\documentclass[10pt,reqno]{article}
\pdfoutput=1
\usepackage{amsmath}
\usepackage{amsfonts,amsthm,amssymb}
\usepackage{mathabx}
\usepackage{bm}
\usepackage{amscd}
\usepackage[latin2]{inputenc}
\usepackage{t1enc}
\usepackage[mathscr]{eucal}
\usepackage{indentfirst}
\usepackage{graphicx}
\usepackage{graphics}
\usepackage{pict2e}
\usepackage{epic}
\numberwithin{equation}{section}
\usepackage[margin=1.7cm]{geometry}
\usepackage{epstopdf} 
\usepackage[colorlinks,linkcolor=blue]{hyperref}
\usepackage[capitalise,noabbrev]{cleveref}
\usepackage{todonotes}
\makeatletter
\def\thanks#1{\protected@xdef\@thanks{\@thanks
		\protect\footnotetext{#1}}}
\makeatother
\crefformat{equation}{(#2#1#3)}
\crefmultiformat{equation}{(#2#1#3)}{ and~(#2#1#3)}{, (#2#1#3)}{ and~(#2#1#3)}
\crefrangeformat{equation}{(#3#1#4) to~(#5#2#6)}
\usepackage[normalem]{ulem}

\allowdisplaybreaks

\newcommand{\ww}{\check{w}}
\newcommand{\dl}{\tilde{\mathrm{div}}}
\newcommand{\di}{\mathrm{div}}
\newcommand{\nlt}{\mathcal{NL}}
\newcommand{\lt}{\mathcal{L}}
\newcommand{\ma}{m^{\frac{1}{4}}M}
\newcommand{\de}{{R}}
\newcommand{\lde}{{\phi}}
\newcommand{\nno}{{(1)}}
\newcommand{\nzno}{{(2)}}
\newcommand{\nxn}{{(3)}}
\newcommand{\nyn}{{(4)}}
\newcommand{\npn}{{(5)}}
\newcommand{\nwii}{{(6)}}
\newcommand{\noiii}{{(7)}}
\newcommand{\nwi}{{(8)}}
\newcommand{\nxnh}{{(9)}}
\newcommand{\pnh}{{(10)}}
\newcommand{\nwh}{{(11)}}

\theoremstyle{plain}
\newtheorem{Thm}{Theorem}[section]
\newtheorem*{Thm*}{Theorem}
\newtheorem{Lem}[Thm]{Lemma}

\newtheorem{Prop}[Thm]{Proposition}

\theoremstyle{definition}
\newtheorem{Def}[Thm]{Definition}

\newtheorem{Rem}[Thm]{Remark}
\newtheorem{?}[Thm]{Problem}

\newcommand{\Doo}{\mathbf{D}_{00}}

\newcommand{\p}{\partial}
\newcommand{\R}{\mathbb{R}}

\newcommand{\na}{\nabla}
\newcommand{\tna}{\tilde{\nabla}}
\newcommand{\eps}{\varepsilon}
\newcommand{\T}{\mathbb{T}}

\newcommand{\norm}[1]{\left\| #1 \right\|}

\newcommand{\Torus}{\mathbb{T}}

\newcommand{\dv}{\text{div}}

\newcommand{\ra}{\rangle}
\newcommand{\la}{\langle}

\newcommand{\abs}[1]{\left\lvert#1\right\rvert}

\begin{document}
	
	\begin{titlepage}
		\title{Nonlinear stability threshold for 3D compressible Couette flow}
	\author{ Rui Li \thanks{\footnotesize Department of Mathematics, The Chinese University of Hong Kong, Shatin, Hong Kong, China \href{mailto:ruili001@cuhk.edu.hk}{\texttt{ruili001@cuhk.edu.hk}}} 
     \and Fei Wang\thanks{School of Mathematical Sciences, CMA-Shanghai, Shanghai Jiao Tong University, 
		 Shanghai, China \href{mailto:fwang256@sjtu.edu.cn}{\texttt{fwang256@sjtu.edu.cn}}} 
    \and Lingda Xu\thanks{Hetao Institute of Mathematics and Interdisciplinary Sciences, Guangdong Province, China \href{mailto:xuldmath@gmail.com}{\texttt{xuldmath@gmail.com}}} 
    \and Zeren Zhang\thanks{School of Mathematical Sciences, Shanghai Jiao Tong University, Shanghai, China \href{mailto:zhangzr0018@sjtu.edu.cn}{\texttt{zhangzr0018@sjtu.edu.cn}}}}  
    \date{}
	\end{titlepage}
	
	\maketitle
\begin{abstract}
We establish the nonlinear stability threshold $O(\nu^{3/2})$ for the three-dimensional Couette flow governed by the compressible Navier--Stokes equations. 
While stability thresholds are well understood in two dimensions for both compressible and incompressible flows, and in three dimensions for incompressible flows, the three-dimensional compressible case remains open due to additional structural features, strong mode interactions, and wave coupling. The proof is based on a refined frequency-space approach. For zero modes, we improve upon two-dimensional methods by clearly separating and precisely estimating the main contributions from diffusion waves, acoustic waves, and the lift-up mechanism, leading to a systematic way to handle their nonlinear coupling. For the non-zero modes, we introduce new multiplier estimates and a decomposition based on the structure of the compressible system, which allows us to track the interaction between dissipation and acoustic effects.
\end{abstract}

\medskip
\noindent\textbf{Keywords:} 3D Compressible Navier-Stokes equations; Couette flow; nonlinear stability threshold; enhanced dissipation; inviscid damping; multiplier estimates.

\tableofcontents
\section{Introduction}
		
    In this paper, we consider the 3D non-dimensional isentropic compressible Navier-Stokes (NS) equations in $\mathbb{T}\times\mathbb{R}\times\mathbb{T}$:
	\begin{equation}\label{ns-ori}
		\left\{\begin{aligned}
			&\partial_t \tilde{\rho} +\di(\rho \tilde{v})=0,\\
			&\partial_t (\tilde{\rho} \tilde{v})+\di(\tilde{\rho} \tilde{v}\otimes \tilde{v})+\frac{1}{M}\nabla P(\tilde{\rho})=\nu\Delta \tilde{v}+(\nu+\nu')\nabla\di \tilde{v},
		\end{aligned}\right.
	\end{equation}
	where $\tilde{\rho}$ is the density of the fluid, $\tilde{v}$ is the velocity,  $P(\tilde{\rho})$ is the pressure which is a smooth function in a neighborhood of $1$, $M>0$ is the Mach number, and $\nu>0,\nu+\nu'\geq0$ are the shear and bulk viscosity coefficients, respectively. For simplify, we assume that $M=1$, $P'(1)=1$, and $\nu\approx(\nu+\nu')$ in the present paper.
	We introduce $({\lde},v)$ to be the perturbation of $(\tilde{\rho},\tilde{v})$ around compressible Couette flow $(1,(y,0,0)^T)$, i.e.,
	\begin{equation*}
		{\lde}=\tilde{\rho}-1,\quad v=\tilde{v}-(y,0,0)^T.
	\end{equation*}
    The equations of $({\lde},v)$ read
	\begin{equation}\label{perturbation-1}
		\left\{\begin{aligned}
			&\partial_t {\lde}+y\partial_x{\lde}+\textrm{div}\ v=F_1,\\
		&\partial_t v+y\partial_xv+(v^2,0,0)^T+\nabla {\lde}-\nu\Delta v-(\nu+\nu')\nabla \textrm{div}\ v=F_2,\\[1.5mm]
        &({\lde}(x,0),v(x,0))=({\lde}_{in}(x),v_{in}(x)),
		\end{aligned}\right.
	\end{equation}
	where
	\begin{equation*}
		\begin{aligned}
			&F_1=-v\cdot \nabla {\lde}-{\lde}\textrm{div}\ v,\\
			&F_2=-v\cdot \nabla v-\left(\frac{P'(1+{\lde})}{1+{\lde}}-1\right)\nabla {\lde}-\frac{{\lde}}{{\lde}+1}(\nu\Delta v+(\nu+\nu')\nabla \textrm{div}\ v).
		\end{aligned}
	\end{equation*}
    \subsection*{Literature review}
Since Reynolds' pioneering experiments in 1883 \cite{Reynolds}, the stability of shear flows in fluid equations has been a central topic in \textit{hydrodynamic stability theory} \cite{D-R, Rayleigh}, with planar Couette flow as the simplest model.
The stability of planar Couette flow has been investigated since the seminal work of Rayleigh \cite{Rayleigh} and Kelvin \cite{kelvin}. For an incompressible fluid, the linear analysis of Couette flow was already studied by Kelvin \cite{kelvin} in 1887. Other classical results have been obtained via an eigenvalue (or normal mode) analysis in many different cases; however, the classical stability analysis in general does not agree with the numerical and physical observations \cite{Bedrossian-Germain-Masmoudi-2019,D-R,Rayleigh,Romanov}.

\noindent\textbf{The Sommerfeld Paradox and Non-Normality}

A significant discrepancy exists between theory and observation. Romanov \cite{Romanov} proved that Couette flow is spectrally stable for all Reynolds numbers, a result that appears to contradict both numerical simulations and physical experiments \cite{Chapman, D-R, GG, OK, SH, Trefethen, Yaglom}, which indicate that shear flows become unstable and undergo transition to turbulence at sufficiently high Reynolds numbers. This discrepancy is known as the Sommerfeld paradox. 

To resolve this, Trefethen et al. \cite{Trefethen} observed that a common feature in these problems is the \textit{non-normality} of the operators involved. In particular, this implies the possibility of large \textit{transient growths} (which are not captured via a pure eigenvalue analysis) that can take the dynamics out of the linear regime before the stability mechanisms take over. The Couette flow is the simpler flow where these phenomena are present, therefore the stability analysis of this particular case is the prototypical example to understand some of the mechanisms involved in the dynamics.

\noindent\textbf{The Transition Threshold Problem}

To elucidate the mechanisms underlying the transition to turbulence, Trefethen et al. \cite{Trefethen} first formulated the transition threshold problem: namely, to quantify the magnitude of perturbations required to trigger instability and to determine their scaling with the Reynolds number. More recently, Bedrossian, Germain, and Masmoudi \cite{Bedrossian-Germain-Masmoudi-2017, Bedrossian-Germain-Masmoudi-2019} provided a rigorous mathematical framework for this problem, stated as follows:

\begin{quote}
\textit{Given a norm $\|\cdot\|_{X}$, find a $\beta=\beta(X)$ so that}
\begin{align*}
    &\|v_{in}\|_{X} \leq Re^{-\beta} \to \textrm{stability},\\
    &\|v_{in}\|_{X} \geq Re^{-\beta}\to \textrm{instability}.
\end{align*}
\end{quote}
The exponent $\beta$ is referred to as the transition threshold.

\noindent\textbf{Results for Incompressible Flow near Couette}

For an incompressible and homogeneous fluid in the Euler regime, Arnold \cite{Arnold} obtained an elegant stability result for a particular class of shear flows. However, some relevant flows, such as the Couette, do not belong to this class. A breakthrough in the understanding of the nonlinear stability properties for the planar Couette flow was achieved by Bedrossian and Masmoudi \cite{Bedrossian-Germain-Masmoudi-2015}. In particular, in the domain $\mathbb{T} \times \mathbb{R}$ they proved the asymptotic stability for the vorticity in a high-regularity space (Gevrey class $2-$) which implies the \textit{inviscid damping} for the velocity field. Namely, the vorticity is \textit{mixed} by the background flow and the velocity field strongly converges in $L^2$ to a shear flow \textit{close} to Couette flow with polynomial rate of convergence. The nonlinear inviscid damping of stable monotone shear flows in $\mathbb{T}\times [0,1]$ was independently established by Ionescu and Jia \cite{IJ,IJ1}, and Masmoudi and Zhao \cite{MZ1}. More recently, Chen, Wei, Zhang, and Zhang \cite{CWZZ} proved the nonlinear inviscid damping of Couette flow for the inhomogeneous incompressible Euler equations in $\mathbb{T}\times\mathbb{R}$, while Zhao \cite{Z25} extended the result to a class of monotone shear flows with non-constant background density in $\mathbb{T}\times[0,1]$.
 
When viscosity is present, more stability results are available. The stability mechanism present at the inviscid level can also combine with the dissipation and one observes an \textit{enhanced dissipation} of non-zero mode of the perturbations around shear flows.
This is possible as the advection causes an energy cascade towards small spatial scales where dissipation takes over. A substantial body of work in applied mathematics and physics has been devoted to obtaining the threshold $\beta$ (see, e.g., \cite{BT, Chapman, DBL, LK, LHS, RSBH, Waleffe, Yaglom}). Over the past decade, rigorous mathematical results have appeared in rapid succession:
\begin{itemize}
    \item \textbf{2D Incompressible in an infinite domain ($\mathbb{T}\times\mathbb{R}$):} Bedrossian, Masmoudi, and Vicol \cite{Bedrossian-Masmoudi-Vicol-2016} proved that $\beta=0$ when the perturbation space $X$ is of Gevrey class $2-$. If $X$ is taken to be a Sobolev space, Bedrossian, Wang and Vicol \cite{BWV} have established the upper bound $\beta\leq \frac{1}{2}$. Then
    Masmoudi and Zhao \cite{MZ}, and Wei and Zhang \cite{WZ-2023} have established the upper bound $\beta\leq \frac{1}{3}$.
    \item \textbf{2D Incompressible in a Channel ($\mathbb{T}\times [-1, 1]$):} Chen, Li, Wei, and Zhang \cite{CLWZ} derived $\beta\le \frac{1}{2}$ for the two-dimensional problem under the no-slip boundary condition. Bedrossian, He, Iyer, Li, and Wang \cite{Bedrossian-He- Iyer-Wang-2025a} extended this result to background shear flows near Couette. Recently, Bedrossian, He, Iyer, and Wang \cite{Bedrossian-He- Iyer-Wang-2025c, Bedrossian-He- Iyer-Wang-2024, Bedrossian-He- Iyer-Wang-2025b} established uniform inviscid damping (i.e., $\beta = 0$) in Gevrey class $2-$ under Navier boundary conditions. 
    \item \textbf{3D Incompressible in domains with or without boundary:} In three dimensions, the picture changes significantly due to the \textit{lift-up effect}, a linear mechanism that induces transient growth. Consequently, the transition threshold problem becomes substantially more difficult. In $\mathbb{T}\times\mathbb{R}\times \mathbb{T}$, Bedrossian, Germain, and Masmoudi \cite{Bedrossian-Germain-Masmoudi-2020, Bedrossian-Germain-Masmoudi-2022, Bedrossian-Germain-Masmoudi-2017, Bedrossian-Germain-Masmoudi-2019} obtained $\beta\le 1$ when $X$ is a Gevrey space and $\beta\le \frac{3}{2}$ when $X$ is a Sobolev space. Wei and Zhang \cite{WZ-cpam} later improved the Sobolev result to $\beta\le 1$. Recently, Chen, Wei, and Zhang \cite{CWZ-MAMS} obtained $\beta\le 1$ in the channel  $\mathbb{T}\times [-1, 1]\times\mathbb{T}$  for Sobolev perturbations under the no-slip boundary condition.
\end{itemize}

\noindent\textbf{Stability of Compressible Flow near Couette}

Compared with the incompressible stability results, the literature is significantly less developed for compressible case. The extension of the standard stability analysis to the compressible case has been already considered starting from the 40s' \cite{CRS,DEH,FI,Glatzel1,HSH,MDA}.

\begin{itemize}
    \item \textbf{Physical and Numerical Context:} The linearization around the Couette flow in the 2D isentropic compressible Euler dynamics was considered in the physics literature. In particular, the 2D inviscid problem (with an additional Coriolis forcing term) has been considered as a first model to understand the formation of \textit{spiral arms} in a rotating disk galaxy by Goldreich and Lynden-Bell \cite{GLB1,GLB2}. In \cite{GLB2} they directly consider the linearized initial value problem and they derive a second order ODE satisfied by the density in the Fourier space. From this equation, appealing to some formal approximation, they deduce an instability phenomenon that appears specifically due to the compressibility of the flow. More precisely, they obtain that $|\rho(t)| \sim O(t^{1/2})$. The problem (without the Coriolis force) was then studied also by Chagelishvili et al. in \cite{CRS,CRS1} where, with analogous computations, it is observed that $|\rho(t)| + |v(t)| \sim O(t^{1/2})$.
    For the viscous compressible planar Couette flow, Glatzel in 1988 \cite{Glatzel,Glatzel1} investigated linear stability properties via a normal mode analysis. Hanifi et al. in \cite{HSH} have numerically investigated a transient growth mechanism in the \textit{non-isothermal} case, showing that the maximum transient growth scales as $O(\nu^{-2})$ and increases with increasing Mach number. Then, Farrell and Ioannou in \cite{FI} considered the linear problem and showed a rapid transient energy growth, that at large Mach numbers greatly exceeds the expected one in the incompressible case, which is then damped due to the effect of viscosity.
    
    \item \textbf{Mathematical Results:} On the mathematical side, Kagei \cite{Kagei} established asymptotic stability of Couette flow for small Reynolds numbers, while Li and Zhang \cite{LZ1} later treated the case with slip boundary conditions.
    Chen and Ju \cite{CJ-1,CJ-2} consider the case of non-isentropic  flow.
    Regarding the transition threshold problem, Antonelli, Dolce, and Marcati \cite{Antonelli-Dolce-Marcati-2021} proved linear stability and enhanced dissipation for two-dimensional Couette flow on $\mathbb{T}\times\mathbb{R}$ at high Reynolds number; they recovered the $t^{1/2}$ growth of the $L^2$ norm previously observed in the physical literature \cite{GLB2}. Zeng, Zi, and Zhang \cite{Zeng-Zhang-Zi-2022} obtained analogous results on $\mathbb{T}\times\mathbb{R}\times\mathbb{T}$. Recently, Huang, Li, and Xu \cite{HLX} addressed the nonlinear problem and showed that $\beta\leq 11/3$ when the perturbation space $X$ is taken to be a Sobolev space. 
    Recently, this result was improved to $\beta\le1$ by Li, Wang and Zhang \cite{LWZ}.
\end{itemize}
\subsection*{Motivations and main ideas of the present work}
It is worth noting that in the work of Huang-Li-Xu \cite{HLX}, even in two dimensions the stability threshold remains as high as $11/3$, largely due to the loss of powers of $\nu$ in deriving optimal time-decay estimates. In contrast, Li-Wang-Zhang \cite{LWZ} improved the threshold to $1$ by developing an energy framework based on the first-order derivatives of the velocity instead of the velocity itself, which provides a more effective starting point for the stability analysis. Meanwhile, the nonlinear stability of the three-dimensional compressible Couette flow remains open. To address this problem, we develop a unified analytical framework adapted to the compressible Couette flow, where we are able to estimate the velocity field and the density directly.

Our main contributions are summarized as follows. In the double zero-mode regime, we exploit the wave-speed structure and dissipative mechanisms inherent to compressible flows to decouple two distinct diffusive components from the solution: diffusion waves propagating at the acoustic speed due to initial perturbations, and Huygens-type diffusive modes generated by the nonlinear structure of the governing equations. In contrast to the approach of Huang-Li-Xu \cite{HLX}, our decomposition of the corresponding wave subsystems is structurally more transparent, and the associated energy estimates are substantially refined. This yields a sharper stability threshold and provides a uniform estimate for the velocity field itself without relying on any time-decay estimates, thereby establishing a unified analytical framework for the double-zero mode.
In particular, we decompose  $\int_{\Torus^2}v^1 dxdz$  into
$$\int_{\Torus^2}v^1 dxdz = \tilde{W}_2-\tilde{W}_1+\tilde{\Xi}_2-\tilde{\Xi}_1 + \theta_A + \mathcal{A}$$ (see \eqref{anti}, \eqref{equ-Xi}), \eqref{thetaA}, and \eqref{equ-A} for more details). 
Since $\p_y \theta_A$ and $\p_y^2 \theta_A$ appear in the equation for $\mathcal{A}$ \eqref{equ-A}, controlling $\norm{\mathcal{A}}_{L^2}$ requires estimates for $\norm{\p_y \theta_A}_{L^2}$ and $\int_0^t\norm{\p_y^2 \theta_A}_{L^2}^2 d\tau$; moreover, proving the uniform boundedness of $\int_{\Torus^2}v^1 dxdz$ also requires an estimate for $\norm{\theta_A}_{L^\infty}$.
From \eqref{thetaA},  the principal part of the equation for $\theta_A$ can be written as
\begin{align*}
    \p_t \theta_A-\nu\p_y^2\theta_A=-a\theta_1-a\p_y\tilde{ W}_1\p_y\theta_A+\cdots.
\end{align*}
Following the method in \cite{HLX} would require optimal decay estimates for $\p_y^k \tilde{W}_i$ ($k=1,2;i=1,2$) to obtain the desired estimate for $\theta_A$, but this may lead to a substantial loss of $\nu$ and force the initial perturbation to satisfy $\varepsilon\sim\nu^{\frac{11}{3}}$. To avoid this loss, we introduce a new strategy. When estimating $\norm{\theta_A}_{L^\infty}$ and $\norm{\p_y \theta_A}_{L^2}$, we apply Duhamel's principle: for terms similar to the slowly decaying $\theta_1$,  we use space-time pointwise estimates, whereas for terms like $\p_y \tilde{W}_1 \p_y\theta_A$ does not exhibit an explicit time decay rate, we need a more refined decomposition of the time integral (see  Lemma \ref{estimateonta}). To estimate $\int_0^t\norm{\p_y^2 \theta_A}_{L^2}^2 d\tau$, we split $\theta_A$ into $\theta_A^{(1)}$ and $\theta_A^{(2)}$: the source term in the equation for $\theta_A^{(1)}$ resembles $\theta_1$ and possesses explicit decay rates, while that for $\theta_A^{(2)}$ resembles $\p_y \tilde{W}_1 \p_y\theta_A$ and is amenable to energy estimates.  We then estimate $\theta_A^{(1)}$ via Duhamel's principle and $\theta_A^{(2)}$ via the energy method; combining these yields the estimate for $\int_0^t\norm{\p_y^2 \theta_A}_{L^2}^2 d\tau$ (see  Lemma \ref{estimateonta-1}).

We note that our decomposition of the double-zero wave modes is essential not only for distinguishing the mechanisms of compressible and incompressible fluids, but also for deriving uniform $L^\infty$ bounds. 
This decomposition is necessitated by the fact that interactions between waves cause the estimates to become uncontrollable. 
By explicitly analyzing the specific interactions among wave patterns, we exploit the distinct propagation directions of the two wave types to obtain refined interaction estimates. 
This constitutes the key to achieving uniform bounds.

For the other modes, we construct a refined bootstrap argument and rigorously analyze the nonlinear interactions across distinct frequency bands. Coupled with precise multiplier estimates and a uniform boundedness estimate for the velocity field itself, this yields uniform-in-time estimates for the perturbation.
There are two main growth mechanisms here. The first is the $\nu^{-1/2}$ growth of $(\partial_y\phi,\textrm{div}\ v)$ due to the lack of divergence-free condition, and the second is the lift-up effect of the simple zero mode $v^1_{0\neq}$.
Controlling the interaction between this two mechanisms is one of the main challenges. For instance, consider the leading term of $W^1_{\neq}$ in the moving frame (see \cref{equ-W1} for details):
\begin{align*}
    I=\int_0^T \langle m(\tilde{\partial}_YD_{\neq}\partial_YV^1_{0\neq}),mW^1_{\neq}\rangle d t, 
\end{align*}
where the derivative operators and Fourier multiplier $m$ are given in \cref{mnote} and {\it Def} \ref{Def-m}, respectively. 
Recalling the equation for $\tilde{\partial}_Y{\de}_{\neq}$ (see \cref{sys-ND} for details) :
			\begin{align*}
				&\partial_t \tilde{\partial}_Y{\de}+\partial_XR+\tilde{\partial}_YD=-\tilde{\partial}_Y(V\cdot \tilde{\nabla} {\de})-\tilde{\partial}_Y({\de}D).
			\end{align*}
			By integration by parts in time, we end up with the main contribution
			\begin{align*}
                \int_{0}^{T}\langle m((\partial_YV^{1}_{0\neq})\tilde{\partial}_Y{\de}_{\neq}), m (V^1_{00}\partial_XW^1_{\neq}+\partial_YV^1_{0\neq}\tilde{\partial}_YD_{\neq})\rangle dt.
			\end{align*}
Therefore, to close energy, we need a uniform estimate of the double zero mode $V^1_{00}$ and initial data to satisfy $\eps \sim \nu^{3/2}$.

Overall, this work provides a systematic and robust methodology for determining the nonlinear stability threshold of compressible Couette flow. The analytical techniques developed herein are expected to serve as a reference paradigm for related problems in mathematical fluid dynamics.

\vspace{0.3cm}

	{{\bf{Notations:}} Throughout this paper, we use the following notations}
	
	\begin{itemize}
		
		\item Defining the zero and non-zero modes for an integrable function $f\in \Torus\times\R\times\Torus$,
		\begin{align*}
			\Doo f:=f_{00}:=\int_{\Torus^2}fdxdz,\qquad {f}_{0\neq}:=\int_\Torus f dx-\int_{\Torus^2} fdxdz,\qquad f_{\neq}:=f-\int_{\Torus} f dx.
		\end{align*}
		\item $\delta>0$ denotes a small constant independent of $\nu$, $\nu+\nu'$ and $t$. 
		\item Given two quantities $A$ and $B$, we denote $A\lesssim B$ and $A \approx B$ if there exists a positive universal constant $C$ such that $A\leq CB$ and $C^{-1}B \leq A \leq CB$ respectively. 
		\item  For a vector $x$, we denote $\la x \ra:=(1+\abs{x}^2)^{\frac{1}{2}}$.
		\item The Fourier transform $\hat{f}(k,\eta,l)$ of a function $f(x,y,z)$ is defined by
		\begin{align*}
			\hat{f}(k,\eta,l)=\int _{\Torus\times \R\times\Torus} f(x,y,z)e^{-2\pi i(kx+\eta y+lz)}dxdydz.
		\end{align*}
		Then the inverse Fourier transform is given by 
		\begin{align*}
			f(x,y,z)= \sum\limits_{k,l\in \mathbb{Z}} \int_{\R} \hat{f}(k,\eta,l) e^{2\pi i(kx+\eta y+lz)} d\eta.
		\end{align*}
		\item The Sobolev space $H^N(\Torus \times \R \times\Torus)$, $N\geq0$ is given by the norm 
		$$ \norm{f}_{H^N}^2=\norm{\la \nabla \ra^N f}_{L^2}^2=\sum_{k,l\in \mathbb{Z}} \int \la k,\eta,l \ra^{2N} \abs{\hat{f}}^2 (k,\eta,l)d\eta. $$
		
		
	\end{itemize}
	\
	
	The rest of the present paper is organized as follows. In Section 2, we derive a new integrated system for the double zero modes, construct $\theta_A$ to capture the main part of $v_{00}^1$, and state the main theorem. In Section 3, we reformulate the perturbation systems, introduce the Fourier multipliers, and obtain the corresponding commutators.
In Section 4, we present the bootstrap argument. 
 Then we prove a uniform bound for  $v_{00}^1$ in Section 5. In Sections 6 and 7, we study the lower regularity energy estimate for the compressible part and the incompressible part, respectively. In Section 8, we consider the energy estimate on the zero mode. Finally, in Section 9, we obtain the high regularity energy estimate.

	\section{Analytical framework and main results}
	\subsection{The perturbation system for double zero modes}
	
	{Compared with the incompressible case, a key difference of the compressible setting is that the double zero mode (i.e., zero mode in both $x$ and $z$ direction) of the $y$ component of the velocity field, $v^2_{00}$, is no longer zero, which in turn induces growth in $v^1_{00}$.} {To quantify this lift-up effect, we need to perform a refined decomposition of the double zero mode system, thereby circumventing the time-growth estimate obtained directly from the energy method i.e., $\norm{v_{00}^1}_{L^2}\lesssim (1+t)^{\frac12}$ (see \cite{Antonelli-Dolce-Marcati-2021,Zeng-Zhang-Zi-2022}).} To this end, applying $\Doo$ to \cref{perturbation-1}$_2$, one has
	\begin{align}\label{psi1}
		\p_t v_{00}^1-\nu\p_y^2v_{00}^1=-v_{00}^2-v_{00}^2\p_yv_{00}^1-\frac{{\lde}_{00}}{{\lde}_{00}+1}\nu\p_y^2v_{00}^1+\tilde{F}_{21},
	\end{align}
	where
	\begin{align}\label{F1}
		\begin{aligned}
			\tilde{F}_{21}:=&-\Doo\left[\frac{{\lde}}{{\lde}+1}\big(\nu\Delta v^1+(\nu+\nu')\p_x\dv v\big)\right]-\Doo(v\cdot\nabla v^1)+v_{00}^2\p_yv_{00}^1\\
			&+\frac{{\lde}_{00}}{{\lde}_{00}+1}\nu\p_y^2v_{00}^1 +\Doo\left[\p_x {\lde}\frac{{\lde}}{{\lde}+1}-\frac{(P'(\rho)-1)\p_x\phi}{{\lde}+1}\right].
		\end{aligned}
	\end{align}
	To study $v^2_{00}$ appearing in the right side of the above equation, we note that the system \cref{perturbation-1} is not in conservation form. Instead, we study the conserved system for $({\lde},\varphi^2)$ with $\varphi^2=\tilde{\rho} v_2$. Taking $\Doo$ on the equations \cref{ns-ori}$_1$ and \cref{ns-ori}$_3$ respectively, we obtain a new system for the zero mode $({\lde}_{00},{\varphi} ^2_{00})$ as follows, 
	\begin{align}
		\begin{cases}\label{equ-rhou2}
			\p_t {\lde}_{00} + \p_y {\varphi} ^2_{00} = 0, \\[2mm]
			\p_t {\varphi} ^2_{00} +  \p_y{\lde}_{00} -( 2\nu +\nu')\p_y^2 {\varphi} ^2_{00} =\partial_{y} E,
		\end{cases}
	\end{align}
	where 
	\begin{align}\label{E}
		\begin{aligned}
			E=&\left[\tilde{F}_2-(2\nu+\nu')\p_y\left(\frac{{\varphi} ^2_{00}}{\rho_{00}}-\int_{\Torus^2}\frac{\varphi^2}{\tilde{\rho}}dxdz\right)\right]-(2\nu+\nu')\p_y\left({\varphi}^{2}_{00}-\frac{{\varphi} ^2_{00}}{\rho_{00}}\right)\\
			&-\left[\bigg(P(\rho_{00})-P(1)-P'(1){\lde}_{00}\bigg)+ \frac{({\varphi} ^2_{00})^2}{\rho_{00}}\right]-\left(v_{00}^2-\frac{{\varphi} ^2_{00}}{\rho_{00}}\right){\varphi} ^2_{00}\\
			:=&E_1+E_2+E_3+E_4,\\
			\tilde{F}_2:=&v_{00}^2  {\varphi} ^2_{00} +P(\rho_{00})-\big(\int_{\Torus^2} v^2 \varphi^2 d x dz + \int_{\Torus^2} P(\tilde{\rho}) d xdz\big ),
		\end{aligned}
	\end{align}
	with the initial data $({\lde}_{00},{\varphi} ^2_{00})(y,0)$.
	
	\subsection{Construction of ansatz}
	
	In this subsection, we study the system~\cref{equ-rhou2}, which may be rewritten  as
	\begin{align}\label{equ-green}
		\bar{w}_t+\tilde{A}\bar{w}_y=\tilde{B} \bar{w}_{yy}+E_y \mathbf{e}_2,
	\end{align}
	where $\mathbf{e}_2:=(0,1)^{T}$ and
	\begin{align}\label{xcvvcx}
		\bar{w}=\left(\begin{array}{c}{\lde}_{00}\\ \varphi^2_{00} \end{array}\right), \quad \tilde{A}=\left(\begin{array}{cc}0 & 1 \\ {1} & 0\end{array}\right), \quad \tilde{B}=\left(\begin{array}{cc}0 & 0 \\ 0 & \bar{\nu}\end{array}\right),\quad \bar{\nu}:=2\nu+\nu'.
	\end{align}
	We note that $\tilde{A}$ is symmetric and the eigenvalues are $\sigma_1=-1$ and $\sigma_2=1$, with the corresponding left and right eigenvectors given as 
	\begin{align}\notag
		(l_1,l_2)^t=\frac{1}{2a}\left(\begin{array}{cc}-1&1\\
			1& 1
		\end{array}\right), \quad	(r_1,r_2)=a\left(\begin{array}{cc}-1&1\\ 1&1\end{array}\right)
	\end{align} 
	respectively, where
	\begin{equation*}
		a:=\frac{2}{P''(1)+2}.
	\end{equation*}
	We decompose the solution $w$ along the right eigenvector directions
	\begin{align}\label{phi-psi}
		w=L\bar{w}, \quad \text{i.e.} \quad \bar{w}=R w,
	\end{align}
	where $L=(l_1,l_2)^{t}$, $R=(r_1,r_2).$ Then we may diagonalize \cref{equ-rhou2} as
	\begin{align}\label{2025-9-23-1}
		\begin{cases}
			w_{1t}-w_{1y}=\frac{\bar{\nu}}{2}w_{1yy}+\frac{\bar{\nu}}{2}w_{2yy}+\frac{1}{2a}E_y,\\
			w_{2t}+w_{2y}=\frac{\bar{\nu}}{2}w_{2yy}+\frac{\bar{\nu}}{2}w_{1yy}+\frac{1}{2a}E_y,
		\end{cases}
	\end{align}
	with the initial data $(w_1,w_2)(y,0)$.
	
	Next, we seek a suitable ansatz to capture the large time behavior of $(w_1,w_2)$. For this, we construct the following diffusion wave
	\begin{align}\label{equ-theta}
		\begin{cases}
			\theta_{1t}-\theta_{1y}+(\frac{\theta_1^2}{2})_y=\frac{\bar{\nu}}{2}\theta_{1yy},& y \in \mathbb{R}, t>0,\\
			\theta_{2t}+\theta_{2y}+(\frac{\theta_2^2}{2})_y=\frac{\bar{\nu}}{2}\theta_{2yy},& y\in \mathbb{R}, t>0,\\
			\int_{\R}\theta_i(y,0)dy=l_i\cdot\int_{\R}\bar{w}(y,0)dy,& y \in \mathbb{R}, \quad i=1,2.
		\end{cases}
	\end{align}
	One can use the Hopf-Cole transformation to obtain the explicit expressions for $\theta_1$ and $\theta_2$
	\begin{align}\label{theta}
		\begin{aligned}
			&\theta_1(x_1, t)=\frac{\bar{\nu}^{\frac{1}{2}}}{\sqrt{2(1+t)}} \Gamma_1\left(\frac{x+(1+t)}{\sqrt{2\bar{\nu}(1+t)}}\right),\qquad\qquad\theta_2(x_1, t)=\frac{\bar{\nu}^{\frac{1}{2}}}{\sqrt{2(1+t)}} \Gamma_2\left(\frac{x-(1+t)}{\sqrt{2\bar{\nu}(1+t)}}\right),\\
			&\Gamma_i(y)=\frac{\left( e^ \frac{\eta_i}{\bar{\nu}}-1\right) \exp \left(-y^2\right)}{\sqrt{\pi}+\left(e^ \frac{\eta_i}{\bar{\nu}}-1\right) \int_y^{+\infty} \exp \left(-\xi^2\right) d \xi},\qquad  \qquad\eta_i:=l_i\cdot\int_{\R}\bar w(y,0)dy.
		\end{aligned}
	\end{align} 	
	
	Setting $\ww_{i}:=w_{i}-\theta_i$, $i=1,2$, one has
			\begin{align}\label{barv}
				\begin{cases}
					\ww_{1t}-\bar{c}\ww_{1y}=\frac{\bar{\mu}}{2}\ww_{1yy}+\frac{\bar{\mu}}{2}\ww_{2yy}+\frac{1}{2a}{E}_y+(\frac{\theta_1^2}{2})_y+\frac{\bar{\mu}}{2}\theta_{2yy},\\
					\ww_{2t}+\bar{c}\ww_{2y}=\frac{\bar{\mu}}{2}\ww_{2yy}+\frac{\bar{\mu}}{2}\ww_{1yy}+\frac{1}{2a}{E}_y+(\frac{\theta_2^2}{2})_y+\frac{\bar{\mu}}{2}\theta_{1yy},\\
					\int_{\R}\ww_i(y,0)dy=0,\quad i=1,2,
				\end{cases}& y \in \mathbb{R},\quad t>0.
			\end{align}
			In view of  \cref{E},  the leading term of ${E}_y$ is 
			\begin{align}\label{tildeE}
				-\frac{P''(1)}{2}[\left(\phi_{00}\right)^2]_y-[\left( \varphi_{00}^2 \right)^2]_y.
			\end{align}
			Since $\phi_{00}=a\left(w_2-w_1\right)$ and $\varphi_{00}^2=a\left(w_1+w_2\right)$, the leading part of $-\frac{1}{2a}\left\{ \frac{P''(1)}{2}[\left(\phi_{00}\right)^2]_y+[\left( \varphi_{00}^2 \right)^2]_y\right\}+\left(\frac{\theta_i^2}{2}\right)_y$ is $-(\frac{\theta_{3-i}^2}{2})_y$, which is bounded by $(1+t)^{-\frac{3}{2}}e^{-\frac{(y\pm t)^2}{1+t}}$.
			 However, the decay of $\ww$ is still not sufficient due to the bad term $-\left(\frac{\theta_{3-i}^2}{2}\right)_{y}$, and hence we shall construct another diffusion wave $\Xi_i$ to approximate $\ww$. Motivated by \cite{HLX}, we define the following Huygens-type coupled diffusion wave to capture the leading terms
	\begin{align}\label{equ-xi}
		\begin{cases}
			\Xi_{1t}-  \Xi_{1y}+\left[\theta_2^2 / 2+\theta_1\Xi_1+\theta_2 \Xi_2+\frac{2-P''(1)}{2+P''(1)}(\theta_1\Xi_2+\theta_2\Xi_1)\right]_y-\frac{\bar{\nu}}{2} \Xi_{1yy}=0, & y \in \mathbb{R}, t>0, \\ 
			\Xi_{2t}+ \Xi_{2y}+\left[\theta_1^2 / 2+\theta_1\Xi_1+\theta_2 \Xi_2+\frac{2-P''(1)}{2+P''(1)}(\theta_1\Xi_2+\theta_2\Xi_1)\right]_y-\frac{\bar{\nu}}{2}  \Xi_{2yy}=0, & y \in \mathbb{R}, t>0, \\ \Xi_1(y, 0)=\Xi_2(y, 0)=0, & y \in \mathbb{R}.\end{cases}
	\end{align}
	{{Since $\theta_{3-i}$ and the Green's function for $\Xi_i$ propagate in different directions, 
			Duhamel's principle can be applied to estimate  $\Xi_i$, yielding the decay rate 
			$(1+t)^{-1/2}$ for $\norm{\Xi_i}_{L^2}$, which is better than the rate 
			$(1+t)^{-1/4}$ for $\norm{\theta_i}_{L^2}$ (see  Lemma \ref{estimateontheta} and  Lemma \ref{estimateonxi} below).}}
	Setting
	\begin{align}\label{tildev}
		&\tilde{w}_i:=w_i-\mathcal{C}_i,\qquad i'=3-i,\\
		&\mathcal{C}_i = \theta_i +\Xi_i+(-1)^{i}\frac{\bar{\nu}}{4}\theta_{i'y}+(-1)^{i}\frac{\bar{\nu}}{4}\Xi_{i'y},\nonumber
	\end{align}
	then we have
	\begin{align}\label{equ-tildev}
		\begin{cases}
			\tilde{w}_{1t}-\tilde{w}_{1y}=\frac{\bar{\nu}}{2}\tilde{w}_{1yy}+\frac{\bar{\nu}}{2}\tilde{w}_{2yy}+\tilde{K}_{1y},\\
			\tilde{w}_{2t}+\tilde{w}_{2y}=\frac{\bar{\nu}}{2}\tilde{w}_{2yy}+\frac{\bar{\nu}}{2}\tilde{w}_{1yy}+\tilde{K}_{2y},
		\end{cases}
	\end{align}
	where
	\begin{align}\label{tildeK}
		\tilde{K}_{i}&:=\tilde{E}_{i}+\tilde{N},\\ \nonumber
		\tilde{E}_{i}&:=\frac{1}{2a}{E}_{2}+\left(\frac{1}{2a}{E}_{3}+\theta_1^2 / 2+\theta_2^2 / 2+\theta_1\Xi_1+\theta_2 \Xi_2+\frac{2-P''(1)}{2+P''(1)}(\theta_1\Xi_2+\theta_2\Xi_1)\right)\\
		&+(-1)^{i}\frac{\bar{\nu}}{4}\left(\theta_1^2 / 2+\theta_2^2 / 2+\theta_1\Xi_1+\theta_2 \Xi_2+\frac{2-P''(1)}{2+P''(1)}(\theta_1\Xi_2+\theta_2\Xi_1)-\frac{\bar{\nu}}{2}\theta_{iy}-\frac{\bar{\nu}}{2}\Xi_{iy}\right)_{y},\nonumber\\ 
		\tilde{N}&:=\frac{1}{2a}(E_{1}+E_{4}).\nonumber
	\end{align}
	It should be noted that the decay rate of  $\tilde{K}_{iy}$ in \cref{equ-tildev} is improved to $$(1+t)^{-2}e^{-\frac{(x\pm t)^2}{(1+t)}}+\p_y(\Xi^2)$$ due to the cancellations in $$\frac{1}{2a}{E}_{3}+\theta_1^2 / 2+\theta_2^2 / 2+\theta_1\Xi_1+\theta_2 \Xi_2+\frac{2-P''(1)}{2+P''(1)}(\theta_1\Xi_2+\theta_2\Xi_1).$$ 
	
	{From \cref{perturbation-1}$_1$, we find the formal relation
    \begin{align*}
        \p_t\int_{-\infty}^{y} \phi_{00} d\tilde{y} + v_{00}^2 = \int_{-\infty}^{y}\int_{\T^2} F_1 dxd\tilde{y}dz,
    \end{align*}
    which may cancel the growth term $v_{00}^2$ appearing in the equation of $v_{00}^1$ \cref{psi1}.  This motivates us to define the  anti-derivatives of $\tilde{w}_i$ and $\Xi_i$.}
	In view of \cref{equ-theta}-\cref{equ-tildev}, one has $$\int_{-\infty}^\infty \tilde{w}_i(y,t)dy=0, \quad \int_{-\infty}^\infty \Xi_i(y,t)dy=0, \quad i=1,2.$$ 
	We define the anti-derivatives of $(\tilde{w}_1,\tilde{w}_2)$ and $\left({\Xi}_1(z,t),{\Xi}_2(z,t)\right)$ as 
	\begin{align}
		\begin{array}{lll}
			\mathbf{\tilde{W}}:=\left(\tilde{W}_1,\tilde{W}_2\right):=&\int_{-\infty}^y\left(\tilde{w}_1(z,t),\tilde{w}_2(z,t)\right)dz,\quad\mathbf{\tilde{\Xi}}:=\left(\tilde{\Xi}_1,\tilde{\Xi}_2\right):=\int_{-\infty}^y\left({\Xi}_1(z,t),{\Xi}_2(z,t)\right)dz.\label{equ--anti}
		\end{array}
	\end{align}
	Then the systems \cref{equ-tildev} and \cref{equ-xi} become,  respectively,
	\begin{align}\label{anti}
		\mathbf{\tilde{W}}_t+A\mathbf{\tilde{W}}_y=B\mathbf{\tilde{W}}_{yy}+\mathbf{\tilde{K}},
	\end{align}
	where
	\begin{align}\notag
		A=\left(\begin{array}{cc}-1& 0 \\ 0 & 1\end{array}\right), \quad {B}=\left(\begin{array}{cc}\frac{\bar{\nu}}{2} & \frac{\bar{\nu}}{2} \\ \frac{\bar{\nu}}{2} & \frac{\bar{\nu}}{2}\end{array}\right),\quad \mathbf{\tilde{K}}=\left(\begin{array}{c}\tilde{K}_1\\ \tilde{K}_2\end{array}\right),
	\end{align}
	and
	\begin{align}\label{equ-Xi}
		\begin{cases}
			\tilde{\Xi}_{1t}-
			\tilde{\Xi}_{1y}+{\left[\theta_2^2 / 2+\theta_1\Xi_1+\theta_2 \Xi_2+\frac{2-P''(1)}{2+P''(1)}(\theta_1\Xi_2+\theta_2\Xi_1)\right]}=\frac{\bar{\nu}}{2}  \tilde{\Xi}_{1yy}, & y \in \mathbb{R}, t>0, \\ 
			\tilde{\Xi}_{2t}+ \tilde{\Xi}_{2y}+\left[\theta_1^2 / 2+\theta_1\Xi_1+\theta_2 \Xi_2+\frac{2-P''(1)}{2+P''(1)}(\theta_1\Xi_2+\theta_2\Xi_1)\right]=\frac{\bar{\nu}}{2}  \tilde{\Xi}_{2yy}, & y \in \mathbb{R}, t>0, \\ \tilde{\Xi}_1(y, 0)=\tilde{\Xi}_2(y, 0)=0, & y \in \mathbb{R}.\end{cases}
	\end{align}
	
	Next, based on the construction above, in order to better quantify the life-up effect, we approximate $v_{00}^1$ to the leading order. We recall the equation for $v_{00}^1$
	\begin{align}\label{equ-psi1}
		\p_t v^1_{00}-\nu\p_y^2v^1_{00}+v^2_{00}=-v^2_{00}\p_yv^1_{00}-\frac{{\lde}_{00}}{{\lde}_{00}+1}\nu\p_y^2v^1_{00}+\tilde{F}_{21}.
	\end{align}
To better quantify the lift up effect, naturally $a(\tilde{W}_2-\tilde{W}_1+\tilde{\Xi}_2-\tilde{\Xi}_1)$ would be a new good unknown. However, this quantity behaves differently from $v^1_{00}$ due to nonlinearity. {Motivated by \cite{HLX}, we define $\theta_A$, which serves as  the leading part of $v_{00}^1-a(\tilde{W}_2-\tilde{W}_1+\tilde{\Xi}_2-\tilde{\Xi}_1)$,}
	\begin{align}\label{thetaA}
		\left\{\begin{aligned}
			&{\p_t\theta_{A}}-\nu\p_y^2\theta_{A}=-a(\mathcal{C}_1+\mathcal{C}_2-\Xi_1-\Xi_2)-\frac{a\nu'}{2}\left(\Xi_2-\Xi_1\right)_y+\frac{a}{2}(\theta_1^2-\theta_2^2)\\
			&\quad\;\qquad\qquad\qquad-a(\mathcal{C}_1+\mathcal{C}_2)\p_y\theta_A+a^2(\mathcal{C}_1+\mathcal{C}_2)(\mathcal{C}_2-\Xi_2-\mathcal{C}_1+\Xi_1)\\
			&\quad\;\qquad\qquad\qquad-a(\tilde{w}_1+\tilde{w}_2)\p_y\theta_A+a^2(\tilde{w}_1+\tilde{w}_2)(\mathcal{C}_2-\mathcal{C}_1),\\
			&\theta_A(y,0)=0.
		\end{aligned}\right.
	\end{align}
Denoting the error term by $\mathcal A:=v^1_{00}-a(\tilde{W}_2-\tilde{W}_1+\tilde{\Xi}_2-\tilde{\Xi}_1)-\theta_A$, one has  
	\begin{align}\label{equ-A}
		&\qquad\begin{aligned}
			\p_t{\mathcal{A}}- \nu \p_y^2{\mathcal{A}}-a\nu\p_y(\tilde{w}_2-\tilde{w}_1)=-v^2_{00}\p_y\mathcal{A}-\nu\frac{{\lde}_{00}}{{\lde}_{00}+1}\p_y^2\mathcal{A}+F_{A},
		\end{aligned}\\
		&\begin{aligned}\label{FA}
			{F_{A}:=}&a^2(\tilde{w}_1+\tilde{w}_2)(\Xi_1-\Xi_2)+a\nu\frac{{\lde}_{00}}{{\lde}_{00}+1}\left(\tilde{w}_1-\tilde{w}_2+\Xi_1-\Xi_2\right)_y-a\nu\frac{{\lde}_{00}}{{\lde}_{00}+1}\p_y^2\theta_A\\
			&-\frac{1}{\rho_{00}}({\lde}_{00})^2\varphi^2_{00}+\Doo(\tilde{\rho} v^2-\rho_{00}v_{00}^2)+{\lde}_{00}\Doo\left(\frac{\varphi^2}{\tilde{\rho}}-\frac{\varphi^2_{00}}{\rho_{00}}\right)+F_1-a (\tilde{K}_{2}-\tilde{K}_{1})\\
			&+\left\{\frac{{\lde}_{00} \varphi^2_{00}}{\rho_{00}}-\Doo\left(\frac{\varphi^2}{\tilde{\rho}}-\frac{\varphi^2_{00}}{\rho_{00}} \right) \right\}\Big(\p_y \theta_A + a\left(\tilde{w}_2-\tilde{w}_1-\Xi_1+\Xi_2 \right)\Big).
		\end{aligned}
	\end{align}
	\begin{Rem}
		{Since the propagation direction of $\mathcal{C}_1+\mathcal{C}_2-\Xi_1-\Xi_2$ differs from that of the Green's function for $\theta_A$, we can use Duhamel's principle to obtain the uniform boundedness of $\theta_A$ -- the estimate that cannot be achieved by energy estimates alone (see  Lemma \ref{estimateonta} below).}
	\end{Rem}
	\begin{Rem}
		Note that the leading term $v^2_{00}$ in \cref{equ-psi1}  is no longer present; instead it is replaced by $-a\nu\p_y(\tilde{w}_2-\tilde{w}_1)$ in the new equation \cref{equ-A}, which weakens the lift-up effect. {Therefore, $a(\tilde{W}_2-\tilde{W}_1+\tilde{\Xi}_2-\tilde{\Xi}_1)+\theta_A$ can approximate $v_{00}^1$ well.}
	\end{Rem}
	\subsection{Main results and comments}
	Now we are ready to state the main results.
	\begin{Thm}\label{MT}
		There exist a sufficiently small constant $\delta_0>0$ 
		and a suitable small constant $\varepsilon_0$ independent of $\nu$ such that if $0<\nu\leqslant\delta_0$
		and $\nu+\nu'$ is of the same order as $\nu$, and the initial data satisfies
		\begin{equation}\label{initial}
			\|(\tilde{W}_1,\tilde{W}_2)_{in}\|_{H^2}+\|(\phi,\varphi_2)_{in}\|_{L^1}+\|(\phi,v)_{in}\|_{H^5}\leqslant\varepsilon=\varepsilon_0\nu^{3/2},
		\end{equation}
		then the system \cref{perturbation-1} admits a unique global solution $(\phi,v)$. Moreover, the profiles $(R,V)(t,x,y,z)=(\phi,v)(t,x+yt,y,z)$ satisfy the following estimates:
		\begin{align*}
			&\|(V^1_{\neq},V^3_{\neq})\|_{L^\infty H^3}+\nu^{1/6}\|(V^1_{\neq},V^3_{\neq})\|_{L^2 H^3}\lesssim\varepsilon,\\
			&\|(R_{\neq},V^2_{\neq})\|_{L^\infty H^3}+\nu^{1/6}\|(R_{\neq},V^2_{\neq})\|_{L^2 H^3}\lesssim\nu^{-1/6}\varepsilon,\\
			&\nu\|V^1_{0\neq}\|_{L^\infty H^5}+\nu^{1/2}(\|V^1_{00}\|_{L^\infty L^\infty}+\|\partial_YV^1_{00}\|_{L^\infty H^4})\lesssim\varepsilon,\\
			&\|R_{0}\|_{L^\infty H^3}+\|V^2_{0}\|_{L^\infty H^3}+\|V^3_0\|_{L^\infty H^4}\lesssim\varepsilon.
		\end{align*}
	\end{Thm}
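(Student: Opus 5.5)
The proof will be a bootstrap (continuity) argument in the moving frame $X=x-yt$, $Y=y$, with the profiles $(R,V)(t,X,Y,z)=(\phi,v)(t,X+Yt,Y,z)$. Local well-posedness in $H^5$ gives a solution on some $[0,T^*)$. On $[0,T]$ we assume the four estimates of Theorem \ref{MT}, with $\lesssim\varepsilon$ replaced by $\le 2C_0\varepsilon$. We also add the auxiliary bounds for the double-zero decomposition: uniform bounds on $\norm{\tilde W}_{H^2}$, dissipative bounds $\nu\int_0^T\norm{\p_y\tilde W}_{H^2}^2$, $L^\infty$ and $\norm{\p_y\cdot}_{L^2}$ bounds on $\theta_A$, and $\int_0^T\norm{\p_y^2\theta_A}_{L^2}^2$. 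We then show that all of these hold with $C_0\varepsilon$ once $\varepsilon=\varepsilon_0\nu^{3/2}$ and $\varepsilon_0,\delta_0$ are small. Continuity then yields $T^*=\infty$, and uniqueness is standard.

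For the double zero mode we use the decomposition of Section 2,
$$v^1_{00}=a(\tilde W_2-\tilde W_1+\tilde\Xi_2-\tilde\Xi_1)+\theta_A+\mathcal A.$$
\begin{itemize}
\item The diffusion waves $\theta_i$ are explicit through \eqref{theta}. The Huygens waves $\Xi_i$ are estimated by Duhamel's principle, exploiting that $\theta_{3-i}$ and the Green's function of $\Xi_i$ travel in opposite directions; this gives $\norm{\Xi_i}_{L^2}\lesssim\varepsilon(1+t)^{-1/2}$.
\item $\tilde W$ is handled by an energy estimate on \eqref{anti}. We use the symmetrizer $A$ and the dissipation $B$, which is degenerate but controls $\p_y(\tilde W_1+\tilde W_2)$, and the improved decay of $\tilde K$.
\item $\theta_A$ is handled as in the introduction. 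For the $L^\infty$ and $\p_y$ bounds we use Duhamel with space-time pointwise Green's function estimates and a splitting of the time integral for the $\p_y\tilde W_1\p_y\theta_A$-type terms. For $\int\norm{\p_y^2\theta_A}^2$ we split $\theta_A=\theta_A^{(1)}+\theta_A^{(2)}$, treating the first by Duhamel and the second by energy estimates.
\item $\mathcal A$ is estimated by energy on \eqref{equ-A}. The term $a\nu\p_y(\tilde w_2-\tilde w_1)$ is paired with $\mathcal A$ and absorbed using $\nu\int\norm{\p_y\tilde W}^2$. The forcing $F_A$ is quadratic, except for its $\p_y^2\theta_A$ piece, which uses the bound above.
\end{itemize}
Together these give $\norm{V^1_{00}}_{L^\infty}\lesssim\nu^{-1/2}\varepsilon$. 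The double-zero components of $R$, $V^2$ and $V^3$ come from the same wave estimates plus standard parabolic energy estimates.

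For the simple zero modes $(0\neq)$ we do energy estimates on the $x$-averaged system in $H^5$. The lift-up term $-V^2_{0\neq}$ in the $V^1_{0\neq}$ equation makes $V^1_{0\neq}$ grow to size $\nu^{-1}\varepsilon$, which is consistent with the statement. The $(R_0,V^2_0,V^3_0)$ system is a damped acoustic system with $z$-dissipation of rate $\nu$, and nonlinear terms are controlled by the bootstrap. For the non-zero modes we use the Fourier multiplier $m$ of Def.~\ref{Def-m}, which combines a ghost weight for inviscid damping, a $\nu^{1/3}$ enhanced-dissipation weight and an acoustic weight. We split into a compressible part $(R_{\neq},D_{\neq})$, which loses $\nu^{-1/6}$, and an incompressible part $(W^1_{\neq},V^3_{\neq})$. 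Low-regularity estimates (Sections 6–7) control the decay-sensitive norms, and high-regularity $H^3$ estimates (Section 9) use the commutators from Section 3.

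The main obstacle is the interaction between lift-up and acoustic growth, namely the term $I=\int_0^T\la m(\tilde\p_YD_{\neq}\p_YV^1_{0\neq}),mW^1_{\neq}\ra dt$. We plan to substitute $\tilde\p_YD_{\neq}$ from the $\tilde\p_YR$ equation \cref{sys-ND} and integrate by parts in time. This leaves $\int\la m(\p_YV^1_{0\neq}\tilde\p_YR_{\neq}),m(V^1_{00}\p_XW^1_{\neq}+\p_YV^1_{0\neq}\tilde\p_YD_{\neq})\ra$, together with boundary terms and a time derivative falling on $V^1_{0\neq}$. We bound it using the uniform $L^\infty$ bound on $V^1_{00}$ from the double-zero analysis, the $\nu^{-1}\varepsilon$ bound on $V^1_{0\neq}$, the $\nu^{-1/6}\varepsilon$ bounds on $R_{\neq}$ and $D_{\neq}$, and the $\nu^{1/6}$ dissipative $L^2_t$ norms. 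The resulting power counting closes exactly when $\varepsilon\lesssim\nu^{3/2}$, which fixes the threshold. Everything else reduces to careful but routine product and commutator estimates.
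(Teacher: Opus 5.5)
\paragraph{The bootstrap set is too weak.} Your architecture matches the paper's: the same double-zero decomposition, the same Duhamel/energy split of $\theta_A$, multiplier energy estimates for the non-zero modes, and the same integration by parts in time for the lift-up/acoustic term. The bootstrap as you set it up does not close, however. You assume only the four conclusions of Theorem \ref{MT} plus double-zero auxiliaries. The energy identities you then run are for $W^1,W^2,\Omega^3$ (that is, $\tilde\Delta V$, so $V$ at the $H^5$ level) and for the acoustic pair $(R,p^{-1/2}D)$. Their nonlinear terms need norms that $H^3$ bounds on $V_{\neq},R_{\neq}$ do not provide:
\begin{itemize}
\item the dissipation $\nu^{1/2}\|\tilde\nabla\,\cdot\|_{L^2_t}$;
\item the damping and ghost norms $\|\partial_Xp^{-1/2}\,\cdot\|_{L^2_t}$ and $\|\sqrt{-\partial_tM/M}\,\cdot\|_{L^2_t}$ (these weights live in $M=M_1M_2$, while $m$ is the lift-up multiplier, not a ghost or enhanced-dissipation weight);
\item the higher acoustic quantities $p^{1/2}R$, $D$, $pR$, $p^{1/2}D$.
\end{itemize}
The paper instead bootstraps a graded family \eqref{bs-ip}--\eqref{bs-zm} with different sizes, each carrying its dissipation, damping and ghost norms: $mMW^1_{\neq}\sim\nu^{-1/3}\varepsilon$, $m^{1/4}MW^2\sim\varepsilon$, $m^{1/4}M\nabla_{X,Z}(R,p^{-1/2}D)_{\neq}\sim\varepsilon$, $(p^{1/2}R,D)\sim\nu^{-1/2}\varepsilon$, $(pR,p^{1/2}D)\sim\nu^{-5/6}\varepsilon$, $U^1_{00}\sim\nu^{-1/2}\varepsilon$, and $\Delta V^1_{0\neq}\sim\nu^{-1}\varepsilon$. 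It then repeats all of this with weights $p^{j/2}$, $j=1,2,3$, at cost $\nu^{-j/3}$. That hierarchy is the idea you are missing. Since $pR$ has no dissipation, the derivative loss in $p(RD)$ and $(pV)\cdot\tilde\nabla R$ can only be absorbed using the next level (e.g.\ $\|p^{3/2}R\|_{L^2H^2}$). At any fixed level the density estimate does not close.

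\paragraph{Consequences and smaller omissions.} The same gap makes your power counting for the key term unsupported. After the time integration by parts, what enters is $\tilde\partial_YR_{\neq}$ and $\tilde\partial_YD_{\neq}$, not $R_{\neq},D_{\neq}$ at size $\nu^{-1/6}\varepsilon$. The binding piece is
\[
\int_0^T\langle mM(\partial_YV^1_0\tilde\partial_YR_{\neq}),mM(\partial_YV^1_0\tilde\partial_YD_{\neq})\rangle_{H^3}dt\lesssim(\nu^{-1}\varepsilon)^2\,\nu^{-2/3}\varepsilon\,\nu^{-1}\varepsilon=\nu^{-3}\varepsilon^2(\nu^{-1/3}\varepsilon)^2 .
\]
This uses $\|\partial_YV^1_0\|_{L^\infty H^3}\lesssim\nu^{-1}\varepsilon$ together with $\|p^{1/2}R_{\neq}\|_{L^2H^3}\lesssim\nu^{-2/3}\varepsilon$ and $\|\tilde\nabla D_{\neq}\|_{L^2H^3}\lesssim\nu^{-1}\varepsilon$ from \eqref{bs-yn}. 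This is what forces $\varepsilon\lesssim\nu^{3/2}$, and it relies on exactly the norms absent from your bootstrap. The analogous term $\partial_YV^1_0\,\partial_XD_{\neq}$ paired with $m^{1/4}MW^2_{\neq}$ needs the same treatment, and $W^2$ is missing from your list of incompressible unknowns.

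In the double-zero part, ``improved decay of $\tilde K$'' does not suffice for $\tilde W$. $\tilde K$ contains $\theta_j\partial_y\tilde W_i$ with $\|\theta_j\|_{L^\infty}\sim\varepsilon(\nu(1+t))^{-1/2}$, which is not integrable in time. The paper handles this with travelling-Gaussian weighted estimates: the weight $\eta_1$ for the counter-propagating component, and Lemma \ref{Ptii} for the co-propagating one. It also tests the $\mathbb W_2$-equation against $\mathbb W_{1y}$ to recover dissipation in the degenerate direction $\tilde W_2-\tilde W_1$. The same weighted trick is needed for $v^2_{00}\mathcal A\mathcal A_y$ in the $\mathcal A$ estimate. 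Finally, $R_0$ is not parabolic: its $L^2_t$ control comes from the hypocoercive cross term $-c_0\bar\nu\langle p^{-1/2}\nabla^sR_0,\nabla^sp^{-1/2}D_0\rangle$.
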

	\begin{Rem}
		Compared to the 2D stability threshold $\varepsilon \sim \nu^{11/3}$ in \cite{HLX}, the initial data of \cref{MT} $\varepsilon = \varepsilon_0 \nu^{3/2}$ represents a substantial sharpening, thereby extending the nonlinear stability theory to the more physically relevant three-dimensional setting. Moreover, note that in \cite{HLX} the optimal time decay rates of $(R_{00},V_{00}^2)$  are required to derive a uniform estimate for $V_{00}^1$; in contrast, at this stage we do not need these rates.
	\end{Rem}
	\begin{Rem}
		The approach in \cref{MT} differs fundamentally from that in \cite{LWZ}. The analysis in \cite{LWZ} constructs energy estimates beginning with the first-order derivatives of the velocity field. In contrast, our framework establishes control directly for the original solution quantities $(R, V)$.
	\end{Rem}
	\begin{Rem}
		Compared to 3D incompressible flows, we obtain the same stability threshold as \cite{Bedrossian-Germain-Masmoudi-2017} using the Fourier multiplier method. Moreover,  due to the lack of divergence-free conditions, $(R_{\neq},V^2_{\neq})$ exhibits additional growth with respect to $\nu^{-\frac16}$, which is much smaller than the linear result $\|V^2_{\neq}(t)\|_{L^2}\lesssim \nu^{-\frac{1}{3}}\eps$ in \cite{Zeng-Zhang-Zi-2022}.  In this case, a uniform estimate of the double zero mode $V_{00}^1$ is required to suppress the strong interaction between this growth and the 3D lift-up effect; see \cref{est:I21}  for details.
	\end{Rem}
	\section{Reformulation of the system, Fourier multipliers, and commutators}
	\subsection{Decomposition into compressible and incompressible parts}
We decompose v into compressible and incompressible components, reflecting their distinct physical properties. The decomposition is given by
	\begin{equation}\label{decomp}
		q=\Delta v,\qquad d=\textrm{div}\ v,\qquad\omega=q-\nabla d,
	\end{equation}
	then the equation for $q$ read
	\[
	\partial_t q+y\partial_x q+2\partial_{xy}v+(q^2,0,0)^\top+\nabla\Delta {\lde}-\nu\Delta q-(\nu+\nu')\Delta\nabla\textrm{div}v=\Delta F_2,
	\]
	where
	\begin{align*}
		\Delta F_2=&-v\cdot\nabla q-q\cdot \nabla v-2\partial_iv^j \partial_{ij} v\\
		&-\Delta\left(\left(\frac{P'(1+{\lde})}{1+{\lde}}-1\right)\nabla {\lde}+\frac{{\lde}}{{\lde}+1}(\nu\Delta v+(\nu+\nu')\nabla \textrm{div}\ v)\right).
	\end{align*}
The system we are working on is given by
	\begin{align}\notag
		\left\{\begin{aligned}
			&\partial_t {\lde}+y\partial_x {\lde}+d=F_1,\\
			&\partial_t d+y\partial_x d+2\partial_x v^2+\Delta {\lde}-\bar{\nu} \Delta d= \textrm{div} F_2, \\
			&\partial_t \omega+y\partial_x \omega+(\omega^2, 0, 0)^\top+(\partial_y d, -\partial_x d,0)^\top+2\partial_{xy}v-2\partial_x\nabla v^2-\nu\Delta \omega=\Delta F_2-\nabla\textrm{div} F_2,\\[1.5mm]
			&\big({\lde},d,\omega\big)(x,y,z,0)=\big({\lde}_{in},d_{in},\omega_{in}\big)(x,y,z).
		\end{aligned}\right.
	\end{align}
	where $\bar{\nu}=2\nu+\nu'$ and nonlinear term given by
	\begin{align*}
		F_1=&-v\cdot \nabla {\lde}-{\lde}d,\\
		\textrm{div} F_2=&-v\cdot\nabla d-\partial_i v^j\partial_j v^i\\
		&-\textrm{div} \left(\left(\frac{P'(1+{\lde})}{1+{\lde}}-1\right)\nabla {\lde}+\frac{{\lde}}{{\lde}+1}(\nu\Delta v+(\nu+\nu')\nabla d)\right),\\
		(\Delta-\nabla\textrm{div})F_2=&-v\cdot\nabla \omega-q\cdot\nabla v-2\partial_iv^j \partial_{ij} v+\nabla v\cdot\nabla d+\nabla(\partial_i v^j\partial_j v^i)\\
		&-(\Delta-\nabla\textrm{div})\left(\frac{{\lde}}{{\lde}+1}(\nu\Delta v+(\nu+\nu')\nabla d)\right).
	\end{align*}
	\subsection{Coordinate transformations}
	To mod out the Couette flow, we take the following coordinate transformations:
	\begin{equation*}
		X=x-yt,\ Y=y,\ Z=z,
	\end{equation*}
and use the following notations
    \begin{align}\label{mnote}
		\left\{
		\begin{array}{ll}
			\partial_{x} = \partial_{X},\quad \tilde{\partial}_{XY} := \partial_{X}\tilde{\partial}_{Y},\quad \tilde{\p}_{XYZ}:=\partial_{X}\tilde{\partial}_{Y}\p_Z,\qquad & \tilde{\nabla} = (\partial_{X}, \tilde{\partial}_{Y},\partial_Z),\\[2mm]
			\partial_{y} = \tilde{\partial}_{Y} := \partial_{Y} - t\partial_{X},\qquad \p_z=\p_Z,& \tilde{\mathrm{div}}\, = \tilde{\nabla}\cdot,\\[2mm]
			\Delta = \tilde{\Delta} := \partial_{XX} + (\partial_{Y} - t\partial_{X})^{2}+\p_{ZZ},\qquad & \tilde{\nabla}^{\perp} = (-\tilde{\partial}_{Y}, \partial_{X}).
		\end{array}
		\right.
	\end{align}
We denote the  symbol associated to $-\tilde{\Delta}$ by 
	$$p=k^2+(\eta-kt)^2+l^2.$$
In the new coordinate system, let
	\begin{align*}
		\big(R,V,\rho,D,\Omega\big):=(\phi,v,\tilde{\rho},d,\omega)(t,X,Y,Z).
	\end{align*}
	For compressible part, we study the symmetrical system of $({\de},p^{-\frac{1}{2}}D)$ which satisfies
	\begin{align}\label{sys-ND}
		\left\{\begin{aligned}
			&\partial_t {\de}+p^{\frac{1}{2}}(p^{-\frac{1}{2}}D)=\nlt_{\de},\\[2mm]
			&\partial_t (p^{-\frac{1}{2}}D)-\frac{\partial_tp}{2p}(p^{-\frac{1}{2}}D)-p^{\frac{1}{2}}{\de}-2\frac{\partial_X}{p^{\frac{3}{2}}}(W^2-\partial_X{\de}+\nu\partial_XD)+\nu p(p^{-\frac{1}{2}}D)=p^{-\frac{1}{2}}\nlt_D,
		\end{aligned}\right.
	\end{align}
	where we denote
	\begin{equation}\label{t-ND}
		\begin{aligned}
			&\nlt_{\de}=-V\cdot \tilde{\nabla} {\de}-{\de}D,\\
			&\nlt_{D}=-V\cdot\tilde{\nabla} D-\tilde{\partial}_iV^j\tilde{\partial}_jV^i-\dl(F({\de})\tilde{\nabla}{\de}+G({\de})(\nu\tilde{\Delta} V+(\nu+\nu')\tilde{\nabla}D)),
		\end{aligned}
	\end{equation}
	and 
	\begin{align*}
		F({\de})=\frac{P'(1+{\de})}{1+{\de}}-1,\quad G({\de})=\frac{{\de}}{{\de}+1}.
	\end{align*}
	To study the incompressible part, motivated by \cite{Antonelli-Dolce-Marcati-2021,Zeng-Zhang-Zi-2022}, we introduce the good unknowns
	\begin{align*}
		W^1=\Omega^1-\tilde{\partial}_Y{\de}+\nu\tilde{\partial}_YD,\qquad\quad W^2=\Omega^2+\partial_X{\de}-\nu\partial_XD.
	\end{align*}
	The $z$-direction is less affected by the compressible portion, so we still use unknown $\Omega^3$. Now the incompressibility condition is expressed as:
	\begin{equation} \label{divf}
		\partial_X W^1+\tilde{\partial}_Y W^2+\partial_Z \Omega^3=0.
	\end{equation}
Then the system reads
\begin{align}
	\label{equ-W1}
	&\partial_t W^1+\lt_{W^1}+\nu pW^1=\nlt_{W^1}, \\
	\label{equ-W2}
	&\partial_t W^2+\lt_{W^2}+\nu pW^2=\nlt_{W^2}, \\
	\label{equ-O3}
		&\partial_t\Omega^3+\lt_{\Omega^3}+\nu p\Omega^3=\nlt_{\Omega^3},
\end{align}
	where the linear operators are given by 
	\begin{align}
	\mathcal{L}_{W^1}=&W^2-2\partial_X{\de}+2\nu\partial_XD-\frac{\partial_t p}{p}W^1+2(\partial_{XX}+\tilde{\p}_{Y}^{2} )\tilde{\Delta}^{-1}\partial_X{\de}-2\partial_{XX}\tilde{\Delta}^{-1}W^2-2\nu\partial_{XX}\tilde{\Delta}^{-1}\partial_XD \nonumber \\
	\label{l-W2}
	&+2\nu\tilde{\partial}_{XY}\tilde{\Delta}^{-1}W^2-2\nu\tilde{\partial}_{XY}\tilde{\Delta}^{-1}\partial_X{\de}+2\nu^2\tilde{\partial}_{XY}\tilde{\Delta}^{-1}\partial_XD-\nu(\nu+\nu')\tilde{\Delta}\tilde{\partial}_YD,\\
	\label{l-O3}
		\lt_{W^2}=&2\nu\partial_X^2p^{-1}(W^2-\partial_X{\de}+\nu\partial_XD)+\nu\frac{\partial_tp}{p}\partial_XD-\nu(\nu+\nu')p\partial_XD,\\
			\lt_{\Omega^3}=&-\frac{\partial_tp}{p}\Omega^3-2\frac{\partial_{XZ}}{p}(W^2-2\partial_X{\de}+\nu\partial_XD). 
	\end{align}
The nonlinear terms are collected below:
	\begin{equation}\label{t-W1}
		\begin{aligned}
			\nlt_{W^1}=&-V\cdot\tilde{\nabla}W^1+\partial_XV^2\tilde{\partial}_YD+\partial_XV^3\partial_ZD-Q^2\tilde{\partial}_YV^1-Q^3\partial_ZV^1-\Omega^1\partial_XV^1\\
			&+2\sum_{i=2,3}\left(\tilde{\partial}_iV^1\partial_{X}^2V^i-\tilde{\partial}_iV^j\tilde{\partial}_{ij}V^1\right)+\sum_{i,j=2,3}\partial_X(\tilde{\partial}_iV^j\tilde{\partial}_jV^i)\\
			&+(\tilde{\partial}_YV\cdot\tilde{\nabla}{\de})+\tilde{\partial}_Y({\de}D)-\nu(\tilde{\partial}_YV)\cdot\tilde{\nabla}D-\nu\tilde{\partial}_Y(\tilde{\partial}_iV^j\tilde{\partial}_jV^i)\\
			&-(\tilde{\Delta}-\tilde{\nabla}\tilde{\textrm{div}})\left(G({\de})(\nu\tilde{\Delta} V+(\nu+\nu')\tilde{\nabla} \tilde{\textrm{div}} V)\right)^1\\
			&-\nu\tilde{\partial}_Y\tilde{\textrm{div}} \left(F({\de})\tilde{\nabla} {\de}+G({\de})(\nu\tilde{\Delta} V+(\nu+\nu')\tilde{\nabla} \tilde{\textrm{div}} V)\right),
		\end{aligned}
	\end{equation}
	\begin{equation}\label{t-W2}
		\begin{aligned}
			\nlt_{W^2}=&-V\cdot\tilde{\nabla}W^2+\sum_{i=1,3}(\tilde{\partial}_YV^i\partial_iD-Q^i\partial_iV^2)-\Omega^2\tilde{\partial}_YV^2\\
			&+2\sum_{i=1,3}(\partial_iV^2\tilde{\p}_{Y}^{2} V^i-\tilde{\partial}_iV^j\tilde{\partial}_{ij}V^2)+\sum_{i,j=1,3}\tilde{\partial}_Y(\partial_iV^j\partial_jV^i)\\
			&-\partial_XV\cdot\tilde{\nabla}{\de}-\partial_X({\de}D)+\nu\partial_XV\cdot\tilde{\nabla}D+\nu\partial_X(\tilde{\partial}_iV^j\tilde{\partial}_jV^i)\\
			&-(\tilde{\Delta}-\tilde{\nabla}\tilde{\textrm{div}})\left(G({\de})(\nu\tilde{\Delta} V+(\nu+\nu')\tilde{\nabla} \tilde{\textrm{div}} V)\right)^2\\
			&+\nu\partial_X\tilde{\textrm{div}} \left(F({\de})\tilde{\nabla} {\de}+G({\de})(\nu\tilde{\Delta} V+(\nu+\nu')\tilde{\nabla} \tilde{\textrm{div}} V)\right),
		\end{aligned}    
	\end{equation}
and 
	\begin{equation}\label{t-O3}
		\begin{aligned}
			\nlt_{\Omega^3}=&-V\cdot\tilde{\nabla}\Omega^3-Q^1\partial_XV^3-Q^2\tilde{\partial}_YV^3-\Omega^3\partial_ZV^3+\partial_ZV^1\partial_XD+\partial_ZV^2\tilde{\partial}_YD\\[2mm]
			&+2\sum_{j=1,2}(\tilde{\partial}_jV^3\partial_{Z}^2V^j-\tilde{\partial}_jV^i\tilde{\partial}_{ij}V^3)+\sum_{i,j=1,2}\partial_Z(\tilde{\partial}_iV^j\tilde{\partial}_jV^i)\\
			&-(\tilde{\Delta}-\tilde{\nabla}\tilde{\textrm{div}})\left(G({\de})(\nu\Delta V+(\nu+\nu')\tilde{\nabla} \tilde{\textrm{div}} V)\right)^3.
		\end{aligned}
	\end{equation}
	Moreover, using \cref{decomp} and \cref{divf}, we can recover $V$ by 
	\begin{align*}
		&V^1=-\partial_Xp^{-1}D+\tilde{\partial}_Y\partial_X^{-1}p^{-1}W^2-\tilde{\partial}_Yp^{-1}{\de}+\nu \tilde{\partial}_Yp^{-1}D+\partial_Z\partial_X^{-1}p^{-1}\Omega^3,\\
		&V^2=-\tilde{\partial}_Yp^{-1}D-p^{-1}(W^2-\partial_X{\de}+\nu \partial_XD),\\
		&V^3=-\partial_Zp^{-1}D-p^{-1}\Omega^3.
	\end{align*}
	
	Turning to zero-mode, the equation of $Q^1_{0\neq}=\Delta V^1_{0\neq}$ which suffers a strong lift-up effect, reads
	\begin{align*}
		\partial_tQ^1_{0\neq}+\Delta V^2_{0\neq}-\nu\Delta Q^1_{0\neq}=&-\Delta(V^2_0\partial_YV^1_0)_{0\neq}-\Delta(V^3_0\partial_ZV^1_0)_{0\neq}\\
		&-\tilde{\Delta}(V_{\neq}\cdot \tilde{\nabla} V^1_{\neq})_{0\neq}-\tilde{\Delta}(F({\de})\partial_X{\de}-G({\de})(\nu\tilde{\Delta}V^1+(\nu+\nu')\partial_XD))_{0\neq}.
	\end{align*}
	For $V^1_{00}$, also study the first-order derivative quantity $U^1_{00}=\partial_YV^1_{00}-{\de}_{00}+\nu D_{00}$ which satisfies
	\begin{align*}
		\partial_t U^1_{00}-\nu\partial^2_{Y} U^1_{00}=&-\partial_Y(V^2_{00}U^1_{00})-\partial_Y(V^2_{0\neq}\partial_YV^1_{0\neq}+V^3_{0\neq}\partial_ZV^1_{0\neq})+\partial_Y({\de}_{0\neq}V^2_{0\neq})-\nu\partial_Y(V_{0\neq}\cdot\nabla V^2_{0\neq})\\
		&-\tilde{\partial}_Y(V_{\neq}\cdot\tilde{\nabla}V^1_{\neq})+\tilde{\partial}_Y(V^2_{\neq}{\de}_{\neq})-\nu\tilde{\partial}_Y(V_{\neq}\cdot\tilde{\nabla}V^2_{\neq})\\
		&-\tilde{\partial}_Y(F({\de})\partial_X{\de}-G({\de})(\nu\tilde{\Delta}V^1+(\nu+\nu')\partial_XD))_{00}\\
		&-\nu\tilde{\partial}_Y(F({\de})\tilde{\partial}_Y{\de}-G({\de})(\nu\tilde{\Delta}V^2+(\nu+\nu')\tilde{\partial}_YD))_{00}.
	\end{align*}
	In fact, it holds that
	\begin{equation*}
		\partial_Y U^1_{00}=W^1_{00}.
	\end{equation*}
	Moreover, we need to consider $V^3_{00}$ which satisfies
	\begin{align*}
		\partial_t V^3_{00}-\nu\partial^2_{Y}V^3_{00}=&-(V^2_0\partial_YV^3_0)_{00}-(V^3_0\partial_ZV^3_0)_{00}\\
		&-(V_{\neq}\cdot \tilde{\nabla} V^3_{\neq})_{00}-(F({\de})\partial_Z{\de}-G({\de})(\nu\tilde{\Delta}V^3+(\nu+\nu')\partial_ZD))_{00}.
	\end{align*}

	\subsection{The Fourier multipliers}
	In this subsection, we introduce the following Fourier multipliers, which are extensively used in the previous studies, see \cite{Antonelli-Dolce-Marcati-2021,Bedrossian-Germain-Masmoudi-2017,Bedrossian-Germain-Masmoudi-2019,Bedrossian-Masmoudi-Vicol-2016,Zeng-Zhang-Zi-2022,Zillinger-2017} and the references therein,
	\begin{equation}\label{def-M}
		\left\{\begin{aligned}
			&\displaystyle\frac{\partial_tM_1}{M_1}=-\frac{\tilde{C}(k^4+(kl)^2)^{1/2}}{k^2+(\eta-kt)^2+l^2},\ \mathrm{for}\ k\neq0,\\[2mm]
			&\displaystyle\frac{\partial_tM_2}{M_2}=-\frac{\nu^{1/3}k^2}{k^2+\nu^{2/3}(\eta-kt)^2},\ \mathrm{for}\ k\neq0,\\[2mm]
			&M_j(t,0,\eta,l)=M_j(0,k,\eta,l)=1,\ j=1,2.
		\end{aligned}\right.
	\end{equation}
	where $\tilde{C}$ is a sufficiently large constant to be determined later,
	and we define $M=M_1M_2$.
	\begin{Def}\label{Def-m}
		The Fourier
		multiplier $m$ is given by
		\begin{enumerate}
			\item if $k=0$: $m(t,0,\eta,l)=1$;
			\item if $k\neq0$: $\eta/k\leq-1000\nu^{-1/3}$: $m(t,k,\eta,l)=1$;
			\item if $k\neq0,-1000\nu^{-1/3}\leq\eta/k\leq0$:
			\begin{equation*}
				m(t,k,\eta,l)=\left\{
				\begin{aligned}
					&\frac{k^2+\eta^2+l^2}{k^2+(\eta-kt)^2+l^2},\quad \text{if}\ \ \  0<t<\eta/k+1000\nu^{-1/3},\\[2mm]
					&\frac{k^2+\eta^2+l^2}{k^2+(1000k\nu^{-1/3})^2+l^2},\quad \text{if}\ \ \ t>\eta/k+1000\nu^{-1/3};\\
				\end{aligned}\right.
			\end{equation*}
			\item if $k\neq0,\eta/k>0$:
			\begin{equation*}
				m(t,k,\eta,l)=\left\{
				\begin{aligned}
					&1,\quad\qquad\qquad\qquad\qquad \text{if}\ t<\eta/k,\\
					&\frac{k^2+l^2}{k^2+(\eta-kt)^2+l^2},\quad \text{if}\ \eta/k<t<\eta/k+1000\nu^{-1/3},\\
					&\frac{k^2+l^2}{k^2+(1000k\nu^{-1/3})^2+l^2},\quad \text{if}\ t>\eta/k+1000\nu^{-1/3};\\
				\end{aligned}\right.
			\end{equation*}
		\end{enumerate}
		with the property that
		\begin{align}\label{def-p}
			&\frac{k^2+l^2}{p}\leq m(k,\eta,l),\qquad\text{and}\quad\quad\nu^{2/3}\lesssim m\lesssim 1.
		\end{align}
	\end{Def}
	\subsection{The commutators}
	We first recall that the  symbol associated to $-\tilde{\Delta}$ 
	$$p=k^2+(\eta-kt)^2+l^2,$$ and then introduce the commutators about $p$:
	\begin{Prop}\label{est:com-p}
		For the symbol $p$ defined in \cref{def-p}, one has,
		\begin{align*}
			&\Big|p^{-\frac{1}{2}}(k,\eta,l)-p^{-\frac{1}{2}}(k',\eta',l')\Big|\lesssim p^{\frac{1}{2}}(k-k',\eta-\eta',l-l')p^{-\frac{1}{2}}(k,\eta,l)p^{-\frac{1}{2}}(k',\eta',l'),\\
			&\Big|p^{\frac{1}{2}}(k,\eta,l)-p^{\frac{1}{2}}(k',\eta',l')\Big|\lesssim p^{\frac{1}{2}}(k-k',\eta-\eta',l-l'),\\
			&\left|\frac{\partial_t p}{p}(k,\eta,l)-\frac{\partial_t p}{p}(k',\eta',l')\right|\lesssim p^{\frac{1}{2}}(k-k',\eta-\eta',l-l')|k'|^{-1}.   \end{align*}
	\end{Prop}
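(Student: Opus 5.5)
The plan rests on one structural observation: $p$ is the squared Euclidean norm of a vector that depends \emph{linearly} on the frequency. For fixed $t$ set
\begin{align*}
\zeta(k,\eta,l):=(k,\ \eta-kt,\ l)\in\R^3,\qquad p(k,\eta,l)=|\zeta(k,\eta,l)|^2 .
\end{align*}
Because $\zeta$ is linear in $(k,\eta,l)$, writing $\zeta:=\zeta(k,\eta,l)$ and $\zeta':=\zeta(k',\eta',l')$ gives
\begin{align*}
\zeta-\zeta'=\zeta(k-k',\eta-\eta',l-l'),\qquad |\zeta-\zeta'|=p^{\frac12}(k-k',\eta-\eta',l-l').
\end{align*}
All three estimates in \cref{est:com-p} then reduce to elementary facts about functions on $\R^3$, evaluated at $\zeta$ and $\zeta'$ with the same $t$.

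\textbf{First two inequalities.} The second inequality is the reverse triangle inequality, $\big||\zeta|-|\zeta'|\big|\le|\zeta-\zeta'|$. For the first, I write
\begin{align*}
\Big|\frac1{|\zeta|}-\frac1{|\zeta'|}\Big|=\frac{\big||\zeta'|-|\zeta|\big|}{|\zeta||\zeta'|}\le\frac{|\zeta-\zeta'|}{|\zeta||\zeta'|},
\end{align*}
which is exactly the claim. Both are valid whenever $p$ and $p'$ are nonzero, which is the regime where $p^{-1/2}$ is used.

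\textbf{Third inequality.} I compute $\partial_t p=-2k(\eta-kt)$, so
\begin{align*}
\frac{\partial_tp}{p}=f(\zeta),\qquad f(\zeta):=-\frac{2\zeta_1\zeta_2}{|\zeta|^2}.
\end{align*}
The function $f$ is homogeneous of degree $0$, satisfies $|f|\le1$, and obeys $|\nabla f(\xi)|\le C|\xi|^{-1}$. I also use $|\zeta'|\ge|\zeta'_1|=|k'|$, with $k'\neq0$ as required for the right-hand side to be meaningful. I split into two cases.
\begin{itemize}
\item If $|\zeta-\zeta'|\ge\frac12|\zeta'|$, then the trivial bound gives $|f(\zeta)-f(\zeta')|\le2\le4|\zeta-\zeta'|/|\zeta'|\le4|\zeta-\zeta'|/|k'|$.
\item If $|\zeta-\zeta'|<\frac12|\zeta'|$, then every point $\xi$ on the segment from $\zeta'$ to $\zeta$ satisfies $|\xi|\ge\frac12|\zeta'|$. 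The mean value theorem then gives $|f(\zeta)-f(\zeta')|\le 2C|\zeta-\zeta'|/|\zeta'|\le 2C|\zeta-\zeta'|/|k'|$.
\end{itemize}
The case $k=0$ needs no separate treatment, since then $f(\zeta)=0$ and the same argument applies.

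\textbf{Main obstacle.} The only delicate point is the third estimate: a naive mean value argument fails if the segment joining $\zeta'$ to $\zeta$ passes near the origin, where $\nabla f$ blows up. The case split above removes this by using the boundedness of $f$ when the two points are far apart relative to $|\zeta'|$. The lower bound $|\zeta'|\ge|k'|$ then converts the natural factor $|\zeta'|^{-1}$ into the stated $|k'|^{-1}$.
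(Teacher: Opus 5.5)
Your proof is correct and complete. The paper gives no proof to compare against: it calls these estimates standard and omits them, so your argument supplies the missing justification.

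Your key reduction is to $\zeta=(k,\eta-kt,l)$, which is linear in the frequency, so that $|\zeta-\zeta'|=p^{\frac12}(k-k',\eta-\eta',l-l')$. This turns the first two bounds into the reverse triangle inequality. It turns the third into a Lipschitz estimate for the degree-zero homogeneous function $f(\zeta)=-2\zeta_1\zeta_2/|\zeta|^2$. All your constants are absolute and independent of $t$, which is what the energy estimates later need.

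Your near/far case split in fact proves the sharper bound
\[
\Big|\frac{\partial_tp}{p}(k,\eta,l)-\frac{\partial_tp}{p}(k',\eta',l')\Big|\lesssim p^{\frac12}(k-k',\eta-\eta',l-l')\,p^{-\frac12}(k',\eta',l').
\]
The stated $|k'|^{-1}$ version follows from this via $p(k',\eta',l')\ge k'^2$.

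The only degenerate point is $\zeta=0$ in the third estimate. Once you adopt the natural convention $\partial_tp/p=0$ when $k=0$ (since $\partial_tp\equiv0$ there), this point falls into your far case, where only $|f|\le1$ is used.
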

	The proof is standard and is omitted for brevity. Next, we introduce the following useful lemma,
	\begin{Lem}\label{lemma:commutator}
		Let $s > 0$ and $d \geq 1$ be the spatial dimension. Let $r, q \in [1, \infty]$ satisfy $\frac{1}{r} + \frac{1}{q} = \frac{1}{2}$.
		
		\begin{enumerate}
			\item Suppose that
			\begin{align}\notag
				\nabla f \in L^\infty(\mathbb{T}\times\mathbb{R}\times\mathbb{T}), \quad \nabla^s f \in L^p(\mathbb{T}\times\mathbb{R}\times\mathbb{T}), \qquad
				\nabla^{s-1} g \in L^2(\mathbb{T}\times\mathbb{R}\times\mathbb{T}), \quad g \in L^q(\mathbb{T}\times\mathbb{R}\times\mathbb{T}).
			\end{align}
			
			Then
			\begin{align}\notag
				\big\| [\nabla^s, f] g \big\|_{L^2} \lesssim \|\nabla f\|_{L^\infty} \|\nabla^{s-1} g\|_{L^2} + \|\nabla^s f\|_{L^r} \|g\|_{L^q}.
			\end{align}
			
			\item Suppose that
			\[
			\nabla f \in L^\infty(\mathbb{T}\times\mathbb{R}\times\mathbb{T}), \quad \nabla\langle\nabla\rangle^2 f \in L^2(\mathbb{T}\times\mathbb{R}\times\mathbb{T}), \qquad
			\langle\nabla\rangle^2 g \in L^2(\mathbb{T}\times\mathbb{R}\times\mathbb{T}), \quad g \in L^\infty(\mathbb{T}\times\mathbb{R}\times\mathbb{T}),
			\]
			where $\langle\nabla\rangle = (1-\Delta)^{1/2}$. Then
			\[
			\big\| [\langle\nabla\rangle^3, f] g \big\|_{L^2} \lesssim \|\nabla f\|_{L^\infty} \|\langle\nabla\rangle^2 g\|_{L^2} + \|\nabla\langle\nabla\rangle^2 f\|_{L^2} \|g\|_{L^\infty}.
			\]
		\end{enumerate}
	\end{Lem}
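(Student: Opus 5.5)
The plan is to prove both parts by a Littlewood--Paley (paraproduct) decomposition of the product $fg$ and to estimate each piece of $[\nabla^s,f]g = \nabla^s(fg)-f\nabla^s g$ separately. The argument follows the classical Kato--Ponce commutator estimate. Since $\mathbb{T}\times\mathbb{R}\times\mathbb{T}$ is a product of tori and a line, we work with the mixed Fourier transform fixed in the Notations (a sum over $k,l$, an integral over $\eta$). Dyadic blocks $\Delta_j$ are defined through a smooth partition of unity in $|(k,\eta,l)|$, and the Bernstein inequalities hold exactly as in $\mathbb{R}^3$. We write
\[
fg = T_f g + T_g f + \mathcal{R}(f,g),\qquad T_f g=\sum_j S_{j-2}f\,\Delta_j g,
\]
and split the commutator as
\[
[\nabla^s,f]g = [\nabla^s,T_f]g + \nabla^s(T_g f)+\nabla^s\mathcal{R}(f,g) - T_{\nabla^s g}f-\mathcal{R}(f,\nabla^s g).
\]

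For the low--high piece $[\nabla^s,T_f]g=\sum_j[\nabla^s,S_{j-2}f]\Delta_j g$, the product $S_{j-2}f\,\Delta_j g$ has Fourier support in an annulus of size $2^j$. We then write the commutator with the multiplier $\nabla^s$ localized to that annulus as an integral kernel and apply the mean value theorem, $\sigma(\xi)-\sigma(\xi')=\int_0^1\nabla\sigma\cdot(\xi-\xi')$. This gains one derivative on $f$ and loses one on $g$, and yields
\[
\|[\nabla^s,S_{j-2}f]\Delta_j g\|_{L^2}\lesssim \|\nabla f\|_{L^\infty}2^{j(s-1)}\|\Delta_j g\|_{L^2}.
\]
Almost orthogonality in $j$ then gives $\|\nabla f\|_{L^\infty}\|\nabla^{s-1}g\|_{L^2}$.

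The high--low terms $\nabla^s(T_gf)$ and $T_{\nabla^s g}f$, together with the remainders, are put entirely on $f$. For $\nabla^s(T_gf)$ we use $\|S_{j-2}g\|_{L^q}\lesssim\|g\|_{L^q}$ and $\|2^{js}\Delta_j f\|$ summed in $L^r$, which gives $\|\nabla^s f\|_{L^r}\|g\|_{L^q}$. Here the square-function characterization is needed for $1<r<\infty$, and the endpoint cases $r=\infty$ or $q=\infty$ are handled by the standard modifications. For $T_{\nabla^s g}f=\sum_j S_{j-2}\nabla^s g\,\Delta_jf$ we note $\|S_{j-2}\nabla^s g\|\lesssim 2^{js}\sup\|g\|$, since $s>0$ makes the low-frequency sum geometric. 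The remainder terms are handled similarly, using $s>0$ to sum the diagonal contributions.

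Part 2 follows by the same decomposition with $\nabla^s$ replaced by the symbol $\langle\xi\rangle^3$. This is a smooth symbol of order $3$, so the mean-value step gives $\|\nabla f\|_{L^\infty}\|\langle\nabla\rangle^2 g\|_{L^2}$, and the high--low pieces give $\|\nabla\langle\nabla\rangle^2 f\|_{L^2}\|g\|_{L^\infty}$. The low frequency block $j=0$ is treated directly, since there $\langle\xi\rangle^3$ is bounded with bounded derivatives. The main obstacle is the bookkeeping at endpoint exponents ($L^\infty$, and $L^2$ against $L^\infty$), where Littlewood--Paley square functions fail. Because the present statement is a standard Kato--Ponce-type result, I would cite Kato--Ponce and Kenig--Ponce--Vega for the full details rather than reproduce them.
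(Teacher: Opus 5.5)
The paper gives no proof of this lemma. It is quoted as the standard Kato--Ponce commutator estimate, and it is only ever invoked with integer $s\in\{1,2,3\}$, with $(r,q)=(2,\infty)$, and in the form of Part~2. (Note also that the hypothesis $\nabla^s f\in L^p$ should read $\nabla^s f\in L^r$.) For those cases, reducing the lemma to Kato--Ponce is exactly the paper's treatment and is adequate. As a proof of the statement as written, however, your sketch has one genuine gap and one step whose tool is misidentified.

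\textbf{The gap: the endpoint $r=\infty$, $q=2$.} The statement allows this case, but it lies outside Kato--Ponce and Kenig--Ponce--Vega, which both require $r<\infty$. It is also not a routine modification, because your high--low bound fails there:
\[
\|\nabla^s T_g f\|_{L^2}^2\approx\sum_j 2^{2js}\|S_{j-2}g\,\Delta_j f\|_{L^2}^2\le\|\nabla^s f\|_{L^\infty}^2\sum_j\|S_{j-2}g\|_{L^2}^2 ,
\]
and the last sum diverges. What is actually needed is the Carleson (BMO paraproduct) estimate $\sum_j\int|S_{j-2}g|^2\,|2^{js}\Delta_j f|^2\lesssim\|\nabla^s f\|_{BMO}^2\|g\|_{L^2}^2$.

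\textbf{The misidentified step: the resonant term.} The term $\mathcal{R}(f,\nabla^s g)=\sum_j\Delta_j f\,\tilde\Delta_j\nabla^s g$ is not handled ``using $s>0$''. There is no outer derivative, and the pieces are not almost orthogonal. For $g\in L^\infty$ the square function of $g$ need not be bounded, so Cauchy--Schwarz in $j$ does not close, even in the paper's case $(r,q)=(2,\infty)$. This is precisely where Kato--Ponce use the Coifman--Meyer theorem (equivalently, duality plus a Carleson estimate). Similarly, for $q<\infty$ you must pull out $\sup_j|S_{j-2}g|$ pointwise and use the maximal theorem, rather than $\|S_{j-2}g\|_{L^q}$.

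\textbf{A simpler route for Part~1.} Since Part~1 is only needed for integer $s$, you can avoid paraproducts altogether; the case $s=1$ is immediate, since $[\nabla,f]g=(\nabla f)g$. For $|\alpha|=s\ge 2$, write
\[
[\partial^\alpha,f]g=\sum_{0<\beta\le\alpha}\binom{\alpha}{\beta}\partial^\beta f\,\partial^{\alpha-\beta}g .
\]
Set $\theta=\frac{|\beta|-1}{s-1}$ and apply H\"older with $\frac1{p_1}=\frac{\theta}{r}$ and $\frac1{p_2}=\frac{\theta}{q}+\frac{1-\theta}{2}$. Then apply Gagliardo--Nirenberg to $\nabla f$, between $L^\infty$ and $\nabla^{s-1}\nabla f\in L^r$, and to $g$, between $L^q$ and $\nabla^{s-1}g\in L^2$. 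Young's inequality finishes the argument. This covers every admissible $(r,q)$, including $r=\infty$.

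Part~2 involves the nonlocal operator $\langle\nabla\rangle^3$, and that is where the Kato--Ponce/Coifman--Meyer machinery you cite is really needed.
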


	We need to study the following commutators
	\begin{Prop}\label{est:com}
		For the Fourier multiplier $M$ and $m$ defined in \eqref{def-M} and Def \ref{Def-m}, one has
		\begin{align}
			&m(t,k,\eta,l)\lesssim\langle\eta-\eta',l-l'\rangle^2 m(t,k,\eta',l'),\label{est:m}\\
			&\Big|m(t,k,\eta,l)-m(t,k,\eta',l')\Big|\big|k\big|\lesssim m(k,\eta',l')\big|\eta-\eta',l-l'\big|\langle\eta-\eta',l-l'\rangle,\label{est-m2}\\
			&\Big|m^{\frac{1}{4}}(t,k,\eta,l)-m^{\frac{1}{4}}(t,k,\eta',l')\Big|\big|k\big|\lesssim m^{\frac{1}{4}}(t,k,\eta',l')\big|\eta-\eta',l-l'\big|\langle\eta-\eta',l-l'\rangle\label{est:m4},\\
			&\Big|M(t,k,\eta,l)-M(t,k,\eta',l')\Big|\big|k\big|\lesssim\big|\eta-\eta',l-l'\big|.
		\end{align}
	\end{Prop}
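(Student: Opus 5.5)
The plan is to work frequency by frequency with $k\neq0$ and $t$ fixed (for $k=0$, $m\equiv1$ and $M\equiv1$, so every inequality is trivial). Throughout, set $\xi=\eta-\eta'$ and $\lambda=l-l'$.

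\emph{Step 1: the bound \eqref{est:m}.} The natural route is to compare $m$ with the quantity $\tilde m:=\min\bigl(1,\,(k^2+l^2+\min(\eta^2,k^2\cdot 10^6\nu^{-2/3})\mathbf 1_{\eta/k<0})/\min(p,\,k^2+10^6k^2\nu^{-2/3}+l^2)\bigr)$ up to absolute constants. This follows from Definition \ref{Def-m} by inspecting the cases: outside the window $\{\eta/k\le0,\ t<\eta/k+1000\nu^{-1/3}\}\cup\{\eta/k>0,\ t>\eta/k\}$ we have $m\approx1$. Each building block then changes under $(\eta,l)\to(\eta',l')$ at most by a factor $\langle\xi,\lambda\rangle^2$. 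For the denominator this is Peetre's inequality $p(k,\eta,l)\lesssim\langle\xi,\lambda\rangle^2p(k,\eta',l')$. For the numerator $k^2+\eta^2+l^2$ the same inequality holds. Two things need separate treatment. The first is the truncation at $t=\eta/k+1000\nu^{-1/3}$, where $(\eta-kt)^2$ is frozen at $(1000k\nu^{-1/3})^2$; this is a min operation, which preserves the Peetre-type bound. The second is a change of regime: moving across $\eta/k=0$ or $\eta/k=-1000\nu^{-1/3}$ or $t=\eta/k$. Near those boundaries both formulas give comparable values, since $m$ is continuous in $\eta$ there up to constants. The only genuinely different case is when $\eta/k$ and $\eta'/k$ lie on opposite sides of $0$. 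There $|\eta|\le|\xi|$, so $k^2+\eta^2+l^2\lesssim\langle\xi,\lambda\rangle^2(k^2+l'^2)$, which again fits.

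\emph{Step 2: the difference bounds.} For \eqref{est-m2}, \eqref{est:m4} and the $M$ bound, I will use the mean value theorem in $\eta$ and $l$ along the segment joining $(\eta',l')$ to $(\eta,l)$. The $l$-derivative is easy: $|\partial_l m|\lesssim m\,|l|/(k^2+l^2)\cdot$const $\le m/|k|$. For the $\eta$-derivative, $m$ is piecewise smooth and Lipschitz in $\eta$ (it is continuous across the regime boundaries $t=\eta/k$ and $t=\eta/k+1000\nu^{-1/3}$). On each piece, $|\partial_\eta \log m|\lesssim |\eta-kt|/p+|\eta|/(k^2+\eta^2+l^2)\lesssim 1/|k|$. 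Hence $|m(\eta,l)-m(\eta',l')||k|\lesssim|\xi,\lambda|\sup_{\text{segment}}m$. Step 1 bounds the supremum by $\langle\xi,\lambda\rangle^2m(\eta',l')$; this is slightly worse than claimed. To improve it, I would bound instead by $\langle\xi,\lambda\rangle$ when $|\xi,\lambda|\le|k|$ (where the factor is $O(1)$), and otherwise use the crude bound $|m-m'|\le m+m'$ together with \eqref{est:m} and $|k|\le|\xi,\lambda|$.

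The $m^{1/4}$ estimate is identical, because $\partial\log m^{1/4}=\frac14\partial\log m$. For $M=M_1M_2$, write $\log M_j=-\int_0^t(\cdots)\,ds$. Then $|\partial_\eta\log M_1|\le\tilde C\int_0^t |k|\sqrt{k^2+l^2}\,2|\eta-ks|/p^2\,ds\lesssim 1/|k|$ after the substitution $u=\eta-ks$, and similarly for $M_2$ and for the $l$-derivative. Since $M\le1$, the result is $|M-M'||k|\lesssim|\xi,\lambda|$.

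\emph{Main obstacle.} I expect the difficulty to lie in the regime bookkeeping of Step 1: verifying that $m$ is comparable to the smooth-looking surrogate $\tilde m$, with constants uniform in $\nu$ and $t$, across all switching curves. This matters especially in case 3 near $\eta/k\approx-1000\nu^{-1/3}$, where $m$ jumps from $1$ to roughly $(k^2+\eta^2+l^2)/p$ at $t=0$. I would first check that this jump is bounded, using $m(0)=1$ at $t=0$ and monotonic behaviour in $t$.
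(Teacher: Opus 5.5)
Your Step~2 takes a different route from the paper, and most of it works. The paper passes to $H=\eta/k$, $L=l/k$ and computes $m/m'-1$ explicitly in every pair of regimes. It obtains $|m/m'-1|\lesssim|\bar H,\bar L|\langle\bar H,\bar L\rangle$ with $\bar H=(\eta-\eta')/k$, $\bar L=(l-l')/k$. It then derives \eqref{est:m4} algebraically and cites \eqref{est:m} and the $M$ bound as classical.

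Your observation is correct: $m$ is continuous across every switching curve and satisfies $|\nabla_{\eta,l}\log m|\lesssim|k|^{-1}$ on each piece. It gives $\sup_{\text{segment}}m\le e^{C}m'$, and hence \eqref{est-m2} and \eqref{est:m4} whenever $|\xi,\lambda|\le|k|$. Your $t$-uniform bound on $\nabla\log M_j$, together with $M\le1$, proves the $M$ estimate outright.

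The gap is \eqref{est-m2} when $|\xi,\lambda|>|k|$. There the crude bound combined with \eqref{est:m} as stated gives only $|m-m'||k|\lesssim|k|\langle\xi,\lambda\rangle^2m'$. This exceeds the target $|\xi,\lambda|\langle\xi,\lambda\rangle m'$ by a factor comparable to $|k|$: for $|k|\approx|\xi|/2\gg1$ you get $|\xi|^3m'$ against $|\xi|^2m'$. Replacing $|k|$ by $|\xi,\lambda|$ still leaves a surplus factor $\langle\xi,\lambda\rangle$. (For $m^{1/4}$ the crude step does close, since $m^{1/4}\lesssim\langle\xi,\lambda\rangle^{1/2}m'^{1/4}$.)

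The missing idea is scale invariance. $m(t,k,\eta,l)$ depends on $(\eta,l)$ only through $(\eta/k,l/k)$, so what you need is $m\lesssim\langle\bar H,\bar L\rangle^2m'$. With that, $|m-m'||k|\lesssim m'|\xi,\lambda|^2/|k|\le m'|\xi,\lambda|\langle\xi,\lambda\rangle$ in that regime.

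Your Step~1 does not supply this scaled bound, and as written it does not prove even the unscaled \eqref{est:m}.
\begin{itemize}
\item The surrogate $\tilde m$ is not comparable to $m$ uniformly in $\nu$. For $\eta/k>0$, $t\le\eta/k-1000\nu^{-1/3}$, $l=0$ one has $m=1$ but $\tilde m=(1+10^6\nu^{-2/3})^{-1}$.
\item The claim that $m\approx1$ outside your window fails on the frozen part of case~3, i.e.\ $-1000\nu^{-1/3}\le\eta/k\le0$, $t>\eta/k+1000\nu^{-1/3}$. There, e.g., $\eta=l=0$ gives $m\approx10^{-6}\nu^{2/3}$.
\item Continuity at a switching curve only compares nearby points. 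For $\eta,\eta'$ far apart on opposite sides, you must route through the switching point on the segment, whose scaled distance to each endpoint is at most $|\bar H|$. You then use the explicit formulas on each side in the variables $(H,L)$, which is exactly the bookkeeping the paper carries out.
\end{itemize}
Finally, the jump you worried about does not occur: $m$ is continuous at $\eta/k=-1000\nu^{-1/3}$ and equals $1$ on that line for every $t$.
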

	\begin{proof}
		Here, we only prove \cref{est-m2} and  \cref{est:m4}. The proofs for the remaining estimates are classical; see, for instance, \cite{Bedrossian-Germain-Masmoudi-2019,BWV}. Denote $$H = \eta/k,\ H' = \eta'/k,\ L = l/k,\ L' = l'/k,\ \bar{H} = H-H',\ \bar{L} = L-L'.$$ 
		Then there exists a constant $C>0$, independent of 
		$k,\eta,\eta',l,l',\nu$, and $t$, such that
		\[
		\left| \frac{m(k,\eta,l)}{m(k,\eta',l')} - 1 \right| \leq C \; |\bar{H},\bar{L}| \; \langle \bar{H},\bar{L} \rangle .
		\tag{A.1}
		\]
		We proceed by case analysis according to the signs of $H, H'$ and their relations with $t$ and $\nu^{-1/3}$. 
		The proof consists of two parts, where we treat the easiest cases in Part 1 and leave the more complicated ones in Part 2.
		
		\noindent\textbf{Part 1. Two special cases.}
		
		\begin{enumerate}
			\item[1.] $H > 0,\ H' > 0,\ t > H + 1000\nu^{-1/3}$ and $t > H' + 1000\nu^{-1/3}$.  
			In this case, $m$ and $m'$ are independent of $\eta,\eta'$. A direct computation gives
			\[
			\frac{m}{m'} = \frac{1+L^2}{1+(L')^2} \cdot \frac{1+(1000\nu^{-1/3})^2+(L')^2}{1+(1000\nu^{-1/3})^2+L^2}.
			\]
			Hence
			\[
			\Bigl|\frac{m}{m'}-1\Bigr| \lesssim \abs{\bar{L}}\la \bar{L} \ra \lesssim \frac{1}{k}|l-l'|\langle l-l'\rangle.
			\]
			
			\item[2.]$H > 0,\ H' > 0,\ t > H' + 1000\nu^{-1/3}$ and $H+1000\nu^{-1/3} > t > H$.  
			Here, $m$ depends on $t-H$ while $m'$ does not. Writing the ratio explicitly, we obtain
			\[
			\Bigl|\frac{m}{m'}-1\Bigr| \lesssim |\bar{H},\bar{L}|\langle\bar{H},\bar{L}\rangle\lesssim\frac{1}{k}|\eta-\eta',l-l'|\langle \eta-\eta',l-l'\rangle.
			\]
		\end{enumerate}

		\noindent\textbf{Part 2. The remaining cases.}
		
		Now we consider the general situation. For clarity, we first treat the case $L = L'$; the case $L \neq L'$ is treated similarly. According to the definition of $m$, its expression depends on the intervals where $H$ and $t$ lie, which we divide into five main cases. In each subcase, we compute the difference $\frac{m}{m'}-1$ explicitly and show it is controlled by $|\bar{H}|$ or $|\bar{H}|^2$. The details are as follows.
		
		\begin{enumerate}
			\item[1.] $H>0$, $t>H+\nu^{-1/3}$, $m(\eta)=\dfrac{1+L^2}{1+\nu^{-2/3}+L^2}$. \\
			\begin{itemize}
				\item[1.1] $H'>0$, $t>H'+\nu^{-1/3}$: $m(\eta')$ has the same form. The difference is
				\[
				\frac{m}{m'}-1 = \frac{(H^2-(H')^2)(1+L^2)}{(1+\nu^{-2/3}+L^2)(1+(H')^2+L^2)} = \frac{\bar{H}(H+H')(1+L^2)}{(1+\nu^{-2/3}+L^2)(1+(H')^2+L^2)},
				\]
				which is bounded by $|\bar{H}|(1+|\bar{H}|)$.
				\item[1.2] $H'<-\nu^{-1/3}$: $m(\eta')=1$. The difference is
				\[
				\frac{m}{m'}-1 = \frac{-\nu^{-2/3}}{1+\nu^{-2/3}+L^2},
				\]
				and since $\nu^{-1/3} \leq |\bar{H}|$, this is bounded by $|\bar{H}|^2$.
			\end{itemize}
			
			\item[2.] $H>0$, $H<t<H+\nu^{-1/3}$, $m(\eta)=\dfrac{1+L^2}{1+(t-H)^2+L^2}$. \\
			\begin{itemize}
				\item[2.1] $-\nu^{-1/3}<H'<0$, $t>H'+\nu^{-1/3}$: $m(\eta')=\dfrac{1+(H')^2+L^2}{1+\nu^{-2/3}+L^2}$. The difference is
				\[
				\frac{m}{m'}-1 = \frac{[\nu^{-2/3}-(t-H)^2](1+L^2) + (t-H)^2(H^2-(H')^2)}{(1+(t-H)^2+L^2)(1+\nu^{-2/3}+L^2)}.
				\]
				Using $|\nu^{-1/3}+H-t| \leq |\bar{H}|$ and $|H'| \leq |\bar{H}|$, we obtain the bound $|\bar{H}|(1+|\bar{H}|)$.
				\item[2.2] $-\nu^{-1/3}<H'<0$, $t<H'+\nu^{-1/3}$: $m(\eta')=\dfrac{1+(H')^2+L^2}{1+(t-H')^2+L^2}$. The difference is
				\[
				\frac{m}{m'}-1 = \frac{[(t-H')^2-(t-H)^2](1+L^2) + (t-H)^2(H^2-(H')^2)}{(1+(t-H)^2+L^2)(1+(t-H')^2+L^2)}.
				\]
				Since $(t-H')^2-(t-H)^2 = \bar{H}(\bar{H}+2(t-H))$, we obtain the bound $|\bar{H}|(1+|\bar{H}|)$.
				\item[2.3] $H'<-\nu^{-1/3}$: $m(\eta')=1$. The difference is
				\[
				\frac{m}{m'}-1 = \frac{-(t-H)^2}{1+(t-H)^2+L^2},
				\]
				and since $|t-H| \leq \nu^{-1/3} \leq |\bar{H}|$, this is bounded by $|\bar{H}|^2$.
			\end{itemize}
			
			\item[3.] $H>0$, $t<H$, $m(\eta)=1$. \\
			\begin{itemize}
				\item[3.1] $-\nu^{-1/3}<H'<0$, $t>H'+\nu^{-1/3}$: $m(\eta')=\dfrac{1+(H')^2+L^2}{1+\nu^{-2/3}+L^2}$. The difference is
				\[
				\frac{m}{m'}-1 = \frac{\nu^{-2/3}-(H')^2}{1+\nu^{-2/3}+L^2},
				\]
				which is bounded by $|\bar{H}|^2$ since $\nu^{-1/3}, |H'| \leq |\bar{H}|$.
				\item[3.2] $-\nu^{-1/3}<H'<0$, $t<H'+\nu^{-1/3}$: $m(\eta')=\dfrac{1+(H')^2+L^2}{1+(t-H')^2+L^2}$. The difference is
				\[
				\frac{m}{m'}-1 = \frac{t(t-2H')}{1+(t-H')^2+L^2},
				\]
				and since $t \leq H \leq |\bar{H}|$, this is bounded by $|\bar{H}|^2$.
				\item[3.3] $H'<-\nu^{-1/3}$: $m(\eta')=1$. The difference is $0$.
			\end{itemize}
			
			\item[4.] $-\nu^{-1/3}<H<0$, $t>H+\nu^{-1/3}$, $m(\eta)=\dfrac{1+H^2+L^2}{1+\nu^{-2/3}+L^2}$. \\
			\begin{itemize}
				\item[4.1] $-\nu^{-1/3}<H'<0$, $t>H'+\nu^{-1/3}$: $m(\eta')$ has the same form. The difference is
				\[
				\frac{m}{m'}-1 = \frac{(H^2-(H')^2)(1+\nu^{-2/3}+L^2)}{(1+\nu^{-2/3}+L^2)^2} = \frac{\bar{H}(H+H')}{1+\nu^{-2/3}+L^2},
				\]
				which is bounded by $|\bar{H}|(1+|\bar{H}|)$.
				\item[4.2] $-\nu^{-1/3}<H'<0$, $t<H'+\nu^{-1/3}$: $m(\eta')=\dfrac{1+(H')^2+L^2}{1+(t-H')^2+L^2}$. The difference is
				\[
				\frac{m}{m'}-1 = \frac{[\nu^{-2/3}-(t-H')^2](1+H^2+L^2) + (t-H')^2(H^2-(H')^2)}{(1+\nu^{-2/3}+L^2)(1+(t-H')^2+L^2)}.
				\]
				Using $|\nu^{-1/3}+H'-t| \leq |\bar{H}|$ and $|H^2-(H')^2| \leq |\bar{H}|(1+|\bar{H}|)$, we obtain the bound $|\bar{H}|(1+|\bar{H}|)$.
				\item[4.3] $H'<-\nu^{-1/3}$: $m(\eta')=1$. The difference is
				\[
				\frac{m}{m'}-1 = \frac{H^2-\nu^{-2/3}}{1+\nu^{-2/3}+L^2},
				\]
				and since $|\nu^{-1/3}+H| \leq |\bar{H}|$, this is bounded by $|\bar{H}|^2$.
			\end{itemize}
			
			\item[5.] $-\nu^{-1/3}<H<0$, $t<H+\nu^{-1/3}$, $m(\eta)=\dfrac{1+H^2+L^2}{1+(t-H)^2+L^2}$. \\
			\begin{itemize}
				\item[5.1] $-\nu^{-1/3}<H'<0$, $t<H'+\nu^{-1/3}$: $m(\eta')$ has the same form. The difference is
				\[
				\frac{m}{m'}-1 = \frac{[(t-H')^2-(t-H)^2](1+H^2+L^2) + (t-H)^2(H^2-(H')^2)}{(1+(t-H)^2+L^2)(1+(t-H')^2+L^2)}.
				\]
				Since $(t-H')^2-(t-H)^2 = \bar{H}(\bar{H}+2(t-H))$, we obtain the bound $|\bar{H}|(1+|\bar{H}|)$.
				\item[5.2] $H'<-\nu^{-1/3}$: $m(\eta')=1$. The difference is
				\[
				\frac{m}{m'}-1 = \frac{H^2-(t-H)^2}{1+(t-H)^2+L^2} = \frac{t(2H-t)}{1+(t-H)^2+L^2},
				\]
				and since $t < H+\nu^{-1/3} \leq |\bar{H}|$, this is bounded by $|\bar{H}|$.
			\end{itemize}
		\end{enumerate}
		Collecting the above, we have shown that
		\begin{align}
			\Bigl| \frac{m}{m'} - 1 \Bigr| \lesssim |\bar{H}| (1 + |\bar{H}|) \leq \frac{1}{k} |\eta-\eta'| \, \langle \eta-\eta' \rangle. \tag{A.2}
		\end{align}
		When $L \neq L'$, the expressions contain additional terms of the form $L^2 - (L')^2 = (L-L')(L+L')$, which are bounded by $\frac{1}{k}|l-l'|\langle l-l'\rangle$. Combining this with (A.2) yields
		\[
		\Bigl| \frac{m}{m'} - 1 \Bigr| \lesssim \bigl( |\bar{H}| + |\bar{L}| \bigr) \bigl( 1 + |\bar{H}| + |\bar{L}| \bigr) \leq C \, |\bar{H},\bar{L}| \; \langle \bar{H},\bar{L} \rangle ,
		\]
				proving the first inequality.
		
		Turning to \cref{est:m4}, noting the fact that
		\begin{align*}
			\left|\frac{m^{\frac{1}{4}}(t,k,\eta,l)}{m^{\frac{1}{4}}(t,k,\eta',l')}-1\right|=\left|\frac{m(t,k,\eta,l)}{m(t,k,\eta',l')}-1\right|\left(\frac{m^{\frac{1}{4}}(t,k,\eta,l)}{m^{\frac{1}{4}}(t,k,\eta',l')}+1\right)^{-1}\left(\frac{m^{\frac{1}{2}}(t,k,\eta,l)}{m^{\frac{1}{2}}(t,k,\eta',l')}+1\right)^{-1},
		\end{align*}
		the estimate \cref{est:m4} is a direct result of \cref{est-m2}, concluding the proof.
	\end{proof}
	\section{Bootstrap argument}
	To prove \cref{MT}, we need to use a bootstrap argument. By a standard local well-posedness argument, we state the following lemma without showing more details.
	\begin{Lem}\label{local-exis-1}
		Under the same  assumptions as \cref{MT}, there exists a small constant $t_0>0$ independent of $\nu$ such that if $\|(\phi,v)_{in}\|_{H^5}\leqslant\varepsilon$, then there holds
		\begin{align*}
			\sup_{t\in[0,2t_0]}\|(R(t),V(t))\|_{H^5}\leqslant2\varepsilon.
		\end{align*}
	\end{Lem}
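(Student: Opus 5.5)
The plan is a standard energy estimate for the symmetrizable hyperbolic--parabolic system \cref{perturbation-1}, closed on a short time interval whose length does not depend on $\nu$. The key point is that the hyperbolic part (transport plus acoustic coupling) does not rely on viscosity for its $H^5$ energy bound. The only dangerous contributions are then the linear terms carrying the background shear: the lift-up term $(v^2,0,0)^T$ and the commutators of $y\partial_x$ with derivatives. These produce at most $O(1)$ exponential growth $e^{Ct}$, with $C$ independent of $\nu$.

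\textbf{Step 1: existence.} For fixed $\nu>0$, I would take local existence from the classical theory for compressible Navier--Stokes (Matsumura--Nishida type, adapted to the unbounded linear background $y\partial_x$). Since $y$ is unbounded, it is convenient to work in the moving frame $(X,Y,Z)$. There the transport term disappears and the operators have time-dependent coefficients through $\tilde\partial_Y=\partial_Y-t\partial_X$. Alternatively, one can work with the original variables, where $y\partial_x$ is skew-adjoint in $L^2$.

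\textbf{Step 2: a priori estimate.} In the original variables, I would apply $\partial^\alpha$ with $|\alpha|\le5$ and test the equations against $P'(1+\lde)\partial^\alpha\lde/(1+\lde)$ and $(1+\lde)\partial^\alpha v$, the usual symmetrizer. This gives
\begin{align*}
\frac{d}{dt}\mathcal{E}_5+\nu\|\nabla v\|_{H^5}^2\lesssim (1+\|(\lde,v)\|_{H^5})\,\mathcal{E}_5,
\end{align*}
with $\mathcal{E}_5\approx\|(\lde,v)\|_{H^5}^2$ as long as $\|\lde\|_{L^\infty}\le1/2$. The term $y\partial_x$ contributes nothing after integration by parts. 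Its commutator $[\partial^\alpha,y\partial_x]=\alpha_y\partial^{\alpha-e_y}\partial_x$ is of the same order and is bounded by $C\mathcal{E}_5$. The lift-up term is bounded in the same way.

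The main obstacle is the nonlinear term at top order: $\partial^\alpha\nabla\lde$ paired with $\lde$-dependent coefficients, and $v\cdot\nabla\partial^\alpha\lde$. These lose a derivative on $\lde$ unless the symmetrizer is used. They are handled by integrating by parts to get $\tfrac12\operatorname{div}v\,|\partial^\alpha\lde|^2$ and by the commutator estimates of Lemma \ref{lemma:commutator}. One must also check that no factor $\nu^{-1}$ appears. The viscous nonlinear term $\frac{\lde}{1+\lde}\nu\Delta v$ is absorbed by the dissipation, since its coefficient is small. So no estimate relies on dissipation in a way that depends on $\nu$.

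\textbf{Step 3: closing the bound.} Gronwall then gives $\mathcal{E}_5(t)\le e^{C(1+\varepsilon)t}\mathcal{E}_5(0)$. Choosing $t_0$ so that $e^{2Ct_0}\le 2$ (up to equivalence constants) yields $\|(\phi,v)(t)\|_{H^5}\le2\varepsilon$ on $[0,2t_0]$, by a continuity argument.

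\textbf{Step 4: transfer to the profiles.} Finally, $(R,V)(t)=(\phi,v)(t,X+Yt,Y,Z)$. The map $f\mapsto f(X+Yt,Y,Z)$ changes derivatives by $\partial_Y\mapsto\partial_Y+t\partial_X$, so $\|(R,V)\|_{H^5}\le(1+t)^5\|(\phi,v)\|_{H^5}$. To avoid this loss, I would instead run the estimate of Step 2 directly on the profile equations in $H^5(X,Y,Z)$. Those contain only $\tilde\partial_Y$ inside $\tilde\Delta$, and the linear coupling terms $W^2$, $\partial_X R$, lift-up, and so on are bounded by $O(1)$ times the energy for $t\le 2t_0$. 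Shrinking $t_0$ slightly then absorbs the constant, giving the stated bound $2\varepsilon$.
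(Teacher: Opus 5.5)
Your argument is correct. It is the standard local well-posedness argument that the paper invokes without giving details: a symmetrized $H^5$ energy estimate with a $\nu$-independent Gronwall constant, followed by continuation.

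Two small remarks. First, with the weighted symmetrizer the term $y\partial_x$ does not literally contribute nothing. It leaves $\int y\,\partial_x w(\phi)\,|\partial^\alpha\phi|^2$, which carries the unbounded factor $y$ and is controlled only after combining it with $\int \partial_t w(\phi)\,|\partial^\alpha\phi|^2$; running the estimate directly in the moving frame, as in your Step 4, is the clean way to do this. Second, the change-of-variables factor $(1+t)^5$ is already $1+O(t_0)$ on $[0,2t_0]$, so it needs no separate treatment.
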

	From now on, all time norms are taken over the interval $[0,T]$ unless otherwise stated.\\
	\textbf{Bootstrap hypotheses}:  Fix $B_j=\prod_{i=1}^{j}C_i$ with $j=0,1,2,3$ large constants determined by the proof below and let $T>t_0$ be the maximal time such that the following estimates hold on $[0,T]$:\\
	\emph{the incompressible part}:
	\begin{subequations}\label{bs-ip}
		\begin{align}
			&\|mMp^{\frac{j}{2}}W^1_{\neq}\|_{L^\infty H^{3-j}}+\nu^{1/2}\|\tilde{\nabla}mMp^{\frac{j}{2}}W^1_{\neq}\|_{L^2 H^{3-j}}\notag\\
			&\quad+\nu^{1/6}\|mMp^{\frac{j}{2}}W^1_{\neq}\|_{L^2 H^{3-j}}+\|mM\partial_Xp^{\frac{j-1}{2}}W^1_{\neq}\|_{L^2 H^{3-j}}\leq 10 C_0\nu^{-1/3} B_j\nu^{-j/3}\varepsilon;\label{bs-w1}\\
			&\|\ma p^{\frac{j}{2}}W^2\|_{L^\infty H^{3-j}}+\nu^{1/2}\|\tilde{\nabla}\ma p^{\frac{j}{2}}W^2\|_{L^2 H^{3-j}}\notag\\
			&\quad+\nu^{1/6}\|\ma p^{\frac{j}{2}}W^2_{\neq}\|_{L^2 H^{3-j}}+\|\ma\partial_Xp^{\frac{j-1}{2}}W^2_{\neq}\|_{L^2 H^{3-j}}\leq 10 B_j\nu^{-j/3}\varepsilon;\label{bs-w2}\\
			&\|mMp^{\frac{j}{2}}\Omega^3\|_{L^\infty H^{3-j}}+\nu^{1/2}\|\tilde{\nabla}mMp^{\frac{j}{2}}\Omega^3\|_{L^2 H^{3-j}}\notag\\
			&\quad+\nu^{1/6}\|mMp^{\frac{j}{2}}\Omega^3_{\neq}\|_{L^2 H^{3-j}}+\|mM\partial_Xp^{\frac{j-1}{2}}\Omega^3_{\neq}\|_{L^2 H^{3-j}}\leq 10 B_j\nu^{-j/3}\varepsilon\label{bs-o3};
		\end{align}	
	\end{subequations}
	\emph{the compressible part}:
	\begin{subequations}\label{bs-cp}
		\begin{align}
			&\|{\de}_0,p^{-\frac{1}{2}}D_0\|_{L^\infty H^3}+\nu^{1/2}\|\nabla {\de}_0\|_{L^2 H^2}+\nu^{1/2}\|D_0\|_{L^2H^3}\leq 10\varepsilon;\label{bs-n0}\\
			&\|\partial_Z{\de}_0,\partial_Zp^{-\frac{1}{2}}D_0\|_{L^\infty H^3}+\nu^{1/2}\|\partial_Z{\de}_0\|_{L^2 H^3}+\nu^{1/2}\|\partial_ZD_0\|_{L^2H^3}\leq 10 \varepsilon;\label{bs-zn0}\\
			&\|\ma \nabla_{X,Z}p^{\frac{j}{2}}({\de},p^{-\frac{1}{2}}D)_{\neq}\|_{L^\infty H^{3-j}}+\nu^{1/2}\|\tilde{\nabla}\ma\nabla_{X,Z}p^{\frac{j}{2}}({\de},p^{-\frac{1}{2}}D)_{\neq}\|_{L^2 H^{3-j}}\notag\\
			&\quad+\nu^{1/6}\|\ma \nabla_{X,Z}p^{\frac{j}{2}}({\de},p^{-\frac{1}{2}}D)_{\neq}\|_{L^2 H^{3-j}}+\|\ma\nabla_{X,Z}\partial_Xp^{\frac{j-1}{2}}({\de},p^{-\frac{1}{2}}D)_{\neq}\|_{L^2 H^{3-j}}\leq 10 B_j\nu^{-j/3}\varepsilon;\label{bs-xn}\\
			&\|p^{\frac{j}{2}}(p^{\frac{1}{2}}{\de},D)\|_{L^\infty H^{3-j}}+\nu^{1/2}\|p^{\frac{1}{2}}{\de}_0\|_{L^2 H^{3}}+\nu^{1/2}\|\nabla D_0\|_{L^2 H^{3}}\notag\\
			&\quad+\nu^{1/6}\|p^{\frac{j}{2}}p^{\frac{1}{2}}{\de}_{\neq}\|_{L^2 H^{3-j}}+\nu^{1/2}\|p^{\frac{j}{2}}\tilde{\nabla} D_{\neq}\|_{L^2 H^{3-j}}\leq 10 C_0\nu^{-1/2} B_j\nu^{-j/3}\varepsilon;\label{bs-yn}\\
			&\|p^{\frac{j}{2}}\partial_X(p^{\frac{1}{2}}{\de},D)_{\neq}\|_{L^\infty H^{3-j}}
			+\nu^{1/6}\|p^{\frac{j}{2}}\partial_Xp^{\frac{1}{2}}{\de}_{\neq}\|_{L^2 H^{3-j}}+\nu^{1/2}\|p^{\frac{j}{2}}\partial_X\tilde{\nabla} D_{\neq}\|_{L^2 H^{3-j}}\leq 10 C_0\nu^{-1/2} B_j\nu^{-j/3}\varepsilon;\label{bs-xyn}\\
			&\|p^{\frac{j}{2}}\partial_Z(p^{\frac{1}{2}}{\de},D)\|_{L^\infty H^{3-j}}+\nu^{1/2}\|p^{\frac{1}{2}}\partial_Z{\de}_0\|_{L^2 H^{3}}+\nu^{1/2}\|\nabla \partial_ZD_0\|_{L^2 H^{3}}\notag\\
			&\quad+\nu^{1/6}\|p^{\frac{j}{2}}\partial_Zp^{\frac{1}{2}}{\de}_{\neq}\|_{L^2 H^{3-j}}+\nu^{1/2}\|p^{\frac{j}{2}}\tilde{\nabla} \partial_ZD_{\neq}\|_{L^2 H^{3-j}}\leq 10 C_0\nu^{-1/2} B_j\nu^{-j/3}\varepsilon;\label{bs-zyn}\\
			&\|p^{\frac{j}{2}}(p{\de},p^{\frac{1}{2}}D)\|_{L^\infty H^{3-j}}+\nu^{1/2}\|p{\de}_0\|_{L^2 H^{3}}+\nu^{1/2}\|\nabla p^{\frac{1}{2}}D_0\|_{L^2 H^{3}}\notag\\
			&\quad+\nu^{1/6}\|p^{\frac{j}{2}}p{\de}_{\neq}\|_{L^2 H^{3-j}}+\nu^{1/2}\|p^{\frac{j}{2}}\tilde{\nabla} p^{\frac{1}{2}}D_{\neq}\|_{L^2 H^{3-j}}\leq 10 C_0^2\nu^{-5/6} B_j\nu^{-j/3}\varepsilon\label{bs-pn};
		\end{align}
	\end{subequations}
	\emph{zero mode of $V^1,V^3$}:
	\begin{subequations}\label{bs-zm}
		\begin{align}
			&\|U^1_{00}\|_{L^\infty H^4}+\nu^{1/2}\|\partial_YU^1_{00}\|_{L^2H^4}\leq 10\nu^{-1/2}\varepsilon;\label{bs-u100}\\
			&\|\Delta V^1_{0\neq}\|_{L^\infty H^3}+\nu^{1/2}\|\nabla \Delta V^1_{0\neq}\|_{L^2H^3}\leq 10\nu^{-1}C_0\varepsilon;\label{bs-pv10}\\
			&\|V^3_{00}\|_{L^\infty H^3}+\nu^{1/2}\|\partial_YV^3_{00}\|_{L^2H^3}\leq 10\varepsilon,\label{bs-v300}		
		\end{align}
	\end{subequations}
	where $\nabla_{X,Z}=(\partial_X,\partial_Z)$. 
	
	Next, we close the bootstrap hypotheses in the
	 Proposition \ref{prop-bs} below.
	\begin{Prop}\label{prop-bs}
		Under the same  assumptions as \cref{MT} and bootstrap hypotheses, 
		the same estimates \cref{bs-ip}-\cref{bs-zm} hold with all the constants on the right-hand side divided by $2$ on $[0,T]$. 
	\end{Prop}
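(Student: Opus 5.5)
The proof of Proposition \ref{prop-bs} will be a collection of a priori energy estimates, one per bootstrap quantity, in the order already indicated in the paper's outline. Each estimate is carried out under the hypotheses \cref{bs-ip}--\cref{bs-zm}. The constants are fixed hierarchically: $C_0$ first, then $C_1,C_2,C_3$ (so $B_j=\prod_{i\le j}C_i$). At each level the linear contributions are then absorbed by the factor $10$, and the nonlinear contributions are $O(\varepsilon\nu^{-3/2})\times(\text{bootstrap bound})=O(\varepsilon_0)\times(\text{bootstrap bound})$.

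\textbf{Step 1: the double zero mode $V^1_{00}$.} We first prove the uniform bound $\nu^{1/2}\|V^1_{00}\|_{L^\infty L^\infty}\lesssim\varepsilon$. This bound is not itself a bootstrap hypothesis, but it is needed later. We use the decomposition $V^1_{00}=a(\tilde W_2-\tilde W_1+\tilde\Xi_2-\tilde\Xi_1)+\theta_A+\mathcal A$.
\begin{itemize}
\item $\theta_i$ is given explicitly by \cref{theta}, and $\Xi_i$ is estimated by Duhamel's principle, using that $\theta_{3-i}$ and the Green's function for $\Xi_i$ propagate in opposite directions.
\item $\tilde{\mathbf W}$ is controlled by an $H^2$ energy estimate for \cref{anti}, using the initial bound \cref{initial}. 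The source $\tilde K$ is handled through the improved decay of $\tilde K_{iy}$.
\item $\theta_A$ is bounded in $L^\infty$ and $\p_y\theta_A$ in $L^2$ by Duhamel's principle. Terms like $\theta_1$ are treated with space--time pointwise estimates. For terms like $\p_y\tilde W_1\p_y\theta_A$, we split the time integral.
\item $\int_0^t\|\p_y^2\theta_A\|_{L^2}^2$ is obtained by splitting $\theta_A=\theta_A^{(1)}+\theta_A^{(2)}$: we use Duhamel for the explicitly decaying forcing in $\theta_A^{(1)}$, and an energy estimate for $\theta_A^{(2)}$.
\item $\mathcal A$ is bounded by an $L^2$ energy estimate of \cref{equ-A}. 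The only linear source is $a\nu\p_y(\tilde w_2-\tilde w_1)$, which is harmless after integrating by parts.
\end{itemize}

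\textbf{Step 2: the non-zero modes, compressible part first.} For \cref{bs-xn}--\cref{bs-pn} we pair the symmetric system \cref{sys-ND} with $(\ma)^2 p^{j}$ (respectively with plain $p^j$ weights) and take energy estimates.
\begin{itemize}
\item The time derivatives of $M_1$ and $M_2$ give the good terms $\|\sqrt{-\p_tM/M}\,\cdot\|^2$. These absorb the linear coupling $\p_X p^{-3/2}W^2$ and the term $\frac{\p_tp}{2p}$; for the latter the large constant $\tilde C$ in $M_1$ is used.
\item The $\nu^{-1/2}$ loss in \cref{bs-yn} comes from the $\frac{\p_tp}{2p}$ growth of $p^{-1/2}D$, controlled only by dissipation on time scales $\nu^{-1/3}$.
\end{itemize}

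\textbf{Step 3: the incompressible unknowns $W^1,W^2,\Omega^3$.} We estimate these with the multiplier $mM$ (respectively $\ma$). The role of $m$ is to absorb the $-\frac{\p_tp}{p}W^1$ term, and $M_2$ produces the $\nu^{1/6}$ enhanced dissipation. Nonlinear terms are split by frequency into $0\cdot\neq$, $\neq\cdot 0$ and $\neq\cdot\neq$ interactions. Commutators are controlled via Propositions \ref{est:com-p} and \ref{est:com} together with Lemma \ref{lemma:commutator}.

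\textbf{Step 4: zero modes and top regularity.} The zero-mode estimates \cref{bs-n0}, \cref{bs-zn0} and \cref{bs-zm} are done by $H^4$ and $H^3$ energy estimates of the heat-type equations for $U^1_{00}$, $\Delta V^1_{0\neq}$ and $V^3_{00}$. The lift-up source $\Delta V^2_{0\neq}$ produces the factor $\nu^{-1}$ in \cref{bs-pv10}. The high-regularity levels ($j\ge1$) repeat Steps 2--3 with the extra weights $p^{j/2}$, losing $\nu^{-1/3}$ per level.

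\textbf{Main obstacle.} The hardest term is the lift-up interaction in the $W^1_{\neq}$ estimate,
\[
I=\int_0^T\langle m(\tilde\p_Y D_{\neq}\,\p_YV^1_{0\neq}),mW^1_{\neq}\rangle\,dt .
\]
Estimated directly, it loses too many powers of $\nu$.
\begin{itemize}
\item First, we substitute $\tilde\p_Y D=-\p_t\tilde\p_Y R-\p_X R+\dots$ from \cref{sys-ND} and integrate by parts in time.
\item This produces boundary terms, terms where $\p_t$ falls on $\p_YV^1_{0\neq}$ (which are dissipative), and the remaining term in which $\p_tW^1_{\neq}$ is replaced via \cref{equ-W1}. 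The last leads to
\[
\int_0^T\langle m(\p_YV^1_{0\neq}\tilde\p_Y R_{\neq}),\,m(V^1_{00}\p_XW^1_{\neq}+\p_YV^1_{0\neq}\tilde\p_Y D_{\neq})\rangle\,dt .
\]
\item We close this using the uniform bound on $V^1_{00}$ from Step 1, the bound $\nu^{-1}\varepsilon$ on $V^1_{0\neq}$, and the $\nu^{-1/6}$ growth of $(R_{\neq},V^2_{\neq})$. We expect this to balance exactly when $\varepsilon=\varepsilon_0\nu^{3/2}$.
\end{itemize}
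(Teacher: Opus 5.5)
Your outline follows the paper's architecture: the decomposition of $V^1_{00}$, the multipliers $mM$ and $\ma$, and the time integration by parts for the lift-up/compressible term $I$, which matches the paper. However, several of the mechanisms you name would not close, and the missing pieces are ones the paper's proof depends on.

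In Step 2, $\frac{\partial_t p}{2p}$ cannot be absorbed by $M_1$. We have $-\partial_tM_1/M_1=\tilde C|k|\sqrt{k^2+l^2}/p$ but $|\partial_tp|/p=2|k||kt-\eta|/p$, and the ratio $2|kt-\eta|/(\tilde C\sqrt{k^2+l^2})$ is unbounded. So $\tilde C$ only handles couplings such as $\partial_{XZ}p^{-3/2}W^2$. The paper instead adds the cross term $\frac14\mathrm{Re}\langle \frac{\partial_t p}{p^{3/2}}\ma\partial_Z R_{\neq},\ma\partial_Zp^{-1/2}D_{\neq}\rangle$ to the energy, which redistributes the growth as $\frac{\partial_tp}{4p}$ on both components. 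This is cancelled exactly by $\frac14\frac{\partial_tm}{m}=-\frac14\frac{\partial_tp}{p}$ on the window $\eta/k<t<\eta/k+1000\nu^{-1/3}$ (hence the exponent $\frac14$), and is $O(\nu^{1/3})$ afterwards.

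Moreover, $R$ has no dissipation, so $M_2$ alone does not give the $\nu^{1/6}$ time-integrability of $R_{\neq}$ in \cref{bs-xn} and \cref{bs-yn}--\cref{bs-pn}. The paper obtains it from a second, hypocoercive cross term $-c_1\bar\nu^{1/3}\mathrm{Re}\langle p^{-1/2}\ma\partial_ZR_{\neq},\ma\partial_Zp^{-1/2}D_{\neq}\rangle$ and its analogues at the other levels.

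The same issue recurs in Step 4: \cref{bs-n0} and \cref{bs-zn0} are not heat-type estimates. $(R_{00},V^2_{00})$ has no $L^2_t$ control at low frequency, so cubic terms like $\langle D_0R_0,R_0\rangle$ cannot be closed in a plain $L^2$ estimate. The paper uses the physical energy $\langle\rho_0,\int_1^{\rho_0}\frac{P(s)-P(1)}{s^2}ds\rangle+\frac12\|\sqrt{\rho_0}(V^2_0,V^3_0)\|^2$, in which all zero-mode self-interactions cancel. For $\dot H^s$ ($s\ge1$) and for \cref{bs-zn0}, it uses a cross term $-c_0\bar\nu\,\mathrm{Re}\langle p^{-1/2}\nabla^sR_0,\nabla^sp^{-1/2}D_0\rangle$ to produce $\nu\|\nabla^sR_0\|^2_{L^2L^2}$.

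In Step 1, the improved decay of $\tilde K_{iy}$ concerns only the ansatz error. Through $E_3$, $\tilde K$ also contains $\tilde W_{iy}\tilde W_{jy}$ and the wave-interaction terms $\tilde W_{iy}\theta_j$, $\tilde W_{iy}\Xi_j$, and \cref{equ-A} contains $v^2_{00}\partial_y\mathcal A$. Bounded directly, for example by $\int_0^T\|\tilde W_{iy}\|_{L^2}\|\theta_j\|_{L^\infty}\|\mathbb W_2\|_{L^2}dt$, these lose a factor $(\int_0^T(1+t)^{-1}dt)^{1/2}$, i.e. they diverge logarithmically. The paper reduces them to $\varepsilon^2\nu^{-1}\int_0^T\int[\bar\nu(1+t)]^{-1}e^{-\frac{(y\pm(1+t))^2}{\bar\nu(1+t)}}\mathbb W_2^2\,dy\,dt$ and controls this by weighted estimates:
for $\tilde W_1$, which travels opposite to the weight, it uses the multiplier $\eta_1=\exp(\int_{-\infty}^yh)$; for $\tilde W_2$, which travels with the weight, it uses Lemma \ref{Ptii}.
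The same device is used for $\mathcal A$ and $\partial_y\theta_A^{(2)}$. Since $B$ is degenerate and $\mathbb W_1$ has no viscosity, a separate estimate for $\mathbb W_{1y}$ is also needed to obtain $\bar\nu\int_0^T\|\partial_y\tilde W_i\|^2_{L^2}dt$. An $H^2$ energy estimate fed only by the decay of $\tilde K_{iy}$ does not supply any of this.
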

	
	Before proving  Proposition \ref{prop-bs}, let us briefly comment on the structure of bootstrap hypotheses. Due to the lift-up effect term, there is an amplification of order $\nu^{-1/3}$ in the right-hand side of \cref{bs-w1}. The nonlinear interaction involving the lift-up of $V^1_{0\neq}$ leads to an amplification for $W^2_{\neq}$ near the critical time, which is quantified by the inclusion of the multiplier $m^{\frac{1}{4}}$ in the norm of \cref{bs-w2}. It is worth noting that $V^2_{\neq}$ is dominated by the compressible part $\tilde{\partial}_Yp^{-1}D_{\neq}$, so this setting does not affect the main estimate. For the third component, we need to consider a low order estimate \cref{bs-v300}, since $\Omega^3$ does not contain $V^3_{00}$. Turning to the compressible part, we bring any derivative estimate of the symmetric variables $(R,p^{-\frac{1}{2}}D)$ up to $(pR,p^{\frac{1}{2}}D)$, which has the same order as the incompressible part. To compensate for the absence of dissipation for $pR$, motivated by \cite{LWZ}, we use a high regularity argument, i.e., consider the estimates for $j=1,2,3$ simultaneously. Finally, for the estimate of the zero mode of the first component, $V^1_{0\neq}$ exhibits a standard lift-up effect, which is consistent with the linear result in \cite{Zeng-Zhang-Zi-2022}, and the $\nu^{-1/2}$ order growth of $U^1_{00}$ stems from this effect.
	
	Immediately, we can obtain following estimates for $j=0$ from bootstrap hypotheses, and the similar results hold for $j=1,2,3$.
	\begin{Prop}\label{est:main}
		Under the bootstrap hypotheses, the following estimates hold:\\
		1. double zero mode estimates:
		\begin{align*}
			&\|{\de}_{00}\|_{L^\infty H^3}+\nu^{1/2}\|\partial_Y {\de}_{00}\|_{L^\infty H^3}+\nu^{5/6}\|\partial^2_{Y} {\de}_{00}\|_{L^\infty H^3}\\
			&\quad+\nu^{1/2}(\|\partial_Y{\de}_{00}\|_{L^2H^2}+\nu^{1/2}\|\partial_Y{\de}_{00}\|_{L^2H^3}+\nu^{5/6}\|\partial^2_{Y}{\de}_{00}\|_{L^2H^3})\lesssim\varepsilon,\\
			&\|\partial_YV^1_{00}\|_{L^\infty H^4}+\nu^{1/2}\|\partial^2_{Y}V^1_{00}\|_{L^2H^3}+\nu^{5/6}\|\partial^3_{Y}V^1_{00}\|_{L^2 H^3}\lesssim\nu^{-1/2}\varepsilon,\\
			&\|V^2_{00}\|_{L^\infty H^3}+\nu^{1/2}\|V^2_{00}\|_{L^\infty H^4}+\nu^{5/6}\|V^2_{00}\|_{L^\infty H^5} \\
			&\quad+\nu^{1/2}(\|\partial_Y V^2_{00}\|_{L^2 H^3}+\nu^{1/2}\|\partial_Y V^2_{00}\|_{L^2 H^4}+\nu^{5/6}\|\partial_Y V^2_{00}\|_{L^2 H^5})\lesssim\varepsilon, \\
			&\|V^3_{00}\|_{L^\infty H^5}+\nu^{1/2}\|\partial_YV^3_{00}\|_{L^2H^5}\lesssim\varepsilon;
		\end{align*}
		2. simple zero modes estimates:
		\begin{align*}
			&\|\partial_Z{\de}_{0\neq}\|_{L^\infty H^3}+\nu^{1/2}\|\partial_Z\nabla {\de}_{0\neq}\|_{L^\infty H^3}+\nu^{5/6}\|\Delta {\de}_{0\neq}\|_{L^\infty H^3}\\&\quad+\nu^{1/2}(\|\partial_Z{\de}_{0\neq}\|_{L^2H^3}+\nu^{1/2}\|\partial_Z\nabla {\de}_{0\neq}\|_{L^2H^3}+\nu^{5/6}\|\Delta {\de}_{0\neq}\|_{L^2H^3})\lesssim\varepsilon,\\
			&\|V^1_{0\neq}\|_{L^\infty H^5}+\nu^{1/2}\|\nabla V^1_{0\neq}\|_{L^2 H^5}\lesssim\nu^{-1}\varepsilon,\\
			&\|\partial_Z V^2_{0\neq}\|_{L^\infty H^3}+\nu^{1/2}\|\partial_ZV^2_{0\neq}\|_{L^\infty H^4}+\nu^{1/2}(\|\nabla \partial_Z V^2_{0\neq}\|_{L^2H^3}+\nu^{1/2}(\|\nabla\partial_ZV^2_{0\neq}\|_{L^2H^4}))\lesssim\varepsilon,\\
			&\| V^3_{0\neq}\|_{L^\infty H^4}+\nu^{1/2}\|V^3_{0\neq}\|_{L^\infty H^5}+\nu^{1/2}(\|V^3_{0\neq}\|_{L^2 H^5}+\nu^{1/2}\|\nabla V^3_{0\neq}\|_{L^2 H^5})\lesssim\varepsilon;
		\end{align*}
		3. nonzero modes estimates:
		\begin{align*}			
			&\|m^{\frac{1}{4}}\nabla_{X,Z}{\de}_{\neq}\|_{L^\infty H^3}+\nu^{1/2}\|\nabla_{X,Z}p^{\frac{1}{2}}{\de}_{\neq}\|_{L^\infty H^3}+\nu^{5/6}\|p{\de}_{\neq}\|_{L^\infty H^3}+\|m^{\frac{1}{4}}\partial_Xp^{-\frac{1}{2}}\nabla_{X,Z}{\de}_{\neq}\|_{L^2H^3}\\&\quad+\nu^{1/6}\|m^{\frac{1}{4}}\nabla_{X,Z}{\de}_{\neq}\|_{L^2H^3}+\nu^{2/3}\|\nabla_{X,Z}p^{\frac{1}{2}}{\de}_{\neq}\|_{L^2H^3}+\nu\|p{\de}_{\neq}\|_{L^2H^3}\lesssim\varepsilon,\\
			&\|\nabla_{X,Z}V^1_{\neq}\|_{L^\infty H^3}+\nu^{1/6}\|m^{\frac{1}{4}}p^{\frac{1}{2}}V^1_{\neq}\|_{L^\infty H^3}+\nu^{2/3}\|\nabla_{X,Z}p^{\frac{1}{2}}V^1_{\neq}\|_{L^\infty H^3}+\nu\|pV^1_{\neq}\|_{L^\infty H^3}\\&\quad+\nu^{1/6}\|\nabla_{X,Z}V^1_{\neq}\|_{L^2H^3}+\nu^{1/2}\|\nabla_{X,Z}p^{\frac{1}{2}}V^1_{\neq}\|_{L^2H^3}+\nu^{1/2}\|m^{\frac{1}{2}}pV^1_{\neq}\|_{L^2H^3}\lesssim\varepsilon,\\
			&\|m^{\frac{1}{4}}\nabla_{X,Z}V^2_{\neq}\|_{L^\infty H^3}+\nu^{1/2}\|\nabla_{X,Z}p^{\frac{1}{2}}V^2_{\neq}\|_{L^\infty H^3}+\nu^{5/6}\|pV^2_{\neq}\|_{L^\infty H^3}\\
			&\quad+\nu^{1/6}\|m^{\frac{1}{4}}\nabla_{X,Z}V^2_{\neq}\|_{L^2H^3}+\nu^{1/2}\|\nabla_{X,Z}^2V^2_{\neq}\|_{L^2H^3}+\nu^{1/2}\|m^{\frac{1}{4}}\nabla_{X,Z}p^{\frac{1}{2}}V^2_{\neq}\|_{L^2H^3}+\nu\|pV^2_{\neq}\|_{L^2H^3}\lesssim\varepsilon,\\
			&\|\nabla_{X,Z}V^3_{\neq}\|_{L^\infty H^3}+\|m^{\frac{1}{2}}p^{\frac{1}{2}}V^3_{\neq}\|_{L^\infty H^3}+\nu^{1/2}\|\nabla_{X,Z}p^{\frac{1}{2}}V^3_{\neq}\|_{L^\infty H^3}+\nu^{2/3}\|pV^3_{\neq}\|_{L^\infty H^3}\\
			&\quad+\nu^{1/6}\|\nabla_{X,Z}V^3_{\neq}\|_{L^2H^3}+\nu^{1/2}\|\nabla_{X,Z}p^{\frac{1}{2}}V^3_{\neq}\|_{L^2H^3}+\nu^{1/2}\|m^{\frac{1}{2}}pV^3_{\neq}\|_{L^2H^3}\lesssim\varepsilon.
		\end{align*}
	\end{Prop}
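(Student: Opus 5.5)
This is a direct consequence of the bootstrap hypotheses \cref{bs-ip}--\cref{bs-zm}, combined with the velocity reconstruction formulas and the multiplier bounds \cref{def-p}. I would treat it entirely as bookkeeping on the Fourier side. For each mode class I write the target quantity in terms of the bootstrapped unknowns, bound the symbols pointwise, and sum.

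\emph{Zero modes.} For $k=0$ we have $m=M=1$ and $p=\eta^2+l^2$. The density bounds then follow directly: \cref{bs-n0} gives ${\de}_{00}$ in $L^\infty H^3$, \cref{bs-yn} gives $\nu^{1/2}\partial_Y{\de}_{00}$, and \cref{bs-pn} gives $\nu^{5/6}\partial_Y^2{\de}_{00}$. The $L^2$-in-time parts come from the dissipative pieces of the same hypotheses. For $V^2_{00}$ and $V^3_{00}$ I use the recovery formulas. Since $\Omega^3_{00}$ and $W^2_{00}$ vanish or are controlled, $V^2_{00}=-\partial_Y p^{-1}D_{00}$ is of the size of $p^{-1/2}D_{00}$. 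Higher derivatives come from $D_{00}$ and $p^{1/2}D_{00}$ in \cref{bs-yn} and \cref{bs-pn}, with the appropriate $\nu$ powers. For $V^3_{00}$ I combine \cref{bs-v300} with $\Omega^3_{00}$ from \cref{bs-o3}. For $\partial_Y V^1_{00}$ I write $\partial_YV^1_{00}=U^1_{00}+{\de}_{00}-\nu D_{00}$ and apply \cref{bs-u100}. For the simple zero modes, the identity $\Delta V^1_{0\neq}$ together with \cref{bs-pv10} gives $V^1_{0\neq}$ in $H^5$, because $l\ne0$ makes $p\gtrsim1$. The quantities $\partial_Z{\de}_{0\neq}$ and $\partial_ZV^{2,3}_{0\neq}$ follow from \cref{bs-zn0}, \cref{bs-zyn}, and the recovery formulas.

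\emph{Nonzero modes.} I insert the formulas for $V^1,V^2,V^3$ and estimate each term's symbol. The key inputs are $\frac{k^2+l^2}{p}\le m$ and $\nu^{2/3}\lesssim m\le1$ from \cref{def-p}, together with $M\approx1$, since $M_1$ and $M_2$ are bounded above and below uniformly. Three examples show the pattern.
\begin{itemize}
\item For $\nabla_{X,Z}V^3_{\neq}$, the term $\nabla_{X,Z}p^{-1}\Omega^3$ has symbol bounded by $(k^2+l^2)^{1/2}p^{-1}\le m$, so \cref{bs-o3} with $j=0$ gives $\varepsilon$. Similarly, $m^{1/2}p^{1/2}p^{-1}$ is at most $m$ times $(k^2+l^2)^{-1/2}$.
\item For $V^1_{\neq}$, the term $\tilde\partial_Y\partial_X^{-1}p^{-1}W^2$ gains a factor $m^{1/4}$ through $\tilde\partial_Y p^{-1}\le p^{-1/2}$ and $p^{-1/2}\lesssim m^{1/2}|k|^{-1}$. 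This matches the $m^{1/4}$ weight in \cref{bs-w2}.
\item The compressible pieces $\tilde\partial_Yp^{-1}{\de}$ and $\partial_Xp^{-1}D$ are handled with \cref{bs-xn}, whose $m^{1/4}$ and $\nabla_{X,Z}$ weights are designed for exactly this.
\end{itemize}
The higher-order statements, with weights $\nu^{1/2}p^{1/2}$, $\nu^{5/6}p$ and so on, come from the $j=1,2,3$ versions of the hypotheses, whose right-hand sides grow like $\nu^{-j/3}$, combined with the extra $C_0\nu^{-1/2}$ and $\nu^{-5/6}$ factors in \cref{bs-yn} and \cref{bs-pn}. The powers of $\nu$ in the statement are chosen to absorb exactly these losses. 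The $L^2_t$ bounds use the $\nu^{1/6}$ enhanced-dissipation terms and the $\|\partial_X p^{-1/2}\cdot\|_{L^2}$ inviscid-damping terms of the hypotheses.

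\emph{Main obstacle.} I expect the hardest step to be $V^1_{\neq}$ and $V^2_{\neq}$ at top order. There the multiplier weights in the hypotheses differ: $W^1$ carries $mM$ with a $\nu^{-1/3}$ loss, while $W^2$ and ${\de}$ carry only $m^{1/4}$. For example, $\nu^{1/2}\|m^{1/2}pV^1_{\neq}\|_{L^2H^3}$ requires trading $p^{-1}$ against $m$ and $\nu^{1/2}\tilde\nabla$ dissipation. My plan is to split frequency space into $t\le \eta/k$ and the critical window $|t-\eta/k|\lesssim\nu^{-1/3}$ (where $m\approx\frac{k^2+l^2}{p}$), versus the region $t\ge\eta/k+1000\nu^{-1/3}$ (where $m\approx\nu^{2/3}$ but $p\gtrsim k^2\nu^{-2/3}$). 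I would then check in each region that the symbol powers combine to at most the available $\nu$ power.
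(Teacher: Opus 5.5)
Your overall route is the paper's. Its proof is pure bookkeeping: substitute $\partial_YV^1_{00}=U^1_{00}+{\de}_{00}-\nu D_{00}$ and the velocity recovery formulas, then compare symbols pointwise using $\frac{k^2+l^2}{p}\le m$, $\nu^{2/3}\lesssim m\le 1$ and $M\approx 1$. Your zero-mode bookkeeping is fine, including $W^2_{00}=0$ and $\Omega^3_{00}=\partial_Y^2V^3_{00}$.

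First, two corrections to how you locate the work. The estimate you single out as the obstacle needs no time-frequency splitting. Multiplying the $V^1_{\neq}$ formula by $m^{1/2}p$ gives $m^{1/2}\partial_XD$, $m^{1/2}\tilde\partial_Y\partial_X^{-1}W^2$, $m^{1/2}\tilde\partial_Y{\de}$, $\nu m^{1/2}\tilde\partial_YD$ and $m^{1/2}\partial_Z\partial_X^{-1}\Omega^3$. Each is controlled directly by the $\nu^{1/2}\tilde\nabla$ dissipation terms of \cref{bs-xn}, \cref{bs-w2}, \cref{bs-yn} and \cref{bs-o3}. For the last term this uses $|l|/|k|\le\sqrt{k^2+l^2}\le m^{1/2}p^{1/2}$. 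This is exactly the paper's computation. Also, the weights $\nu^{1/2}p^{1/2}$, $\nu^{5/6}p$, $\nu^{2/3}$, $\nu$ in the statement come from the $j=0$ versions of \cref{bs-yn}--\cref{bs-pn} and \cref{bs-w1}. The $j\ge 1$ hypotheses control only $H^{3-j}$, so they cannot give the claimed $H^3$ regularity.

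Second, one step fails as you plan it. The top-order $L^\infty_t$ bounds $\nu^{2/3}\|\nabla_{X,Z}p^{1/2}V^1_{\neq}\|_{L^\infty H^3}$ and $\nu\|pV^1_{\neq}\|_{L^\infty H^3}$ cannot be obtained term by term from the $W^2/\Omega^3$ representation you use.
\begin{itemize}
\item $\nu\,p\,\partial_Z\partial_X^{-1}p^{-1}\Omega^3=\nu\,\partial_Z\partial_X^{-1}\Omega^3$ needs $\partial_Z\Omega^3\in L^\infty H^3$.
\item $\nu\,\tilde\partial_Y\partial_X^{-1}W^2$ needs $\tilde\partial_Y W^2\in L^\infty H^3$.
\end{itemize}
Both are a full derivative beyond \cref{bs-w2} and \cref{bs-o3} with $j=0$; the $j=1$ versions supply that derivative only in $H^2$. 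The loss is a factor $|l|/|k|$ or $|\eta-kt|/|k|$, while the available weights carry $m\le 1$. So no splitting of frequency space recovers it.

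The missing idea is to recombine these two terms using \cref{divf}, $\tilde\partial_YW^2+\partial_Z\Omega^3=-\partial_XW^1$. Equivalently, use
\[
V^1_{\neq}=-p^{-1}\big(W^1+\tilde\partial_Y{\de}-\nu\tilde\partial_YD+\partial_XD\big)_{\neq},
\]
which is immediate from the definition of $W^1$. Then $\sqrt{k^2+l^2}\,p^{-1/2}\le m^{1/2}\lesssim \nu^{-1/3}m$ and $1\lesssim\nu^{-2/3}m$, together with $\|mMW^1_{\neq}\|_{L^\infty H^3}\lesssim\nu^{-1/3}\varepsilon$ from \cref{bs-w1}, give exactly the losses $\nu^{-2/3}$ and $\nu^{-1}$ in the statement.

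This form is not universal either. For $\nu^{1/6}\|m^{1/4}p^{1/2}V^1_{\neq}\|_{L^\infty H^3}$, $\|\nabla_{X,Z}V^1_{\neq}\|_{L^\infty H^3}$ and $\nu^{1/2}\|m^{1/2}pV^1_{\neq}\|_{L^2H^3}$ it loses $\nu^{-1/3}$, because of the lift-up loss built into \cref{bs-w1}. Those bounds must be done with the $W^2/\Omega^3$ form. What closes the $V^1_{\neq}$ bounds is choosing the representation of $V^1_{\neq}$ according to the norm being estimated, not a frequency decomposition.
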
	
	\begin{proof}
		For $\partial_YV^1_{00}$, recalling the definition of $U^1_{00}$, we have
		\begin{align*}
			\|\partial^2_{Y}V^1_{00}\|_{L^\infty H^3}=&\|\partial_YU^1_{00}+\partial_Y{\de}_{00}-\nu\partial_YD_{00}\|_{L^\infty H^3}\lesssim\nu^{-1/2}\varepsilon,\\
			\|\partial^2_{Y}V^1_{00}\|_{L^2 H^3}=&\|\partial_{Y}U^1_{00}+\partial_{Y}{\de}_{00}-\nu\partial_{Y}D_{00}\|_{L^2 H^3}\\
			\lesssim&\nu^{-1}\varepsilon+\nu^{-1}\varepsilon+\nu\nu^{-1}\varepsilon\lesssim\nu^{-1}\varepsilon,\\
			\|\partial^3_{Y}V^1_{00}\|_{L^2 H^3}=&\|\partial^2_{Y}U^1_{00}+\partial^2_{Y}{\de}_{00}-\nu\partial^2_{Y}D_{00}\|_{L^2 H^3}\\
			\lesssim&\nu^{-1}\varepsilon+\nu^{-4/3}\varepsilon+\nu\nu^{-4/3}\varepsilon\lesssim\nu^{-4/3}\varepsilon.
		\end{align*}
		For $V^3_{0\neq}$, noting the bounds of the compressible part, we get
		\begin{align*}
			\|V^3_{0\neq}\|_{L^\infty H^4}=&\|\partial_Zp^{-1}D_{0\neq}+p^{-1}\Omega^3_{0\neq}\|_{L^\infty H^4}\\
			\lesssim&\|\partial_Zp^{-\frac{1}{2}}D\|_{L^\infty H^3}+\|\Omega^3_{0\neq}\|_{L^\infty H^3}\lesssim\varepsilon,\\
			\|V^3_{0\neq}\|_{L^\infty H^5}=&\|\partial_Zp^{-1}D_{0\neq}+p^{-1}\Omega^3_{0\neq}\|_{L^\infty H^5}\\
			\lesssim&\|\partial_ZD\|_{L^\infty H^3}+\|\Omega^3_{0\neq}\|_{L^\infty H^3}\lesssim\nu^{-1/2}\varepsilon.
		\end{align*}
		For $pV^1_{\neq}$, noting that
		\begin{equation*}
			V^1_{\neq}=-\partial_Xp^{-1}D_{\neq}+\tilde{\partial}_Y\partial_X^{-1}p^{-1}W^2_{\neq}-\tilde{\partial}_Yp^{-1}{\de}_{\neq}+\nu \tilde{\partial}_Yp^{-1}D_{\neq}+\partial_Z\partial_X^{-1}p^{-1}\Omega^3_{\neq},
		\end{equation*}
		we deduce that
		\begin{align*}
			\|m^{\frac{1}{2}}pV^1_{\neq}\|_{L^2H^3}\lesssim&\|p^{\frac{1}{2}}m^{\frac{1}{4}}\partial_Xp^{-\frac{1}{2}}D_{\neq}\|_{L^2H^3}+\|m^{\frac{1}{4}}\tilde{\partial}_Y(W^2_{\neq},{\de}_{\neq})\|_{L^2H^3}+\nu\|\partial_YD_{\neq}\|_{L^2H^3}+\|p^{\frac{1}{2}}m\Omega^3_{\neq}\|_{L^2H^3}\\
			\lesssim&\nu^{-1/2}\varepsilon.
		\end{align*}
		For $\nabla_{X,Z}^2V^2_{\neq}$, it holds that
		\begin{align*}
			\|\nabla_{X,Z}^2V^2_{\neq}\|_{L^2H^3}\lesssim\|\nabla_{X,Z}^2p^{-\frac{1}{2}}D_{\neq}\|_{L^2H^3}+\|\nabla_{X,Z}^2p^{-1}W^2_{\neq}\|_{L^2H^3}\lesssim\nu^{-1/2}\varepsilon.
		\end{align*}
		The remaining estimates follow from a similar argument, and we omit the details.
	\end{proof}
	Finally, we prove the main result.
    \begin{proof}[Proof of \cref{MT}]
        A continuity argument implies that $T=+\infty$ by combining the local existence of the solution (Lemma \ref{local-exis-1}) with  Proposition \ref{prop-bs}. Then \cref{MT} is a direct corollary of this, together with the uniform bound of $V^1_{00}$ from  \cref{2026-3-21-2}.
    \end{proof}

	\section{Estimates on double zero mode}
	The main goal of this section is to give a uniform bounded estimate of $\norm{v_{00}^1}_{L_t^\infty L_x^\infty}$.
	To achieve this goal, we firstly give the estimates of  $\theta_i$, $i=1,2$, which will be used frequently.
	\begin{Lem}\label{estimateontheta}
		For the initial data \cref{initial},  it holds that for $i=1,2$, 
		\begin{align}
			\norm{\p_y^k \theta_i(\cdot,t)}_{L^p}\lesssim \varepsilon\left[\bar{\nu}(1+t)\right]^{-\frac{1}{2}+\frac{1}{2p}-\frac{k}{2}}, \quad  k\geq0,\ p\geq 1.
		\end{align}
	\end{Lem}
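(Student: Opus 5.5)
We read the bound directly off the explicit Hopf--Cole formula \cref{theta}. Write $\theta_i(y,t)=\frac{\bar\nu^{1/2}}{\sqrt{2(1+t)}}\Gamma_i(\xi)$ with $\xi=\frac{y\pm(1+t)}{\sqrt{2\bar\nu(1+t)}}$. Each $y$-derivative produces a factor $(2\bar\nu(1+t))^{-1/2}$, and changing variables $y\mapsto\xi$ in the $L^p$ norm produces $(2\bar\nu(1+t))^{1/(2p)}$. This gives the scaling identity
\[
\norm{\p_y^k\theta_i(\cdot,t)}_{L^p}=c_{k,p}\,\bar\nu^{\frac12}(1+t)^{-\frac12}\left[\bar\nu(1+t)\right]^{-\frac k2+\frac1{2p}}\norm{\Gamma_i^{(k)}}_{L^p}.
\]
It therefore suffices to prove
\[
\norm{\Gamma_i^{(k)}}_{L^p}\lesssim \bar\nu^{-1}\abs{\eta_i}\lesssim \bar\nu^{-1}\varepsilon,
\]
uniformly for $\abs{\eta_i}/\bar\nu$ small. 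With this bound the prefactor becomes $\bar\nu^{1/2}(1+t)^{-1/2}\bar\nu^{-1}\varepsilon=\varepsilon[\bar\nu(1+t)]^{-1/2}$, which is exactly the claimed rate.

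We bound $\Gamma_i$ using the smallness of the mass. From \cref{initial}, $\abs{\eta_i}\le\abs{l_i}\,\norm{(\phi,\varphi_2)_{in}}_{L^1}\lesssim\varepsilon=\varepsilon_0\nu^{3/2}$. Since $\bar\nu\approx\nu$, we get $\abs{\eta_i}/\bar\nu\lesssim\varepsilon_0\nu^{1/2}\ll1$. Set $\lambda=e^{\eta_i/\bar\nu}-1$, so that $\abs\lambda\lesssim\abs{\eta_i}/\bar\nu$ and $\abs\lambda\le1/2$. The denominator $\sqrt\pi+\lambda\int_\xi^\infty e^{-s^2}ds$ is then bounded below by $\sqrt\pi(1-\abs\lambda)\ge\sqrt\pi/2$, uniformly in $\xi$. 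Hence $\abs{\Gamma_i}\lesssim\abs\lambda e^{-\xi^2}$.

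Differentiating the quotient $k$ times gives terms that are products of derivatives of $e^{-\xi^2}$ (each bounded by a polynomial times $e^{-\xi^2}$) and powers of the denominator, whose derivatives are $-\lambda e^{-\xi^2}$ and its derivatives. By induction, $\abs{\Gamma_i^{(k)}(\xi)}\lesssim_k\abs\lambda\langle\xi\rangle^k e^{-\xi^2}$, since every term carries at least one factor $\lambda$ and $\abs\lambda\le1/2$ absorbs the higher powers. This function lies in every $L^p$, $1\le p\le\infty$, with norm $\lesssim_{k,p}\abs\lambda\lesssim\varepsilon/\bar\nu$.

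The only point requiring care is the uniform lower bound on the denominator, that is, ensuring the diffusion wave is nondegenerate. This is where the smallness $\varepsilon\ll\bar\nu$ is used. For $\eta_i<0$ the ratio $\lambda$ lies in $(-1,0)$, and the bound needs $\abs\lambda$ bounded away from $1$, which our smallness assumption guarantees. The remaining steps are routine scaling. The implicit constants depend on $k,p$; this is harmless since only finitely many $k$ are used later.
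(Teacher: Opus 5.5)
Your argument is correct and is the same as the paper's, which only asserts that the bound follows from the explicit Hopf--Cole formula \cref{theta} together with $|\eta_i|=O(\varepsilon)$ and omits the details. Your scaling identity and the bound $|\Gamma_i^{(k)}(\xi)|\lesssim_k|\lambda|\langle\xi\rangle^k e^{-\xi^2}$ supply those details. You also correctly make explicit that the smallness $|\eta_i|/\bar\nu\lesssim\varepsilon_0\nu^{1/2}\ll1$, and not merely $|\eta_i|=O(\varepsilon)$, is what keeps the denominator bounded below and gives the linear dependence on $\varepsilon$.
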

	\begin{proof}
		Note that $\abs{\eta_i}=O(1)\varepsilon$,  Lemma \ref{estimateontheta} can be directly derived by the formulas \cref{equ-theta,theta}. The proof is then omitted. 	
	\end{proof}
	
	\subsection{Estimates on coupled diffusion waves}
Note that the Green's function of the equation $v_t+\sigma v_x=\frac{\nu}{2}v_{xx} $ is  $$H(x,t;\sigma,\nu)=\frac{1}{\sqrt{2\pi \nu t}}e^{-\frac{(x-\sigma t)^2}{2\nu t}}.$$
	Let
	$H(\sigma,\mu):=H(y-s,t-\tau;\sigma,\mu).$ 
	By Duhamel's principle, one has from \cref{equ-Xi} that 
	\begin{align}\label{XI}
		\tilde{\Xi}_i(y, t)=-\int_0^t \int_{-\infty}^{\infty} H(\sigma_i,\frac{\bar{\nu}}{2}) \left(\theta_{i'}^2 / 2+\theta_1\Xi_1+\theta_2 \Xi_2+\frac{2-P''(1)}{2+P''(1)}(\theta_1\Xi_2+\theta_2\Xi_1)\right)(s, \tau) d s d \tau,
	\end{align}
	where $i'=3-i$ and $\sigma_i=(-1)^i $. By \cref{XI}, we have
	\begin{Lem}\label{estimateonxi}
		Under the same conditions of \cref{MT},  it holds that for $i=1,2$, 
		\begin{align}
			&\norm{\tilde{\Xi}_i(\cdot,t)}_{L^\infty}\lesssim \varepsilon^2 \nu^{-\frac12}(\bar{\nu}(1+t))^{-\frac{1}{4}},\nonumber\\
			&\norm{\p_y^{k+1}\tilde{\Xi}_i(\cdot,t)}_{L^p}\lesssim\varepsilon^2\left(\bar{\nu}(1+t)\right)^{-\frac{3}{4}+\frac{1}{2p}-\frac{k}{2}}, \quad  k\geq0,\ p\geq 1.
		\end{align}
	\end{Lem}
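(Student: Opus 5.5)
We argue from the Duhamel representation \cref{XI}, combined with a bootstrap (or continuity) argument in time for the weighted quantities
\[
\mathcal{X}(t):=\sup_{0\le\tau\le t}\Big[\nu^{\frac12}(\bar\nu(1+\tau))^{\frac14}\norm{\tilde\Xi_i(\tau)}_{L^\infty}+\sum_{k\le k_0}\sup_{p}(\bar\nu(1+\tau))^{\frac34-\frac1{2p}+\frac k2}\norm{\p_y^{k+1}\tilde\Xi_i(\tau)}_{L^p}\Big]/\varepsilon^2 .
\]
The forcing in \cref{XI} has two kinds of terms. The first is the source $\theta_{i'}^2/2$, which is quadratic in the diffusion waves and independent of $\Xi$. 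The second consists of the linear terms $\theta_j\Xi_l$, which carry a small factor $\theta_j=O(\varepsilon\nu^{1/2}(1+t)^{-1/2})$ by Lemma \ref{estimateontheta}. We first treat the source term exactly. We then show that the linear terms contribute at most $C\varepsilon\nu^{-1/2}\mathcal{X}(t)$ times the target rates; since $\varepsilon\nu^{-1/2}=\varepsilon_0\nu\ll1$, this closes the bootstrap and gives $\mathcal{X}\lesssim1$.

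For the source term, the key point is that $\theta_{i'}$ is a Gaussian moving with speed $\sigma_{i'}=-\sigma_i$ (see \cref{theta}), with $|\theta_{i'}^2|\lesssim\varepsilon^2(1+\tau)^{-1}e^{-\frac{(s-\sigma_{i'}(1+\tau))^2}{C\bar\nu(1+\tau)}}$. The kernel $H(\sigma_i,\bar\nu/2)$ instead moves with speed $\sigma_i$. We will use the standard Huygens-type space-time estimate for the convolution of a heat kernel moving with speed $\sigma_i$ against a Gaussian source moving with the opposite speed (as in \cite{HLX} and the classical Liu--Zeng analysis). We split the time integral into $[0,t/2]$ and $[t/2,t]$, and in $s$ according to whether $|y-\sigma_i t|$ or $|y-\sigma_{i'}t|$ is smaller. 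Because the two speeds differ by $2$, the mass of the source that the kernel "sees" at time $\tau$ is only the portion lying in a window of width $\sqrt{\bar\nu(t-\tau)}$. This yields a pointwise bound
\[
|\tilde\Xi_i(y,t)|\lesssim\varepsilon^2\nu^{-\frac12}\Big[(1+t)^{-\frac12}\big(e^{-\frac{(y-\sigma_it)^2}{C\bar\nu(1+t)}}+e^{-\frac{(y-\sigma_{i'}t)^2}{C\bar\nu(1+t)}}\big)+\text{(Huygens plateau between the two wave fronts)}\Big].
\]
Taking suprema gives the stated $L^\infty$ rate. The bound has a $\nu^{-1/2}$ loss, because the time integral of the plateau is of order $\int(1+\tau)^{-1}d\tau$, normalized by the kernel width $\sqrt{\bar\nu}$. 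For the derivative bounds we differentiate \cref{XI} in $y$. When the derivative falls on the kernel near $\tau=t$, we instead move it onto the source by integrating by parts in $s$, using $\p_y\theta_{i'}^2$, which gains $(\bar\nu(1+\tau))^{-1/2}$. The $\tau\in[0,t/2]$ piece puts all $k+1$ derivatives on $H$, gaining $(\bar\nu t)^{-(k+1)/2}$. The $L^p$ norms then follow by interpolating between $p=1$ and $p=\infty$, exploiting the spatial localization (width $\sqrt{\bar\nu(1+t)}$ around each front). Note that $\p_y\tilde\Xi_i=\Xi_i$ avoids the plateau's $\nu^{-1/2}$ loss, because the plateau is flat and only its edges contribute.

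For the linear terms $\theta_j\Xi_l$, the bootstrap bounds $\norm{\Xi_l}_{L^p}\lesssim\mathcal{X}\varepsilon^2(\bar\nu(1+\tau))^{-3/4+1/(2p)}$ together with $\norm{\theta_j}_{L^\infty}\lesssim\varepsilon(\bar\nu(1+\tau))^{-1/2}$ yield the source bound $\norm{\theta_j\Xi_l}_{L^1}\lesssim\varepsilon^3\mathcal{X}(\bar\nu(1+\tau))^{-3/4}$. Young's inequality with $\norm{\p_y^kH}_{L^p}$ then gives the required rates after splitting at $t/2$. The $\tau$ integrals converge up to logarithmic factors, which are absorbed by the extra $\bar\nu^{-1/4}$ headroom. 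The resulting prefactor is of order $\varepsilon\nu^{-1/2}$, which is small.

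The main obstacle is the source step: quantifying precisely the Huygens interaction between the oppositely moving $\theta_{i'}^2$ and the kernel, so that exactly $\nu^{-1/2}(\bar\nu(1+t))^{-1/4}$ appears in $L^\infty$ and no further powers of $\nu$ are lost. We will handle it by explicit Gaussian integral computations in the moving coordinates $y\mp t$, completing the square in $s$.
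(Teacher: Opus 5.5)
Your route is the one the paper has in mind when it defers to \cite{HLX}: Duhamel via \cref{XI}, the Huygens-type crossing of $H(\sigma_i,\cdot)$ with the oppositely moving $\theta_{i'}^2$, and a perturbative bootstrap for $\theta_j\Xi_l$. However, the step meant to deliver the second family of bounds fails. You claim that $\Xi_i=\p_y\tilde\Xi_i$ ``avoids the plateau's $\nu^{-1/2}$ loss because only its edges contribute.'' With the input you use, $|\eta_{i'}|\lesssim\varepsilon$, the $\nu^{-1/2}$ sits exactly at an edge. Source emitted at early times $\tau\lesssim\sqrt{\bar\nu(1+t)}$ has mass $\norm{\theta_{i'}^2(\tau)}_{L^1}\approx\eta_{i'}^2(\bar\nu(1+\tau))^{-1/2}$. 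All of it is carried to the $\sigma_i$-front $y\approx\sigma_it+\sigma_{i'}$, with spread $\sqrt V$, $V\approx\bar\nu(1+t)$. Completing the square as you propose gives, for $i=1$ and $\bar\nu(1+t)\gtrsim1$,
\[
|\tilde\Xi_1(1-t,t)|\approx\eta_2^2\int_0^t\frac{e^{-2\tau^2/V}}{\sqrt{\bar\nu(1+\tau)}\,\sqrt{V}}\,d\tau\approx\eta_2^2\,\bar\nu^{-\frac12}\big(\bar\nu(1+t)\big)^{-\frac14},
\]
while $\tilde\Xi_1$ is negligible to the left of $1-t-C\sqrt V$. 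So the $L^\infty$ bound is attained here. It is neither a plateau-interior effect nor an $\int(1+\tau)^{-1}d\tau$ effect, which would give a logarithm. Since $\tilde\Xi_i(-\infty,t)=0$, you have $\norm{\Xi_i}_{L^1}\ge\norm{\tilde\Xi_i}_{L^\infty}$ and $\norm{\Xi_i}_{L^\infty}\gtrsim\norm{\tilde\Xi_i}_{L^\infty}/\sqrt{\bar\nu(1+t)}$. Hence, when $|\eta_{i'}|\sim\varepsilon$, every derivative norm carries the same $\nu^{-1/2}$ as the first estimate, and no refinement of the Duhamel computation can remove it. In fact the two estimates of the lemma are compatible only if the first one is not sharp. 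Your normalizations are also off. Lemma~\ref{estimateontheta} gives $\norm{\theta_j}_{L^\infty}\lesssim\varepsilon(\bar\nu(1+t))^{-1/2}$, not $\varepsilon\nu^{1/2}(1+t)^{-1/2}$. Likewise $|\theta_{i'}^2|\lesssim\varepsilon^2(\bar\nu(1+\tau))^{-1}e^{-(s-\sigma_{i'}(1+\tau))^2/(C\bar\nu(1+\tau))}$, not $\varepsilon^2(1+\tau)^{-1}e^{-(\cdots)}$.

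The missing ingredient is extra smallness of the wave masses, and it is hidden in \cref{initial}. Since $\theta_i(\cdot,0)$ has width $\sqrt{2\bar\nu}$, one has $\norm{\theta_i(\cdot,0)}_{L^2}\approx|\eta_i|\bar\nu^{-1/4}$. Also $\p_y\tilde W_i(\cdot,0)=w_i(\cdot,0)-\theta_i(\cdot,0)+O(\bar\nu\,\p_y\theta_{i'}(\cdot,0))$ with $\norm{w_i(\cdot,0)}_{L^2}\lesssim\varepsilon$. So the $H^2$ (indeed already the $H^1$) bound on $(\tilde W_1,\tilde W_2)_{in}$ forces $|\eta_i|\lesssim\varepsilon\bar\nu^{1/4}$, and the $H^2$ part even gives $\varepsilon\bar\nu^{3/4}$. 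With $\eta_{i'}^2\bar\nu^{-1/2}\lesssim\varepsilon^2$, your explicit Gaussian computation of the source part yields the stated rates for $\p_y^{k+1}\tilde\Xi_i$, and a better $L^\infty$ bound than claimed. The bootstrap for $\theta_j\Xi_l$ then closes with a prefactor that is still a positive power of $\nu$ (since $\varepsilon=\varepsilon_0\nu^{3/2}$), though larger than your $\varepsilon\nu^{-1/2}$ once $\theta_j$ is normalized correctly. Without this input, the most your argument proves is the derivative bounds with an additional factor $\nu^{-1/2}$.
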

	\begin{proof}
		This proof is similar to that of \cite{HLX}, so we omit it here.
	\end{proof}
	
	\subsection{Estimate on $\tilde W$}
	This subsection is devoted to obtain $\norm{\Big(\tilde{W}_1(\cdot,t),\tilde{W}_2(\cdot,t)\Big)}_{L^2}$ and $\norm{\Big(\tilde{W}_1(\cdot,t),\tilde{W}_2(\cdot,t)\Big)}_{L^\infty}$.
	\begin{Lem}\label{1or}
		Under the same  assumptions as \cref{MT} and  Proposition \ref{est:main}, it holds that 
		\begin{align}
			\begin{aligned}
				&\norm{\left(\tilde{W}_1(\cdot,t),\tilde{W}_2(\cdot,t)\right)}^2_{L^2}+\bar{\nu}\int_0^t\norm{\Big(\p_y\tilde{W}_1,\p_y\tilde{W}_2\Big)}_{L^2}^2d\tau \lesssim \varepsilon^2, \qquad \norm{\Big(\tilde W_1, \tilde W_2\Big)}_{L^\infty}\lesssim\varepsilon \nu^{-\frac18}.
			\end{aligned}
		\end{align}
	\end{Lem}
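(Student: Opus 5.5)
We run an $L^2$ energy estimate on the anti-derivative system \cref{anti}, $\mathbf{\tilde{W}}_t+A\mathbf{\tilde{W}}_y=B\mathbf{\tilde{W}}_{yy}+\mathbf{\tilde{K}}$, and then interpolate to get the $L^\infty$ bound.

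\textbf{Energy identity.} We take the $L^2$ inner product of \cref{anti} with $\mathbf{\tilde W}$. Since $A$ is diagonal and constant, the transport term integrates to zero. The matrix $B=\frac{\bar\nu}{2}\begin{pmatrix}1&1\\1&1\end{pmatrix}$ is only positive semidefinite, so the dissipation this produces is $\frac{\bar\nu}{2}\|\p_y(\tilde W_1+\tilde W_2)\|_{L^2}^2$. To recover dissipation for each component we use the hyperbolic–parabolic structure in the standard Kawashima way. The $\tilde W_2-\tilde W_1$ direction corresponds to $\phi_{00}$, which has no viscosity in \cref{equ-rhou2}. We add a small multiple $\kappa\bar\nu$ of the cross term $\int (\tilde W_1+\tilde W_2)_y(\tilde W_2-\tilde W_1)\,dy$, i.e. we test the sum equation against $\p_y$ of the difference. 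The transport part $A\mathbf{\tilde W}_y$ then produces $\kappa\bar\nu\|\p_y(\tilde W_2-\tilde W_1)\|_{L^2}^2$ with a good sign. The cost is a time-derivative term $\frac{d}{dt}\kappa\bar\nu\int \p_y(\ldots)(\ldots)$, which is dominated by the energy for $\kappa$ small because $\bar\nu\|\p_y\tilde W\|^2\le$ the $H^1$-level quantity. For this we also carry an $H^1$ estimate, i.e. the analogous estimate on $\tilde w=\p_y\tilde W$, with weight $\bar\nu$. Altogether this yields
\[
\frac{d}{dt}\mathcal E+c\bar\nu\|\p_y\mathbf{\tilde W}\|_{L^2}^2\lesssim \Big|\int \mathbf{\tilde K}\cdot\mathbf{\tilde W}\Big|+\bar\nu\Big|\int\mathbf{\tilde K}\cdot\p_y(\ldots)\Big|,
\]
with $\mathcal E\approx\|\mathbf{\tilde W}\|_{L^2}^2$. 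The initial data is controlled by \cref{initial}, namely $\|\mathbf{\tilde W}_{in}\|_{H^2}\le\varepsilon$.

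\textbf{Source terms.} We split $\tilde K_i=\tilde E_i+\tilde N$ following \cref{tildeK}.

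For $\tilde E_i$, the cancellation noted after \cref{tildeK} leaves explicit terms: $\theta$-quadratic remainders of size $(1+t)^{-3/2}$-type Gaussians, products $\theta\Xi$, $\Xi^2$, and $\bar\nu\p_y(\cdots)$. By Lemma \ref{estimateontheta} and Lemma \ref{estimateonxi}, $\|\tilde E_i\|_{L^2}\lesssim\varepsilon^2\nu^{-3/4}(1+t)^{-5/4}$, or a similar integrable power. Hence $\int_0^t\|\tilde E\|_{L^2}\|\mathbf{\tilde W}\|_{L^2}\,d\tau\lesssim\varepsilon^2\nu^{-3/4}\sup\|\mathbf{\tilde W}\|_{L^2}$. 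This is acceptable since $\varepsilon\nu^{-3/4}\ll1$ when $\varepsilon=\varepsilon_0\nu^{3/2}$.

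The term $E_3$ also contains cubic pieces and the $\tilde w\theta$, $\tilde w\Xi$ cross terms. These are $\tilde w\,\mathcal C$, which we integrate by parts onto $\tilde W$ and bound by $\|\mathcal C\|_{L^\infty}\|\tilde W_y\|_{L^2}\|\tilde W\|_{L^2}$. With $\|\mathcal C\|_{L^\infty}\lesssim\varepsilon(\bar\nu(1+t))^{-1/2}$ this is absorbed via Young's inequality into $\bar\nu\|\tilde W_y\|^2$ plus $\varepsilon^2\nu^{-2}(1+t)^{-1}\|\tilde W\|^2$. The latter is handled by Gronwall only if it is integrable. So instead we use the Gaussian localization of $\theta$ with a Hardy/weighted-type inequality $\int \theta^2 \tilde W^2\lesssim \|\theta\|_{L^1}\|\theta\|_{L^\infty}\|\tilde W\|_{L^\infty}^2$ together with $\|\tilde W\|_{L^\infty}^2\le\|\tilde W\|\|\tilde W_y\|$. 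This yields a bound of the form $\varepsilon^2\nu^{-3/2}(1+t)^{-1/2}\|\tilde W\|\|\tilde W_y\|$, whose time integral is controlled by a small multiple of the dissipation plus $\varepsilon^2$-type quantities.

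For $\tilde N=\frac1{2a}(E_1+E_4)$, the leading part of $E_1$ is the non-zero-mode contribution $\int_{\T^2}v^2\varphi^2-v^2_{00}\varphi^2_{00}$, i.e. quadratic in $(V_{\neq},V_{0\neq},R_{\neq})$, together with $\bar\nu$-viscous corrections. We bound it using Proposition \ref{est:main}. The key point is that $\|\mathbf{\tilde W}\|_{L^2}$ is paired with $\|\tilde N\|_{L^2}$ through $\int_0^T\|\tilde N\|_{L^2}d\tau$, and the non-zero modes supply time integrability from the $L^2_t$ enhanced-dissipation bounds. For example, $\|V^2_{\neq}\|_{L^2H^3}\lesssim \nu^{-1/6}\cdot\nu^{-1/6}\varepsilon$-type bounds give $\int_0^T\|(V^2_{\neq})^2\|_{L^2}\lesssim\nu^{-2/3}\varepsilon^2$. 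Simple-zero-mode products $V^2_{0\neq}V^2_{0\neq}$ are handled through the $\nu^{1/2}L^2$ bounds and cost $\nu^{-1}\varepsilon^2$. All of this is $\lesssim\varepsilon$ since $\varepsilon\nu^{-1}\ll1$. The $E_4$ term $(v^2_{00}-\varphi^2_{00}/\rho_{00})\varphi^2_{00}\sim\phi_{00}(\varphi^2_{00})^2$ is cubic and harmless.

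\textbf{Main obstacle and $L^\infty$ bound.} The main obstacle is the simple-zero-mode part of $\tilde N$, which carries no inviscid-damping time decay. Its control must come solely from the $\nu^{1/2}\|\cdot\|_{L^2_t}$ dissipation bounds, and it is there that the full $\nu^{3/2}$ smallness is consumed. Closing the estimate gives $\sup_t\|\mathbf{\tilde W}\|_{L^2}^2+\bar\nu\int_0^t\|\p_y\mathbf{\tilde W}\|^2\lesssim\varepsilon^2$. Running the same argument one derivative higher, with the extra factor $\bar\nu$, gives $\|\p_y\mathbf{\tilde W}\|_{L^2}\lesssim\varepsilon\nu^{-1/2}$, or at worst $\varepsilon\nu^{-3/4}$. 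Gagliardo–Nirenberg, $\|f\|_{L^\infty}\le\|f\|_{L^2}^{1/2}\|f_y\|_{L^2}^{1/2}$, would then give $\varepsilon\nu^{-1/4}$. The stated bound $\varepsilon\nu^{-1/8}$ suggests that the $H^1$ bound we need is $\|\p_y\mathbf{\tilde W}\|_{L^2}\lesssim\varepsilon\nu^{-1/4}$. We will try to obtain it by interpolating the $H^1$ energy between the $L^2$ bound and an $H^2$ bound $\lesssim\varepsilon\nu^{-1/2}$.
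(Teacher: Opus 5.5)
Your skeleton matches the paper's proof. It consists of an $L^2$ energy estimate on \cref{anti}, a Kawashima-type cross term obtained by testing the equation for $\tilde W_1+\tilde W_2$ against $\p_y(\tilde W_2-\tilde W_1)$, explicit decay for $\tilde E^{(1)}$ and $\tilde K_2-\tilde K_1$, and time-integrated control of $\tilde N$ from the non-zero-mode bounds. Those pieces are fine. In fact $\tilde N$ only costs $\varepsilon^3\nu^{-9/8}$, so it is not where the $\nu^{3/2}$ smallness is consumed.

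The genuine gap is the interaction of the perturbation with the diffusion waves: the terms $\p_y\tilde W_i\,\theta_j$ in $\tilde E^{(2)}$, tested against $\tilde W_1+\tilde W_2$. This interaction is critical. Your bound $\int\theta^2\tilde W^2\lesssim\norm{\theta}_{L^1}\norm{\theta}_{L^\infty}\norm{\tilde W}_{L^\infty}^2$ uses no localization of $\theta$ at all. It leaves $\varepsilon^2\nu^{-3/2}(1+t)^{-1/2}\norm{\tilde W}_{L^2}\norm{\p_y\tilde W}_{L^2}$, and
\[
\int_0^T(1+t)^{-\frac12}\norm{\tilde W}_{L^2}\norm{\p_y\tilde W}_{L^2}\,dt\le\sup_{t\le T}\norm{\tilde W}_{L^2}\,\big(\log(1+T)\big)^{\frac12}\Big(\int_0^T\norm{\p_y\tilde W}_{L^2}^2dt\Big)^{\frac12},
\]
which is not uniform in $T$. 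After Young's inequality you are back to $\int_0^T(1+t)^{-1}\norm{\tilde W}_{L^2}^2dt$, and Gronwall then gives growth in $T$. So your claim that this term is controlled by a small multiple of the dissipation plus $\varepsilon^2$ fails, and the uniform bound is lost with it.

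The missing idea is a weighted energy estimate along the characteristics $y=\pm(1+t)$ of the diffusion waves. Let $h$ be the heat kernel moving with speed $+1$, as in \cref{q}, and $\tilde h$ its mirror image.

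\emph{Transversal component.} $\tilde W_1$ travels with speed $-1$. Test its equation with $\eta_1\tilde W_1$, where $\eta_1=\exp\int_{-\infty}^y h$. Since $\p_t\eta_1-\p_y\eta_1=\eta_1\bigl(\frac{\bar\nu}{2}\p_yh-2h\bigr)$, this produces a term $\eta_1h\tilde W_1^2$ with a good sign.

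\emph{Co-moving component.} For $\tilde W_2$, test with $n^2\tilde W_2$, where $n=\int_{-\infty}^y h$, and use the Huang--Li--Matsumura inequality (Lemma \ref{Ptii}). It bounds $\int_0^T\!\int h^2\tilde W_2^2$ by $\bar\nu^{-1}\norm{\tilde W_2(0)}_{L^2}^2+2\int_0^T\norm{\p_y\tilde W_2}_{L^2}^2$, plus terms read off from the equation.

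\emph{Closing.} Since $\theta_j^2\lesssim\varepsilon^2[\bar\nu(1+t)]^{-1}e^{-(y\pm(1+t))^2/(\bar\nu(1+t))}$, the bad term becomes
\[
\varepsilon^2\nu^{-1}\int_0^T\!\int[\bar\nu(1+t)]^{-1}e^{-(y\pm(1+t))^2/(\bar\nu(1+t))}(\tilde W_1+\tilde W_2)^2\,dy\,dt .
\]
The coupled weighted estimates for $h$ and $\tilde h$ then close because $\varepsilon^2\nu^{-2}\ll1$.

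Your $L^\infty$ bound is also not established, and no higher-order energy estimate is needed for it. We have $\p_y\tilde W_i=\tilde w_i=w_i-\mathcal C_i$ with $w=L\bar w$ and $\bar w=(\phi_{00},\varphi^2_{00})$. So Proposition \ref{est:main} and Lemmas \ref{estimateontheta}--\ref{estimateonxi} give directly
\[
\norm{\p_y\tilde W}_{L^2}\lesssim\norm{(\phi_{00},v^2_{00})}_{L^2}+\norm{\theta_i}_{L^2}+\norm{\Xi_i}_{L^2}\lesssim\varepsilon\bar\nu^{-1/4}.
\]
The factor $\bar\nu^{-1/4}$ comes from $\norm{\theta_i}_{L^2}$ at early times. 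Hence $\norm{\tilde W}_{L^\infty}\le\norm{\tilde W}_{L^2}^{1/2}\norm{\p_y\tilde W}_{L^2}^{1/2}\lesssim\varepsilon\nu^{-1/8}$. This bound is also used inside the energy estimate itself, paired with $\norm{\tilde N}_{L^1}$.
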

	\begin{proof}
		Recalling \cref{tildeK,anti}, $\mathbf{\tilde{W}}$ satisfies
		\begin{align}\label{AA}
			\mathbf{\tilde{W}}_t+A\mathbf{\tilde{W}}_y=B\mathbf{\tilde{W}}_{yy}+\mathbf{\tilde{K}},
		\end{align}
		where
		\begin{align}
			A=\left(\begin{array}{cc}-1& 0 \\ 0 & 1\end{array}\right), \quad {B}=\left(\begin{array}{cc}\frac{\bar{\nu}}{2} & \frac{\bar{\nu}}{2} \\ \frac{\bar{\nu}}{2} & \frac{\bar{\nu}}{2}\end{array}\right),\quad \mathbf{\tilde{K}}=\left(\begin{array}{c}\tilde{K}_1\\ \tilde{K}_2\end{array}\right).
		\end{align}
		To capture the viscous effect, we denote $(\mathbb{W}_1,\mathbb{W}_2):=(\tilde{W}_2-\tilde{W}_1,\tilde{W}_2+\tilde{W}_1)$, then one has
		\begin{align}\label{hatv}
			\begin{cases}
				\mathbb{W}_{1t}+\mathbb{W}_{2y}=\tilde{K}_{2}-\tilde{K}_{1},\\
				\mathbb{W}_{2t}+\mathbb{W}_{1y}=\bar{\nu}\mathbb{W}_{2yy}+\tilde{K}_{1}+\tilde{K}_{2}.
			\end{cases}
		\end{align}
		From \cref{tildeK}, one has
		\begin{align}\label{K2-K1}
			\tilde{K}_2-\tilde{K}_1=&\frac{\bar{\nu}}{2}\p_y\left[\theta_1^2 / 2+\theta_2^2 / 2+\theta_1\Xi_1+\theta_2 \Xi_2+\frac{2-P''(1)}{2+P''(1)}(\theta_1\Xi_2+\theta_2\Xi_1) \right]\\ \nonumber
			&-\frac{\bar{\nu}^2}{8}\p_y^2(\theta_1+\theta_2+\Xi_1+\Xi_2),
		\end{align}
		which implies from  Lemma \ref{estimateontheta} and  Lemma \ref{estimateonxi} that, for $k\geq0$, 
		\begin{align}\label{estimateon2-1}
			\norm{\p_y^k\left(\tilde{K}_2-\tilde{K}_1\right)}_{L^1}\lesssim\varepsilon \nu^2\left[\bar{\nu}(1+t)\right]^{-1-\frac{k}{2}},\quad\norm{\p_y^{k}\left(\tilde{K}_2-\tilde{K}_1\right)}_{L^2}&\lesssim \varepsilon\bar{\nu}^{2}\left[\bar{\nu}(1+t)\right]^{-\frac{5}{4}-\frac{k}{2}}.
		\end{align}
		The estimate of $\tilde{K}_1+\tilde{K}_2$ is complicated.  The formulas \cref{tildeK} and \cref{E}  give that 
		\begin{align}\label{K2+K1}
			\abs{\tilde{K}_1+\tilde{K}_2-\frac{E_2}{a}}=O(1)\abs{\tilde{E}^{(1)}+\tilde{E}^{(2)}+\tilde{N}},
		\end{align}
		where 	$E_2=-(2\nu+\nu')\p_y\left(\frac{\varphi^2_{00}{\lde}_{00}}{\rho_{00}}\right)$, 
		\begin{align}\label{E2}
			\frac{\varphi^2_{00}{\lde}_{00}}{\rho_{00}}&=a^2 \frac{(\tilde{W}_{1y}+\tilde{W}_{2y}+\mathcal{C}_1+\mathcal{C}_2)(\tilde{W}_{2y}-\tilde{W}_{1y}+\mathcal{C}_2-\mathcal{C}_1)}{\tilde{W}_{2y}-\tilde{W}_{1y}+\mathcal{C}_2-\mathcal{C}_1+1}\\ \nonumber
			&=O(1)\sum_{i,j=1}^2 \left(\abs{\tilde{W}_{iy}\tilde{W}_{jy}}+ \abs{\tilde{W}_{iy}\theta_i}+\abs{\theta_i\theta_j}\right).
		\end{align}
		and
		\begin{align}\label{e1}
			\abs{\tilde{E}^{(1)}}=&O(1)\Bigg|\sum_{i=1}^2\left[\mathcal{C}_i^2-\theta_i^2-2\left(\theta_1\Xi_1+\theta_2 \Xi_2\right)\right.\\\notag
			&\left.\qquad\qquad+\frac{2-P''(1)}{2(2+P''(1))}\mathcal{C}_{i}\mathcal{C}_{i'}-\frac{2-P''(1)}{2+P''(1)}(\theta_1\Xi_2+\theta_2\Xi_1)\right]\\\notag
			&+\frac{\bar{\nu}}{4}\left(\theta_1^2 / 2+\theta_2^2 / 2+(\theta_1 +\theta_2)(\Xi_1+ \Xi_2)+\frac{\bar{\nu}}{2}\theta_{iy}+\frac{\bar{\nu}}{2}\Xi_{iy}\right)_y\Bigg|,\\\label{ee}
			\abs{\tilde{E}^{(2)}}=&O(1)\sum_{i,j=1}^2|\tilde{W}_{iy}\tilde{W}_{jy}+\tilde{W}_{iy}C_i|=O(1)\sum_{i,j=1}^2\left(|\tilde{W}_{iy}\tilde{W}_{jy}|+|\tilde{W}_{iy}\theta_{j}|+|\tilde{W}_{iy}\Xi_j|\right),\\\label{e2}
			\abs{\tilde{N}}=&O(1)\abs{\Doo(({\lde}_{\neq})^2+{\lde}_{\neq}v_{\neq}^2+({\lde}_{0\neq})^2+{\lde}_{0\neq}v_{0\neq}^2)}\notag\\
			&+O(\nu)\abs{\p_y\Doo(({\lde}_{\neq})^2+{\lde}_{\neq}v_{\neq}^2+({\lde}_{0\neq})^2+{\lde}_{0\neq}v_{0\neq}^2))}.
		\end{align}
		From \cref{tildev}, the worst terms in $\tilde{E}^{(1)}$ are $ \theta_i \cdot \p_y \theta_i$ and  $\Xi_1 \cdot \Xi_2$.
		By \cref{theta},   Lemma \ref{estimateontheta},  Lemma \ref{estimateonxi}, one has
		\begin{align}
			&\norm{\p_y^{k}\tilde{E}^{(1)}}_{L^1}\lesssim \varepsilon^2 \nu\left[\bar{\nu}(1+t)\right]^{-1-\frac{k}{2}},\quad \norm{\p_y^k\tilde{E}^{(1)}}_{L^2}\lesssim\varepsilon^2 \nu\left[\bar{\nu}(1+t)\right]^{-\frac{5}{4}-\frac{k}{2}}, \quad\text{for} \quad k=0,1,2.\label{2025-9-17-1}
		\end{align}
		\begin{flushleft}
			\textbf{Basic estimate}
		\end{flushleft}
		Multiplying \cref{hatv}$_1$ by $\mathbb{W}_1$ and  \cref{hatv}$_2$ by $\mathbb{W}_2$, adding them up, and  integrating over $(0,t)\times\R$, one has
		\begin{align}
			\begin{aligned}\nonumber
				\norm{(\mathbb{W}_1,\mathbb{W}_2)}^2_{L^2}+\bar{\nu}	\int_0^t \norm{\mathbb{W}_{2y}}_{L^2}^2d\tau&\lesssim \norm{\mathbb{W}_1(\cdot,0),\mathbb{W}_2(\cdot,0)}_{L^2}^2+ \abs{\int_0^T\int_{\R}(\tilde{K}_2-\tilde{K}_1)\mathbb{W}_1+(\tilde{K}_1+\tilde{K}_2)\mathbb{W}_2dydt}.
			\end{aligned}
		\end{align} 
		By \cref{estimateon2-1}, we have
		\begin{align}
			&	\int_0^T \int_\R \abs{(\tilde{K}_2-\tilde{K}_1)\mathbb{W}_1} dy dt
			\lesssim\int_0^T\norm{\tilde{K}_2-\tilde{K}_1}_{L^1}\norm{\mathbb{W}_1}_{L^2}^{\frac{1}{2}}\norm{\mathbb{W}_{1y}}_{L^2}^{\frac{1}{2}}dt\\ \nonumber
			\lesssim& C_\delta\int_0^T\bar{\nu}^{-\frac13}\norm{\tilde{K}_2-\tilde{K}_1}^{\frac{4}{3}}_{L^1}\norm{\mathbb{W}_1}_{L^2}^{\frac{2}{3}}+\delta\bar{\nu}\norm{\mathbb{W}_{1y}}_{L^2}^{2}dt
			\lesssim C_\delta\varepsilon^2 \nu+\int_0^T\delta\bar{\nu}\norm{\mathbb{W}_{1y}}^{2}_{L^2}dt.
		\end{align}
		Recall \cref{K2+K1},  $\abs{(\tilde{K}_1+\tilde{K}_2-\frac{E_2}{a})\mathbb{W}_2}\lesssim\abs{\left(\tilde{E}^{(1)}+\tilde{E}^{(2)}+\tilde{N}\right)\mathbb{W}_2} $. From \cref{2025-9-17-1} and  Proposition \ref{est:main}, we have
		\begin{align}\label{a}
			&	\int_0^T\int_R \abs{(\tilde{E}^{(1)}+\tilde{N})\mathbb{W}_2}dydt \lesssim\int_0^T\norm{\tilde{E}^{(1)}}_{L^1}\norm{\mathbb{W}_2}^{\frac{1}{2}}_{L^2}\norm{\mathbb{W}_{2y}}_{L^2}^{\frac{1}{2}}+\norm{\tilde{N}}_{L^1}\norm{\mathbb{W}_2}_{L^\infty}dt\nonumber\\ 
			\lesssim& C_\delta \bar{\nu}^{-\frac13} \int_0^T\norm{\tilde{E}^{(1)}}^{\frac{4}{3}}_{L^1}\norm{\mathbb{W}_2}_{L^2}^{\frac{2}{3}} d\tau+\delta\bar{\nu}\int_0^t\norm{\mathbb{W}_{2y}}_{L^2}^{2}dt + \norm{\mathbb{W}_2}_{L^\infty} \int_0^t \norm{\tilde N_1}_{L^1} d\tau  \nonumber\\
			\lesssim& C_\delta\varepsilon^{\frac83}\nu^{-\frac{1}{3}}+C_\delta\varepsilon^3\nu^{-\frac98}+\int_0^T\delta\bar{\nu}\norm{\mathbb{W}_{2y}}_{L^2}^{2}dt,
		\end{align}
		where we have used 
		\begin{align*}
			&\norm{\mathbb{W}_i}_{L^\infty}\lesssim \norm{\mathbb{W}_i}_{L^2}^{\frac12}\left(\norm{({\lde}_{00},v_{00}^2)}_{L^2}^{\frac12}+\sum_{j=1}^2\norm{\theta_j}_{L^2}^{\frac12} \right)\lesssim \varepsilon \nu^{-\frac18}, \\
			&\int_0^t\norm{\tilde N}_{L^1} d\tau \lesssim \int_0^t \norm{{\lde}_{\neq}}_{L^2}\norm{v^2_{\neq}}_{L^2}+\norm{{\lde}_{\neq}}_{L^2}^2+\norm{{\lde}_{0\neq}}_{L^2}^2+\norm{{\lde}_{0\neq}}_{L^2}\norm{v^2_{0\neq}}_{L^2}d\tau \lesssim \varepsilon^2 \nu^{-1}.
		\end{align*}
		By \cref{ee} and  Proposition \ref{est:main}, we have
		\begin{align}\label{b}
			&\int_0^T\int_{\R}|\tilde{E}^{(2)}\cdot\mathbb{W}_2|dydt\lesssim \int_0^T\delta\bar{\nu} \norm{(\mathbb{W}_{1y},\mathbb{W}_{2y})}^2_{L^2}dt\\ 
			&\qquad+C_\delta\varepsilon^2{\nu}^{-1}\int_0^T\int_{\R}\left[\bar{\nu}(1+t)\right]^{-1}e^{\frac{-(y\pm(1+t))^2}{\bar{\nu}(1+t)}}\mathbb{W}_2^2 dydt+C_\delta\varepsilon^4\nu^{-1}, \nonumber
		\end{align}
		where we have used the fact that $$\sum_{i,j=1}^2|\tilde{W}_{iy}\theta_j|\lesssim  \sum_{i,j=1}^2 \varepsilon(\bar{\nu}(1+t))^{-\frac{1}{2}}e^{-\frac{\abs{y-(-1)^j(1+t)}^2}{\bar{\nu}(1+t)}}\abs{\tilde{W}_{iy}},$$ due to \cref{theta} and $\abs{\eta_i}=O(1)\varepsilon$.
		Also, we have
		\begin{align}\label{c}
			\begin{aligned}
				\abs{\int_0^T\int_\R E_2\cdot\mathbb{W}_2dydt}\lesssim& \nu\sum_{i,j,k=1,2}\int_0^T\int_{\R}|\mathbb{W}_{iy}\mathbb{W}_{jy}\mathbb{W}_{ky}|+|\theta_{i}\theta_{j}\mathbb{W}_{ky}|+|\theta_{i}\mathbb{W}_{jy}\mathbb{W}_{ky}|dydt\\
				\lesssim& \delta\bar{\nu}\int_0^T\norm{\mathbb{W}_{1y},\mathbb{W}_{2y}}_{L^2}^2dt+C_\delta\varepsilon^4\nu^{-\frac12},
			\end{aligned}
		\end{align}
		where we have used the  Proposition \ref{est:main}
		\begin{align*}
			\norm{\p_y \mathbb{W}_j}_{L^\infty} \lesssim \norm{({\lde}_{00},v_{00}^2)}_{L^\infty}+\sum_{i=1}^2\norm{\theta_i}_{L^\infty}\lesssim\varepsilon\nu^{-\frac12}.
		\end{align*}
		Then it holds that 
		\begin{align}\label{v1v2}
			\begin{aligned}
				&\norm{(\mathbb{W}_1,\mathbb{W}_2)}_{L^2}^2+\bar{\nu}	\int_0^T\norm{\mathbb{W}_{2y}}_{L^2}^2dt \notag\\
				\lesssim & \delta\bar{\nu}\int_0^T \norm{\mathbb{W}_{1y}}_{L^2}^2dt+\varepsilon^2+C_\delta\varepsilon^2 \nu^{-1}\int_0^T\int_{\R} \left[\bar{\nu}(1+t)\right]^{-1}e^{\frac{-(y\pm(1+t))^2}{\bar{\nu}(1+t)}}\mathbb{W}_2^2 dydt.
			\end{aligned}
		\end{align} 
		\begin{flushleft}
			\textbf{Estimate on $\norm{\mathbb{W}_{1y}(\cdot,t)}^2_{L^2}$}
		\end{flushleft}
		
		Multiplying \cref{hatv}$_2$ by $\mathbb{W}_{1y}$, one has
		\begin{align}
			\mathbb{W}_{1y}^2+(\mathbb{W}_2\mathbb{W}_{1y})_t+\mathbb{W}_{2y}\mathbb{W}_{1t}-\bar{\nu}\mathbb{W}_{2yy}\mathbb{W}_{1y}=(\tilde{K}_1+\tilde{K}_2)\mathbb{W}_{1y}+(\cdots)_y,
		\end{align}	
		which implies from \cref{hatv}$_1$ (i.e. $\mathbb{W}_{1t}=-\mathbb{W}_{2y}+\tilde{K}_2-\tilde{K}_1, \mathbb{W}_{2yy}=-\mathbb{W}_{1yt}+(\tilde{K}_2-\tilde{K}_1)_y$) that 
		\begin{align}\label{V1y0}
			&	\mathbb{W}_{1y}^2+(\mathbb{W}_2\mathbb{W}_{1y}+\frac{\bar{\nu}}{2}\mathbb{W}_{1y}^2)_t \notag\\
			=&\mathbb{W}_{2y}^2+(\tilde{K}_1-\tilde{K}_2)\mathbb{W}_{2y}+\bar \nu(\tilde{K}_2-\tilde{K}_1)_y\mathbb{W}_{1y}+(\tilde{K}_1+\tilde{K}_2)\mathbb{W}_{1y}+(\cdots)_y.
		\end{align}	
		Integrating \cref{V1y0} on $\R\times [0,T]$ gives that 
		\begin{align}\label{V1y}
			&\int_{\R}\mathbb{W}_2\mathbb{W}_{1y}+\frac{\bar{\nu}}{2}\mathbb{W}_{1y}^2dy+	\int_0^T\norm{\mathbb{W}_{1y}}_{L^2}^2dt\\ \nonumber
			\lesssim&\int_0^T \norm{\tilde{K}_{2}-\tilde{K}_{1}}_{L^2}^2 dt+\bar{\nu}\int_0^T \norm{\tilde{K}_{2y}-\tilde{K}_{1y}}_{L^2}^2 dt+\int_0^T\bar{\nu}\norm{\mathbb{W}_{1y}}_{L^2}^2+\norm{\mathbb{W}_{2y}}_{L^2}^2dt\\\nonumber
			&+\abs{\int_{0}^T\int_{\R}\left(\tilde{K}_{1}+\tilde{K}_{2}\right)\mathbb{W}_{1y}dydt}+\norm{(\tilde{W}_{1y},\tilde{W}_{2})(\cdot,0)}_{L^2}^2.\nonumber
		\end{align}
		Using \cref{2025-9-17-1} and  Proposition \ref{est:main}, we have
		\begin{align}
			&\begin{aligned}
				\int_0^T \norm{\tilde{K}_2-\tilde{K}_1}_{L^2}^2 + \bar{\nu}  \norm{\tilde{K}_{2y}-\tilde{K}_{1y}}_{L^2}^2 dt \lesssim \varepsilon^2\nu^{\frac{3}{2}},
			\end{aligned}\\
			&\begin{aligned}\label{nmnm}
				\int_0^T \int_\R \abs{(\tilde{E}^{(1)}+\tilde{N})\mathbb{W}_{1y}}dy \lesssim \delta \int_0^T\norm{\mathbb{W}_{1y}}_{L^2}^2 dt + C_\delta\int_0^T \norm{(\tilde{E}^{(1)},\tilde{N})}_{L^2}^2dt \lesssim \delta \int_0^T\norm{\mathbb{W}_{1y}}_{L^2}^2 dt+C_\delta\varepsilon^4\nu^{-\frac{4}{3}},
			\end{aligned}\\
			&\begin{aligned}
				\int_0^T\int_{\R}|\tilde{E}^{(2)}\mathbb{W}_{1y}|dydt\lesssim& \sum_{i.j=1,2} \int_0^T\int_{\R}|\mathbb{W}_{iy}\mathbb{W}_{jy}\mathbb{W}_{1y}|+|\mathcal{C}_{i}\mathbb{W}_{jy}\mathbb{W}_{1y}|dydt
				\lesssim \bar{\nu}^{\alpha-\frac12}\int_0^T\norm{\mathbb{W}_{1y},\mathbb{W}_{2y}}_{L^2}^2dt,
			\end{aligned}\\
			&\begin{aligned}
				\bigg|\int_0^T\int_{\R} {E}_2\mathbb{W}_{1y}dydt \bigg|\lesssim& \bar{\nu}\sum_{i,j=1,2} \int_0^T\int_{\R}(|\mathbb{W}_{iy}\mathbb{W}_{jy}|+\abs{\theta_{i}\theta_{j}}+|\theta_{i}\mathbb{W}_{jy}|)|\mathbb{W}_{1yy}|dydt\\
				\lesssim&\bar{\nu}\sum_{i,j=1}^2\int_0^T\norm{\mathbb{W}_{iy}}_{L^2}^2\norm{\mathbb{W}_{1yy}}_{L^\infty}+\norm{\mathbb{W}_{iy}}_{L^2}^2+\norm{\theta_i}_{L^\infty}^2\norm{\mathbb{W}_{1yy}}_{L^2}^2\\
				&\qquad\quad\;\;+\norm{\theta_i}_{L^\infty}\norm{\theta_j}_{L^2}\norm{\mathbb{W}_{1yy}}_{L^2} dt\\
				\lesssim&\delta\bar{\nu}\int_0^T\norm{(\mathbb{W}_{1y},\mathbb{W}_{2y})}_{L^2}^2dt+C_\delta\varepsilon^3\nu^{-\frac{1}{2}},
			\end{aligned}
		\end{align}
		where we have used
		\begin{align*}
			\int_0^t \norm{\tilde N}_{L^2}^2 d\tau \lesssim \norm{({\lde}_{\neq},{\lde}_{0\neq},v_{0\neq}^2,v_{\neq}^2)}_{L^\infty}^2\int_0^t \norm{\nabla({\lde}_{\neq},{\lde}_{0\neq},v_{0\neq}^2,v_{\neq}^2)}_{L^2}^2 d\tau \lesssim \varepsilon^4\nu^{-\frac43}.
		\end{align*}
		Then we arrive at
		\begin{align}\label{V1y1}
			\begin{aligned}
				&\int_{\R}\mathbb{W}_2\mathbb{W}_{1y}+\frac{\bar{\nu}}{2}\mathbb{W}_{1y}^2dy+	\int_0^T\norm{\mathbb{W}_{1y}}_{L^2}^2dt
				\lesssim\varepsilon^2+\int_0^T \norm{\mathbb{W}_{2y}}_{L^2}^2 dt.
			\end{aligned}
		\end{align}
		\begin{flushleft}
			\textbf{Estimate on $	\int_0^T\int_{\R}\left[\bar{\nu}(1+t)\right]^{-1}e^{-\frac{(y\pm(1+t))^2}{\bar{\nu}(1+t)}}\mathbb{W}_2^2dydt$}
		\end{flushleft}
			
			We only consider the estimate of $\int_0^T\int_{\R}\left[\bar{\nu}(1+t)\right]^{-1}e^{-\frac{(y-(1+t))^2}{\bar{\nu}(1+t)}}\mathbb{W}_2^2dydt$, the other case is handled similarly. Note that $\mathbb{W}_2=\tilde{W}_1+\tilde{W}_2$, we go back the system \cref{AA} and estimate $\int_0^T\int_{\R}[\bar{\nu}(1+t)]^{-1}e^{-\frac{(y-(1+t))^2}{\bar{\nu}(1+t)}}(\tilde{W}_1^2+\tilde{W}_2^2)dydt$. From $\cref{AA}_1$, $\tilde{W}_1$ {propagates backward with speed $-1$}, while the weight function $
			e^{-\frac{(y-(1+t))^2}{\bar{\nu}(1+t)}}$ travels forward with speed $1$, and hence we can expect stronger estimate for $\tilde{W}_1$ than $\int_0^T\int_{\R}[\bar{\nu}(1+t)]^{-1}e^{-\frac{(y-(1+t))^2}{\bar{\nu}(1+t)}}\tilde{W}_1^2dydt$. Indeed, 
			set
			\begin{align}\label{q}
				\eta_1=\exp \left(\int_{-\infty}^{y} h(z, t) d z\right), \quad h(t,z)=\frac{1}{\sqrt{2 \pi \bar{\nu}(1+t)}} \exp \left(-\frac{\left(z-(1+t)\right)^2}{2 \bar{\nu}(1+t)}\right),
			\end{align} where $h$ satisfies
			$$
			h_t+h_y=\frac{\bar{\nu}}{2} \p_y^2h,\qquad
			1 \leq \eta_1 \leq e,
			$$
			and
			$$
			\p_t\eta_{1}=\eta_1 \int_{-\infty}^{y} h_t(z, t) d z=\eta_1\left(\frac{\bar{\nu}}{2} \p_yh- h\right), \quad \p_y\eta_{1}=\eta_1 h .
			$$
			Multiplying \cref{AA}$_1$ by $\eta_1 \tilde{W}_1$, we can get
			$$
			\begin{aligned}
				& \p_t\left(\eta_1 \frac{\tilde{W}_1^2}{2}\right)-\left(\p_t\eta_{1}- \p_y\eta_{1}\right) \frac{\tilde{W}_1^2}{2} =\frac{\bar{\nu}}{2}\p_y^2 \tilde{W}_{1} \eta_1 \tilde{W}_1+\frac{\bar{\nu}}{2}\p_y^2\tilde{W}_2\eta_1\tilde{W}_1+\tilde{K}_1 \eta_1 \tilde{W}_1+(\cdots)_y .
			\end{aligned}
			$$
			Note that
			$$
			\begin{aligned}
				\p_t\eta_{1}-\p_y \eta_{1} & =-2 \eta_1 h+\eta_1 \frac{\bar{\nu}}{2} \p_yh,
			\end{aligned}
			$$
			we can get
			\begin{align}\label{qq}
				\begin{aligned}
					& \left(\eta_1 \frac{\tilde{W}_1^2}{2}\right)_t+2\eta_1 h \tilde{W}_1^2=\eta_1 \bar{\nu} \p_yh\frac{\tilde{W}_1^2}{4}+\frac{\bar{\nu}}{2}\p_y^2 \tilde{W}_{1} \eta_1 \tilde{W}_1+\frac{\bar{\nu}}{2}\p_y^2\tilde{W}_2\eta_1\tilde{W}_1+\tilde{K}_1 \eta_1 \tilde{W}_1+(\cdots)_y.
				\end{aligned}
			\end{align}
			Since $\p_yh\approx \bar{\nu}^{-\frac12}(1+t)^{-\frac12}h$, $\bar{\nu}\p_yh \tilde{W}_1^2$ could be controlled by $h\tilde{W}_1^2$. 
			Also, we have 
			\begin{align}\label{qqq}
				\begin{aligned}
					&\abs{\int_\R \eta_1(\p_y^2\tilde{W}_1+\p_y^2\tilde{W}_2)\tilde{W}_1dy}\lesssim \norm{\left(\p_y\tilde{W}_1,\p_y\tilde{W}_2\right)}_{L^2}^2+ \norm{h\tilde{W}_1}_{L^2}^2.
				\end{aligned}
			\end{align}
			Similar to \cref{a}-\cref{c}, one has
			\begin{align}\label{22222}
				\bigg|\int_0^T\int_{\R}\tilde{K}_1\tilde{W}_1\eta_1dydt\bigg|\lesssim &C_\delta\varepsilon^2 \nu^{-1}\int_0^T\int_{\R}\left[\bar{\nu}(1+t)\right]^{-1}e^{-\frac{(y\pm(1+t))^2}{\bar{\nu}(1+t)}}\tilde{W}_1^2 dydt\nonumber\\
				&+\bar{\nu}\int_0^T \bigg(\norm{\p_y\tilde{W}_1}_{L^2}^2+\norm{\p_y\tilde{W}_2}_{L^2}^2\bigg)dt+C_\delta\varepsilon^3\nu^{-\frac98}.
			\end{align}
			Thus, integrating \cref{qq} on $\R\times [0,T]$ gives that
			\begin{align}\label{new1}
				\norm{\tilde{W}_1(\cdot,T)}_{L^2}^2+	\int_0^T\int_{\R}h\tilde{W}_1^2dydt\lesssim& \norm{\tilde{W}_1(\cdot,0)}_{L^2}^2+\bar{\nu}\int_0^T\norm{\left(\p_y\tilde{W}_1,\p_y\tilde{W}_2\right)}_{L^2}^2dt\\ \nonumber
				&+C_\delta\varepsilon^2 \nu^{-1}\int_0^T\int_{\R}\left[\bar{\nu}(1+t)\right]^{-1}e^{-\frac{(y+(1+t))^2}{\bar{\nu}(1+t)}}\tilde{W}_1^2 dydt +C_\delta\varepsilon^3\nu^{-\frac98}.
			\end{align}
			From the formula \cref{q} of $h$, we exactly obtain an estimate of $\int_0^T\int_{\R}\left[\bar{\nu}(1+t)\right]^{-\frac12}e^{-\frac{(y-(1+t))^2}{\bar{\nu}(1+t)}}\tilde{W}_1^2dydt$.
			
			The estimate of $\int_0^T\int_{\R}\left[\bar{\nu}(1+t)\right]^{-1}e^{-\frac{(y-(1+t))^2}{\bar{\nu}(1+t)}}\tilde{W}_2^2dydt$ is subtle since both $\tilde{W}_2$ and the weight function $
			e^{-\frac{(y-(1+t))^2}{\bar{\nu}(1+t)}}$ propagate forward with the same speed $1$. 
			Setting
			$$
			n(y, t)=\int_{-\infty}^y h(z, t) d z,
			$$
			it is easy to check that
			\begin{align}\label{0000000}
				\p_tn=\frac{\bar{\nu}}{2} \p_yh- h, \quad 0<n<1 .
			\end{align}
			Multiplying \cref{AA}$_2$ by $n^2\tilde{W}_2$, we get
			\begin{align}\label{new11}
				&	\int_0^T<  \p_ t \tilde{W}_{2}, \tilde{W}_2 n^2>d t- \int_0^T \int_{\R} \tilde{W}_2^2 n h d y d t \nonumber\\
				=&\int_0^T \int_{\R}  \frac{\bar{\nu}}{2}\p_y^2\tilde{W}_2\tilde{W}_2n^2+\frac{\bar{\nu}}{2}\p_y^2\tilde{W}_1\tilde{W}_2n^2+\tilde{K}_2\tilde{W}_2n^2d y d t\nonumber\\
				\lesssim&\bar{\nu}\int_0^T\norm{\p_y\tilde{W}_1,\p_y\tilde{W}_2}_{L^2}^2dt+\delta\bar{\nu}\int_0^T\int_{\R} h^2\tilde{W}_2^2dydt+\int_0^T\int_{\R}\tilde{K}_2\tilde{W}_2n^2dydt.
			\end{align}
			Similar to \cref{a}-\cref{c}, one has
			\begin{align}\label{new12}
				\int_0^T\int_{\R}\tilde{K}_2\tilde{W}_2n^2dydt\lesssim &C_\delta\varepsilon^2 \nu^{-1}\int_0^T\int_{\R}\left[\bar{\nu}(1+t)\right]^{-1}e^{-\frac{(y\pm(1+t))^2}{\bar{\nu}(1+t)}}\tilde{W}_2^2 dydt\nonumber\\
				&+\bar{\nu}\int_0^T \norm{\p_y\tilde{W}_1}_{L^2}^2+\norm{\p_y\tilde{W}_2}_{L^2}^2dt+C_\delta\varepsilon^3\nu^{-\frac{9}{8}}.
			\end{align}
			Combining   Lemma \ref{Ptii}, \cref{new11} and \cref{new12}, one has
			\begin{align}\label{h2V}
				\int_0^T \int_{\R} h^2 \tilde{W}^2_{2} d y d t\lesssim& \bar{\nu}^{-1}\norm{\tilde{W}_2(\cdot,0)}_{L^2}^2+\int_0^T\norm{\left(\p_y\tilde{W}_1,\p_y\tilde{W}_2\right)}_{L^2}^2dt+C_\delta\varepsilon^3\nu^{-\frac{17}{8}}\\\nonumber
				&+C_\delta\varepsilon^2\nu^{-2}\int_0^T\int_{\R}\left[\bar{\nu}(1+t)\right]^{-1}e^{-\frac{(y+(1+t))^2}{\bar{\nu}(1+t)}} \tilde{W}_2^2 dydt.
			\end{align}
			Therefore, from \cref{new1} and \cref{h2V}, we obtain that 
			\begin{align}\label{new13}
				\int_0^T\int_{\R}h\tilde{W}_1^2dydt+\bar{\nu}\int_0^T\int_{\R}h^2\tilde{W}_2^2dydt\lesssim& \norm{(\tilde{W}_1,\tilde{W}_2)(\cdot,0)}_{L^2}^2+\bar{\nu}\int_0^T\norm{\left(\p_y\tilde{W}_1,\p_y\tilde{W}_2\right)}_{L^2}^2dt\\ \nonumber
				&+C_\delta\varepsilon^2 \nu^{-1}\int_0^T\int_{\R}\tilde{h}^2(\tilde{W}_1^2+\tilde{W}_2^2) dydt +C_\delta\varepsilon^3\nu^{-\frac{9}{8}}.
			\end{align}
			where $\tilde{h}(t,y)=\frac{1}{\sqrt{2 \pi \bar{\nu}(1+t)}} \exp \left(-\frac{\left(y+(1+t)\right)^2}{2 \bar{\nu}(1+t)}\right)$. We also have the same estimate for the case $\tilde h$, that is, 
			\begin{align}\label{33333}
				\int_0^T\int_{\R}\tilde{h}\tilde{W}_2^2dydt+\bar{\nu}\int_0^T\int_{\R}\tilde{h}^2\tilde{W}_1^2dydt\lesssim& \norm{(\tilde{W}_1,\tilde{W}_2)(\cdot,0)}_{L^2}^2+\bar{\nu}\int_0^T\norm{\left(\p_y\tilde{W}_1,\p_y\tilde{W}_2\right)}_{L^2}^2dt\\ \nonumber
				&+C_\delta\varepsilon^2 \nu^{-1}\int_0^T\int_{\R}h^2(\tilde{W}_1^2+\tilde{W}_2^2) dydt +C_\delta\varepsilon^3\nu^{-\frac{9}{8}}.
			\end{align}
			Therefore, we conclude that
			\begin{align}\label{new14}
				\bar{\nu}\int_0^T\int_{\R}\left[\bar{\nu}(1+t)\right]^{-1}e^{-\frac{(y\pm(1+t))^2}{\bar{\nu}(1+t)}}\mathbb{W}_2^2dydt\lesssim& \norm{(\mathbb{W}_1,\mathbb{W}_2)(\cdot,0)}_{L^2}^2+\bar{\nu}\int_0^T\norm{\left(\mathbb{W}_{1y},\mathbb{W}_{2y}\right)}_{L^2}^2dt\\ \nonumber
				&+C_\delta\varepsilon^3\nu^{-\frac{9}{8}}.
			\end{align}
			Taking $\cref{v1v2}+\cref{V1y1}\times\bar{\nu}$ and using \cref{new14}, one can prove 
			\begin{align}\label{new15}
				\norm{(\mathbb{W}_1,\mathbb{W}_2)(\cdot,T)}_{L^2}^2+\bar{\nu}^2\norm{\mathbb{W}_{1y}(\cdot,T)}_{L^2}^2+\bar{\nu}\int_0^T\norm{(\mathbb{W}_{1y},\mathbb{W}_{2y})}_{L^2}^2dt+	\bar{\nu}\int_0^T\int_{\R}(h^2+\tilde{h}^2)\mathbb{W}_2^2dydt\lesssim \varepsilon^2,
			\end{align}
			where we have used $\norm{(\mathbb{W}_1,\mathbb{W}_2)(\cdot,0)}_{L^2}\lesssim \varepsilon$. By  Proposition \ref{est:main} and  Lemma \ref{estimateontheta} and  Lemma \ref{estimateonxi}, we have
			\begin{align*}
				\norm{\tilde{W}}_{L^\infty}\lesssim \norm{\tilde W}_{L^2}^{\frac12}\norm{\p_y \tilde W}_{L^2}^{\frac12}\lesssim \norm{\tilde W}_{L^2}^{\frac12}\left(\norm{({\lde}_{00},v_{00}^2)}_{L^2}^{\frac12} + \norm{\theta_i}_{L^2}^{\frac12} +\norm{\Xi_i}_{L^2}^{\frac12} \right) \lesssim \varepsilon \nu^{-\frac18}.
			\end{align*} 
			This yields  Lemma \ref{1or}.
		\end{proof}
					In the proof above, we have used a weighted inequality from Huang-Li-Matsumura \cite{HLM}, which we state below.
		\begin{Lem}\label{Ptii} 
			For any $T>0$, if $V(t, y)$ satisfies
			$$
			V(t,y) \in L^{\infty}\left(0, T ; L^2({\R})\right), \quad \p_yV \in L^2\left(0, T ; L^2({\R})\right), \quad \p_tV \in L^2\left(0, T ; H^{-1}({\R})\right),
			$$
			it holds that 
			$$
			\begin{aligned}
				\frac{1}{2}\int_0^T \int_{\R} h^2 V^2 d y d t \leq  & \frac{1}{\bar{\nu}}\int_{\R} V^2(y, 0) d y+2 \int_0^T\left\|\p_yV\right\|_{L^2}^2 d t \\
				& +\frac{2}{\bar{\nu}}\left(\int_0^T<\p_t V, V n^2>d t- \int_0^T \int_{\R} nhV^2 d y d t\right).
			\end{aligned}
			$$
		\end{Lem}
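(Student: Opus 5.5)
The idea is to obtain the weighted bound from an exact energy identity for the weight $n^2$, using only the transport-diffusion relation \cref{0000000} satisfied by $n$ and the identity $\p_y n=h$. No smallness or further structure is required. We first argue for smooth $V$ with $V,\p_yV$ decaying at infinity, and then pass to the stated class $V\in L^\infty(0,T;L^2)$, $\p_yV\in L^2(0,T;L^2)$, $\p_tV\in L^2(0,T;H^{-1})$ by a standard density (Lions--Magenes type) argument. This passage is legitimate because $n^2\in W^{1,\infty}$, so the map $t\mapsto\int_{\R}V^2n^2\,dy$ is absolutely continuous and the pairing $\la \p_tV,Vn^2\ra$ makes sense.

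\textbf{Step 1: time derivative of the weighted energy.} Using $\p_tn=\frac{\bar\nu}{2}\p_yh-h$ from \cref{0000000}, we compute
\begin{align*}
\la \p_tV,Vn^2\ra=\frac12\frac{d}{dt}\int_{\R}V^2n^2\,dy-\int_{\R}V^2n\,\p_tn\,dy
=\frac12\frac{d}{dt}\int_{\R}V^2n^2\,dy-\frac{\bar\nu}{2}\int_{\R}V^2n\,\p_yh\,dy+\int_{\R}nhV^2\,dy .
\end{align*}
The last term is exactly the one subtracted in the statement. Therefore
\begin{align*}
\la \p_tV,Vn^2\ra-\int_{\R}nhV^2\,dy=\frac12\frac{d}{dt}\int_{\R}V^2n^2\,dy-\frac{\bar\nu}{2}\int_{\R}V^2n\,\p_yh\,dy .
\end{align*}

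\textbf{Step 2: produce $h^2V^2$ by integrating by parts.} We move the $y$-derivative off $h$ and use $\p_yn=h$. This gives
\begin{align*}
-\frac{\bar\nu}{2}\int_{\R}V^2n\,\p_yh\,dy=\frac{\bar\nu}{2}\int_{\R}h^2V^2\,dy+\bar\nu\int_{\R}V\,\p_yV\,nh\,dy .
\end{align*}
Since $0<n<1$, Cauchy--Schwarz bounds the cross term by
\begin{align*}
\bar\nu\left|\int_{\R}V\p_yV\,nh\,dy\right|\le\frac{\bar\nu}{4}\int_{\R}h^2V^2\,dy+\bar\nu\norm{\p_yV}_{L^2}^2 .
\end{align*}
Combining this with Step 1 yields the pointwise-in-time inequality
\begin{align*}
\frac{\bar\nu}{4}\int_{\R}h^2V^2\,dy\le \la \p_tV,Vn^2\ra-\int_{\R}nhV^2\,dy-\frac12\frac{d}{dt}\int_{\R}V^2n^2\,dy+\bar\nu\norm{\p_yV}_{L^2}^2 .
\end{align*}

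\textbf{Step 3: integrate in time.} We integrate over $[0,T]$. We drop the nonpositive boundary term $-\frac12\int_{\R}V^2(T)n^2(T)\,dy$ and bound $\frac12\int_{\R}V^2(0)n^2(0)\,dy\le\frac12\int_{\R}V^2(y,0)\,dy$, again using $n<1$. Multiplying the result by $2/\bar\nu$ gives exactly
\begin{align*}
\frac12\int_0^T\!\!\int_{\R}h^2V^2\,dy\,dt\le\frac{1}{\bar\nu}\int_{\R}V^2(y,0)\,dy+2\int_0^T\norm{\p_yV}_{L^2}^2dt+\frac{2}{\bar\nu}\Big(\int_0^T\la \p_tV,Vn^2\ra\,dt-\int_0^T\!\!\int_{\R}nhV^2\,dy\,dt\Big),
\end{align*}
with the same constants as in the statement.

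The only delicate point is the justification in the low-regularity class, in particular the chain rule $\int_0^T\la \p_tV,Vn^2\ra\,dt=\frac12\big[\int_{\R}V^2n^2\,dy\big]_0^T-\int_0^T\!\!\int_{\R}V^2n\,\p_tn\,dy\,dt$. We handle it by mollifying $V$ in time and space, applying Steps 1--3 to the mollified function, and passing to the limit. This uses that $n$, $\p_tn$, $h$ and $\p_yh$ are bounded and smooth for each fixed $\bar\nu>0$ and $t\in[0,T]$.
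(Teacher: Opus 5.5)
Your proof is correct and follows essentially the same argument as the paper. Both use the chain rule with $\p_t n=\frac{\bar\nu}{2}\p_y h-h$, then integrate by parts via $\p_y n=h$ to produce $\bar\nu\int h^2V^2$ plus the cross term $2\bar\nu\int hnVV_y$, apply Cauchy--Schwarz with $0<n<1$, and discard the nonnegative term at $t=T$; your density justification in the class $L^\infty L^2\cap L^2\dot H^1$ with $\p_tV\in L^2H^{-1}$ is a detail the paper leaves implicit.
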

		\begin{proof}
			From \cref{0000000}, it is straightforward to check that
			\begin{align*}
				&\frac{2}{\bar{\nu}}\left(\int_0^T<\p_t V, V n^2>d t- \int_0^T \int_{\R} nhV^2  d y d t\right)\\
				=&\frac{1}{\bar{\nu}}\left(\int_\R (Vn)^2(y,T)dy-\int_\R (Vn)^2(y,0)dy\right)-\int_0^T \int_\R \p_yhnV^2 dydt\\
				\geq& -\frac{1}{\bar{\nu}}\int_\R (Vn)^2(y,0)dy +2 \int_0^T \int_\R hnV V_y dydt + \int_0^T \int_\R V^2 h^2 dydt\\
				\geq & -\frac{1}{\bar{\nu}}\int_\R (Vn)^2(y,0)dy - 2 \int_0^T \norm{V_y}_{L^2}^2 dt + \frac{1}{2}\int_0^T \int_\R V^2h^2dydt,
			\end{align*}
			which completes the proof of  Lemma \ref{Ptii}. 
		\end{proof}
		
		\subsection{Estimate on $\theta_A$}
		This subsection aims to estimate the main part of $v^1_{00}$ (i.e., $\theta_A$). 
		\begin{Lem}\label{estimateonta}
			Under the same  assumptions as \cref{MT} and  Proposition \ref{est:main},  it holds that
			\begin{align}
				\norm{\theta_A(y, t)}_{L^\infty} \lesssim \varepsilon ,\quad \norm{\p_y\theta_{A}(y, t)}_{L^2}\lesssim\varepsilon \left[\nu(1+t)\right]^{-\frac{1}{4}}.
			\end{align}
		\end{Lem}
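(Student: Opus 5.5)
We will work with the Duhamel representation of \cref{thetaA}, using the heat kernel $G(y,t)=(4\pi\nu t)^{-1/2}e^{-y^2/(4\nu t)}$:
\begin{align*}
\theta_A(y,t)=\int_0^t\int_\R G(y-s,t-\tau)\,S(s,\tau)\,ds\,d\tau .
\end{align*}
The source $S$ is the right-hand side of \cref{thetaA}. We split $S=S_1+S_2$. The explicit part is
\begin{align*}
S_1=-a(\mathcal{C}_1+\mathcal{C}_2-\Xi_1-\Xi_2)-\tfrac{a\nu'}{2}(\Xi_2-\Xi_1)_y+\tfrac a2(\theta_1^2-\theta_2^2)+a^2(\mathcal{C}_1+\mathcal{C}_2)(\mathcal{C}_2-\Xi_2-\mathcal{C}_1+\Xi_1)+a^2(\tilde w_1+\tilde w_2)(\mathcal{C}_2-\mathcal{C}_1).
\end{align*}
The transport part is $S_2=-a(\mathcal{C}_1+\mathcal{C}_2+\tilde w_1+\tilde w_2)\p_y\theta_A$. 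We will prove both bounds simultaneously by a continuity argument on the quantity
\begin{align*}
\sup_{t}\Big(\norm{\theta_A}_{L^\infty}+[\nu(1+t)]^{1/4}\norm{\p_y\theta_A}_{L^2}\Big),
\end{align*}
which vanishes at $t=0$.

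\textbf{Step 1: the leading linear source.} The worst term in $S_1$ is $-a(\theta_1+\theta_2)$, together with the $\frac{\bar\nu}{4}\theta_{iy}$ corrections in $\mathcal C_i$. It has size $\varepsilon[\bar\nu(1+\tau)]^{-1/2}$ and is concentrated near $s=\mp(1+\tau)$ with width $\sqrt{\bar\nu(1+\tau)}$. A naive $L^1\to L^\infty$ bound gives $\int_0^t\norm{\theta_i}_{L^1}\norm{G}_{L^\infty}\,d\tau$, which grows. Instead we use space–time pointwise estimates exploiting that $G$ does not move while $\theta_i$ travels with speed $\pm1$. Convolving two Gaussians gives
\begin{align*}
|G*\theta_i|(y,t,\tau)\lesssim \varepsilon\,[\nu(t-\tau)+\bar\nu(1+\tau)]^{-1/2}\exp\Big(-\tfrac{(y\pm(1+\tau))^2}{C(\nu(t-\tau)+\bar\nu(1+\tau))}\Big).
\end{align*}
Integrating in $\tau$, the center $\mp(1+\tau)$ passes a fixed $y$ in a $\tau$-window of length $\sim\sqrt{\nu(1+t)}$. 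On that window the prefactor is $\lesssim\varepsilon[\nu(1+t)]^{-1/2}$, so the contribution is $O(\varepsilon)$ uniformly in $y$ and $t$; this is the classical diffusion-wave/heat-kernel interaction estimate. For $\p_y\theta_A$ in $L^2$ we differentiate $G$ and gain a factor $[\nu(t-\tau)]^{-1/2}$ on the part of the $\tau$-integral away from $\tau=t$. Near $\tau=t$ we instead move the derivative onto $\theta_i$, which is allowed since $\norm{\p_y\theta_i}_{L^2}\lesssim \varepsilon[\bar\nu(1+t)]^{-3/4}$ by Lemma \ref{estimateontheta}. Summing the two pieces should give $\varepsilon[\nu(1+t)]^{-1/4}$. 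The $\Xi$-terms, the quadratic $\theta$-terms and $\nu'\Xi_y$ carry an extra factor $\varepsilon$ or an extra decay rate by Lemma \ref{estimateontheta} and Lemma \ref{estimateonxi}, so the same argument bounds them by $\varepsilon^2\nu^{-1/2}\lesssim\varepsilon$, using $\varepsilon=\varepsilon_0\nu^{3/2}$. The term $(\tilde w_1+\tilde w_2)(\mathcal C_2-\mathcal C_1)$ is handled by Cauchy–Schwarz in $y$, with weights of the form $h$ from \cref{new15} and $\bar\nu\int_0^t\norm{\tilde w}_{L^2}^2\lesssim\varepsilon^2$ from Lemma \ref{1or}.

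\textbf{Step 2: the transport term $S_2$; this is the main obstacle.} The difficulty is $\tilde w\,\p_y\theta_A$, since $\tilde w=\p_y\tilde W$ has no explicit decay rate. Only $\bar\nu\int_0^T\norm{\p_y\tilde W}_{L^2}^2\lesssim\varepsilon^2$ from Lemma \ref{1or} and the $L^\infty$ bound $\norm{\tilde w}_{L^\infty}\lesssim\varepsilon\nu^{-1/2}$ from Proposition \ref{est:main} are available. For the $L^\infty$ bound on $\theta_A$ we split the $\tau$-integral into $[0,t-1]$ and $[t-1,t]$, or dyadically in $t-\tau$. On $[t-1,t]$ we use
\begin{align*}
\norm{G}_{L^2}\norm{\tilde w}_{L^\infty}\norm{\p_y\theta_A}_{L^2}\lesssim (\nu(t-\tau))^{-1/4}\,\varepsilon\nu^{-1/2}\,\varepsilon(\nu(1+\tau))^{-1/4}.
\end{align*}
On $[0,t-1]$ we use $\norm{G}_{L^\infty}\norm{\tilde w}_{L^2}\norm{\p_y\theta_A}_{L^2}$ followed by Cauchy–Schwarz in time:
\begin{align*}
\Big(\int_0^t\norm{\tilde w}_{L^2}^2\Big)^{1/2}\Big(\int_0^{t-1}(\nu(t-\tau))^{-1}(\nu(1+\tau))^{-1/2}\,d\tau\Big)^{1/2}\lesssim \varepsilon\nu^{-1/2}\cdot\varepsilon\,\nu^{-3/4}\log^{1/2}(1+t).
\end{align*}
The logarithm must be removed. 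For this we would refine the time weight, or use the weighted $h$-norm of $\tilde W$ from \cref{new15} after integrating by parts in $y$ to put the derivative onto $\tilde W$. The resulting factors such as $\varepsilon\nu^{-5/4}$ are absorbed by the smallness $\varepsilon\nu^{-3/2}=\varepsilon_0$. The terms involving $\mathcal C_i\,\p_y\theta_A$ are easier, because $\mathcal C_i$ decays like $\theta_i$. The $L^2$ estimate for $\p_y\theta_A$ from $S_2$ is analogous, with $\norm{\p_yG}_{L^1}\lesssim(\nu(t-\tau))^{-1/2}$.

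\textbf{Step 3: closing the argument.} Collecting Steps 1 and 2, the bootstrap quantity satisfies a bound of the form $\lesssim\varepsilon+\varepsilon_0\cdot(\text{bootstrap quantity})$. Choosing $\varepsilon_0$ small closes the continuity argument and yields both estimates of Lemma \ref{estimateonta}.
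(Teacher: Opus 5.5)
Your architecture is the paper's: Duhamel for \eqref{thetaA}, a continuity argument on the same functional $\hat M^{(1)}(t)=\sup_{\tau\le t}\{\|\theta_A\|_{L^\infty}+[\nu(1+\tau)]^{1/4}\|\partial_y\theta_A\|_{L^2}\}$, and the Gaussian identity \eqref{semi-group} for the source $-a(\theta_1+\theta_2)$ (the paper's $I_3$). The $\tilde w$-terms are handled by $\int_0^t\|\partial_y\tilde W_j\|_{L^2}^2\lesssim\varepsilon^2\nu^{-1}$ plus a time splitting (the paper's $I_2$), and everything closes since $\varepsilon\nu^{-5/4}=\varepsilon_0\nu^{1/4}\ll1$.

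What you call the main obstacle in Step 2, however, is a miscalculation. For $t\ge1$,
\begin{align*}
\int_0^{t-1}\frac{d\tau}{(t-\tau)(1+\tau)^{1/2}}\lesssim\frac{(1+t)^{1/2}}{t}+\frac{\log(1+t)}{(1+t)^{1/2}}\lesssim1 .
\end{align*}
On $[t/2,t-1]$, where the logarithm arises, the factor $(1+\tau)^{-1/2}\approx(1+t)^{-1/2}$ absorbs it. So your own bound already gives $\varepsilon\nu^{-5/4}\hat M^{(1)}$; this is exactly the split $[0,t/2]\cup[t/2,t-1]\cup[t-1,t]$ of \eqref{2026-3-20-1}. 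Your remedies are unnecessary, and the second one is harmful. Integrating by parts in $y$ to put the derivative on $\tilde W$ produces $\tilde W\,\partial_y^2\theta_A$, which is not controlled here: Lemma \ref{estimateonta-1} comes later and uses the present lemma.

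For $\|\partial_y\theta_A\|_{L^2}$, the long-time piece must pair $\|\partial_yG\|_{L^2}\lesssim(\nu(t-\tau))^{-3/4}$ with $\|S_2\|_{L^1}$, as in \eqref{2026-3-20-2}. Using $\|\partial_yG\|_{L^1}$ against $\|\tilde w\|_{L^\infty}\|\partial_y\theta_A\|_{L^2}$ is only admissible on $[t-1,t]$; over long times it yields a factor $(1+t)^{1/4}$ instead of decay.

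The term $\tilde w(\mathcal C_2-\mathcal C_1)$ needs no $h$-weights. Since $\|\mathcal C_i\|_{L^2}\lesssim\varepsilon[\nu(1+\tau)]^{-1/4}$ decays like the bootstrap bound for $\partial_y\theta_A$, it goes through the same splitting; this is why the paper puts it in $S^{(2)}_{\mathcal A}$. Moreover, \eqref{new15} weights $\tilde W$, not $\tilde w$.

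There is also a step in your Step 1 that fails as described: the $k=1$ bound $\varepsilon[\nu(1+t)]^{-1/4}$ cannot come from moving the derivative between $G$ and $\theta_i$ and taking absolute values. Any such bound gives, at each $y$ in the interval of length $\sim t$ swept by the wave, a size $\sim\varepsilon[\nu(1+t)]^{-1/2}$, hence only $\varepsilon\nu^{-1/2}$ in $L^2$; Minkowski in $\tau$ is worse. The decay comes from cancellation. The Duhamel integral of the travelling source is a plateau of height $\approx a\eta_i$, and its derivative lives only at the two edges. A clean way to capture this is to use the $\theta_1$ equation, $\theta_{1y}=\theta_{1t}+(\theta_1^2/2)_y-\frac{\bar\nu}{2}\theta_{1yy}$, and integrate by parts in $\tau$:
\begin{align*}
\int_0^tG(t-\tau)*\partial_y\theta_1(\tau)\,d\tau=\theta_1(t)-G(t)*\theta_1(0)-\frac{\nu'}{2}\int_0^tG*\partial_y^2\theta_1\,d\tau+\int_0^tG*\partial_y\Big(\frac{\theta_1^2}{2}\Big)d\tau .
\end{align*}
The same works for $\theta_2$ with the opposite sign. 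By Lemma \ref{estimateontheta}, the boundary terms are $O(\varepsilon[\nu(1+t)]^{-1/4})$ in $L^2$. The two integrals obey the same bound after splitting at $\tau=t/2$: put the derivatives on $G$ before $t/2$ and on $\theta_1$ after. The paper only asserts the $k=1$ case of \eqref{I9} from the explicit Gaussian formula, so this cancellation must be made explicit in either write-up.
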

		\begin{proof}
			By Duhamel's principle and \cref{thetaA}, one has 
			\begin{align}\label{Theta-A}
				{\theta_A(y, t)}=\int_0^t \int_{-\infty}^{\infty} &H(0,\nu) \bigg(a(\Xi_1+\Xi_2-\mathcal{C}_1-\mathcal{C}_2)-\frac{a\nu'}{2}\left(\Xi_2-\Xi_1\right)_y+\frac{a}{2}(\theta_1^2-\theta_2^2)\\\nonumber
				&\qquad-a(\mathcal{C}_1+\mathcal{C}_2)\p_y\theta_A+a^2(\mathcal{C}_1+\mathcal{C}_2)(\mathcal{C}_2-\Xi_2-\mathcal{C}_1+\Xi_1)\\ \nonumber
				&\qquad-a(\tilde{w}_1+\tilde{w}_2)\p_y\theta_A+a^2(\tilde{w}_1+\tilde{w}_2)(\mathcal{C}_2-\mathcal{C}_1)\bigg) d s d \tau. \nonumber
			\end{align}
			Let
			\begin{align}\label{MMMM}
				\hat{M}^{(1)}(t):=\sup_{0 \leq \tau \leq t}\left\{\norm{\theta_A(y, \tau)}_{L^\infty}+ \norm{\p_y\theta_{A}(y, \tau)}_{L^2}\left[\nu(1+\tau)\right]^{\frac{1}{4}}\right\}.
			\end{align}
			For $k=0,1$, a direct computation gives that 
			\begin{align}\label{thetaA1}
				\p^k_y\theta_A 
				=&\int_0^{t} \int_{-\infty}^{\infty} \p^{k}_yH(0,\nu)S_{\mathcal{A}}^{(1)}(s, \tau) d s d \tau+\int_{0}^t \int_{-\infty}^{\infty} \p_y^{k}H(0,\nu) S_{\mathcal{A}}^{(2)}d s d \tau\\ \nonumber
				&\qquad\qquad-\int_0^{t} \int_{-\infty}^{\infty} aH(0,\nu)\p^k_s(\theta_1+\theta_2)d s d \tau:=I_1+I_2+I_3,
			\end{align}
			where $S_\mathcal{A}$ is defined as
			\begin{align*}
				S_{\mathcal{A}}^{(1)}:=&\frac{a\bar{\nu}}{4}(\theta_{2y}-\theta_{1y})-\frac{a}{4}(\nu'-2\nu)\left(\Xi_2-\Xi_1\right)_y+\frac{a}{2}(\theta_1^2-\theta_2^2)\\
				&-a(\mathcal{C}_1+\mathcal{C}_2)\p_y\theta_A+a^2(\mathcal{C}_1+\mathcal{C}_2)(\mathcal{C}_2-\Xi_2-\mathcal{C}_1+\Xi_1)\\
				S_{\mathcal{A}}^{(2)}:=&-a(\tilde{w}_1+\tilde{w}_2)\p_y\theta_A+a^2(\tilde{w}_1+\tilde{w}_2)(\mathcal{C}_2-\mathcal{C}_1) d s d \tau,\\
				\mbox{and}\quad			\abs{S_{\mathcal{A}}^{(1)}}=&O(1)\sum_{i,j=1,2}\left(\bar{\nu}\abs{\theta_{iy}}+\theta_i^2+\abs{\theta_i}\abs{\p_y\theta_A}\right),\\
				\abs{S_A^{(2)}}=&O(1)\sum_{i,j=1,2}\left(\abs{\p_y\tilde{W}_j}\abs{\p_y\theta_A}+\abs{\p_y\tilde{W}_i\theta_j}\right).
			\end{align*}
			By  Lemma \ref{estimateontheta}, Lemma \ref{estimateonxi} and \cref{MMMM}, one has,
			\begin{align}
				\norm{S_{\mathcal{A}}^{(1)}}_{L^1 }\lesssim \varepsilon\left[\nu(1+t)\right]^{-\frac{1}{2}}\hat{M}^{(1)}(t)+\varepsilon\bar{\nu}\left[\bar{\nu}(1+t)\right]^{-\frac{1}{2}}.
			\end{align}
			Thus, for $k=0,1$, there holds 
			\begin{align}\label{I7I8}
				\norm{I_1}_{L^2}\lesssim \varepsilon\nu^{-1}\left[\bar\nu(1+t)\right]^{-\frac{k}{2}+\frac{1}{4}}\hat{M}^{(1)}(t)+\varepsilon\left[\bar\nu(1+t)\right]^{-\frac{k}{2}+\frac{1}{4}}.
			\end{align}
			Next, we turn to $I_2$. Since we do not have the decay rate of $\norm{\p_y \tilde W_j}_{L^2}$, the estimate of $I_2$ is different from that of $I_1$ and more complicated. It is easy to see that
			\begin{align*}
				\norm{S_A^{(2)}}_{L^1} \lesssim (\hat{M}^{(1)}(t)+\varepsilon) [\nu(1+t)]^{-\frac14}\sum_{j=1}^2 \norm{\p_y \tilde W_j}_{L^2}.
			\end{align*}
			For $k=0$, by  Lemma \ref{1or}, we have
			\begin{align}\label{2026-3-20-1}
				\norm{I_2}_{L^\infty} & \lesssim (\hat{M}^{(1)}(t)+\varepsilon) \nu^{-\frac34}\sum_{j=1}^2\int_0^t (t-\tau)^{-\frac12}(1+\tau)^{-\frac14}\norm{\p_y \tilde W_j}_{L^2} d\tau \notag\\
				& =  (\hat{M}^{(1)}(t)+\varepsilon) \nu^{-\frac34}\sum_{j=1}^2\left(\int_0^{\frac t2} + \int_{\frac t2}^{t-1}+\int_{t-1}^t \right)(t-\tau)^{-\frac12}(1+\tau)^{-\frac14}\norm{\p_y \tilde W_j}_{L^2} d\tau \notag\\
				&\lesssim (\hat{M}^{(1)}(t)+\varepsilon) \nu^{-\frac34} (1+t)^{-\frac12} \left( \int_0^{\frac t2} (1+\tau)^{-\frac12} d\tau\right)^{\frac12} \left(\int_0^{\frac t2} \sum_{j=1}^2\norm{\p_y \tilde W_j}_{L^2}^2 d\tau\right)^{\frac12} \notag\\
				&\quad+(\hat{M}^{(1)}(t)+\varepsilon) \nu^{-\frac34} (1+t)^{-\frac14} \left( \int_{\frac t2}^{t-1} (t-\tau)^{-1} d\tau\right)^{\frac12} \left(\int_{\frac t2}^{t-1} \sum_{j=1}^2\norm{\p_y \tilde W_j}_{L^2}^2 d\tau\right)^{\frac12} \notag\\
				&\quad+(\hat{M}^{(1)}(t)+\varepsilon) \nu^{-\frac34} (1+t)^{-\frac14} \int_{t-1}^{t} (t-\tau)^{-\frac12} d\tau \sum_{j=1}^2\norm{\p_y \tilde W_j}_{L_t^\infty L^2_x} \notag\\
				& \lesssim \varepsilon\nu^{-\frac54}\left( \hat{M}^{(1)}(t) +\varepsilon \right).
			\end{align}
			For $k=1$, by  Lemma \ref{1or}, we have
			\begin{align}\label{2026-3-20-2}
				\norm{I_2}_{L^2} & \lesssim (\hat{M}^{(1)}(t)+\varepsilon) \nu^{-1}\sum_{j=1}^2\int_0^t (t-\tau)^{-\frac34}(1+\tau)^{-\frac14}\norm{\p_y \tilde W_j}_{L^2} d\tau \notag\\
				& =  (\hat{M}^{(1)}(t)+\varepsilon) \nu^{-1}\sum_{j=1}^2\left(\int_0^{\frac t2} + \int_{\frac t2}^{t-1}+\int_{t-1}^t \right)(t-\tau)^{-\frac34}(1+\tau)^{-\frac14}\norm{\p_y \tilde W_j}_{L^2} d\tau \notag\\
				&\lesssim (\hat{M}^{(1)}(t)+\varepsilon) \nu^{-1} (1+t)^{-\frac34} \left( \int_0^{\frac t2} (1+\tau)^{-\frac12} d\tau\right)^{\frac12} \left(\int_0^{\frac t2} \sum_{j=1}^2\norm{\p_y \tilde W_j}_{L^2}^2 d\tau\right)^{\frac12} \notag\\
				&\quad+(\hat{M}^{(1)}(t)+\varepsilon) \nu^{-1} (1+t)^{-\frac14} \left( \int_{\frac t2}^{t-1} (t-\tau)^{-\frac32} d\tau\right)^{\frac12} \left(\int_{\frac t2}^{t-1} \sum_{j=1}^2\norm{\p_y \tilde W_j}_{L^2}^2 d\tau\right)^{\frac12} \notag\\
				&\quad+(\hat{M}^{(1)}(t)+\varepsilon) \nu^{-1} (1+t)^{-\frac14} \int_{t-1}^{t} (t-\tau)^{-\frac34} d\tau \sum_{j=1}^2\norm{\p_y \tilde W_j}_{L_t^\infty L^2_x} \notag\\
				& \lesssim \varepsilon\nu^{-\frac54}\left( \hat{M}^{(1)} +\varepsilon \right)[\nu(1+t)]^{-\frac14}.
			\end{align}
			Now, we focus on the $I_3$. We introduce the following useful formula
			\begin{align}\label{semi-group}
				&-\frac{\left(s-\sigma(1+\tau)\right)^2}{4 \mu_1(1+\tau)}-\frac{\left(y-s-\sigma'(t-\tau)\right)^2}{4 \mu_2(t-\tau)}\nonumber \\
				= & -\frac{\mu_1(1+\tau)+\mu_2(t-\tau)}{4 \mu_1 \mu_2(1+\tau)(t-\tau)}\left[s-\sigma(1+\tau)-\frac{\mu_1(1+\tau)\left(y-\sigma'(t-\tau)-\sigma(1+\tau)\right)}{\mu_1(1+\tau)+\mu_2(t-\tau)}\right]^2\\
				& - \frac{\left[y-\sigma(1+\tau)-\sigma'(t-\tau)\right]^2}{4\left[\mu_1(1+\tau)+\mu_2(t-\tau)\right]},\nonumber
			\end{align}
			Since the wave speeds of $\theta_i$ and $H(0,\nu)$ are different, for $\sigma=(-1)^i, i=1,2$, applying \cref{semi-group}  we obtain
			\begin{align*}
				I_3=&\varepsilon \int_0^t \int_{\R} \bigg\{[\nu(t-\tau)]^{-\frac12}[\bar \nu(1+\tau)]^{-\frac12} \exp \Big\{-\frac{\bar\nu(1+\tau)+\nu(t-\tau)}{4\nu\bar\nu(1+\tau)(t-\tau)}\Big[s-\sigma(1+\tau)-\frac{\bar\nu(1+\tau)(y-\sigma(1+\tau))}{\bar\nu(1+\tau)+\nu(t-\tau)}\Big]^2 \Big\}\\
				&\quad\qquad\qquad\qquad\qquad\qquad\qquad\qquad\times\exp\Big\{-\frac{[y-\sigma(1+\tau)]^2}{4[\bar\nu(1+\tau)+\nu(t-\tau)]}\Big\}\bigg\} ds d\tau\\
				=&\varepsilon\int_0^t \frac{1}{[ \bar\nu (1+\tau)+\nu(t-\tau)]^{\frac12}} \exp\Big\{-\frac{[y-\sigma(1+\tau)]^2}{4[\bar\nu(1+\tau)+\nu(t-\tau)]}\Big\} d\tau.
			\end{align*}
			Then we have
			\begin{align}\label{I9}
				\norm{I_3}_{L^\infty}\lesssim \varepsilon\quad(k=0), \qquad \norm{I_3}_{L^2}\lesssim  \varepsilon\left[\nu(1+t)\right]^{-\frac{1}{4}}\quad(k=1).
			\end{align}
			Combining \cref{thetaA1,I7I8,I9}, we deduce that $\hat{M}^{(1)}(t)\lesssim \varepsilon+\varepsilon\nu^{-\frac54}\hat{M}^{(1)}(t)$.  Note that $\varepsilon=\nu^\frac32$, one has $\hat{M}^{(1)}(t)\lesssim \varepsilon$ as $\nu$ is small. Substituting $\hat{M}^{(1)}(t)\lesssim \varepsilon$ into \cref{MMMM} yields  Lemma \ref{estimateonta}.     
		\end{proof}
		\begin{Lem}\label{estimateonta-1}
			Under the same  assumptions as \cref{MT} and  Proposition \ref{est:main},  it holds that
			\begin{align}
				\int_0^t\norm{\p_y^2\theta_{A}(y, \tau)}_{L^2}^2d\tau\lesssim\varepsilon^2\nu^{-\frac32}. 
			\end{align}
		\end{Lem}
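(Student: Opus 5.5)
Following the strategy announced in the introduction, I split $\theta_A=\theta_A^{(1)}+\theta_A^{(2)}$ according to the source terms in \cref{thetaA}. The equation for $\theta_A^{(1)}$ collects the wave-type sources with explicit decay, namely $-a(\theta_1+\theta_2)$, the $\bar\nu\theta_{iy}$ and $\nu'(\Xi_2-\Xi_1)_y$ terms, $\frac a2(\theta_1^2-\theta_2^2)$, $a^2(\mathcal{C}_1+\mathcal{C}_2)(\cdots)$ and $-a(\mathcal{C}_1+\mathcal{C}_2)\p_y\theta_A$. The equation for $\theta_A^{(2)}$ collects $-a(\tilde w_1+\tilde w_2)\p_y\theta_A+a^2(\tilde w_1+\tilde w_2)(\mathcal{C}_2-\mathcal{C}_1)$. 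Both components have zero initial data. In both, $\p_y\theta_A$ is the full function, already controlled by Lemma \ref{estimateonta}, so the two equations are linear heat equations with given sources.

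\textbf{Step 1: $\theta_A^{(2)}$ by energy.} I multiply by $-\p_y^2\theta_A^{(2)}$ and integrate to get
\[
\|\p_y\theta_A^{(2)}(t)\|_{L^2}^2+\nu\int_0^t\|\p_y^2\theta_A^{(2)}\|_{L^2}^2\lesssim \nu^{-1}\int_0^t\|S^{(2)}_{\mathcal A}\|_{L^2}^2 .
\]
Since $\tilde w_i=\p_y\tilde W_i$, I have $\|S^{(2)}_{\mathcal A}\|_{L^2}\lesssim\|\p_y\tilde W\|_{L^2}\big(\|\p_y\theta_A\|_{L^\infty}+\|\mathcal{C}\|_{L^\infty}\big)$. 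Lemma \ref{1or} gives $\int_0^t\|\p_y\tilde W\|_{L^2}^2\lesssim\varepsilon^2\nu^{-1}$. Lemma \ref{estimateontheta} gives $\|\mathcal{C}\|_{L^\infty}\lesssim\varepsilon\nu^{-1/2}$. For $\|\p_y\theta_A\|_{L^\infty}$ I would use Gagliardo--Nirenberg with $\|\p_y\theta_A\|_{L^2}$ from Lemma \ref{estimateonta} and absorb $\|\p_y^2\theta_A\|_{L^2}$ into the left side. Alternatively, I would bound it via the Duhamel representation \cref{thetaA1} in $L^\infty$, which should give $\lesssim\varepsilon\nu^{-1/2}$ up to acceptable losses.

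This produces $\nu\int\|\p_y^2\theta_A^{(2)}\|^2\lesssim\nu^{-1}\cdot\varepsilon^2\nu^{-1}\cdot\varepsilon^2\nu^{-1}=\varepsilon^4\nu^{-3}$, hence $\int\|\p_y^2\theta_A^{(2)}\|^2\lesssim\varepsilon^4\nu^{-4}$. With $\varepsilon=\varepsilon_0\nu^{3/2}$ this is $\varepsilon^2\nu^{-1}\le\varepsilon^2\nu^{-3/2}$. The absorbed term $\nu^{-1}\varepsilon^2\nu^{-1}\int\|\p_y^2\theta_A\|^2$ has coefficient $\varepsilon^2\nu^{-2}=\varepsilon_0^2\nu\ll\nu$. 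It is absorbed at the very end, after combining with $\theta_A^{(1)}$.

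\textbf{Step 2: $\theta_A^{(1)}$ by Duhamel.} For $\theta_A^{(1)}$ I write $\p_y^2\theta_A^{(1)}=\int_0^t\p_y H(0,\nu)*\p_yS^{(1)}\,d\tau$, or split at $t/2$ and move derivatives onto the source where needed. For the dominant linear term $-a(\theta_1+\theta_2)$ I reuse the exact convolution identity \cref{semi-group} from the proof of Lemma \ref{estimateonta}. Differentiating that closed form twice in $y$ gives pointwise bounds of the form
\[
\varepsilon\int_0^t[\bar\nu(1+\tau)+\nu(t-\tau)]^{-3/2}e^{-\frac{(y-\sigma(1+\tau))^2}{C[\cdots]}}\,d\tau .
\]
Its $L^2_y$ norm is $\lesssim\varepsilon\int_0^t[\nu(1+t)]^{-5/4}d\tau$ only crudely. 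A better route is the change of variable $\sigma(1+\tau)$, where the moving center sweeps $y$ at unit speed. This should give $\|\p_y^2 I_3\|_{L^2}\lesssim\varepsilon\nu^{-1/2}[\nu(1+t)]^{-3/4}$ roughly, whose square integrates in time to $\varepsilon^2\nu^{-3/2}$ up to constants. I expect this to be exactly the source of the $\nu^{-3/2}$ in the statement.

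The remaining terms of $S^{(1)}$ carry an extra factor $\varepsilon\nu^{-1/2}$ or a $\bar\nu\p_y$. By Lemmas \ref{estimateontheta}, \ref{estimateonxi} and \ref{estimateonta} they have $L^2$ decay $[\nu(1+t)]^{-3/4}$ times small factors. The parabolic $L^2_tL^2$ maximal regularity estimate $\nu^2\int\|\p_y^2\theta\|^2\lesssim\int\|S\|^2$ would lose too much, so I instead use the heat-kernel bound $\|\p_y H\|_{L^1}\lesssim(\nu(t-\tau))^{-1/2}$ applied to $\p_yS^{(1)}$, and then a Young-in-time argument.

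\textbf{Main obstacle.} I expect the hardest step to be the sharp bound on $\p_y^2$ of the linear $\theta_1+\theta_2$ contribution. Only the transversality of speeds (speed $\pm1$ versus $0$) gives an $L^2_t$-integrable rate, with a loss of exactly $\nu^{-3/2}$ and not worse. I would first try bounding the time integral pointwise by noting that the Gaussian in $\tau$ has width $\sim\sqrt{\nu(1+t)}$ around $\tau\approx\sigma y-1$. That yields an extra factor $\sqrt{\nu(1+t)}$ in place of a time length, so the pointwise bound is $\varepsilon[\nu(1+t)]^{-1}$ on a region of width $\sim1+t$ in $y$.
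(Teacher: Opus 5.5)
\textbf{The gap is in Step 2, at the term you yourself call the main obstacle: the linear forcing $-a(\theta_1+\theta_2)$.} Your Step 1 closes with your splitting, which puts the full $\p_y\theta_A$ into both sources. The paper splits differently: it puts $\p_y\theta_A^{(1)}$ into the $\theta_A^{(1)}$ equation and $\p_y\theta_A^{(1)},\p_y\theta_A^{(2)}$ into the second, and then needs the weighted $\eta_1$ estimate for $(\mathcal C_1+\mathcal C_2)\p_y\theta_A^{(2)}$.

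Neither bound you propose for the linear term gives $\varepsilon^2\nu^{-3/2}$.
\begin{itemize}
\item Your pointwise bound $\varepsilon[\nu(1+t)]^{-1}$ on a $y$-interval of length $\sim 1+t$ gives $\norm{\p_y^2 I_3}_{L^2}\sim\varepsilon\nu^{-1}(1+t)^{-1/2}$. Its square, $\varepsilon^2\nu^{-2}(1+t)^{-1}$, is not integrable in time, and the power of $\nu$ is also wrong.
\item Your intermediate bound $\varepsilon\nu^{-1/2}[\nu(1+t)]^{-3/4}$, squared and integrated, gives $\varepsilon^2\nu^{-5/2}$, not $\varepsilon^2\nu^{-3/2}$.
\item Splitting at $t/2$ does not help either: $\int_0^{t/2}\norm{\p_y^2H}_{L^2}\norm{\theta_i}_{L^1}d\tau\sim\varepsilon\nu^{-5/4}(1+t)^{-1/4}$, which is not square integrable.
\end{itemize}

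\textbf{The missing idea is the cancellation produced by the transversality of speeds.} Set $\Sigma(\tau)=\bar\nu(1+\tau)+\nu(t-\tau)\approx\nu(1+t)$ and $E=\exp\{-(y-\sigma(1+\tau))^2/(4\Sigma)\}$. The Duhamel term is then $J=\varepsilon\int_0^t\Sigma^{-1/2}E\,d\tau$ (cf. \eqref{semi-group}). This function is essentially constant on the interval swept by the center $\sigma(1+\tau)$, and taking absolute values inside the $\tau$-integral throws that information away. Instead, use
\[
\p_yE=-\sigma\,\p_\tau E+\sigma\,(\p_\tau\Sigma)\,\frac{(y-\sigma(1+\tau))^2}{4\Sigma^2}\,E,\qquad \p_\tau\Sigma=\bar\nu-\nu=O(\nu),
\]
and integrate by parts in $\tau$.
\begin{itemize}
\item The boundary terms at $\tau=0$ and $\tau=t$ are Gaussians of amplitude $\varepsilon\Sigma^{-1/2}$ and width $\sqrt{\Sigma}\approx\sqrt{\nu(1+t)}$, centered at $y=\sigma$ and $y=\sigma(1+t)$. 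One more $\p_y$ gives an $L^2$ norm $\lesssim\varepsilon\Sigma^{-3/4}\approx\varepsilon[\nu(1+t)]^{-3/4}$.
\item The remainder carries the factor $\p_\tau\Sigma=O(\nu)$. By Minkowski it is bounded by $\varepsilon\nu\int_0^t\Sigma^{-7/4}d\tau\lesssim\varepsilon[\nu(1+t)]^{-3/4}$.
\end{itemize}
This is the bound $\norm{I_6}_{L^2}\lesssim\varepsilon[\nu(1+t)]^{-3/4}$ in \eqref{I9-1}, whose square integrates to exactly $\varepsilon^2\nu^{-3/2}$. The same mechanism is implicitly needed already for the $k=1$ case of \eqref{I9}.

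\textbf{A smaller, repairable point.} With your splitting, $\p_yS^{(1)}$ contains $(\mathcal C_1+\mathcal C_2)\p_y^2\theta_A$ with the full $\theta_A$, which has no pointwise decay in time. A global Young-in-time bound with kernel $(\nu(t-\tau))^{-1/2}$ fails, because that kernel is not integrable on $(0,\infty)$. Use that kernel only for $\tau\in[t/2,t]$, together with $\norm{\mathcal C}_{L^\infty}\lesssim\varepsilon[\nu(1+\tau)]^{-1/2}$, and put both derivatives on $H$ for $\tau\in[0,t/2]$. A Schur test then gives an $L^2_t$ operator norm $\lesssim\varepsilon\nu^{-1}\ll1$, and your final absorption closes.
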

		\begin{proof}
			Since $\norm{\p_y^2 \tilde W_j}_{L^2}$ does not show the decay rate, using the same method as for  Lemma \ref{estimateonta}, we cannot prove  Lemma \ref{estimateonta-1}. Therefore, we split $\theta_A$ into $\theta_A^{(1)}$ and $\theta_A^{(2)}$, where $\theta_A^{(1)}$ \cref{thetaA-1} is handled by using the same method as for $\theta_A$  Lemma \ref{estimateonta}, and $\theta_A^{(2)}$ \cref{thetaA-2} is estimated by using the energy method
			\begin{align}
				& \p_t\theta_{A}^{(1)}-\nu\p_y^2\theta_{A}^{(1)}=S_{\mathcal{A}1}-a(\theta_1+\theta_2),
				\label{thetaA-1}\\
				& \p_t\theta_{A}^{(2)}-\nu\p_y^2\theta_{A}^{(2)}=S_{\mathcal{A}2}\label{thetaA-2},\\
				&\theta_{A}^{(1)}|_{t=0}=\theta_{A}^{(2)}|_{t=0}=0,\notag
			\end{align}
			where
			\begin{align*}
				S_{\mathcal{A}1}:=&\frac{a\bar{\nu}}{4}(\theta_{2y}-\theta_{1y})-\frac{a}{4}(\nu'-2\nu)\left(\Xi_2-\Xi_1\right)_y+\frac{a}{2}(\theta_1^2-\theta_2^2)\\
				&-a(\mathcal{C}_1+\mathcal{C}_2)\p_y\theta_A^{(1)}+a^2(\mathcal{C}_1+\mathcal{C}_2)(\mathcal{C}_2-\Xi_2-\mathcal{C}_1+\Xi_1),\\
				S_{\mathcal{A}2}:=&-a(\mathcal{C}_1+\mathcal{C}_2)\p_y\theta_A^{(2)}-a(\p_y\tilde{W}_1+\p_y\tilde{W}_2)\p_y\theta_A^{(2)}\notag\\
				&-a(\p_y\tilde{W}_1+\p_y\tilde{W}_2)\p_y\theta_A^{(1)}+a^2(\p_y\tilde{W}_1+\p_y\tilde{W}_2)(\mathcal{C}_2-\mathcal{C}_1),\\
				\mbox{and}\quad			\abs{S_{\mathcal{A}1}}=&O(1)\sum_{i,j=1,2}\left(\bar{\nu}\abs{\theta_{iy}}+\theta_i^2+\abs{\theta_i}\abs{\p_y\theta_A^{(1)}}\right),\\
				\abs{S_{\mathcal{A}2}}=&O(1)\sum_{i,j=1,2}\left[\abs{\theta_i \p_y\theta_A^{(2)}}+\abs{\Xi_i\p_y\theta_A^{(2)}}+\abs{\p_y \tilde{W}_i}\left( \abs{\p_y\theta_A^{(2)}}+\abs{\p_y\theta_A^{(1)}}+\abs{\theta_j}\right)\right].
			\end{align*}
			We first give an estimate for $\theta_A^{(1)}$.  By Duhamel's principle and \cref{thetaA-1}, one has 
			\begin{align}\label{Theta-A-1}
				{\theta_A^{(1)}(y, t)}=\int_0^t \int_{-\infty}^{\infty} &H(0,\nu)\left[S_{\mathcal{A}1} -a(\theta_1+\theta_2)\right] d s d \tau. 
			\end{align}
			For $k=0,1,2$, let
			\begin{align}\label{MMMM-1}
				\hat{M}^{(2)}(t):=\sup_{0 \leq \tau \leq t}\left\{\norm{\theta_A^{(1)}(y, \tau)}_{L^\infty}+\sum_{j=1}^k \norm{\p_y^j\theta_{A}^{(1)}(y, \tau)}_{L^2}\left[\nu(1+\tau)\right]^{\frac{j}{2}-\frac{1}{4}}\right\}.
			\end{align}
			$\norm{\theta_A^{(1)}(y, \tau)}_{L^\infty}$ and $\norm{\p_y\theta_A^{(1)}(y, \tau)}_{L^2}$ can be estimated by the same method as  Lemma \ref{estimateonta}. For $\p_y^2 \theta_A^{(1)}$, a direct computation gives that 
			\begin{align}\label{thetaA1-1}
				\p^2_y\theta_A^{(1)} 
				=&\int_0^{\frac t2} \int_{-\infty}^{\infty} \p^{2}_yH(0,\nu)S_{\mathcal{A}1}(s, \tau) d s d \tau+\int_{\frac t2}^t \int_{-\infty}^{\infty} \p_y H(0,\nu) \p_s S_{\mathcal{A}1}d s d \tau\\ \nonumber
				&\qquad\qquad-\int_0^{t} \int_{-\infty}^{\infty} aH(0,\nu)\p^2_s(\theta_1+\theta_2)d s d \tau:=I_4+I_5+I_6.
			\end{align}
			For $k=0,1$, by  Lemma \ref{estimateontheta}, Lemma \ref{estimateonxi} and \cref{MMMM-1}, one has,
			\begin{align}
				\norm{\p_y^kS_{\mathcal{A}1}}_{L^1 }\lesssim \varepsilon\left[\nu(1+t)\right]^{-\frac{1}{2}-\frac{k}{2}}\hat{M}^{(2)}(t)+\varepsilon\bar{\nu}\left[\bar{\nu}(1+t)\right]^{-\frac{1}{2}-\frac{k}{2}}.
			\end{align}
			Thus, there holds 
			\begin{align}\label{I7I8-1}
				\norm{I_4}_{L^2}+\norm{I_5}_{L^2}\lesssim \varepsilon\nu^{-1}\left[\bar\nu(1+t)\right]^{-\frac34}\hat{M}^{(2)}(t)+\varepsilon\left[\bar\nu(1+t)\right]^{-\frac34}.
			\end{align}
			Since the propagation speeds of $\theta_i$ and $H(0,\nu)$ are different,  using the same method as \cref{I9}, we obtain
			\begin{align}\label{I9-1}
				\norm{I_6}_{L^2}\lesssim  \varepsilon\left[\nu(1+t)\right]^{-\frac{3}{4}}.
			\end{align}
			Combining \cref{thetaA1-1,I7I8-1,I9-1}, we deduce that $\hat{M}^{(2)}(t)\lesssim \varepsilon+\varepsilon\nu^{-1}\hat{M}^{(2)}(t)$.  Note that $\varepsilon=\nu^\frac32$, one has $\hat{M}^{(2)}(t)\lesssim \varepsilon$ as $\nu$ is small. Substituting $\hat{M}^{(2)}(t)\lesssim \varepsilon$ into \cref{MMMM-1} yields $\int_0^t\norm{\p_y^2\theta_{A}^{(1)}(y, t)}_{L^2}^2d\tau\lesssim\varepsilon^2\nu^{-\frac32}$.
			
			Next we give an energy estimate for $\norm{\p_y \theta_A^{(2)}}_{L^2}^2+\nu\int_0^T \norm{\p_y^2 \theta_A^{(2)}}_{L^2}^2 d\tau$. Let 
			$\hat M^{(3)}(t):=\sup_{0\le\tau \le t}\norm{\p_y \theta_A^{(2)}}_{L^2}^2.$
			Multiplying $\p_y$\cref{thetaA-2} by $\p_y\mathcal{\theta}_A^{(2)}$ and integrating the resulting equation, one has
			\begin{align*}
				&\frac{1}{2}\norm{\p_y \theta_A^{(2)}(\tau)}_{L^2}^2\Big|_{\tau=0}^{\tau=t}+\nu\int_0^t \norm{\p_y^2\theta_A^{(2)}}_{L^2}^2 d\tau \\
				\le& \frac{\nu}{2}\int_0^t \norm{\p_y^2\theta_A^{(2)}}_{L^2}^2 d\tau +4 \varepsilon^2 \nu^{-1}\int_0^t\int_{\R}[\nu(1+t)]^{-1}e^{-\frac{|y\pm \bar c(1+t)|^2}{1+t}} \abs{\p_y\theta_A^{(2)}}^2 dyd\tau \\
				& +4\nu^{-1}\sum_{i,j=1}^2\left[ \norm{\p_y \theta_A^{(1)}}_{L^\infty L^2}^2 \int_0^ t \norm{\Xi_i}_{L^\infty}^2 d\tau+ \left(\norm{\p_y\theta_A^{(1)}}_{L^\infty L^\infty}^2+\norm{\theta_i}_{L^\infty L^\infty}^2 \right)\int_0^t \norm{\p_y \tilde W_j}_{L^2}^2 d\tau\right]\\
				&+ 4 \nu^{-3} \sum_{i=1}^2\norm{\p_y\theta_A^{(2)}}_{L^\infty L^2}^2\norm{\p_y\tilde W_i}_{L^\infty L^2}^2 \int_0^t \norm{\p_y \tilde W_i}_{L^2}^2 d\tau\\
				\le& 4\varepsilon^4\nu^{-3}+\varepsilon^2\nu^{-2}\hat M^{(3)}(t)+\frac{\nu}{2}\int_0^t \norm{\p_y^2\theta_A^{(2)}}_{L^2}^2 d\tau +4 \varepsilon^2 \nu^{-1}\int_0^t\int_{\R}[\nu(1+t)]^{-1}e^{-\frac{|y\pm \bar c(1+t)|^2}{1+t}} \abs{\p_y\theta_A}^2 dyd\tau,
			\end{align*}
			where we have used  Lemma \ref{estimateontheta},  Lemma \ref{estimateonxi},  Lemma \ref{1or},  Lemma \ref{estimateonta},  Proposition \ref{est:main} and the following 
			\begin{align*}
				& \sum_{i=1}^2 \int_0^t \int_{\R} \abs{\p_y \tilde W_i}\abs{\p_y \theta_A^{(2)}}\abs{\p_y^2\theta_A^{(2)}} dy d\tau \le \sum_{i=1}^2 \int_0^t \norm{\p_y \tilde W_i}_{L^2} \norm{\p_y \theta_A^{(2)}}_{L^2}^{\frac12}\norm{\p_y^2\theta_A^{(2)}}_{L^2}^{\frac32} d\tau \\
				\le & \frac{\nu}{4}\int_0^t \norm{\p_y^2 \theta_A^{(2)}}_{L^2}^2 d\tau + 4\nu^{-3}\sum_{i=1}^2 \norm{\p_y \tilde W_i}_{L^\infty L^2}^2 \norm{\p_y \theta_A^{(2)}}_{L^\infty L^2}^2 \int_0^t \norm{\p_y \tilde W_i}_{L^2}^2 d\tau.
			\end{align*}
			Since $\theta_A^{(2)}$ does not have propagation speed, while the weight function $
			e^{-\frac{(y\pm (1+t))^2}{\bar{\nu}(1+t)}}$ travels forward with speed $\pm1$, we can use the same method as \cref{q}-\cref{new1} to obtain
			\begin{align}
				\bar{\nu}\int_0^T\int_{R}\left[\bar{\nu}(1+t)\right]^{-1}&e^{-\frac{(y\pm(1+t))^2}{\bar{\nu}(1+t)}} \abs{\p_y \theta_A^{(2)}}^2 dydt \lesssim  \bar{\nu}^{\frac{1}{2}}\int_0^T\int_\R\eta_1h\abs{\p_y \theta_A^{(2)}}^2dydt
				\lesssim {\nu}^{\frac{3}{2}}\int_0^T\norm{\p_y^2 \theta_A^{(2)}}_{L^2}^2dt\nonumber.
			\end{align}
			where
			\begin{align}
				\eta_1=\exp \left(\int_{-\infty}^{y} h(z, t) d z\right), \quad h(t,z)=\frac{1}{\sqrt{2 \pi \bar{\nu}(1+t)}} \exp \left(-\frac{\left(z\pm (1+t)\right)^2}{2 \bar{\nu}(1+t)}\right),
			\end{align}
			Then we arrive at
			\begin{align}
				&\norm{\p_y\theta_{A}^{(2)}(\cdot,T)}_{L^2}^2+\nu\int_0^T\norm{\p_y^2\theta_{A}^{(2)}}_{L^2}^2dt
				\lesssim \varepsilon^2.
			\end{align}
			Since $$\int_0^t \norm{\p^2_y \theta_A}_{L^2}^2 d\tau\le\int_0^t \norm{\p^2_y \theta_A^{(1)}}_{L^2}^2 d\tau+\int_0^t \norm{\p^2_y \theta_A^{(2)}}_{L^2}^2 d\tau \lesssim \varepsilon^2\nu^{-\frac32},$$ we finished the proof of  Lemma \ref{estimateonta-1}.
		\end{proof}
		\subsection{Estimate on $\mathcal{A}$}
		This subsection is devoted to the  $\mathcal{A}$. Recall
		\begin{align}\label{equ-AA}
			\p_t{\mathcal{A}}- \nu \p_y^2{\mathcal{A}}-a\nu\p_y(\tilde{w}_2-\tilde{w}_1)=-v^2_{00}\p_y\mathcal{A}-\nu\frac{{\lde}_{00}}{{\lde}_{00}+1}\p_y^2\mathcal{A}+F_{A},
		\end{align}
		where
		\begin{align*}
			&\abs{F_A}=O(1)(\abs{F_{A1}}+\abs{F_{A2}}),\\
			&\abs{F_{A1}}=O(1)\sum_{i,j=1,2}\Big[\abs{\p_y\tilde{W}_i\Xi_j}+\nu\abs{\p_y\tilde{W}_i\p_y^2\tilde{W}_j}
			+\nu\abs{\p_y^2\theta_A}\left(\abs{\theta_j}+\abs{\p_y\tilde{W}_i}\right)+\bar{\nu}^2\abs{\p_y^2\theta_i} \Big],\\
			&\abs{F_{A2}}=O(1)\Doo\left( v_{\neq} \cdot \nabla v^1_{\neq}+v_{0\neq}\cdot\nabla v^1_{0\neq}+{\lde}_{\neq}\p_x{\lde}_{\neq}+\nu {\lde}_{\neq}\Delta v_{\neq}^{1} +\nu {\lde}_{0\neq}\Delta v_{0\neq}^{1}\right).
		\end{align*}
		Then, we have
		\begin{Lem}\label{Aor}
			Under the same  assumptions as \cref{MT} and  Proposition \ref{est:main}, it holds that
			\begin{align}
				&\norm{\mathcal{A}(\cdot,t)}_{L^2}^2+\nu \int_0^t \norm{\p_y\mathcal{A}(\cdot,\tau)}_{L^2}^2 d\tau \lesssim \varepsilon^2 \nu^{-1}.
			\end{align}
		\end{Lem}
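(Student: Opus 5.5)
We perform a standard $L^2$ energy estimate on \cref{equ-AA}. Multiply \cref{equ-AA} by $\mathcal{A}$ and integrate over $\R\times[0,t]$. Since $\mathcal{A}(\cdot,0)$ is controlled by the initial assumption \cref{initial} (it consists of $v^1_{00,in}$ and $a(\tilde W_2-\tilde W_1)_{in}$, with $\theta_A(0)=0$, $\tilde\Xi_i(0)=0$), this gives
\begin{align*}
\frac12\norm{\mathcal{A}(t)}_{L^2}^2+\nu\int_0^t\norm{\p_y\mathcal{A}}_{L^2}^2d\tau\le C\varepsilon^2+\sum_{j=1}^4 J_j ,
\end{align*}
where $J_1$ comes from $a\nu\p_y(\tilde w_2-\tilde w_1)=a\nu\p_y^2(\tilde W_2-\tilde W_1)$, $J_2$ from $-v^2_{00}\p_y\mathcal{A}$, $J_3$ from the quasilinear viscous term and $J_4$ from $F_A$.

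\textbf{Linear and transport terms.} For $J_1$, integrate by parts once: $|J_1|\le \frac{\nu}{8}\int_0^t\norm{\p_y\mathcal A}^2+C\nu\int_0^t\norm{\p_y\tilde W}_{L^2}^2\lesssim \frac{\nu}{8}\int_0^t\norm{\p_y\mathcal A}^2+\varepsilon^2$ by Lemma \ref{1or}. For $J_2$, integrate by parts to obtain $\frac12\int\p_y v^2_{00}\mathcal A^2$, which is bounded by $\norm{\p_yv^2_{00}}_{L^\infty}\norm{\mathcal A}_{L^2}^2$. Here Proposition \ref{est:main} gives $\int_0^t\norm{\p_y v^2_{00}}_{H^1}\,d\tau$ only via $L^2_t$. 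So we instead write $\int \p_yv^2_{00}\mathcal A^2\le \norm{\p_yv^2_{00}}_{L^2}\norm{\mathcal A}_{L^2}^{3/2}\norm{\p_y\mathcal A}_{L^2}^{1/2}$ and absorb it by Young. We get $\varepsilon^{4/3}\nu^{-1/3}\dots$ times $\sup\norm{\mathcal A}^2$, which is small since $\varepsilon=\varepsilon_0\nu^{3/2}$. For $J_3$, after integrating by parts, $\nu\int\frac{\phi_{00}}{1+\phi_{00}}(\p_y\mathcal A)^2+\nu\int\p_y(\cdot)\mathcal A\p_y\mathcal A$ is absorbed because $\norm{\phi_{00}}_{L^\infty}\lesssim\varepsilon$ and $\norm{\p_y\phi_{00}}_{L^\infty}\lesssim \varepsilon\nu^{-1/2}$.

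\textbf{Source $F_{A1}$.} For $\p_y\tilde W_i\Xi_j$, bound it by $\norm{\Xi_j}_{L^2}\norm{\p_y\tilde W_i}_{L^2}\norm{\mathcal A}_{L^\infty}$ and use $\norm{\mathcal A}_{L^\infty}\le\norm{\mathcal A}^{1/2}\norm{\p_y\mathcal A}^{1/2}$. Lemma \ref{estimateonxi} gives the decay $(\nu(1+t))^{-1/2}\varepsilon^2$ of $\norm{\Xi}_{L^2}$ (up to powers of $\nu$), and combined with Lemma \ref{1or} and Young this is $\lesssim\varepsilon^2\nu^{-1}$ with room. The terms $\nu\p_y\tilde W\p_y^2\tilde W$ and $\nu\p_y^2\theta_A(\theta_j+\p_y\tilde W)$ are integrated by parts, or handled via Cauchy--Schwarz using $\int_0^t\norm{\p_y^2\theta_A}^2\lesssim\varepsilon^2\nu^{-3/2}$ (Lemma \ref{estimateonta-1}), $\norm{\p_y\tilde W}_{L^\infty}\lesssim\varepsilon\nu^{-1/2}$, and $\norm{\theta_j}_{L^\infty}$ decay. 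For example, $\nu\int|\p_y^2\theta_A\theta_j\mathcal A|\le \nu\norm{\p_y^2\theta_A}\norm{\theta_j}_{L^\infty}\norm{\mathcal A}$; time integration with $\norm{\theta_j}_{L^\infty}\lesssim\varepsilon(\nu t)^{-1/2}$ gives a logarithmically borderline quantity. We would therefore put $\mathcal A$ in $L^\infty$ via Gagliardo--Nirenberg and use $\norm{\theta_j}_{L^2}$ instead. The term $\bar\nu^2\p_y^2\theta_i$ is integrated by parts and controlled by $\nu^2\norm{\p_y\theta_i}_{L^2_tL^2}\cdot\nu^{-1/2}(\nu\int\norm{\p_y\mathcal A}^2)^{1/2}$.

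\textbf{Source $F_{A2}$ and the main obstacle.} $F_{A2}$ contains the nonlinear interactions of non-zero and simple-zero modes, notably $\Doo(v_{0\neq}\cdot\nabla v^1_{0\neq})$ with $v^1_{0\neq}$ of size $\nu^{-1}\varepsilon$ due to lift-up. We plan to write it in divergence form $\p_y\Doo(v^2_{0\neq}v^1_{0\neq})+\p_z(\dots)$, since the $z$-average kills the $\p_z$ part, and then integrate by parts onto $\p_y\mathcal A$. This gives $\int_0^t\norm{v^2_{0\neq}}_{L^\infty}\norm{v^1_{0\neq}}_{L^2}\norm{\p_y\mathcal A}$. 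Using $\int_0^t\norm{\p_Zv^2_{0\neq}}_{H^3}^2\lesssim\varepsilon^2\nu^{-1}$ and $\norm{V^1_{0\neq}}\lesssim\nu^{-1}\varepsilon$, Young yields $\nu^{-1}\varepsilon^2\nu^{-2}\varepsilon^2\nu^{-1}\cdot\nu^{-1}\cdots$, which must be $\lesssim\varepsilon^2\nu^{-1}$; this forces precisely $\varepsilon\lesssim\nu^{3/2}$. We expect this bookkeeping to be the main obstacle. Non-zero mode terms are treated similarly with the enhanced-dissipation bounds from Proposition \ref{est:main}. The $\nu\phi\Delta v^1$ terms are handled by moving one derivative to $\mathcal A$. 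Collecting everything and absorbing the $\delta\nu\int\norm{\p_y\mathcal A}^2$ contributions yields the claim.
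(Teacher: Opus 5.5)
\textbf{There is a genuine gap: the proof is not uniform in time.} Your energy estimate has no way to control the interaction between the diffusion waves $\theta_j$ and $\mathcal A$.

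In $J_2$ you integrate by parts, and after Young you need $\int_0^t\|\partial_y v^2_{00}\|_{L^2}^{4/3}\,d\tau$ bounded uniformly in $t$. This fails for both parts of $v^2_{00}$:
\begin{itemize}
\item $v^2_{00}$ contains $a(\theta_1+\theta_2)$. By Lemma \ref{estimateontheta}, $\|\partial_y\theta_j\|_{L^2}^{4/3}\sim\varepsilon^{4/3}[\bar\nu(1+\tau)]^{-1}$, which diverges logarithmically.
\item Its $\partial_y^2\tilde W$ part is only known in $L^2_t$, and $L^2_t$ does not control $L^{4/3}_t$ on $[0,\infty)$.
\end{itemize}

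The same obstruction appears in $\nu\,\partial_y^2\theta_A\,\theta_j\,\mathcal A$ from $F_{A1}$. You noticed the borderline there, but your remedy does not remove it. Put $\mathcal A$ in $L^\infty$ by Gagliardo--Nirenberg and $\partial_y^2\theta_A$ in $L^2_t$. Hölder in time then forces $\|\theta_j\|_{L^2}\in L^4_t$. But $\|\theta_j\|_{L^2}^4\sim\varepsilon^4[\bar\nu(1+t)]^{-1}$, which is again log-divergent.

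This is not a bookkeeping accident. With $\mathcal A$ only in $L^\infty_tL^2\cap L^2_t\dot H^1$, every Hölder/Gagliardo--Nirenberg pairing of $\theta_j$ or $\partial_y\theta_j$ against $\mathcal A$ ends with $\int_0^t(1+\tau)^{-1}d\tau$ at best. This reflects the critical scaling of the diffusion wave.

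\textbf{The missing idea is the separation of propagation speeds.}
\begin{itemize}
\item Do not differentiate $v^2_{00}$. Write $|v^2_{00}\mathcal A\,\partial_y\mathcal A|\le\delta\nu|\partial_y\mathcal A|^2+C\nu^{-1}(v^2_{00})^2\mathcal A^2$, and use $|\phi_{00}|+|v^2_{00}|\lesssim\sum(|\partial_y\tilde W_i|+|\theta_j|)$.
\item The $\partial_y\tilde W$ part then needs only $\int_0^t\|\partial_y\tilde W\|_{L^2}^2$ from Lemma \ref{1or}. Pair it with $\sup_\tau\|\mathcal A\|_{L^\infty}^2\lesssim\varepsilon\nu^{-1/2}\sup_\tau\|\mathcal A\|_{L^2}$; this uses the triangle-inequality bound on $\|\partial_y\mathcal A\|_{L^\infty_tL^2}$.
\item The $\theta_j$ parts become a Gaussian-weighted quantity. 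This includes $\nu\theta_j^2\mathcal A^2$, which comes from Cauchy--Schwarz on the $\partial_y^2\theta_A$ term together with $\nu\int_0^t\|\partial_y^2\theta_A\|^2\lesssim\varepsilon^2\nu^{-1/2}$ (Lemma \ref{estimateonta-1}). The weighted quantity is $\varepsilon^2\nu^{-1}\int_0^t\!\int[\bar\nu(1+\tau)]^{-1}e^{-(y\pm(1+\tau))^2/(\bar\nu(1+\tau))}\mathcal A^2$, which is $\lesssim\varepsilon^2\nu^{-3/2}\int_0^t\!\int h\,\mathcal A^2$.
\item $\mathcal A$ solves a drift-free heat equation, while the weight travels at speed $\pm1$. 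Multiply the equation by $\eta_1\mathcal A$ with $\eta_1=\exp(\int_{-\infty}^y h)$, as in \cref{q}--\cref{new1}. The term $-\partial_t\eta_1$ produces the good term $\int\!\int h\,\mathcal A^2$, and the small factor $\varepsilon^2\nu^{-3/2}$ lets you absorb the weighted integral.
\end{itemize}

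\textbf{Two smaller points.}
\begin{itemize}
\item In $J_3$, a pointwise-in-time bound on $\|\partial_y\phi_{00}\|_{L^\infty}$ is not enough over infinite time. You need $\int_0^\infty\|\partial_y\phi_{00}\|_{L^\infty}^2\lesssim\varepsilon^2\nu^{-1}$ from Proposition \ref{est:main}.
\item Rewriting $\Doo(v_{0\neq}\cdot\nabla v^1_{0\neq})$ in divergence form leaves a compressible remainder $-\Doo(v^1_{0\neq}d_{0\neq})$, which you must estimate separately; it is manageable.
\end{itemize}
Apart from this, your $F_{A2}$ bookkeeping gives the same $\nu^{3/2}$ threshold as the paper, which puts $\mathcal A$ in $L^\infty$ instead of $v^2_{0\neq}$.
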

		\begin{proof}
			Let 
			$\hat M^{(4)}(t):=\sup_{0\le\tau \le t}\norm{\mathcal{A}}_{L^2}^2.$ Since $\abs{{\lde}_{00},v^2_{00}}=O(1)\sum_{i,j=1}^2\left[\abs{\tilde{W}_{iy}}+\abs{\theta_j}\right]$, multiplying \cref{equ-AA} by $\mathcal{A}$, one has
			\begin{align}
				&\frac{1}2\norm{\mathcal{A}(\cdot,T)}^2+\nu\int_0^T\norm{\mathcal{A}_y}^2dt\notag\\ \nonumber
				= &\frac12\norm{\mathcal{A}(\cdot,0)}^2+ \int_0^T\int_\R F_A \mathcal{A} - v^2_{00} \mathcal{A}\mathcal{A}_y-\nu \frac{{\lde}_{00}}{{\lde}_{00}+1} \p_y^2 \mathcal{A} \mathcal{A} + a\nu(\p_y\tilde{W}_2-\p_y\tilde{W}_1)\p_y \mathcal{A}  dydt\\ \nonumber
				\lesssim&\norm{\mathcal{A}(\cdot,0)}^2+\varepsilon^2 \nu^{-1}\int_0^T\int_{\R}\left[\bar{\nu}(1+t)\right]^{-1}e^{-\frac{(y\pm(1+t))^2}{\bar{\nu}(1+t)}}\mathcal{A}^2dydt+\nu\int_0^T \norm{\p_y\tilde{W}_2-\p_y\tilde{W}_1}_{L^2}^2 dt+\delta\nu\int_0^T\norm{\mathcal{A}_y}_{L^2}^2dt\\ \nonumber
				&+\nu^{-1}\norm{\mathcal{A}}_{L^\infty L^2}\norm{\p_y\mathcal{A}}_{L^\infty L^2}\sum_{i,j,k=1}^2\int_0^T \norm{\p_y\theta_i}_{L^2}^2+\norm{\p_y^k\tilde{W}_j}_{L^2}^2dt+\int_0^T\int_{\R}(F_{A1}+F_{A2})\mathcal{A}dydt\\
				\leq&\varepsilon^2 \nu^{-1}+\frac{1}{10}\left(\nu\int_0^T\norm{\mathcal{A}_y}_{L^2}^2dt+\hat M^{(4)}(t) \right)+\int_0^T\int_{\R}(F_{A1}+F_{A2})\mathcal{A}dydt\notag\\
				&+C\varepsilon^2 \nu^{-1}\int_0^T\int_{\R}\left[\bar{\nu}(1+t)\right]^{-1}e^{-\frac{(y\pm(1+t))^2}{\bar{\nu}(1+t)}}\mathcal{A}^2dydt,
			\end{align}
			where we have used  Lemma \ref{estimateontheta},  Lemma \ref{estimateonxi},  Lemma \ref{1or},  Lemma \ref{estimateonta},  Proposition \ref{est:main} and the following 
			\begin{align*}
				&\int_0^T \int_{\R} v^2_{00} \mathcal{A} \mathcal{A}_y dydt \leq \delta \nu\int_0^T \norm{\mathcal{A}_y}^2 dt + C \nu^{-1} \sum_{i,j=1}^2 \int_0^T \int_{\R} \left(\theta_j^2 + \tilde{W}_{iy}^2 \right) \mathcal{A}^2 dydt\\
				&\qquad\qquad\qquad\qquad\;\;\lesssim\varepsilon^2 \nu^{-1}\int_0^T\int_{\R}\left[\bar{\nu}(1+t)\right]^{-1}e^{-\frac{(y\pm(1+t))^2}{\bar{\nu}(1+t)}}\mathcal{A}^2dydt+\delta \nu\int_0^T \norm{\mathcal{A}_y}_{L^2}^2 dt\\
				&\qquad\qquad\qquad\qquad\quad\;\;\;+\nu^{-1}\norm{\mathcal{A}}_{L^\infty L^2}\norm{\p_y\mathcal{A}}_{L^\infty L^2}\sum_{i=1}^2\int_0^T \norm{\p_y\tilde{W}_i}_{L^2}^2dt,\\
				& \norm{\mathcal A}_{L^\infty L^\infty} \le \norm{\mathcal A}_{L^\infty L^2}^{\frac12}\norm{\p_y\mathcal A}_{L^\infty L^2}^{\frac12} \\
				&\qquad\qquad\;\le \sum_{i,j=1}^2 [\hat M^{(4)}(t)]^{\frac14}\left(\norm{\p_y v_{00}^1}_{L^\infty L^2}+ \norm{\theta_i}_{L^\infty L^2}+\norm{({\lde}_{00},v^2_{00})}_{L^\infty L^2}+\norm{\p_y\Xi_j}_{L^\infty L^2}+\norm{\p_y\theta_A}_{L^\infty L^2}\right)^{\frac12}\\
				&\qquad\qquad\;\le \varepsilon^{\frac12} \nu^{-\frac14}[\hat M^{(4)}(t)]^{\frac14},\\
				& \int_0^T\|\partial_y^2 \tilde{W}^2_j\|^2_{L^2} dt\leq \sum_{j=1,2}\int_0^T \norm{(\p_y {\lde}_{00},\p_yv_{00}^2)}_{L^2}^2 + \norm{(\p_y \theta_j,\p_y\Xi_j)}_{L^2}^2 + \norm{\p_y({\lde}_{0\neq}v^2_{0\neq})}_{L^2}^2+ \norm{\p_y({\lde}_{\neq}v^2_{\neq})}_{L^2}^2 d\tau\\
				&\qquad\qquad\qquad\;\; \;\le\varepsilon^2 \nu^{-\frac32}.
			\end{align*}
			For the source term $F_{A1},F_{A2}$, by  Lemma \ref{estimateontheta}- Lemma \ref{1or},  Lemma \ref{estimateonta} and  Proposition \ref{est:main}, we have
			\begin{align*}
				\int_0^T\int_{\R} F_{A1}\mathcal{A}dydt \lesssim &\nu \sum_{i=1}^2\int_0^t \norm{\p_y \tilde W_i}_{L^2}^2 d\tau + \nu^{-1}\norm{\mathcal A}_{L^\infty L^2}^2 \sum_{i=1}^2\int_{0}^t \norm{\Xi_i}_{L^\infty}^2 d\tau + \nu\int_0^T \norm{\p_y^2 \theta_A}_{L^2}^2 d\tau + \delta \nu\int_0^T \norm{\p_y \mathcal A}_{L^2}^2 d\tau \\
				&+\nu \norm{\mathcal A}_{L^\infty L^\infty}^2 \sum_{i,j=1}^2\int_0^t \norm{\p_y \tilde W_i}_{L^2}^2 + \norm{\p_y^2 \tilde W_j}_{L^2}^2 + \norm{\p_y^2 \theta_A}_{L^2}^2 d\tau+\nu^5 \norm{\mathcal A}_{L^\infty L^2}^{\frac23}\sum_{i=1}^2\int_0^t \norm{\p_y^2 \theta_i}_{L^1}^{\frac43} d\tau \\
				&+\varepsilon^2\nu\int_0^t\int_{\R} [\nu(1+t)]^{-1}e^{-\frac{|y\pm \bar c(1+t)|^2}{1+t}}\abs{\mathcal A}^2 dyd\tau\\
				\leq&\varepsilon^2 \nu^{-1}+\frac{1}{10}\hat M^{(4)}(t)+ \frac{\nu}{10}  \int_0^t \norm{\p_y \mathcal A}_{L^2}^2 d\tau+C\varepsilon^2\nu\int_0^t\int_{\R} [\nu(1+t)]^{-1}e^{-\frac{|y\pm \bar c(1+t)|^2}{1+t}}\abs{\mathcal A}^2 dyd\tau,\\
				\int_0^T\int_{\R} F_{A2}\mathcal{A}dydt \lesssim & \norm{\mathcal A}_{L^\infty L^\infty}\int_0^t \Big[\norm{v_{\neq}}_{L^2}\norm{\nabla v_{\neq}^{1}}_{L^2} + (\norm{v_{0\neq}^2}_{L^2}+\norm{v_{0\neq}^3}_{L^2})\norm{\nabla_{y,z}v_{0\neq}^1}_{L^2} + \norm{{\lde}_{\neq}}_{L^2}\norm{\nabla {\lde}_\neq}_{L^2} \\
				&\qquad\qquad\qquad+\nu \norm{\phi_{\neq}}_{L^2}\norm{\Delta v_{\neq}^1}_{L^2} + \nu \norm{\phi_{0\neq}}_{L^2}\norm{\Delta_{y,z}v_{0\neq}^1}_{L^2} \Big] d\tau \\
				\le &\frac{1}{10}\hat M^{(4)}(t) + C \varepsilon^2 \nu^{-1},
			\end{align*}
			Since $\theta_A^{(2)}$ does not have propagation speed, while the weight function $
			e^{-\frac{(y\pm (1+t))^2}{\bar{\nu}(1+t)}}$ travels forward with speed $\pm1$, we can use the same method as \cref{q}-\cref{new1} to obtain
			\begin{align}
				\bar{\nu}\int_0^T\int_{R}\left[\bar{\nu}(1+t)\right]^{-1}&e^{-\frac{(y\pm(1+t))^2}{\bar{\nu}(1+t)}} \abs{\mathcal A}^2 dydt \lesssim  \bar{\nu}^{\frac{1}{2}}\int_0^T\int_\R\eta_1h\abs{\mathcal A}^2dydt
				\lesssim {\nu}^{\frac{3}{2}}\int_0^T\norm{\p_y \mathcal A}_{L^2}^2dt\nonumber.
			\end{align}
			where
			\begin{align}
				\eta_1=\exp \left(\int_{-\infty}^{y} h(z, t) d z\right), \quad h(t,z)=\frac{1}{\sqrt{2 \pi \bar{\nu}(1+t)}} \exp \left(-\frac{\left(z\pm (1+t)\right)^2}{2 \bar{\nu}(1+t)}\right),
			\end{align}
			Then we arrive at
			\begin{align}
				&\norm{\mathcal{A}(\cdot,T)}_{L^2}^2+\nu\int_0^T\norm{\mathcal{A}_y}_{L^2}^2dt
				\lesssim \varepsilon^2 \nu^{-1}.
			\end{align}
			Therefore Lemma \ref{Aor} is completed.
		\end{proof}
		
		\subsection{Estimate on $v_{00}^1$}
		In this subsection, we give an estimate of $\norm{v_{00}^1}_{L^\infty}$
		\begin{Thm}\label{2026-3-21-2}
			Under the same  assumptions as \cref{MT} and  Proposition \ref{est:main}, it holds that
			\begin{align}
				\norm{v_{00}^1}_{L^\infty}\lesssim \varepsilon\nu^{-\frac12}.
			\end{align}
		\end{Thm}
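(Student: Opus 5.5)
We use the decomposition defining $\mathcal A$,
\begin{align*}
v_{00}^1 = a\big(\tilde W_2-\tilde W_1\big)+a\big(\tilde\Xi_2-\tilde\Xi_1\big)+\theta_A+\mathcal A ,
\end{align*}
and bound each of the four pieces in $L^\infty$ separately, using the lemmas of this section. By Lemma \ref{1or}, $\norm{(\tilde W_1,\tilde W_2)}_{L^\infty}\lesssim \varepsilon\nu^{-1/8}$. By Lemma \ref{estimateonxi}, $\norm{\tilde\Xi_i}_{L^\infty}\lesssim \varepsilon^2\nu^{-1/2}(\bar\nu(1+t))^{-1/4}\lesssim \varepsilon^2\nu^{-3/4}$, which is much smaller than $\varepsilon\nu^{-1/2}$ because $\varepsilon=\varepsilon_0\nu^{3/2}$. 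By Lemma \ref{estimateonta}, $\norm{\theta_A}_{L^\infty}\lesssim\varepsilon$. Hence the first three pieces together contribute at most $C\varepsilon\nu^{-1/8}\le C\varepsilon\nu^{-1/2}$.

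The remaining piece $\mathcal A$ is handled with the one-dimensional Gagliardo--Nirenberg inequality
\begin{align*}
\norm{\mathcal A}_{L^\infty}\lesssim \norm{\mathcal A}_{L^2}^{1/2}\norm{\p_y\mathcal A}_{L^2}^{1/2}.
\end{align*}
Lemma \ref{Aor} gives $\norm{\mathcal A}_{L^2}\lesssim\varepsilon\nu^{-1/2}$. For $\norm{\p_y\mathcal A}_{L^\infty_tL^2}$ we do not rely on the time-integrated bound in Lemma \ref{Aor}. Instead we differentiate the decomposition:
\begin{align*}
\p_y\mathcal A=\p_y v_{00}^1-a(\tilde w_2-\tilde w_1)-a(\Xi_2-\Xi_1)-\p_y\theta_A .
\end{align*}
\begin{itemize}
\item Proposition \ref{est:main} gives $\norm{\p_yv^1_{00}}_{L^\infty H^4}\lesssim\nu^{-1/2}\varepsilon$.
\item Writing $\tilde w_i=w_i-\mathcal C_i$, Proposition \ref{est:main} bounds $\norm{w_i}_{L^2}$ by $\norm{(\phi_{00},v^2_{00})}_{L^2}\lesssim\varepsilon$.
\item Lemmas \ref{estimateontheta} and \ref{estimateonxi} bound $\norm{\mathcal C_i}_{L^2}$ and $\norm{\Xi_i}_{L^2}$ by $O(\varepsilon\nu^{-1/4})$.
\item Lemma \ref{estimateonta} bounds $\norm{\p_y\theta_A}_{L^2}$ by $\varepsilon\nu^{-1/4}$.
\end{itemize}
Altogether $\norm{\p_y\mathcal A}_{L^\infty L^2}\lesssim\varepsilon\nu^{-1/2}$, and therefore $\norm{\mathcal A}_{L^\infty}\lesssim \varepsilon\nu^{-1/2}$. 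This is the estimate already recorded, in the form $\varepsilon^{1/2}\nu^{-1/4}[\hat M^{(4)}]^{1/4}$, inside the proof of Lemma \ref{Aor}.

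Summing the four contributions yields $\norm{v^1_{00}}_{L^\infty}\lesssim\varepsilon\nu^{-1/2}$, uniformly in $t\in[0,T]$. The main obstacle is the $\mathcal A$ term, which decides the final power of $\nu$: both factors in the interpolation must be of size $\varepsilon\nu^{-1/2}$. So one has to check that $\norm{\p_y v^1_{00}}_{L^\infty L^2}$ really carries no worse than $\nu^{-1/2}$. This traces back to the bootstrap bound \eqref{bs-u100} on $U^1_{00}$, together with the $O(\varepsilon)$ bounds on $(\phi_{00},v^2_{00})$ from \eqref{bs-n0}. Every other piece is lower order.
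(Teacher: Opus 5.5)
Your proof is correct and follows the paper's own argument. You use the same decomposition $v^1_{00}=a(\tilde W_2-\tilde W_1+\tilde\Xi_2-\tilde\Xi_1)+\theta_A+\mathcal A$ with the bounds from Lemmas \ref{1or}, \ref{estimateonxi} and \ref{estimateonta}, and the same Gagliardo--Nirenberg interpolation for $\mathcal A$, with $\|\p_y\mathcal A\|_{L^\infty L^2}\lesssim\varepsilon\nu^{-1/2}$ obtained by differentiating the decomposition and invoking Proposition \ref{est:main}.
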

		\begin{proof}
			Combining  Lemma \ref{estimateontheta}- Lemma \ref{1or},  Lemma \ref{estimateonta}, Lemma \ref{Aor} and  Proposition \ref{est:main}, we have
			\begin{align*}
				\norm{v_{00}^1}_{L^\infty} &\le \norm{\mathcal A}_{L^\infty}+\sum_{j=1}^2\big(\norm{\tilde W_j}_{L^\infty}+\norm{\Xi_j}_{L^\infty}\big)+\norm{\theta_A}_{L^\infty}\\
				&\lesssim\norm{\mathcal{A}}_{L^2}^{\frac12}\left(\norm{\p_y v_{00}^1}_{L^\infty L^2}+ \norm{\theta_i}_{L^\infty L^2}+\norm{({\lde}_{00},v^2_{00})}_{L^\infty L^2}+\norm{\p_y\Xi_j}_{L^\infty L^2}+\norm{\p_y\theta_A}_{L^\infty L^2}\right)^{\frac12}+\varepsilon \nu^{-\frac18} \\
				&\lesssim \varepsilon\nu^{-\frac12}.
			\end{align*}
			Then we have finished the proof of \cref{2026-3-21-2}.
		\end{proof}
		
		\section{Energy estimates for the compressible part at low regularity}
		In this section, we prove the compressible part of  Proposition \ref{prop-bs} for $j=0$.
		\subsection{$H^3$ estimate on $({\de}_0,p^{-\frac{1}{2}}D_0)$}
		In this subsection, we improve the bootstrap hypothesis \cref{bs-n0}.
		\begin{Lem}
			Under the same assumptions as \cref{MT} and  Proposition \ref{prop-bs}, it holds that,
			\begin{align}
				\sup_{0\leq t\leq T}\|({\de}_0,p^{-\frac{1}{2}}D_0)(t)\|_{H^3}^2&
				+\nu\int_{0}^{T}\|\na(V^2_0,V^3_0)\|_{L^2}^2\,dt
				\nonumber\\
				&+(\nu+\nu')\int_{0}^{T}\|D_0\|_{H^4}^2\,dt+c_0\nu\int_{0}^{T}\|\na{\de}_0\|_{H^2}^2\,dt
				\lesssim \eps^2+\nu^{-1}\eps^3 .
			\end{align}
			
		\end{Lem}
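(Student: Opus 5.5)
We run an $H^3$ energy estimate on the zero-mode ($k=0$) part of the symmetrized system \cref{sys-ND}. For $k=0$ we have $p=\eta^2+l^2$ with $\partial_t p=0$, so the term $\frac{\partial_t p}{2p}$ and the coupling $2\partial_X p^{-3/2}(\cdots)$ drop out. The linear part then reads
\[
\partial_t R_0+p^{\frac12}(p^{-\frac12}D_0)=(\nlt_{\de})_0,\qquad \partial_t (p^{-\frac12}D_0)-p^{\frac12}R_0+\bar\nu p(p^{-\frac12}D_0)=p^{-\frac12}(\nlt_D)_0,
\]
where we read the dissipation as $\bar\nu$, collecting the $(\nu+\nu')\nabla D$ contribution. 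We apply $\langle\nabla\rangle^3$ to both equations and pair them with $\langle\nabla\rangle^3R_0$ and $\langle\nabla\rangle^3p^{-1/2}D_0$. The antisymmetric terms $\pm p^{1/2}$ cancel, and the dissipation gives $\bar\nu\|D_0\|_{H^3}^2$. Since $\nu+\nu'\approx\nu$, this yields the $(\nu+\nu')\int\|D_0\|_{H^4}^2$ term once we also account for the $p^{-1/2}$ weight, more precisely $\bar\nu\|p^{1/2}p^{-1/2}D_0\|_{H^3}^2$. The gradient of $V^2_0,V^3_0$ is recovered from $D_0$ and $\Omega^3_0$ through the velocity reconstruction formulas, and $\Omega^3_0$ is controlled by \cref{bs-o3}.

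There is no damping on $R_0$ itself, so we use the standard hypocoercive cross term. We add $c_0\nu\,\mathrm{Re}\langle\langle\nabla\rangle^2 \nabla R_0,\ -\langle\nabla\rangle^2\nabla p^{-1}D_0\rangle$; concretely, we pair the $D_0$-equation with $-p^{1/2}R_0$ at one lower derivative level. Differentiating this in time produces $c_0\nu\|\nabla R_0\|_{H^2}^2$ from $-p^{1/2}R_0\cdot(-p^{1/2}R_0)$. The remaining cross terms are bounded by $\nu\|D_0\|^2$-type quantities, which the main dissipation absorbs once $c_0$ is small. The viscous cross term $\bar\nu\nu\langle pD,R\rangle$ is absorbed by Cauchy--Schwarz, and the total energy stays equivalent to $\|(R_0,p^{-1/2}D_0)\|_{H^3}^2$. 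The linear contributions then give $\eps^2$ from the initial data.

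It remains to bound the nonlinear terms by $\nu^{-1}\eps^3$. We split each product into the interactions $0\cdot0$, $\neq\cdot\neq$ (projected onto $k=0$), and, for $0$-modes, $00\cdot0\neq$. Transport terms $V_0\cdot\nabla R_0$ are handled by symmetrization and the commutator estimate of Lemma \ref{lemma:commutator}. This gives $\|\nabla V_0\|_{L^\infty}\|R_0\|_{H^3}^2$, and its time integral is bounded by $\|R_0\|_{L^\infty H^3}^2\int\|\nabla V_0\|_{H^2}\,dt$. The dangerous piece is $V^1_0\partial_X$, which vanishes on the zero mode. This is exactly why the $X$-independence removes the lift-up amplitude $V^1_{0\neq}\sim\nu^{-1}\eps$ from transport. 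The terms still involving $V^1_0$, such as $\tilde\partial_iV^j\tilde\partial_jV^i$, contain $V^1$ only through $\partial_X V^1$ or $\partial_1$, so they vanish for zero modes or pair with nonzero modes. For the $\neq\cdot\neq$ interactions we use Proposition \ref{est:main}: for example $\int\|\nabla V_{\neq}\|_{H^3}\|D_{\neq}\|_{H^3}\|p^{-1/2}D_0\|_{H^3}\,dt$ is bounded using the $\nu^{1/6}$ and $\nu^{1/2}$ weighted $L^2_t$ bounds, which gives at worst $\nu^{-1}\eps^3$. The viscous nonlinearity $G(R)\nu\tilde\Delta V$ is integrated by parts onto $p^{-1/2}D_0$ and absorbed into the dissipation. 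We expect the main obstacle to be the $\neq\cdot\neq$ terms containing $R_{\neq}$ or $V^2_{\neq}$, which carry the $\nu^{-1/6}$ growth, paired against $p^{1/2}R_0$ in the cross term. There we would use the $m^{1/4}$-weighted bounds of \cref{bs-xn} together with $\nu^{2/3}\lesssim m$, and keep careful accounting so that the total loss does not exceed $\nu^{-1}$.
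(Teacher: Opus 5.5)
\textbf{The gap is at the $L^2$ level.} Your argument fails on the lowest-order part of the $H^3$ energy, i.e.\ on the low frequencies of $R_{00}$. Your functional gives time-integrated control only of $\na R_0$ (through the cross term) and of $D_0$. It gives none for $R_0$ itself, and none can be expected. The diffusion waves in $R_{00}$ satisfy $\|\theta_i\|_{L^2}\sim\eps[\bar\nu(1+t)]^{-1/4}$ for generic data, so $\int_0^T\|R_{00}\|_{L^2}^2dt$ grows like $T^{1/2}$.

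Consequently the $0\cdot0$ cubic terms tested against undifferentiated $R_0$ are not bounded uniformly in $T$. After symmetrization, transport together with $-R_0D_0$ leaves a multiple of $\langle D_0,R_0^2\rangle$. The pressure nonlinearity contributes $\langle F(R_0)\na R_0,\na p^{-1}D_0\rangle\approx c\,\langle R_0^2,D_0\rangle$, of the same type. Your bound $\|R_0\|_{L^\infty H^3}^2\int_0^T\|\na V_0\|_{H^2}\,dt$ requires an $L^1_t$ bound on $\na V_0$. The bootstrap only provides $L^2_t$ control ($\nu^{1/2}\|D_0\|_{L^2H^3}\lesssim\eps$), and the diffusion-wave part of $D_{00}$ is genuinely not integrable in time. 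Interpolation such as $\|R_{00}\|_{L^4}^2\lesssim\|R_{00}\|_{L^2}^{3/2}\|\p_YR_{00}\|_{L^2}^{1/2}$ does not restore integrability either. So the real obstacle is the $0\cdot0$ interaction, not the $\neq\cdot\neq$ terms you single out.

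\textbf{The missing idea is to use the conservation structure at the $L^2$ level.} The paper writes the zero-mode compressible Navier--Stokes system for $(\rho_0,V^2_0,V^3_0)$ and moves every contribution with at least two non-zero-mode factors into forcing terms $G_1,G_i$. It then uses the physical energy
\[
E^{(1)}=\Bigl\langle\rho_0,\int_1^{\rho_0}\frac{P(s)-P(1)}{s^2}\,ds\Bigr\rangle+\frac12\|\sqrt{\rho_0}\,(V^2_0,V^3_0)\|_{L^2}^2 .
\]
For this functional all $0\cdot0$ nonlinear terms cancel identically. The viscous term directly produces $\nu\int\|\na(V^2_0,V^3_0)\|_{L^2}^2$. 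Only the forcing remains, and it has $L^2_t$ bounds from enhanced dissipation, which gives $\nu^{-1}\eps^3$.

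The quadratic energy with the hypocoercive cross term $-c_0\bar\nu\,\mathrm{Re}\langle p^{-1/2}\na^sR_0,\na^sp^{-1/2}D_0\rangle$ is then used only for $\dot H^s$, $s=1,2,3$. There your commutator argument does close, because $\na^sR_0\in L^2_t$ and $\na V_0$ can be placed in $L^\infty_t$. For example, $\|(V^2_0,V^3_0)\|_{L^\infty H^3}\|\na R_0\|_{L^2H^2}^2\lesssim\nu^{-1}\eps^3$. Equivalently, you could add a cubic normal-form correction to your $L^2$ energy, which amounts to expanding the physical energy.

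\textbf{Two smaller points.}
\begin{itemize}
\item Your dissipation is $\bar\nu\|D_0\|_{H^3}^2$, since $p^{1/2}p^{-1/2}D_0=D_0$; it is not $H^4$. The paper's argument also yields only $H^3$, consistent with \cref{bs-n0}, so the $H^4$ in the statement appears to be a typo and should not be forced.
\item You cannot recover $\na(V^2_0,V^3_0)$ from $D_0$ and $\Omega^3_0$ at the double-zero mode. There $\Omega^3_{00}=\p_Y^2V^3_{00}$, so you would need $\p_Y^{-1}\Omega^3_{00}$. You would have to invoke \cref{bs-v300}, and \cref{bs-w2} for $V^2_{0\neq}$, instead.
\end{itemize}
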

		Due to the absence of $L^2_t$-estimates for ${\de}_{00}$, we need to obtain energy estimates in $L^2$ and $\dot{H}^s$, $s=1,2,3,$ respectively.
		\subsubsection{$L^2$ estimate of $({\de}_0,p^{-\frac{1}{2}}D_0)$}\label{sec411}
		
		\begin{Lem} \label{Lem-L2-1}
			Under the same assumptions as \cref{MT} and  Proposition \ref{prop-bs}, it holds that,
			\begin{align}
				\sup_{0\leq t\leq T}\|({\de}_0,p^{-\frac{1}{2}}D_0)(t)\|_{L^2}^2
				+\nu\int_{0}^{T}\|\na(V^2_0,V^3_0)\|_{L^2}^2\,dt
				+(\nu+\nu')\int_{0}^{T}\|D_0\|_{L^2}^2\,dt
				\lesssim \eps^2+\nu^{-1}\eps^3 .
			\end{align}
			
		\end{Lem}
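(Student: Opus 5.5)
The plan is a basic $L^2$ energy estimate on the zero-in-$X$ mode of the symmetric compressible system \cref{sys-ND}. Since $k=0$, we have $p=\eta^2+l^2$ time independent, $\partial_t p=0$ and $\partial_X=0$. The linear system for $(\de_0,p^{-\frac12}D_0)$ therefore reduces to
\begin{align*}
\partial_t \de_0+p^{\frac12}(p^{-\frac12}D_0)=(\nlt_{\de})_0,\qquad \partial_t(p^{-\frac12}D_0)-p^{\frac12}\de_0+\bar\nu\, p\,(p^{-\frac12}D_0)=p^{-\frac12}(\nlt_D)_0 .
\end{align*}
Here the viscous coefficient is $\bar\nu=2\nu+\nu'$, because $\mathrm{div}$ of $\nu\Delta v+(\nu+\nu')\nabla d$ gives $\bar\nu\Delta d$. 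I will pair the first equation with $\de_0$ and the second with $p^{-\frac12}D_0$ in $L^2$. The skew terms $\pm p^{\frac12}$ cancel, which yields
\begin{align*}
\tfrac12\tfrac{d}{dt}\|(\de_0,p^{-\frac12}D_0)\|_{L^2}^2+\bar\nu\|D_0\|_{L^2}^2=\langle(\nlt_\de)_0,\de_0\rangle+\langle(\nlt_D)_0,p^{-1}D_0\rangle .
\end{align*}
The dissipation $\nu\|\nabla(V^2_0,V^3_0)\|_{L^2}^2$ on the left is obtained by running the same computation on the momentum-type equations for $(V^2_0,V^3_0)$, or more simply from the identity $V^2_0=-\partial_Yp^{-1}D_0-p^{-1}(W^2_0)$ together with the $\Omega^3$ bootstrap, since $W^2_0$ and $\Omega^3_0$ are controlled at order $\varepsilon$ with $\nu^{1/2}L^2_t$ gradient bounds by \cref{bs-w2}, \cref{bs-o3}. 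I will take this piece from the bootstrap and Proposition \ref{est:main}; it only costs $\varepsilon^2$.

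Next I treat the nonlinear terms. The mode structure gives $(fg)_0=f_0g_0+(f_{\neq}g_{\neq})_0$, so I split accordingly.

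\emph{Zero--zero interactions.} For $\langle V_0\cdot\nabla\de_0+\de_0D_0,\de_0\rangle$, integration by parts gives $\int(\frac12 D_0-D_0)\de_0^2$. This is bounded by $\|D_0\|_{L^2}\|\de_0\|_{L^4}^2$. The double-zero component $\de_{00}$ is the dangerous one, since it has no $L^2_t$ control. I will use $\|\de_{00}\|_{L^\infty}\lesssim\|\de_{00}\|_{L^2}^{1/2}\|\partial_Y\de_{00}\|_{L^2}^{1/2}$ together with $\int_0^T\|D_0\|_{L^2}^2\lesssim\nu^{-1}\varepsilon^2$ and $\nu^{1/2}\|\partial_Y\de_{00}\|_{L^2_tH^2}\lesssim\varepsilon$. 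Combined with Cauchy--Schwarz in time, this gives at most $\nu^{-1}\varepsilon^3$. The pressure and viscous parts of $\nlt_D$ are handled after moving one derivative onto $p^{-1}D_0$. Since $p^{-1}\tilde\nabla D_0\sim p^{-\frac12}D_0$, the term $\langle \tilde\nabla\cdot(F(\de)\tilde\nabla\de),p^{-1}D_0\rangle$ is bounded by $\|\de\|_{L^\infty}\|\nabla\de_0\|_{L^2}\|p^{-\frac12}D_0\|_{L^2}$, and then by $\varepsilon\cdot\nu^{-1/2}\varepsilon\cdot\varepsilon$ after time integration with \cref{bs-n0}. The $G(\de)\nu\Delta V$ terms carry an explicit $\nu$, which compensates the $\nu^{-1/2}$ losses. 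The quadratic term $\tilde\partial_iV^j\tilde\partial_jV^i$ restricted to zero modes involves $V^1_0$ only through $\partial_X V^1_0=0$ and $\partial_Z V^1_0\partial_YV^3_0$-type products. I will pair these as $\|V^1_{0\neq}\|\,\|\nabla V^3_0\|\,\|p^{-\frac12}D_0\|$, using $\|V^1_{0\neq}\|_{L^\infty H^5}\lesssim\nu^{-1}\varepsilon$ and $\nu^{1/2}\|\nabla V^3_0\|_{L^2}\lesssim\varepsilon$.

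\emph{Nonzero--nonzero interactions.} These are estimated with the enhanced dissipation bounds of Proposition \ref{est:main}. Terms such as $\langle V_{\neq}\cdot\tilde\nabla\de_{\neq},\de_0\rangle$ are bounded by $\|\de_0\|_{L^\infty}\|V_{\neq}\|_{L^2_tL^2}\|\tilde\nabla\de_{\neq}\|_{L^2_tL^2}$. With $\nu^{1/6}L^2_t$ bounds on $V_{\neq}$ and $\nu^{1/2}$ bounds on $\tilde\nabla\de_{\neq}$, this gives $\nu^{-2/3}\varepsilon^3\le\nu^{-1}\varepsilon^3$.

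The main obstacle is the term $V^2_{00}\partial_Y(\cdot)$ and, more generally, the lack of time integrability of $\de_{00}$ and $V^2_{00}$. If this fails to close directly, I would first try to use the anti-derivative description of Lemma \ref{1or}, namely $\de_{00},V^2_{00}\sim\partial_y\tilde W+\theta$, with the weighted estimates from the proof of Lemma \ref{1or}. There the diffusion waves $\theta_i$ supply the $\varepsilon(\bar\nu(1+t))^{-1/2}$ Gaussian weights, and the cubic contributions give $\varepsilon^3\nu^{-1}$ after using $\bar\nu\int_0^T\|\partial_y\tilde W\|_{L^2}^2\lesssim\varepsilon^2$.
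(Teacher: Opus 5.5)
\emph{Verdict: genuine gap.} Your plan has a genuine gap at the double-zero cubic terms. In your linear-symmetrizer energy for $(\de_0,p^{-\frac12}D_0)$ the zero--zero self-interactions do not cancel. Terms with a $\de_{0\neq}$ or $V_{0\neq}$ factor can be closed using Poincar\'e in $Z$. The pure $(0,0)$ transport and pressure contributions, however, combine to leading order into $\frac{P''(1)-2}{2}\int D_{00}\,\de_{00}^2\,dY$ with $D_{00}=\p_YV^2_{00}$, which is generically nonzero.

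For $(\de_{00},V^2_{00})$ the bootstrap gives only $L^\infty_tH^3$ bounds of size $\varepsilon$ and $L^2_t$ bounds on $\p_Y$ of size $\nu^{-1/2}\varepsilon$. Nothing controls $\de_{00}$ or $V^2_{00}$ themselves in $L^2_t$.

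\begin{itemize}
\item Your bound $\|D_{00}\|_{L^2}\|\de_{00}\|_{L^2}^{3/2}\|\p_Y\de_{00}\|_{L^2}^{1/2}$ has total time integrability $\frac12+\frac14<1$. Cauchy--Schwarz therefore leaves $(\int_0^T\|\p_Y\de_{00}\|_{L^2}\,dt)^{1/2}\lesssim T^{1/4}\nu^{-1/4}\varepsilon^{1/2}$, which is not uniform in $T$.
\item In your pressure bound $\|\de\|_{L^\infty}\|\na\de_0\|_{L^2}\|p^{-\frac12}D_0\|_{L^2}$, only the middle factor is in $L^2_t$ when $\de=\de_{00}$. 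So ``$\varepsilon\cdot\nu^{-1/2}\varepsilon\cdot\varepsilon$ after time integration'' does not follow.
\item No interpolation can repair this. Take $\de_{00}\approx\varepsilon L^{-1/2}\chi(Y/L)$ and $V^2_{00}\approx\varepsilon L^{-1/2}\psi(Y/L)$ with $\int\chi^2\psi'\neq0$ and $L=(1+\nu t)^{1/2}\log(2+\nu t)$. These profiles satisfy all the double-zero bootstrap bounds, yet the cubic integrand has size $\varepsilon^3L^{-3/2}\notin L^1_t$.
\end{itemize}
A minor point: the zero--zero part of $\tilde\p_iV^j\tilde\p_jV^i$ contains no $V^1_0$ at all, since every such product carries $\p_X$ of a zero mode.

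The missing idea is the one the paper uses. Treat $(\rho_0,V^2_0,V^3_0)$ as a two-dimensional compressible Navier--Stokes system in $(Y,Z)$ forced by nonzero modes, and use the physical energy $E^{(1)}=\langle\rho_0,\int_1^{\rho_0}\frac{P(s)-P(1)}{s^2}\,ds\rangle+\frac12\|\sqrt{\rho_0}\,(V^2_0,V^3_0)\|_{L^2}^2$.

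\begin{itemize}
\item Through the continuity equation, all zero--zero transport and pressure interactions cancel exactly in $\frac{d}{dt}E^{(1)}$.
\item The viscous terms produce $\nu\|\na(V^2_0,V^3_0)\|_{L^2}^2+(\nu+\nu')\|D_0\|_{L^2}^2$ directly, so nothing has to be borrowed from the bootstrap.
\item Only $G_1$ and $G_i$ remain. These are products of two nonzero modes or coefficient corrections of size $O((\de_{\neq}^2)_0)$. Every term therefore has two factors with $L^2_t$ bounds from enhanced dissipation, which yields $\nu^{-1}\varepsilon^3$.
\item Coercivity $E^{(1)}\gtrsim\|(\de_0,p^{-\frac12}D_0)\|_{L^2}^2$ follows from $|p^{-\frac12}\widehat{D_0}|\le|(\widehat{V^2_0},\widehat{V^3_0})|$.
\end{itemize}

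Your fallback through Lemma \ref{1or} is not unreasonable. Since $\de_{00}=a(\p_y\tilde W_2-\p_y\tilde W_1)+a(\mathcal{C}_2-\mathcal{C}_1)$ and $\bar\nu\int_0^T\|\p_y\tilde W\|_{L^2}^2\lesssim\varepsilon^2$, it does give $L^2_t$ control of the non-wave part. The wave part decays if measured in $L^4$ or $L^\infty$ rather than $L^2$. As written, though, it is only a contingency that would have to be carried out term by term, whereas the entropy structure removes these terms altogether.
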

		\begin{proof}

			The system for the zero modes $(\rho_0,V^2_0,V^3_0)$ reads
			\begin{equation}\label{eq:rhoV-system-zero}
				\left\{\begin{aligned}
					&\p_t\rho_0+\p_Y(\rho_0V^2_0)+\p_Z(\rho_0 V^3_0)=G_1,\\
					&\rho_0\p_t V^i_0+P'(\rho_0)\p_i\rho_0-\bigl(\nu\Delta V^i_0+(\nu+\nu')\p_iD_0\bigr)+\rho_0(V_0\cdot\na V^i_0)=G_i,\qquad i=2,3,
				\end{aligned}\right.
			\end{equation}
			where the nonlinear remainders are
			\begin{align*}
				G_1&=-\p_Y(\rho_{\neq}V^2_{\neq})_0-\p_Z(\rho_{\neq}V^3_{\neq})_0,\\[4pt]
				G_i&=-\rho_0\Bigl[\Bigl(\frac{P'(\rho)}{\rho}\Bigr)_0-\frac{P'(\rho_0)}{\rho_0}\Bigr]\p_i\rho_0
				+\rho_0\Bigl[\Bigl(\frac{1}{\rho}\Bigr)_0-\frac{1}{\rho_0}\Bigr]\bigl(\nu\Delta V^i_0+(\nu+\nu')\p_iD_0\bigr)\\
				&\quad -\rho_0\Bigl[\Bigl(\frac{P'(\rho)}{\rho}\Bigr)_{\neq}\tna_i\rho_{\neq}\Bigr]_0
				+\rho_0\Bigl[\Bigl(\frac{1}{\rho}\Bigr)_{\neq}\bigl(\nu\tilde{\Delta} V^i_{\neq}+(\nu+\nu')\tna_iD_{\neq}\bigr)\Bigr]_0
				-\rho_0(V_{\neq}\cdot\tna V^i_{\neq})_0 .
			\end{align*}
			Since $\rho_0=1+{\de}_0$ we employ the physical energy
			\[
			E^{(1)}(t)=\Bigl\la\rho_0,\int_{1}^{\rho_0}\frac{P(s)-P(1)}{s^2}\,ds\Bigr\ra_{L^2}
			+\frac12\|\sqrt{\rho_0}\,(V^2_0,V^3_0)\|_{L^2}^2\;\gtrsim\;\|({\de}_0,p^{-1/2}D_0)\|_{L^2}^2 .
			\]
			A standard energy computation yields
			\begin{align*}
				E^{(1)}(T)&+\nu\|\na(V^2_0,V^3_0)\|_{L^2L^2}^2+(\nu+\nu')\|D_0\|_{L^2L^2}^2 \\[2pt]
				&=E^{(1)}(0)+\int_{0}^{T}\Bigl\la G_1,\;
				\int_{1}^{\rho_0}\frac{P(s)-P(1)}{s^2}ds+\frac{P(\rho_0)-P(1)}{\rho_0}
				+\frac12|V^2_0,V^3_0|^2\Bigr\ra_{L^2}\,dt \\[2pt]
				&\quad +\sum_{i=2,3}\int_{0}^{T}\la V^i_0,\,G_i\ra_{L^2}\,dt
				\;=:\,E^{(1)}(0)+I_1+I_2 .
			\end{align*}
			Because $P'(1)=1$ and $\rho_{\neq}={\de}_{\neq}$, Taylor expansion gives
			\[
			\int_{1}^{\rho_0}\frac{P(s)-P(1)}{s^2}ds\approx\frac12{\de}_0^2,\qquad
			\frac{P(\rho_0)-P(1)}{\rho_0}\approx {\de}_0 .
			\]
			Consequently,
			\begin{align*}
				|I_1|&\lesssim\|{\de}_{\neq}\|_{L^2H^3}\|(V^2_{\neq},V^3_{\neq})\|_{L^2H^3}
				\bigl(\|{\de}_0\|_{L^\infty L^2}+\|(V^2_0,V^3_0)\|_{L^\infty L^2}^2\bigr)\\[2mm]
				&\lesssim\nu^{-1/3}\eps\;\nu^{-1/3}\eps\;\eps^2
				\lesssim\nu^{-2/3}\eps^3 .
			\end{align*}
			For $I_2$ we use the expansions
			\[
			\Bigl(\frac{P'(\rho)}{\rho}\Bigr)_0=\frac{P'(\rho_0)}{\rho_0}+O\bigl(({\de}_{\neq}^2)_0\bigr),\qquad
			\Bigl(\frac{P'(\rho)}{\rho}\Bigr)_{\neq}=O({\de}_{\neq}),
			\]
			\[
			\Bigl(\frac{1}{\rho}\Bigr)_0=\frac{1}{\rho_0}+O\bigl(({\de}_{\neq}^2)_0\bigr),\qquad
			\Bigl(\frac{1}{\rho}\Bigr)_{\neq}=O({\de}_{\neq}),
			\]
			together with the bootstrap bounds.  This yields
			\begin{align*}
				|I_2|&\lesssim\Bigl(\|\na {\de}_0\|_{L^2L^2}
				+\nu\|\Delta(V^2_0,V^3_0)\|_{L^2L^2}
				+\|\tna {\de}_{\neq}\|_{L^2L^2} \\
				&\qquad\qquad +\nu\|\tilde{\Delta}(V^2_{\neq},V^3_{\neq})\|_{L^2L^2}
				+(\nu+\nu')\|\na D_0\|_{L^2L^2}
				+(\nu+\nu')\|\tna D_{\neq}\|_{L^2L^2}\Bigr)\\
				&\qquad\times\|(V^2_0,V^3_0)\|_{L^\infty L^2}\|{\de}_{\neq}\|_{L^2H^3}+\|V_{\neq}\|_{L^2H^3}\|\tna(V^2_{\neq},V^3_{\neq})\|_{L^2L^2}
				\|(V^2_0,V^3_0)\|_{L^\infty L^2}\\[2pt]
				&\lesssim\bigl(\nu^{-1/2}\eps+\nu\nu^{-1/2}\eps
				+(\nu+\nu')\nu^{-1/2}\eps+\nu^{-2/3}\eps
				+\nu\nu^{-1}\eps+(\nu+\nu')\nu^{-1}\eps\bigr)\nu^{-1/3}\eps^2
				+\nu^{-1/3}\eps\,\nu^{-2/3}\eps^2 \\[2pt]
				&\lesssim\nu^{-1}\eps^3 .
			\end{align*}
			Collecting the two contributions we obtain
			\[
			\sup_{0\leq t\leq T}\|({\de}_0,p^{-\frac{1}{2}}D_0)(t)\|_{L^2}^2
			+\nu\int_{0}^{T}\|\na(V^2_0,V^3_0)\|_{L^2}^2\,dt
			+(\nu+\nu')\int_{0}^{T}\|D_0\|_{L^2}^2\,dt
			\lesssim \eps^2+\nu^{-1}\eps^3 .
			\]
			The proof is concluded by noticing that $\eps\sim \nu^{3/2}$ for $\nu$ small.
		\end{proof}
		
		\subsubsection{Control of $\|\na^s({\de}_0,p^{-\frac{1}{2}}D_0)\|_{L^2}$ for $s=1,2,3$}\label{sec412}
		\begin{Lem}\label{Lem-Hs-1}
			Under the same assumptions as \cref{MT} and  Proposition \ref{prop-bs}, it holds that,
			\begin{align}
				\sup_{t\in[0,T]}\|\na^s({\de}_0,p^{-1/2}D_0)(t)\|_{L^2}^2
				+\nu\int_{0}^{T}\|\na^sD_0\|_{L^2}^2\,dt
				+c_0\nu\int_{0}^{T}\|\na^s{\de}_0\|_{L^2}^2\,dt
				\lesssim\eps^2+\nu^{-1}\eps^3 .
			\end{align}
			
		\end{Lem}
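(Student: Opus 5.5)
We plan to work with the symmetric linearized structure of the zero-mode compressible system rather than the physical energy used in Lemma~\ref{Lem-L2-1}. Applying $\Doo$-type averaging in $X$ to \cref{sys-ND} (for $k=0$ the symbol is $p=\eta^2+l^2$, time independent, and the terms with $\partial_X$ vanish), the pair $(\de_0,p^{-1/2}D_0)$ satisfies
\begin{align*}
&\partial_t \de_0+p^{\frac12}(p^{-\frac12}D_0)=(\nlt_{\de})_0,\\
&\partial_t (p^{-\frac12}D_0)-p^{\frac12}\de_0+\bar\nu\, p\,(p^{-\frac12}D_0)=p^{-\frac12}(\nlt_D)_0 .
\end{align*}
We apply $\nabla^s$, $s=1,2,3$, and take the $L^2$ inner product of the first equation with $\nabla^s\de_0$ and of the second with $\nabla^s p^{-1/2}D_0$. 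The skew terms $\pm p^{1/2}$ cancel exactly, and the viscous term gives $\bar\nu\|\nabla^s D_0\|_{L^2}^2$.

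The density has no direct dissipation, so we add a hypocoercive cross term. Pairing the second equation with $-\delta_1\nabla^s p^{-1/2}\de_0$ gives $\delta_1\|\nabla^s\de_0\|_{L^2}^2$, since $p^{1/2}p^{-1/2}=1$ on each frequency. The time derivative is rewritten using the first equation, producing $\delta_1\|\nabla^{s}p^{-1/2}D_0\cdot p^{1/2}\|^2\approx\delta_1\|\nabla^s D_0\|^2$-type terms, which are absorbed by the viscous dissipation when $\delta_1\approx c_0\nu$. The remaining viscous cross term $\bar\nu\langle p^{1/2}D_0,\nabla^{2s}p^{-1/2}\de_0\rangle$ is handled by Young's inequality. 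The modified energy with $\delta_1\sim\nu$ is equivalent to $\|\nabla^s(\de_0,p^{-1/2}D_0)\|_{L^2}^2$, so this yields the left-hand side with $c_0\nu\int\|\nabla^s\de_0\|^2$. The restriction $s\ge1$ matters because $p^{-1/2}$ is singular at $(\eta,l)=0$; the factor $\nabla^s$ exactly removes this singularity. This is why Lemma~\ref{Lem-L2-1} had to be proved separately.

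Next we bound the nonlinear terms using Proposition~\ref{est:main} and the bootstrap hypotheses. Three types of contribution arise.

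\emph{Zero$\times$zero transport.} The term $\langle\nabla^s(V_0\cdot\nabla\de_0),\nabla^s\de_0\rangle$ is handled by integration by parts together with the commutator Lemma~\ref{lemma:commutator}. The top-order piece becomes $\tfrac12\int \mathrm{div}V_0|\nabla^s\de_0|^2$, which is controlled by $\|D_0\|_{L^\infty}\|\nabla^s\de_0\|^2$. This requires only $V_0^2,V_0^3$, since $V^1_0$ does not act on $k=0$ quantities through $\partial_X$. This is why the uniform bound on $V^1_{00}$ is not needed here.

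\emph{Nonzero$\times$nonzero interactions.} Terms such as $(\tilde{\partial}_i V^j_{\neq}\tilde\partial_j V^i_{\neq})_0$ and $(V_{\neq}\cdot\tilde\nabla\de_{\neq})_0$ are estimated by $L^2_t$ enhanced-dissipation norms. For example, $\|m^{1/4}\nabla_{X,Z}\de_{\neq}\|_{L^2H^3}\lesssim\nu^{-1/6}\eps$ is combined with $\nu^{-1/2}$-type bounds from \cref{bs-yn} for $\tilde\nabla D_{\neq}$. Time integrals of the form $\nu^{-1/2}\eps\cdot\nu^{-1/2}\eps\cdot\eps$ give $\nu^{-1}\eps^3$. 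Where $m^{1/4}$ appears, we use $m\gtrsim\nu^{2/3}$ from \cref{def-p}.

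\emph{Quasilinear viscous terms.} These are $G(\de)\nu\tilde\Delta V$ and its counterparts. We move one derivative onto the test function and absorb it into $\bar\nu\|\nabla^{s}D_0\|^2$ and $c_0\nu\|\nabla^s\de_0\|^2$.

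The main obstacle is the top derivative $s=3$ in the pressure term $\mathrm{div}(F(\de)\nabla\de)_0$. This term loses a derivative on $\de$, which has no dissipation. We will use the symmetrizer structure: the pressure nonlinearity is combined with the continuity-equation term $\de_0 D_0$ so that the top-order contribution becomes a commutator $[\nabla^3,F(\de_0)]\nabla\de_0$ plus a time derivative of $\int F(\de_0)|\nabla^3\de_0|^2$. This correction is absorbed into a modified energy $\int(1+O(\de_0))|\nabla^3\de_0|^2$. The resulting error is bounded by $\|\partial_t\de_0\|_{L^\infty}\|\nabla^3\de_0\|^2$. Integrated in time, it is controlled by $\nu^{-1/2}\eps\cdot\nu^{-1/2}\eps\cdot\eps$ through the $L^2_t$ bounds on $D_0$ in \cref{bs-n0}. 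Summing $s=1,2,3$ gives $\eps^2+\nu^{-1}\eps^3$.
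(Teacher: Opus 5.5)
Your scheme is the paper's. It uses exactly $E^{(1)}_s=\frac12\|\nabla^s(\de_0,p^{-1/2}D_0)\|_{L^2}^2-c_0\bar{\nu}\,\mathrm{Re}\langle p^{-1/2}\nabla^s\de_0,\nabla^s p^{-1/2}D_0\rangle$ and bounds the transport, $\de D$ and $\nlt_D$ contributions by bootstrap norms at size $\nu^{-1}\eps^3$. Two steps need correcting.

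\textbf{First: the cost of the cross term is miscomputed, and your absorption step fails.} Inserting $\partial_t\de_0=-p^{1/2}(p^{-1/2}D_0)+(\nlt_{\de})_0$ into $\frac{d}{dt}\langle\nabla^sp^{-1/2}\de_0,\nabla^sp^{-1/2}D_0\rangle$ produces $-\|\nabla^sp^{-1/2}D_0\|_{L^2}^2$. The factor $p^{-1/2}p^{1/2}=1$ is used up and no $p^{1/2}$ remains, so the cost is $\delta_1\|\nabla^sp^{-1/2}D_0\|_{L^2}^2$, not $\delta_1\|\nabla^sD_0\|_{L^2}^2$. On $k=0$ the symbol of $\nabla^sp^{-1/2}$ is $|(\eta,l)|^{s-1}$, one power below the dissipation $\bar{\nu}\|\nabla^sD_0\|_{L^2}^2$. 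For $l\neq0$ one has $p\gtrsim1$ and absorption works. On the double zero mode, however, $p=\eta^2$ and the ratio is $c_0/\eta^2$, so absorption fails at low $\eta$.

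The paper keeps $c_0\bar{\nu}\|\nabla^sp^{-1/2}D_0\|_{L^2L^2}^2$ on the right-hand side, and it is closed by lower-order information. For $s=1$ it equals $c_0\bar{\nu}\|D_0\|_{L^2L^2}^2$, which Lemma~\ref{Lem-L2-1} controls, and one then inducts in $s$. Alternatively, \cref{bs-n0} gives $c_0\bar{\nu}\|D_0\|_{L^2H^2}^2\lesssim c_0\eps^2$. The same caveat affects your claim that the modified energy is equivalent to $\|\nabla^s(\de_0,p^{-1/2}D_0)\|_{L^2}^2$: the cross term is only bounded by $\delta_1\|\de_0\|_{H^{s-1}}\|\nabla^sp^{-1/2}D_0\|_{L^2}$, again a lower-order quantity.

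\textbf{Second: your ``main obstacle'' is not an obstacle.} On $k=0$ the pairing $\langle\nabla^sp^{-1/2}\mathrm{div}\,G,\nabla^sp^{-1/2}D_0\rangle$ has multiplier $|\xi|^{2s-2}\,i\xi$. You can therefore put one derivative on $D_0$ and bound it by $\|\nabla^{s-1}G\|_{L^2}\|\nabla^sD_0\|_{L^2}$. For $G=(F(\de)\tilde\nabla\de)_0$ this involves at most $\nabla^s\de_0$, so no derivative is lost. This is how the paper, via \cref{ineq:div}, bounds this part of $J_3$ by $\|\de_0\|_{L^\infty H^2}\|\nabla\de_0\|_{L^2H^2}\|\nabla^sD_0\|_{L^2L^2}\lesssim\nu^{-1}\eps^3$. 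It treats $\de_0D_0$ directly with $\nu^{1/2}\|D_0\|_{L^2H^3}\lesssim\eps$.

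The missing derivative is supplied by the viscosity on $D_0$. Note also that $\de_0$ does carry the dissipation $c_0\nu\|\nabla^s\de_0\|^2$ you had just constructed, contrary to your remark. Your symmetrizer would also close, with error $\int_0^T\|D_0\|_{L^\infty}\|\nabla^3\de_0\|_{L^2}^2\,dt\lesssim\nu^{-1}\eps^3$. But it adds Riesz-type commutators $[p^{-1/2}\mathrm{div},F(\de_0)]$ and interactions with the cross term, with no gain here.

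Likewise, the quasilinear viscous terms cannot simply be ``absorbed.'' $\bar{\nu}\|\nabla^sD_0\|^2$ does not control the rotational part of $V_0$, so you need the bootstrap bound on $\nu\|\Delta(V^2_0,V^3_0)\|_{L^2H^2}$, as the paper uses.
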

		\begin{proof}
			
			We now consider the higher-order energies
			\[
			E^{\nno}_s(t)=\frac12\|\na^s({\de}_0,p^{-\frac{1}{2}}D_0)\|_{L^2}^2
			-c_0\bar{\nu}\, Re\bigl\la p^{-\frac{1}{2}}\na^s{\de}_0,\;\na^s(p^{-\frac{1}{2}}D_0)\bigr\ra_{L^2},
			\]
			where the constant $c_0>0$ is chosen so that $E_s$ is coercive.  Differentiating and using the equations for ${\de}_0$ and $D_0$ gives
			\begin{align*}
				E^{\nno}_s(T)&+\bar{\nu}\|\na^sD_0\|_{L^2L^2}^2+c_0\bar{\nu}\|\na^s{\de}_0\|_{L^2L^2}^2 \\[2pt]
				&=E^{\nno}_s(0)+c_0\bar{\nu}\|\na^sp^{-1/2}D_0\|_{L^2L^2}^2
				-c_0\int_{0}^{T}\bar{\nu}\bigl\la\na^s{\de}_0,\;\na^s(\bar{\nu} D_0)\bigr\ra_{L^2}\,dt \\[2pt]
				&\quad+\int_{0}^{T}\bigl\la\na^s(\mathcal{NL}_{\de})_0,\;\na^s{\de}_0\bigr\ra_{L^2}\,dt
				+\int_{0}^{T}\bigl\la\na^s(p^{-\frac{1}{2}}\na\!\cdot\!\mathcal{NL}_V)_0,\;\na^sp^{-\frac{1}{2}}D_0\bigr\ra_{L^2}\,dt \\[2pt]
				&\quad+c_0\bar{\nu}\int_{0}^{T} Re\Bigl[
				\bigl\la\na^sp^{-1/2}(\mathcal{NL}_{\de})_0,\;\na^sp^{-\frac{1}{2}}D_0\bigr\ra_{L^2}
				+\bigl\la\na^sp^{-1/2}{\de}_0,\;\na^s(p^{-\frac{1}{2}}\na\!\cdot\!\mathcal{NL}_V)_0\bigr\ra_{L^2}\Bigr]\,dt \\[2pt]
				&=:E^{\nno}_s(0)+c_0\bar{\nu}\|\na^sp^{-1/2}D_0\|_{L^2L^2}^2+J_1+J_2+J_3+J_4 .
			\end{align*}
			Here the nonlinearities are
			\[
			\mathcal{NL}_{\de}=-V\cdot\tna {\de}-{\de}D,\qquad
			\mathcal{NL}_V=-V\cdot\tna V-\bigl[F({\de})\tna {\de}+G({\de})(\nu\tilde{\Delta}V+(\nu+\nu')\tna D)\bigr].
			\]
			
			The term $J_1$ is linear and easily bounded by
			\[
			|J_1|\lesssim\bar{\nu}^2\|\na^s{\de}_0\|_{L^2L^2}\|\na^sD_0\|_{L^2L^2}
			\le\frac{\bar{\nu}^2}{4}\|\na^sD_0\|_{L^2L^2}^2+C\bar{\nu}^2\|\na^s{\de}_0\|_{L^2L^2}^2 .
			\]
			
			We split $J_2$ according to the structure of $\mathcal{NL}_{\de}$:
			\begin{align*}
				J_2&=\int_{0}^{T}\bigl\la\na^s(V_0\cdot\na {\de}_0),\na^s{\de}_0\bigr\ra_{L^2}\,dt
				+\int_{0}^{T}\bigl\la\na^s(V_{\neq}\cdot\tna {\de}_{\neq})_0,\na^s{\de}_0\bigr\ra_{L^2}\,dt \\
				&\quad+\int_{0}^{T}\bigl\la\na^s({\de}_0D_0),\na^s{\de}_0\bigr\ra_{L^2}\,dt
				+\int_{0}^{T}\bigl\la\na^s({\de}_{\neq}D_{\neq})_0,\na^s{\de}_0\bigr\ra_{L^2}\,dt \\
				&=:J_{21}+J_{22}+J_{23}+J_{24}.
			\end{align*}
			For $J_{21}$ we integrate by parts and use the commutator estimate of Lemma \ref{lemma:commutator}:
			\begin{align*}
				J_{21}&=-\int_{0}^{T}\bigl\la[\na^s,V_0]\cdot\na {\de}_0,\;\na^s{\de}_0\bigr\ra_{L^2}\,dt
				+\int_{0}^{T}\bigl\la D_0\na^s{\de}_0,\;\na^s{\de}_0\bigr\ra_{L^2}\,dt \\
				&\lesssim\Bigl(\|\na(V^2_0,V^3_0)\|_{L^\infty L^\infty}\|\na^s{\de}_0\|_{L^2L^2}
				+\|\na^s(V^2_0,V^3_0)\|_{L^\infty L^2}\|\na {\de}_0\|_{L^2L^\infty}
				+\|D_0\|_{L^\infty L^\infty}\|\na^s{\de}_0\|_{L^2L^2}\Bigr)\|\na^s{\de}_0\|_{L^2L^2}\\
				&\lesssim\|(V^2_0,V^3_0)\|_{L^\infty H^3}\|\na {\de}_0\|_{L^2H^2}^2
				\lesssim\eps\,\nu^{-1/2}\eps\,\nu^{-1/2}\eps
				=\nu^{-1}\eps^3 .
			\end{align*}
			The remaining three terms are directly bounded with the help of  Proposition \ref{est:main}:
			\begin{align*}
				|J_{22}|+|J_{23}|+|J_{24}|
				&\lesssim\|V_{\neq}\|_{L^2H^3}\|\tna {\de}_{\neq}\|_{L^2H^3}\|\na^s{\de}_0\|_{L^\infty L^2}
				+\|{\de}_0\|_{L^\infty H^3}\|D_0\|_{L^2H^3}\|\na^s{\de}_0\|_{L^2L^2}\\
				&\quad+\|{\de}_{\neq}\|_{L^2H^3}\|D_{\neq}\|_{L^2H^3}\|\na^s{\de}_0\|_{L^\infty L^2}\\
				&\lesssim\nu^{-1/3}\eps\,\nu^{-2/3}\eps\,\eps
				+\eps\,\nu^{-1/2}\eps\,\nu^{-1/2}\eps
				+\nu^{-1/3}\eps\,\nu^{-2/3}\eps\,\eps
				\lesssim\nu^{-1}\eps^3 .
			\end{align*}
			
			Next we treat $J_3$.  Using the pointwise bound
			\begin{equation}\label{ineq:div}
				|(k,\eta-kt,l)|p^{-1/2}\le1,
			\end{equation}
			and integration by parts we obtain
			\begin{align*}
				|J_3|&\lesssim\|(V^2_0,V^3_0)\|_{L^\infty H^3}\|\na(V^2_0,V^3_0)\|_{L^2H^3}\|\na^sp^{-1/2}D_0\|_{L^2L^2}\\
				&\quad+\|V_{\neq}\|_{L^2H^3}\|\tna(V^2_{\neq},V^3_{\neq})\|_{L^2H^3}\|\na^sp^{-1/2}D_0\|_{L^\infty L^2}\\
				&\quad+\|{\de}_0\|_{L^\infty H^2}\|\na {\de}_0\|_{L^2H^2}\|\na^sD_0\|_{L^2L^2}
				+\|{\de}_{\neq}\|_{L^2H^3}\|\tna {\de}_{\neq}\|_{L^2H^3}\|\na^sp^{-1/2}D_0\|_{L^\infty L^2}\\
				&\quad+\|{\de}_0\|_{L^\infty H^2}\Bigl(\nu\|\Delta(V^2_0,V^3_0)\|_{L^2H^2}
				+(\nu+\nu')\|\na D_0\|_{L^2H^2}\Bigr)\|\na^sD_0\|_{L^2L^2}\\
				&\quad+\|{\de}_{\neq}\|_{L^2H^3}\Bigl(\nu\|\tilde{\Delta}(V^2_{\neq},V^3_{\neq})\|_{L^2H^3}
				+(\nu+\nu')\|\tna D_{\neq}\|_{L^2H^3}\Bigr)\|\na^sp^{-1/2}D_0\|_{L^\infty L^2}\\
				&\lesssim\eps\,\nu^{-1/2}\eps\,\nu^{-1/2}\eps
				+\nu^{-1/3}\eps\,\nu^{-2/3}\eps\,\eps
				+\eps\,\nu^{-1/2}\eps\,\nu^{-1/2}\eps
				+\nu^{-1/3}\eps\,\nu^{-2/3}\eps\,\eps\\
				&\quad+\eps\bigl(\nu\nu^{-1/2}\eps+(\nu+\nu')\nu^{-1/2}\eps\bigr)\nu^{-1/2}\eps
				+\nu^{-1/3}\eps\bigl(\nu\nu^{-1}\eps+(\nu+\nu')\nu^{-1}\eps\bigr)\eps\\
				&\lesssim\nu^{-1}\eps^3 .
			\end{align*}
			
			The term $J_4$ is handled in the same way, giving a bound of order $\nu^{-1}\eps^3$ as well.  Collecting all the estimates and choosing $\eps$ sufficiently small (depending on $\nu$) we finally obtain
			\[
			\sup_{t\in[0,T]}\|\na^s({\de}_0,p^{-1/2}D_0)(t)\|_{L^2}^2
			+\nu\int_{0}^{T}\|\na^sD_0\|_{L^2}^2\,dt
			+c_0\nu\int_{0}^{T}\|\na^s{\de}_0\|_{L^2}^2\,dt
			\lesssim\eps^2+\nu^{-1}\eps^3 .
			\]
			Thus, for $\eps$ small enough, the desired $H^3$ estimate for the zero-mode compressible part follows. Then Lemma \ref{Lem-Hs-1} follows.
		\end{proof}
		
		\subsection{$H^3$ estimates on $m^{\frac{1}{4}}M \partial_X({\de}_{\neq},p^{-\frac{1}{2}}D_{\neq})$\ and $m^{\frac{1}{4}}M \partial_Z({\de},p^{-\frac{1}{2}}D)$}\label{4.4}
		In this subsection, we improve \cref{bs-zn0} and \cref{bs-xn} with $j=0$. Estimates on $ \partial_X({\de}_{\neq},p^{-\frac{1}{2}}D_{\neq})$\ and $\partial_Z({\de},p^{-\frac{1}{2}}D)$ are similar, we only estimate the later one for short.

		\subsubsection{Estimate for $\| \partial_Z({\de}_{0\neq},p^{-\frac{1}{2}}D_{0\neq})\|_{H^3}$}
		Firstly, we define the energy
		\begin{equation*}
			E^{\nzno}(t)=\frac{1}{2}\|\partial_Z({\de}_{0\neq},p^{-\frac{1}{2}}D_{0\neq})\|^2_{H^3}-c_0\bar{\nu} Re\langle p^{-1/2}\partial_Z {\de}_{0\neq},\partial_Z p^{-1/2}D_{0\neq}\rangle_{H^3}.
		\end{equation*}
		Then one has
		\begin{Lem}\label{Lem-dz}
			Under the same assumptions as \cref{MT} and  Proposition \ref{prop-bs}, it holds that,
			\begin{align}
				\sup_{t\in[0,T]}E^{(2)}(t)
				+\bar{\nu}\|\partial_Z D_0\|^2_{L^2H^3}+c_0\bar{\nu}\|\partial_Z {\de}_0\|^2_{L^2H^3}
				\lesssim E^{(2)}(0)+\nu^{-3/2}\eps^3 .
			\end{align}
		\end{Lem}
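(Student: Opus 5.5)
The plan is to repeat the scheme of Lemma \ref{Lem-Hs-1}, now for the simple zero modes: apply $\partial_Z\la\na\ra^3$ to the symmetric system \cref{sys-ND} restricted to $k=0$, $l\neq0$. On this mode $p=\eta^2+l^2$ does not depend on time, so $\frac{\partial_tp}{2p}=0$. The coupling term $2\frac{\partial_X}{p^{3/2}}(\cdots)$ also vanishes because $\partial_X=0$. The linear part is therefore the skew-symmetric acoustic pairing $\pm p^{1/2}$ plus the dissipation $\nu p$ acting on $p^{-1/2}D$.

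\textbf{Linear part.} Differentiating the first term of $E^{(2)}$ produces $\bar\nu\|\partial_Z D_{0\neq}\|_{H^3}^2$; the acoustic terms cancel. Differentiating the cross term $-c_0\bar\nu Re\la p^{-1/2}\partial_Z\de,\partial_Zp^{-1/2}D\ra_{H^3}$ gives, through the term $-p^{1/2}\de$ in the $D$-equation, the coercive contribution $c_0\bar\nu\|\partial_Z\de_{0\neq}\|_{H^3}^2$. It also gives the error terms $c_0\bar\nu\|\partial_Zp^{-1/2}D\|^2_{L^2H^3}$ and $c_0\bar\nu^2\la\partial_Z\de,\partial_Z D\ra$. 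Both are absorbed by taking $c_0$ small, using $\nu\lesssim1$ and $p\ge l^2\ge1$ on $0\neq$ modes. For $c_0$ small, $E^{(2)}\approx\|\partial_Z(\de,p^{-1/2}D)_{0\neq}\|_{H^3}^2$.

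\textbf{Nonlinear part.} Write $\mathcal{NL}_\de=-(V_0\cdot\na\de_0)_{0\neq}-(V_{\neq}\cdot\tna\de_{\neq})_0-(\de D)_0$ and split into three kinds of interaction.
\begin{itemize}
\item Zero--zero interactions $(0,0)\to0\neq$, meaning $00\times0\neq$ and $0\neq\times0\neq$ products. The transport term $V_0\cdot\na\partial_Z\de_{0\neq}$ at top order is handled by integration by parts plus the commutator estimate of Lemma \ref{lemma:commutator}. This leaves $\|\na(V^2_0,V^3_0)\|_{L^\infty}$ and $D_0$ factors. The dangerous coefficient is $V^1_0$, but $V^1_0\partial_X$ vanishes on zero modes, so the large $V^1_{0\neq}\sim\nu^{-1}\eps$ never enters $\mathcal{NL}_\de$. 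By Proposition \ref{est:main} these terms are bounded by $\|(V^2_0,V^3_0)\|_{L^\infty H^4}\|\partial_Z\de_0\|^2_{L^2H^3}\lesssim\eps\cdot\nu^{-1}\eps^2$.
\item Non-zero interactions $(\neq,\neq)\to0$. These are bounded by $\|V_{\neq}\|_{L^2H^3}\|\tna\de_{\neq}\|_{L^2H^4}\|\partial_Z\de_0\|_{L^\infty H^3}$.
\item The $D$-equation. The nonlinear term $p^{-1/2}\mathcal{NL}_D$ is paired with $\partial_Zp^{-1/2}D$, so $p^{-1/2}\dl$ absorbs one derivative via \cref{ineq:div}. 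The viscous pieces $G(\de)\nu\Delta V$ are handled as $J_3$ in Lemma \ref{Lem-Hs-1}, putting $\nu\|\na\Delta V\|_{L^2}$ against $\|\partial_Z D_0\|_{L^2}$. There $V^1_{0\neq}$ does enter, through $\nu G(\de)\Delta V^1$ in the $x$-component, but only under $\dl$ with $\partial_X=0$, so it drops out again. The remaining $V_0\cdot\na V_0$ terms contain $V^1_0\partial_X$, which vanishes, and $V^{2,3}_0$ terms of size $\eps$.
\end{itemize}

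\textbf{Main obstacle.} The hardest terms are the non-zero interactions, which carry the $\nu^{-1/2}$ growth of $(\de_{\neq},D_{\neq})$ from \cref{bs-yn}. For example, $\|\de_{\neq}\|_{L^2H^3}\nu\|\tilde\Delta V_{\neq}\|_{L^2H^4}$ requires one more derivative. I would use the $j=1$ bootstrap level \cref{bs-pn} together with the $\nu^{1/6}$ enhanced-dissipation integrability of \cref{bs-xn}. This should give $\nu^{-1/3}\eps\cdot\nu^{-1/2}\cdot\nu^{-2/3}\eps\cdot\eps\lesssim\nu^{-3/2}\eps^3$, which is the loss stated in the lemma. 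The top-order derivative falling on $\de_{\neq}$ inside $(V_{\neq}\cdot\tna\de_{\neq})_0$ has to be placed on $D_{\neq}$-type dissipative norms or symmetrized, and I expect this bookkeeping to be the delicate point. Summing all contributions and integrating in time yields the claimed inequality.
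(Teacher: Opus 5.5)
Your scheme is the paper's: the same functional $E^{(2)}$, the same cancellation of the acoustic terms and absorption of the two cross-term errors, integration by parts plus Lemma \ref{lemma:commutator} for transport, and \cref{ineq:div} for the $D$-nonlinearity. However, your zero--zero bookkeeping contains a concrete error, and it sits exactly where the $\nu^{-3/2}$ of the lemma comes from.

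Consider $\p_Z(V_0\cdot\na\de_0)$. When the outer $\p_Z$ falls on the coefficient, you get $(\p_Z V^2_{0\neq})\,\p_Y\de_0$. Its $\de_{00}$-part does not depend on $Z$, so it is not controlled by $\|\p_Z\de_0\|$ at all. Hence your bound $\|(V^2_0,V^3_0)\|_{L^\infty H^4}\|\p_Z\de_0\|^2_{L^2H^3}\lesssim\nu^{-1}\eps^3$ does not cover it. Separately, Proposition \ref{est:main} only gives $\|V^2_{00}\|_{L^\infty H^4}\lesssim\nu^{-1/2}\eps$, although $H^3$ is all the commutator needs. The factor $\p_Y\de_{00}$ is available only through $\|\na\de_0\|_{L^\infty H^3}\lesssim\nu^{-1/2}\eps$ (from \cref{bs-yn}) or $\|\na\de_0\|_{L^2H^2}\lesssim\nu^{-1/2}\eps$ (from \cref{bs-n0}). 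The paper bounds this piece by $\|\p_Z(V^2_0,V^3_0)\|_{L^2H^3}\|\na\de_0\|_{L^\infty H^3}\|\p_Z\de_0\|_{L^2H^3}\lesssim(\nu^{-1/2}\eps)^3$. The other ways of distributing time integrability among the three factors give the same power.

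Conversely, the non-zero interactions are not the bottleneck. The relevant bounds are $\|(1,\p_Z)V_{\neq}\|_{L^2H^3}\lesssim\nu^{-1/3}\eps$, $\|\tna(1,\p_Z)\de_{\neq}\|_{L^2H^3}\lesssim\nu^{-2/3}\eps$, and $\|\p_Z\de_0\|_{L^\infty H^3}\lesssim\eps$. The middle one comes from the $\nu^{1/6}$-integrability of $p^{1/2}\de_{\neq}$ in \cref{bs-yn} and \cref{bs-zyn}. Together these give $\nu^{-1}\eps^3$ directly, so the top derivative on $\de_{\neq}$ needs no symmetrization or transfer to $D_{\neq}$-norms. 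The viscous terms in the $D$-equation are also harmless once $p^{-1/2}\dl$ absorbs a derivative; the paper gets at most $\nu^{-1}\eps^3$ for them.

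The corrected zero-mode estimate is still $\nu^{-3/2}\eps^3$, so the lemma holds. But the critical term is $(\p_Z V^{2,3}_{0\neq})\cdot\na\de_0$, not the $(\neq,\neq)\to0$ terms you singled out as the main obstacle.
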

		\begin{proof}
			
			The energy estimate reads
			\begin{align*}
				&E^{\nzno}(T)+\bar{\nu}\|\partial_Z D_0\|^2_{L^2H^3}+c_0\bar{\nu}\|\partial_Z {\de}_0\|^2_{L^2H^3}\\
				=&E^{\nzno}(0)+c_0\bar{\nu}\|\partial_Z p^{-\frac{1}{2}}D_0\|^2_{L^2H^3}-c_0\bar{\nu} Re\langle \partial_Z {\de}_0,\bar{\nu}\partial_Z D_0\rangle_{H^3}\\
				&+ \int_{0}^{T}\langle \partial_Z(\mathcal{NL}_{{\de}})_0,\partial_Z {\de}_0\rangle_{H^3} dt+\int_{0}^{T}\langle \partial_Z ( p^{-\frac{1}{2}}\di\mathcal{NL}_{V})_0,\partial_Zp^{-\frac{1}{2}}D_0\rangle_{H^3} dt\\
				&+c_0\bar{\nu}\int_{0}^{T} Re\langle \partial_Z p^{-\frac{1}{2}}(\mathcal{NL}_{{\de}})_0,\partial_Z p^{-\frac{1}{2}}D_0\rangle_{H^3}+ Re \langle \partial_Z p^{-\frac{1}{2}}{\de}_0,\partial_Z ( p^{-\frac{1}{2}}\di\mathcal{NL}_{V})_0\rangle_{H^3}dt\\
				=&:E^{\nzno}(0)+c_0\bar{\nu}\|\partial_Z p^{-1/2}D_0\|^2_{L^2H^3}-c_0\bar{\nu} Re\langle \partial_Z {\de}_0,\bar{\nu}\partial_Z D_0\rangle_{H^3}+I^{\nzno}_1+I^{\nzno}_2+I^{\nzno}_3.
			\end{align*}
			For $I^{\nzno}_1$, we divide it into 
			\begin{align*}
				I^{\nzno}_1=&\int_{0}^{T}\langle \partial_Z(V_0\cdot \nabla {\de}_0),\partial_Z {\de}_0\rangle_{H^3}dt+\int_{0}^{T}\langle \partial_Z(V_{\neq}\cdot\tilde{\nabla} {\de}_{\neq})_0,\partial_Z {\de}_0\rangle_{H^3}dt\\
				&+\int_{0}^{T}\langle \partial_Z({\de}_0D_0),\partial_Z {\de}_0\rangle_{H^3}dt+\int_{0}^{T}\langle \partial_Z({\de}_{\neq}D_{\neq})_0,\partial_Z {\de}_0\rangle_{H^3}dt=:\sum_{i=1}^{4}I^{\nzno}_{1i}.
			\end{align*}
			Using the commutator estimate and integration by part, we obtain
			\begin{align*}
				I^{\nzno}_{11}=&\int_{0}^{T}-\langle (\partial_ZV_{0})\cdot\nabla {\de}_0, \partial_Z {\de}_0\rangle_{H^3} dt\\
				&\qquad\qquad\qquad+\int_{0}^{T}-\langle [\langle\nabla\rangle^3, V_0]\cdot\nabla \partial_Z{\de}_0,\langle\nabla\rangle^3 \partial_Z{\de}_0\rangle_{L^2}+\langle D_0\langle\nabla\rangle^3 \partial_Z{\de}_0,\langle\nabla\rangle^3\partial_Z {\de}_0\rangle_{L^2}dt\\
				\lesssim&{\|\partial_Z(V^2_0,V^3_0)\|_{L^2H^3}\|\nabla {\de}_0\|_{L^\infty H^3}\|\partial_Z {\de}_0\|_{L^2H^3}}+(\|(V^2_0,V^3_0)\|_{L^\infty H^3}+\|p^{-1/2}D_0\|_{L^\infty H^3})\|\partial_Z{\de}_0\|^2_{L^2H^3}\\[2mm]
				\lesssim&(\nu^{-1/2}\varepsilon)^3+\varepsilon(\nu^{-1/2}\varepsilon)^2\lesssim\nu^{-3/2}\varepsilon^3.
			\end{align*}
			Turning to the other three terms,  Proposition \ref{est:main} implies that
			\begin{align*}
				I^{\nzno}_{12}+I^{\nzno}_{13}+I^{\nzno}_{14}\lesssim&\|(1,\partial_Z)V_{\neq}\|_{L^2H^3}\|\tilde{\nabla} (1,\partial_Z){\de}_{\neq}\|_{L^2H^3}\|\partial_Z {\de}_0\|_{L^\infty H^3}\\
				&+\|(1,\partial_Z){\de}_0\|_{L^\infty H^3}\|(1,\partial_Z)D_0\|_{L^2H^3}\|\partial_Z{\de}_0\|_{L^2H^3}\\
				&+\|(1,\partial_Z){\de}_{\neq}\|_{L^2H^3}\|(1,\partial_Z)D_{\neq}\|_{L^2H^3}\|\partial_Z{\de}_0\|_{L^\infty H^3}\\
				\lesssim&\nu^{-1/3}\varepsilon\nu^{-2/3}\varepsilon^2+\varepsilon\nu^{-1/2}\varepsilon\nu^{-1/2}\varepsilon+\nu^{-1/3}\varepsilon\nu^{-2/3}\varepsilon^2\lesssim\nu^{-1}\varepsilon^3.
			\end{align*}
			For $I^{\nzno}_2$, applying \cref{ineq:div}, we integrate by parts to obtain
			\begin{align*}
				I^{\nzno}_2 &\lesssim \|(1,\partial_Z)(V^2_0,V^3_0)\|_{L^\infty H^3} \|(1,\partial_Z)\nabla(V^2_0,V^3_0)\|_{L^2H^3} \|\partial_Z p^{-1/2} D_0\|_{L^2H^3} \\
				&\quad + \|\nabla_{X,Z}V_{\neq}\|_{L^2H^3} \|\tilde{\nabla}\nabla_{X,Z}(V^2_{\neq},V^3_{\neq})\|_{L^2H^3} \|\partial_Z p^{-1/2} D_0\|_{L^\infty H^3} \\
				&\quad + \|{\de}_0\|_{L^\infty H^3} \|\nabla(1,\partial_Z) {\de}_0\|_{L^2H^2} \|\partial_Z D_0\|_{L^2H^3} \\
				&\quad + \|\nabla_{X,Z}{\de}_{\neq}\|_{L^2 H^3} \|\tilde{\nabla}\nabla_{X,Z} {\de}_{\neq}\|_{L^2H^3} \|\partial_Z p^{-1/2} D_0\|_{L^\infty H^3} \\
				&\quad + \|{\de}_0\|_{L^\infty H^3} \Bigl( \nu\|\Delta (1,\partial_Z)(V^2_0,V^3_0)\|_{L^2H^2} + (\nu+\nu')\|\nabla (1,\partial_Z)D_0\|_{L^2H^2} \Bigr) \|\partial_Z D_0\|_{L^2 H^3} \\
				&\quad + \|\nabla_{X,Z}{\de}_{\neq}\|_{L^2H^3} \Bigl( \nu\|\tilde{\Delta}\nabla_{X,Z}(V^2_{\neq},V^3_{\neq})\|_{L^2H^3} + (\nu+\nu')\|\tilde{\nabla}\nabla_{X,Z}D_{\neq}\|_{L^2H^3} \Bigr) \|\partial_Z p^{-1/2} D_0\|_{L^\infty H^3} \\
				&\lesssim \varepsilon\nu^{-1/2}\varepsilon\nu^{-1/2}\varepsilon + \nu^{-1/3}\varepsilon\nu^{-2/3}\varepsilon^2 + \varepsilon\nu^{-1/2}\varepsilon\nu^{-1/2}\varepsilon + \nu^{-1/3}\varepsilon\nu^{-2/3}\varepsilon^2 \\
				&\quad + \varepsilon\bar{\nu}\nu^{-1/2}\varepsilon\nu^{-1/2}\varepsilon + \nu^{-1/3}\varepsilon\bar{\nu}\nu^{-1}\varepsilon^2 \\
				&\lesssim \nu^{-1}\varepsilon^3.
			\end{align*}
			The analysis for $I_3^{(2)}$
			follows similarly, so we omit the details. Combining all the estimates of $I_{1}^{(2)}, I_{2}^{(2)},I_{3}^{(2)}$, Lemma \ref{Lem-dz} follows.
		\end{proof}
		
		\subsubsection{Estimate for $\lVert \ma\partial_Z({\de}_{\neq},p^{-\frac{1}{2}}D_{\neq})\rVert_{H^3}$}
		The associated energy functional is defined by
		\begin{align*}
			E^{\nxn}(t) 
			&= \frac{1}{2}\lVert m^{\frac{1}{4}}M\partial_Z({\de}_{\neq},p^{-\frac{1}{2}}D_{\neq})\rVert_{H^3}^2 
			+ \frac{1}{4} Re\left\langle \frac{\partial_t p}{p^{\frac{3}{2}}}m^{\frac{1}{4}}M\partial_Z{\de}_{\neq},\, m^{\frac{1}{4}}M\partial_Zp^{-\frac{1}{2}}D_{\neq}\right\rangle_{H^3} \\
			&\quad - c_1\bar{\nu}^{\frac{1}{3}} Re\left\langle p^{-\frac{1}{2}}m^{\frac{1}{4}}M\partial_Z{\de}_{\neq},\, m^{\frac{1}{4}}M\partial_Zp^{-\frac{1}{2}}D_{\neq}\right\rangle_{H^3}.
		\end{align*}
		One has
		\begin{Lem}\label{Lem-E3}
			Under the same assumptions as \cref{MT} and  Proposition \ref{prop-bs}, it holds that,
			\begin{align}
				\sup_{t\in[0,T]}E^{(3)}(t)&
				+ \bar{\nu}\lVert m^{\frac{1}{4}}M\partial_ZD_{\neq}\rVert_{L^2H^3}^2 
				+ \frac{1}{4}\Bigl\lVert\sqrt{-\frac{\partial_t m}{m}}\,m^{\frac{1}{4}}M\partial_Z({\de}_{\neq},p^{-\frac{1}{2}}D_{\neq})\Bigr\rVert_{L^2H^3}^2\nonumber \\
				&\quad + \Bigl\lVert\sqrt{-\frac{\partial_t M}{M}}\,m^{\frac{1}{4}}M\partial_Z({\de}_{\neq},p^{-\frac{1}{2}}D_{\neq})\Bigr\rVert_{L^2H^3}^2 
				+ c_1\bar{\nu}^{\frac{1}{3}}\lVert \ma \partial_Z{\de}_{\neq}\rVert_{L^2H^3}^2 
				\lesssim E^{(3)}(0)+\nu^{-3/2}\eps^3 .
			\end{align}
		\end{Lem}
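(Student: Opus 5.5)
The plan is to differentiate $E^{(3)}(t)$ in time, using the symmetric system \eqref{sys-ND} projected onto $k\neq0$ and applying $\partial_Z m^{\frac14}M\langle\nabla\rangle^3$. I will first check that $E^{(3)}$ is coercive. Since $|\partial_t p|/p^{3/2}\le 2|k|\,|\eta-kt|/p^{3/2}\le 1$, the cross term $\frac14\langle \frac{\partial_tp}{p^{3/2}}\cdot,\cdot\rangle$ is bounded by $\frac14$ of the main quadratic part. The $c_1\bar\nu^{1/3}$ term is harmless for $c_1$ small. Hence $E^{(3)}\approx \|m^{\frac14}M\partial_Z(R_{\neq},p^{-\frac12}D_{\neq})\|_{H^3}^2$.

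\textbf{Linear part.} The time derivatives falling on the multipliers produce the damping terms $\frac14\|\sqrt{-\partial_tm/m}\,\cdot\|^2$ and $\|\sqrt{-\partial_tM/M}\,\cdot\|^2$. The $\nu p$ term gives $\bar\nu\|m^{\frac14}M\partial_ZD_{\neq}\|^2$. The dangerous linear term is $-\frac{\partial_tp}{2p}p^{-\frac12}D$ in the $D$-equation. The corrector $\frac14\langle\frac{\partial_tp}{p^{3/2}}R,p^{-\frac12}D\rangle$ is chosen so that its time derivative, computed with the skew coupling $\pm p^{\frac12}$, cancels this term up to a remainder of size $\frac{|\partial_tp|}{p}\cdot\frac{|\partial_t p|}{p}+\frac{\partial_t^2p}{p^{3/2}}$, which is $\lesssim |k|^2/p$ times the energy. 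The $M_1$ multiplier is defined with rate $\tilde C(k^4+k^2l^2)^{1/2}/p$, so for $\tilde C$ large this remainder is absorbed by $-\partial_tM_1/M_1$. The coupling term $-2\frac{\partial_X}{p^{3/2}}(W^2-\partial_XR+\nu\partial_XD)$ is absorbed in the same way: its $R$ and $D$ parts are controlled by $M_1$, and its $W^2$ part is bounded by $\|\sqrt{-\partial_tM_1/M_1}\,m^{\frac14}M W^2\|$ times the energy, which the bootstrap \eqref{bs-w2} gives at size $\varepsilon^2$ after integrating in time, since $p^{-3/2}|k|\lesssim |k|/p$. The $c_1\bar\nu^{1/3}$ corrector's derivative produces $c_1\bar\nu^{1/3}\|p^{-\frac12}\cdots p^{\frac12}R\|^2=c_1\bar\nu^{1/3}\|m^{\frac14}M\partial_ZR_{\neq}\|^2$. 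Its errors, of type $c_1\bar\nu^{1/3}\|p^{-\frac12}D\|^2$ and $\bar\nu^{4/3}\langle R,D\rangle$, are absorbed by the $-\partial_tM_2/M_2$ damping and the $\bar\nu\|D\|^2$ dissipation. This is the standard argument from \cite{Antonelli-Dolce-Marcati-2021,Zeng-Zhang-Zi-2022}, and it is the only place where the choice of constants matters.

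\textbf{Nonlinear part.} I will split $\mathcal{NL}_R$ and $p^{-\frac12}\mathcal{NL}_D$ by frequency interactions: zero$\times$nonzero, nonzero$\times$zero, and nonzero$\times$nonzero. For the transport terms $V\cdot\tilde\nabla R$, I will integrate by parts and move the multiplier through using the commutator bounds of Proposition~\ref{est:com} together with \eqref{est:m4}. The $|k|^{-1}|\eta-\eta'|$ loss is paid by the $\partial_Z$ or $\partial_X$ structure and by the $H^3$ regularity of the zero mode. In each case I will use $L^\infty$-in-time on one factor and $L^2$-in-time on the other two, drawing on Proposition~\ref{est:main}, \eqref{bs-n0}, \eqref{bs-zn0} and \eqref{bs-yn}.

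\textbf{Main obstacle.} The hardest terms are those involving $V^1_{0}$ transport, i.e. $V^1_{0}\partial_X R_{\neq}$, where $V^1_{0\neq}$ is large, of size $\nu^{-1}\varepsilon$. For these I will exploit the fact that $\partial_X$ on a nonzero mode pairs with $\sqrt{-\partial_tM/M}$ and the $\nu^{1/6}$ $L^2_t$ bounds. Using the uniform bound on $V^1_{00}$ from Theorem~\ref{2026-3-21-2}, $\|V^1_{00}\|_{L^\infty}\lesssim\nu^{-1/2}\varepsilon$, I aim for an estimate of the form $\nu^{-1/2}\varepsilon\cdot(\nu^{-1/2}\varepsilon)^2$. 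The $V^1_{0\neq}$ part has to use its $H^5$ bound combined with the enhanced-dissipation integrability $\nu^{-1/3}$ of the nonzero factors. The target in all cases is a total $\lesssim \nu^{-3/2}\varepsilon^3$, which closes because $\varepsilon=\varepsilon_0\nu^{3/2}$.
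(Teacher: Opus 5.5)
There is a genuine gap in your linear estimate: the corrector does not cancel the term $-\frac{\partial_tp}{2p}p^{-1/2}D$, and what it leaves cannot be absorbed by $M_1$.

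Write $f=m^{1/4}M\partial_Z R_{\neq}$ and $g=m^{1/4}M\partial_Zp^{-1/2}D_{\neq}$. Use the skew coupling $\partial_tf\approx-p^{1/2}g$, $\partial_tg\approx p^{1/2}f$. The time derivative of $\frac14\mathrm{Re}\langle\frac{\partial_tp}{p^{3/2}}f,g\rangle$ is then $\frac14\langle\frac{\partial_tp}{p}f,f\rangle-\frac14\langle\frac{\partial_tp}{p}g,g\rangle$, plus errors of size $\frac{\partial_t^2p}{p^{3/2}}+\frac{(\partial_tp)^2}{p^{5/2}}\lesssim\frac{k^2}{p}$. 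Added to $\langle\frac{\partial_tp}{2p}g,g\rangle$, what survives is the symmetrized term
\[
LS^{(3)}=\frac14\Big\langle\frac{\partial_tp}{p}f,f\Big\rangle+\frac14\Big\langle\frac{\partial_tp}{p}g,g\Big\rangle ,
\]
exactly as in the paper. No coefficient $c$ in front of the corrector removes it: you only get $c\frac{\partial_tp}{p}|f|^2+(\frac12-c)\frac{\partial_tp}{p}|g|^2$.

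This term is not a remainder. We have $\frac{\partial_tp}{p}=\frac{2(t-\eta/k)}{(t-\eta/k)^2+(k^2+l^2)/k^2}\approx\frac{2}{t-\eta/k}$. It is not bounded by $Ck^2/p$, and $\int\frac{\partial_tp}{p}\,dt=\log p$ diverges, whereas the $M_1$ rate is integrable in time. So $-\partial_tM_1/M_1$ cannot absorb it for any $\tilde C$. In fact this term is the $t^{1/2}$ linear growth of the compressible part. Without a weight it gives $\frac{d}{dt}\mathcal E\approx\frac{\partial_tp}{2p}\mathcal E$, i.e. growth like $p^{1/4}$. Hence no functional equivalent to $\|\partial_Z(R_{\neq},p^{-1/2}D_{\neq})\|^2$ can be almost non-increasing modulo $M_1$-type errors.

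The missing idea is that $LS^{(3)}$ is absorbed by the damping from the weight $m^{1/4}$; this is why the exponent is exactly $\frac14$. The paper bounds $LS^{(3)}\le\frac14\|\sqrt{-\partial_tm/m}\,(f,g)\|_{L^2H^3}^2+\frac{c_1}{4}\nu^{1/3}\|(f,g)\|_{L^2H^3}^2$, by cases:
\begin{itemize}
\item For $t<\eta/k$, $\partial_tp\le0$, so there is nothing to absorb.
\item On the window where $m=\frac{k^2+l^2}{p}$ (or $\frac{k^2+\eta^2+l^2}{p}$), $\frac{\partial_tm}{m}=-\frac{\partial_tp}{p}$. The term $\frac14\langle\frac{\partial_tm}{m}(f,g),(f,g)\rangle$ from differentiating $m^{1/4}$ then cancels $LS^{(3)}$ exactly.
\item Once $t-\eta/k\ge1000\nu^{-1/3}$, $m$ is frozen and $\frac{\partial_tp}{p}\le\frac{2}{t-\eta/k}\le 2\cdot10^{-3}\nu^{1/3}$. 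For $f$ this is absorbed by the $c_1\bar\nu^{1/3}\|m^{1/4}M\partial_ZR_{\neq}\|^2$ term. For $g$ it is absorbed by $\bar\nu\|p^{1/2}g\|^2$, since $p\gtrsim\nu^{-2/3}$ there.
\end{itemize}

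In your plan the $\partial_tm$ damping is listed on the left but never used, so the linear estimate does not close as written. Only the genuine remainders go into $-\partial_tM/M$, as in the paper's bound for $LE^{(3)}$. These are $\frac{\partial_t^2p}{p^{3/2}}$, $\frac{(\partial_tp)^2}{p^{5/2}}$, the cross terms with $\partial_tm/m$ and $\partial_tM/M$, and the $W^2$ coupling.

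Your nonlinear strategy is essentially the paper's. The $L^\infty$ bound on $V^1_{00}$ is not needed here: $\langle V^1_0\partial_XG,G\rangle=0$ for $X$-independent $V^1_0$. Only $\|\nabla V^1_0\|_{L^\infty H^4}\lesssim\nu^{-1}\varepsilon$ enters, through the commutators, giving $\nu^{-4/3}\varepsilon^3$. The worst contribution is the quadratic term $\tilde\partial_iV^j\tilde\partial_jV^i$, at $\nu^{-3/2}\varepsilon^3$.
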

		\begin{proof}
			
			Differentiating $E^{\nxn}(t)$ and integrating by parts yield the energy inequality
			\begin{align*}
				&E^{\nxn}(T) + \bar{\nu}\lVert m^{\frac{1}{4}}M\partial_ZD_{\neq}\rVert_{L^2H^3}^2 
				+ \frac{1}{4}\Bigl\lVert\sqrt{-\frac{\partial_t m}{m}}\,m^{\frac{1}{4}}M\partial_Z({\de}_{\neq},p^{-\frac{1}{2}}D_{\neq})\Bigr\rVert_{L^2H^3}^2 \\
				&\quad + \Bigl\lVert\sqrt{-\frac{\partial_t M}{M}}\,m^{\frac{1}{4}}M\partial_Z({\de}_{\neq},p^{-\frac{1}{2}}D_{\neq})\Bigr\rVert_{L^2H^3}^2 
				+ c_1\bar{\nu}^{\frac{1}{3}}\lVert \ma \partial_Z{\de}_{\neq}\rVert_{L^2H^3}^2 \\
				&\leq E^{\nxn}(0) + LS^{\nxn} + LE^{\nxn} + I_1^{\nxn} + I_2^{\nxn} + I_3^{\nxn} + I_4^{\nxn},
			\end{align*}
			where the lower-order and error terms are given by
			\begin{align*}
				LS^{\nxn} &= \int_{0}^{T} \Bigl[ \frac{1}{4}\left\langle\frac{\partial_t p}{p}\ma\partial_Z {\de}_{\neq},\ma\partial_Z {\de}_{\neq}\right\rangle_{H^3} 
				+ \frac{1}{4}\left\langle\frac{\partial_t p}{p}\ma\partial_Z p^{-\frac{1}{2}}D_{\neq},\ma\partial_Z p^{-\frac{1}{2}}D_{\neq}\right\rangle_{H^3} \Bigr] dt, \\
				LE^{\nxn} &= 2\int_{0}^{T}\left\langle \ma \frac{\partial_{XZ}}{p^{\frac{3}{2}}}(W^2_{\neq}-\partial_X{\de}+\nu\partial_XD)_{\neq},\ma \partial_Zp^{-\frac{1}{2}}D_{\neq}\right\rangle_{H^3} \\
				&\quad -\frac{1}{4} Re\left\langle \left(\frac{\partial_{tt}p}{p^{\frac{3}{2}}}+2\frac{(\partial_tp)^2}{p^{\frac{5}{2}}}\right)\ma\partial_Z{\de}_{\neq},\ma\partial_Zp^{-\frac{1}{2}}D_{\neq}\right\rangle_{H^3} \\
				&\quad +\frac{1}{2} Re\left\langle \frac{\partial_t p}{p^{\frac{3}{2}}}\left(\frac{\partial_t m}{4m}+\frac{\partial_tM}{M}\right)\ma\partial_Z{\de}_{\neq},\ma\partial_Zp^{-\frac{1}{2}}D_{\neq}\right\rangle_{H^3} \\
				&\quad +\frac{1}{4}\left\langle\frac{\partial_tp}{p^{\frac{3}{2}}}\ma \partial_Z{\de}_{\neq},\ma\Bigl(\frac{1}{2}\frac{\partial_tp}{p}\partial_Zp^{-\frac{1}{2}}D_{\neq}+ 2\ma \frac{\partial_{XZ}}{p^{\frac{3}{2}}}(W^2_{\neq}-\partial_X{\de}+\nu\partial_XD)_{\neq}-\bar{\nu} p^{\frac{1}{2}}\partial_ZD_{\neq}\Bigr)\right\rangle_{H^3} \\
				&\quad + c_1\bar{\nu}^{\frac{1}{3}}\lVert\ma \partial_Zp^{-\frac{1}{2}} D_{\neq}\rVert_{H^3}^2 
				+ c_1\bar{\nu}^{\frac{1}{3}} Re\left\langle \frac{\partial_t p}{2p^{\frac{3}{2}}}\ma\partial_Z{\de}_{\neq},\ma\partial_Zp^{-\frac{1}{2}}D_{\neq}\right\rangle_{H^3} \\
				&\quad -2c_1\bar{\nu}^{\frac{1}{3}} Re\left\langle p^{-\frac{1}{2}}\left(\frac{\partial_t m}{4m}+\frac{\partial_tM}{M}\right)\ma\partial_Z{\de}_{\neq},\ma\partial_Zp^{-\frac{1}{2}}D_{\neq}\right\rangle_{H^3} \\
				&\quad -c_1\bar{\nu}^{\frac{1}{3}}\left\langle p^{-\frac{1}{2}}\ma \partial_Z{\de}_{\neq},\ma\Bigl(\frac{1}{2}\frac{\partial_tp}{p}\partial_Zp^{-\frac{1}{2}}D_{\neq}+ 2\ma \frac{\partial_{XZ}}{p^{\frac{3}{2}}}(W^2_{\neq}-\partial_X{\de}+\nu\partial_XD)_{\neq}-\bar{\nu} p^{\frac{1}{2}}\partial_ZD_{\neq}\Bigr)\right\rangle_{H^3} dt, \\
				I^{\nxn}_1 &= \int_{0}^{T}\left\langle \ma \partial_Z(\nlt_{{\de}})_{\neq},\ma\partial_Z{\de}_{\neq}\right\rangle_{H^3} dt, \\
				I^{\nxn}_2 &= \int_{0}^{T}\left\langle \ma \partial_Zp^{-\frac{1}{2}}(\nlt_{D})_{\neq},\ma\partial_Zp^{-\frac{1}{2}}D_{\neq}\right\rangle_{H^3} dt, \\
				I^{\nxn}_3 &= \frac{1}{4}\int_{0}^{T}\Bigl[ \left\langle\frac{\partial_tp}{p^{\frac{3}{2}}}\ma\partial_Z {\de}_{\neq},\ma \partial_Zp^{-\frac{1}{2}}(\nlt_{D})_{\neq} \right\rangle_{H^3} 
				+ \left\langle\frac{\partial_tp}{p^{\frac{3}{2}}} \ma \partial_Z(\nlt_{{\de}})_{\neq},\ma\partial_Zp^{-\frac{1}{2}}D_{\neq}\right\rangle_{H^3} \Bigr] dt, \\
				I^{\nxn}_4 &= -c_1\bar{\nu}^{\frac{1}{3}}\int_{0}^{T}\Bigl[ \left\langle p^{-\frac{1}{2}}\ma\partial_Z {\de}_{\neq},\ma \partial_Zp^{-\frac{1}{2}}(\nlt_{D})_{\neq} \right\rangle_{H^3} 
				+ \left\langle p^{-\frac{1}{2}} \ma \partial_Z(\nlt_{{\de}})_{\neq},\ma\partial_Zp^{-\frac{1}{2}}D_{\neq}\right\rangle_{H^3} \Bigr] dt,
			\end{align*}
			with the nonlinear source terms $\nlt_{\de},\nlt_D$ given in \cref{t-ND}.
			
			\noindent\textbf{Estimates for $LS^{\nxn}$ and $LE^{\nxn}$.} 
			By the definition and monotonicity properties of the multipliers $m$ and $M$, we obtain
			\begin{align*}
				LS^{\nxn} &\leq \frac{1}{4}\Bigl\lVert\sqrt{-\frac{\partial_t m}{m}}\,m^{\frac{1}{4}}M\partial_Z({\de}_{\neq},p^{-\frac{1}{2}}D_{\neq})\Bigr\rVert_{L^2H^3}^2 
				+ \frac{c_1}{4}\nu^{\frac{1}{3}}\Bigl\lVert m^{\frac{1}{4}}M\partial_Z({\de}_{\neq},p^{-\frac{1}{2}}D_{\neq})\Bigr\rVert_{L^2H^3}^2, \\
				LE^{\nxn} &\leq \frac{40}{\tilde{C}}\Bigl\lVert \sqrt{-\frac{\partial_t M}{M}}\,\ma(W^2_{\neq},\nabla_{X,Z}{\de}_{\neq},\nabla_{X,Z}p^{-\frac{1}{2}}D_{\neq})\Bigr\rVert_{L^2H^3}^2 
				+ \frac{1}{20}\bar{\nu}\lVert\ma \partial_ZD_{\neq}\rVert_{L^2H^3}^2 \\
				&\quad + \frac{c_1}{20}\nu^{\frac{1}{3}}\lVert\ma\partial_Z{\de}_{\neq}\rVert_{L^2H^3}^2 
				\lesssim \varepsilon^2.
			\end{align*}
			
			\noindent\textbf{Estimate of $I^{\nxn}_1$.} 
			We split $I^{\nxn}_1$ into four components:
			\begin{align*}
				I^{\nxn}_1 &= \int_{0}^{T}\left\langle \ma \partial_Z(V\cdot \tilde{\nabla} {\de}+{\de}D)_{\neq},\ma\partial_Z{\de}_{\neq}\right\rangle_{H^3} dt \\
				&= \int_{0}^{T}\left\langle \ma \partial_Z(V^1_0\partial_X {\de}_{\neq}),\ma\partial_Z{\de}_{\neq}\right\rangle_{H^3} dt 
				+ \sum_{i=2,3}\int_{0}^{T}\left\langle \ma \partial_Z(V^i_0\tilde{\partial}_i {\de}_{\neq}),\ma\partial_Z{\de}_{\neq}\right\rangle_{H^3} dt \\
				&\quad + \int_{0}^{T}\left\langle \ma \partial_Z(V_{\neq}\cdot\tilde{\nabla} {\de}),\ma\partial_Z{\de}_{\neq}\right\rangle_{H^3} dt 
				+ \int_{0}^{T}\left\langle \ma \partial_Z({\de}D)_{\neq},\ma\partial_Z{\de}_{\neq}\right\rangle_{H^3} dt 
				= \sum_{i=1}^{4}I^{\nxn}_{1i}.
			\end{align*}
			For $I^{\nxn}_{11}$, applying the commutator estimate \cref{est:m}, \cref{est:m4}, and integration by parts yields
			\begin{align*}
				I^{\nxn}_{11} &= \int_{0}^{T}\left\langle \ma (\partial_ZV^1_0)\partial_X {\de}_{\neq},\ma\partial_Z{\de}_{\neq}\right\rangle_{H^3} dt \\
				&\quad + \int_{0}^{T}\left\langle \ma[\langle\nabla\rangle^3, V^1_0]\partial_X \partial_Z{\de}_{\neq},\langle\nabla\rangle^3\ma\partial_Z{\de}_{\neq}\right\rangle_{L^2} dt \\
				&\quad + \int_{0}^{T}\left\langle m^{\frac{1}{4}}[M, V^1_0]\partial_X\langle\nabla\rangle^3 \partial_Z{\de}_{\neq},\langle\nabla\rangle^3\ma\partial_Z{\de}_{\neq}\right\rangle_{L^2}dt \\
				&\quad + \int_{0}^{T}\left\langle [m^{\frac{1}{4}}, V^1_0]\partial_X\langle\nabla\rangle^3 M\partial_Z{\de}_{\neq},\langle\nabla\rangle^3\ma\partial_Z{\de}_{\neq}\right\rangle_{L^2}dt \\
				&\quad + \int_{0}^{T}\left\langle  V^1_0\partial_X\langle\nabla\rangle^3 \ma\partial_Z{\de}_{\neq},\langle\nabla\rangle^3\ma\partial_Z{\de}_{\neq}\right\rangle_{L^2}dt \\
				&\lesssim \bigl(\|\partial_Z V^1_0\|_{L^\infty H^4}\|m^{\frac{1}{4}}\partial_X{\de}_{\neq}\|_{L^2H^3}
				+ \|\nabla V^1_0\|_{L^\infty H^3}\|m^{\frac{1}{4}}\partial_Z{\de}_{\neq}\|_{L^2H^3}\bigr)\|\ma \partial_Z{\de}_{\neq}\|_{L^2H^3} \\
				&\lesssim \nu^{-1}\varepsilon \cdot \nu^{-1/6}\varepsilon \cdot \nu^{-1/6}\varepsilon = \nu^{-4/3}\varepsilon^3.
			\end{align*}
			
			For the remaining terms,  Proposition \ref{est:main} implies that
			\begin{align*}
				I^{\nxn}_{12}+I^{\nxn}_{13}+I^{\nxn}_{14} &\lesssim \bigl(\|(1,\partial_Z)(V^2_0,V^3_0)\|_{L^\infty H^3}\|\nabla_{X,Z}p^{\frac{1}{2}}{\de}_{\neq}\|_{L^2H^3} 
				+ \|\nabla_{X,Z}V_{\neq}\|_{L^2H^3}\|(1,\partial_Z)p^{\frac{1}{2}}{\de}\|_{L^\infty H^3} \\
				&\quad + \|(1,\partial_Z){\de}\|_{L^\infty H^3}\|\nabla_{X,Z}D_{\neq}\|_{L^2H^3} 
				+ \|\nabla_{X,Z}{\de}_{\neq}\|_{L^\infty H^3}\|(1,\partial_Z)D_0\|_{L^2H^3}\bigr)\|\ma\partial_Z{\de}_{\neq}\|_{L^2H^3} \\
				&\lesssim \bigl(\varepsilon\nu^{-2/3}\varepsilon + \nu^{-1/3}\varepsilon\nu^{-1/2}\varepsilon + \nu^{-1/6}\varepsilon\nu^{-2/3}\varepsilon + \nu^{-1/6}\varepsilon\nu^{-1/2}\varepsilon\bigr)\nu^{-1/6}\varepsilon 
				\lesssim \nu^{-1}\varepsilon^3.
			\end{align*}
			
			\noindent\textbf{Estimate of $I^{(3)}_2$.} 
			We decompose $I^{(3)}_2$ as
			\begin{align*}
				I^{\nxn}_2 &= \int_{0}^{T}\left\langle \ma \partial_Zp^{-\frac{1}{2}}(V\cdot\tilde{\nabla} D)_{\neq},\ma\partial_Zp^{-\frac{1}{2}}D_{\neq}\right\rangle_{H^3} dt \\
				&\quad + \int_{0}^{T}\left\langle \ma \partial_Zp^{-\frac{1}{2}}(\tilde{\partial}_iV^j\tilde{\partial}_jV^i)_{\neq},\ma\partial_Zp^{-\frac{1}{2}}D_{\neq}\right\rangle_{H^3} dt \\
				&\quad + \int_{0}^{T}\left\langle \ma \partial_Zp^{-\frac{1}{2}}(\dl(F({\de})\tilde{\nabla}{\de}))_{\neq},\ma\partial_Zp^{-\frac{1}{2}}D_{\neq}\right\rangle_{H^3} dt \\
				&\quad + \int_{0}^{T}\left\langle \ma \partial_Zp^{-\frac{1}{2}}(\dl(G({\de})(\nu\tilde{\Delta} V+(\nu+\nu')\tilde{\nabla}D)))_{\neq},\ma\partial_Zp^{-\frac{1}{2}}D_{\neq}\right\rangle_{H^3} dt 
				= \sum_{i=1}^{4}I^{\nxn}_{2i}.
			\end{align*}
			The leading part $I^{\nxn}_{21}$ is further split into $T^{\nxn}_1+T^{\nxn}_2+T^{\nxn}_3$. By commutator estimates \cref{est:m4},
			\begin{align*}
				T^{\nxn}_1 &= \int_{0}^{T}\left\langle \ma [p^{-\frac{1}{2}},\partial_ZV^1_0]\partial_X D_{\neq},\ma\partial_Zp^{-\frac{1}{2}}D_{\neq}\right\rangle_{H^3} dt \\
				&\quad + \int_{0}^{T}\left\langle \ma (\partial_ZV^1_0\partial_X p^{-\frac{1}{2}}D_{\neq}),\ma\partial_Zp^{-\frac{1}{2}}D_{\neq}\right\rangle_{H^3} dt \\
				&\quad + \int_{0}^{T}\left\langle ([\ma \langle \nabla\rangle^3p^{-\frac{1}{2}},V^1_0]\partial_X\partial_Z D_{\neq}),\ma\langle \nabla\rangle^3\partial_Zp^{-\frac{1}{2}}D_{\neq}\right\rangle_{L^2} dt \\
				&\lesssim \|\partial_Z\nabla V^1_0\|_{L^\infty H^3}\|\partial_Xp^{-\frac{1}{2}}D_{\neq}\|_{L^2H^3}\|p^{-\frac{1}{2}}\ma \partial_Zp^{-\frac{1}{2}}D_{\neq}\|_{L^2H^3} \\
				&\quad + \|\partial_ZV^1_0\|_{L^\infty H^4}\|\ma\partial_Xp^{-\frac{1}{2}}D_{\neq}\|_{L^2H^3}\|\ma \partial_Zp^{-\frac{1}{2}}D_{\neq}\|_{L^2H^3} \\
				&\quad + \|\nabla V^1_0\|_{L^\infty H^3}\|m^{\frac{1}{4}}\partial_Zp^{-\frac{1}{2}}D_{\neq}\|_{L^2H^3}\|\partial_Xp^{-\frac{1}{2}}\ma\partial_Zp^{-\frac{1}{2}}D_{\neq}\|_{L^2H^3} \\
				&\quad + \|\nabla V^1_0\|_{L^\infty H^3}\|m^{\frac{1}{4}}\partial_Zp^{-\frac{1}{2}}D_{\neq}\|_{L^2H^3}\|\ma\partial_Zp^{-\frac{1}{2}}D_{\neq}\|_{L^2H^3} \\
				&\lesssim \nu^{-1}\varepsilon\nu^{-1/3}\varepsilon^2 + \nu^{-1}\varepsilon(\nu^{-1/6}\varepsilon)^2 + \nu^{-1}\varepsilon\nu^{-1/6}\varepsilon^2 + \nu^{-1}\varepsilon\nu^{-1/6}\varepsilon\nu^{-1/6}\varepsilon 
				\lesssim \nu^{-4/3}\varepsilon^3,
			\end{align*}
			The remaining parts satisfy
			\begin{align*}
				T^{\nxn}_2+T^{\nxn}_3 &\lesssim \bigl(\|(V^2_0,V^3_0)\|_{L^\infty H^3} + \|V_{\neq}\|_{L^\infty H^3}\bigr)\|\tilde{\nabla}D\|_{L^2H^3}\|\ma\partial_Zp^{-\frac{1}{2}}D_{\neq}\|_{L^2H^3} \\
				&\lesssim \nu^{-1/6}\varepsilon \cdot \nu^{-1}\varepsilon \cdot \nu^{-1/6}\varepsilon \lesssim \nu^{-4/3}\varepsilon^3.
			\end{align*}
			For $I^{\nxn}_{22}$, direct H{\"o}lder's inequality and Sobolev embeddings give
			\begin{align*}
				I^{\nxn}_{22} &= \int_{0}^{T}\left\langle \ma \partial_Zp^{-\frac{1}{2}}(\tilde{\partial}_iV^j\tilde{\partial}_jV^i)_{\neq},\ma\partial_Zp^{-\frac{1}{2}}D_{\neq}\right\rangle_{H^3} dt \\
				&\lesssim \bigl(\|\partial_XV^1_{\neq}\|_{L^\infty H^3}^2\|\partial_XV^1_{\neq}\|_{L^2 H^3} 
				+ \|\tilde{\partial}_YV^1\|_{L^\infty H^3}\|\partial_XV^2_{\neq}\|_{L^2H^3} + \|\partial_ZV^1\|_{L^\infty H^3}\|\partial_XV^3_{\neq}\|_{L^2H^3} \\
				&\quad + \|\tilde{\partial}_YV^2\|_{L^\infty H^3}\|\tilde{\partial}_YV^2_{\neq}\|_{L^2H^3} + \|\tilde{\partial}_YV^3\|_{L^\infty H^3}\|\partial_ZV^2_{\neq}\|_{L^2H^3} 
				+ \|\tilde{\partial}_YV^3_{\neq}\|_{L^2 H^3}\|\partial_ZV^2_0\|_{L^\infty H^3} \\
				&\quad + \|\partial_ZV^3\|_{L^\infty H^3}\|\partial_ZV^3_{\neq}\|_{L^2H^3}\bigr)\|\ma\partial_Zp^{-\frac{1}{2}}D_{\neq}\|_{L^2 H^3} \\
				&\lesssim \bigl(\varepsilon\nu^{-1/6}\varepsilon + \nu^{-1}\varepsilon\nu^{-1/3}\varepsilon + \nu^{-1}\varepsilon\nu^{-1/6}\varepsilon + \nu^{-1/2}\varepsilon\nu^{-2/3}\varepsilon + (\nu^{-1/3}\varepsilon)^2 + \nu^{-1/2}\varepsilon^2 + \varepsilon\nu^{-1/6}\varepsilon\bigr)\nu^{-1/6}\varepsilon \\
				&\lesssim \nu^{-3/2}\varepsilon^3.
			\end{align*}
			Finally, using the divergence structure in \cref{ineq:div}, we have
			\begin{align*}
				I^{\nxn}_{23}+I^{\nxn}_{24} &= \int_{0}^{T}\left\langle \ma \partial_Zp^{-\frac{1}{2}}(\dl(F({\de})\tilde{\nabla}{\de}))_{\neq},\ma\partial_Zp^{-\frac{1}{2}}D_{\neq}\right\rangle_{H^3} dt \\
				&\quad + \int_{0}^{T}\left\langle \ma \partial_Zp^{-\frac{1}{2}}(\dl(G({\de})(\nu\tilde{\Delta} V+(\nu+\nu')\tilde{\nabla}D)))_{\neq},\ma\partial_Zp^{-\frac{1}{2}}D_{\neq}\right\rangle_{H^3} dt \\	
				&\lesssim \bigl(\|(1,\partial_Z){\de}\|_{L^\infty H^3}\|\nabla_{X,Z}\tilde{\nabla} {\de}_{\neq}\|_{L^2H^3} 
				+ \|\nabla_{X,Z}{\de}_{\neq}\|_{L^2 H^3}\|(1,\partial_Z)\nabla {\de}_0\|_{L^\infty H^3} \\
				&\quad + \|\partial_X{\de}_{\neq}\|_{L^2 H^3}\bigl(\nu\|\tilde{\Delta} V^1\|_{L^\infty H^3}+(\nu+\nu')\|\partial_XD_{\neq}\|_{L^\infty H^3}\bigr) \\
				&\quad + \|{\de}_0\|_{L^\infty H^3}\bigl(\nu\|\partial_X\tilde{\Delta} V^1_{\neq}\|_{L^2 H^3}+(\nu+\nu')\|\partial_{XX}D_{\neq}\|_{L^2 H^3}\bigr) \\
				&\quad + \|\partial_Z{\de}_{\neq}\|_{L^2 H^3}\bigl(\nu\|\tilde{\Delta} V^2\|_{L^\infty H^3}+(\nu+\nu')\|\tilde{\partial}_YD\|_{L^\infty H^3}\bigr) \\
				&\quad + \|{\de}_0\|_{L^\infty H^3}\bigl(\nu\|\partial_Z\tilde{\Delta} V^2_{\neq}\|_{L^2 H^3}+(\nu+\nu')\|\tilde{\partial}_Y\partial_ZD_{\neq}\|_{L^2 H^3}\bigr) \\
				&\quad + \|\partial_Z{\de}_{\neq}\|_{L^2 H^3}\bigl(\nu\|\tilde{\Delta} V^3\|_{L^\infty H^3}+(\nu+\nu')\|\partial_ZD\|_{L^\infty H^3}\bigr) \\
				&\quad + \|{\de}_0\|_{L^\infty H^3}\bigl(\nu\|\partial_Z\tilde{\Delta} V^3_{\neq}\|_{L^2 H^3}+(\nu+\nu')\|\partial_{ZZ}D_{\neq}\|_{L^2 H^3}\bigr)\bigr)\|\ma\partial_Zp^{-\frac{1}{2}}D_{\neq}\|_{L^2 H^3} \\
				&\lesssim \bigl(\nu^{-1/6}\varepsilon\nu^{-2/3}\varepsilon + \nu^{-1/3}\varepsilon\nu^{-1/2}\varepsilon + \nu^{-1/3}\varepsilon(\nu\nu^{-1}\varepsilon+(\nu+\nu')\nu^{-1/2}\varepsilon) + \varepsilon\bar{\nu}\nu^{-1}\varepsilon \\
				&\quad + \nu^{-1/3}\varepsilon\bar{\nu}\nu^{-5/6}\varepsilon + \varepsilon\bar{\nu}\nu^{-1}\varepsilon + \nu^{-1/3}\varepsilon(\nu\nu^{-2/3}\varepsilon+(\nu+\nu')\nu^{-1/2}\varepsilon) + \varepsilon\bar{\nu}\nu^{-1}\varepsilon\bigr)\nu^{-1/6}\varepsilon \\
				&\lesssim \nu^{-1}\varepsilon^3.
			\end{align*}
			
			\noindent\textbf{Estimates for $I^{\nxn}_3$ and $I^{\nxn}_4$.} 
			We first decompose $I^{(3)}_3$ into three parts:
			\begin{align*}
				I^{\nxn}_3 = I^{\nxn}_{31} + I^{\nxn}_{32} + I^{\nxn}_{33},
			\end{align*}
			where
			\begin{align*}
				I^{\nxn}_{31} &= \int_{0}^{T}\left\langle \frac{\partial_t p}{p^{\frac{3}{2}}} \ma (V^1_0\partial_{XZ} {\de}_{\neq}),\ma\partial_Zp^{-\frac{1}{2}}D_{\neq}\right\rangle_{H^3} 
				+ \left\langle \frac{\partial_t p}{p^{\frac{3}{2}}}\ma\partial_Z{\de}_{\neq},\ma p^{-\frac{1}{2}}(V^1_0\partial_{XZ} D_{\neq})\right\rangle_{H^3} dt, \\
				I^{\nxn}_{32} &= \int_{0}^{T}\left\langle \frac{\partial_t p}{p^{\frac{3}{2}}} \ma (\partial_ZV^1_0\partial_X {\de}_{\neq}),\ma\partial_Zp^{-\frac{1}{2}}D_{\neq}\right\rangle_{H^3} 
				+ \left\langle \frac{\partial_t p}{p^{\frac{3}{2}}}\ma\partial_Z{\de}_{\neq},\ma p^{-\frac{1}{2}}(\partial_ZV^1_0\partial_X D_{\neq})\right\rangle_{H^3} dt, \\
				I^{\nxn}_{33} &= \frac{1}{4}\int_{0}^{T}\Bigl[ \left\langle\frac{\partial_tp}{p^{\frac{3}{2}}}\ma\partial_Z {\de}_{\neq},\ma \partial_Zp^{-\frac{1}{2}}(\nlt_{D}-V^1_0\partial_XD_{\neq})_{\neq} \right\rangle_{H^3} 
				\\
				&\quad+ \left\langle\frac{\partial_tp}{p^{\frac{3}{2}}} \ma \partial_Z(\nlt_{{\de}}-V^1_0\partial_X{\de}_{\neq})_{\neq},\ma\partial_Zp^{-\frac{1}{2}}D_{\neq}\right\rangle_{H^3} \Bigr] dt.
			\end{align*}
			For $I^{\nxn}_{31}$, applying the commutator estimate yields
			\begin{align*}
				I^{\nxn}_{31} &= \int_{0}^{T}\left\langle \frac{\partial_t p}{p^{\frac{3}{2}}} \ma (V^1_0\partial_{XZ} {\de}_{\neq}),\ma\partial_Zp^{-\frac{1}{2}}D_{\neq}\right\rangle_{H^3} 
				+ \left\langle \frac{\partial_t p}{p^{\frac{3}{2}}}\ma\partial_Z{\de}_{\neq},\ma p^{-\frac{1}{2}}(V^1_0\partial_{XZ} D_{\neq})\right\rangle_{H^3} dt \\
				&= \int_{0}^{T}\left\langle ([\langle \nabla\rangle^3 \ma p^{-\frac{1}{2}},V^1_0]\partial_{XZ} {\de}_{\neq}),\langle \nabla\rangle^3\ma\frac{\partial_t p}{p}\partial_Zp^{-\frac{1}{2}}D_{\neq}\right\rangle_{L^2} dt \\
				&\quad + \int_{0}^{T}\left\langle \langle \nabla\rangle^3\ma p^{-\frac{1}{2}}\partial_Z{\de}_{\neq}, ([\langle \nabla\rangle^3\ma \frac{\partial_t p}{p}p^{-\frac{1}{2}},V^1_0]\partial_{XZ} D_{\neq})\right\rangle_{L^2} dt \\
				&\quad + \int_{0}^{T} \left\langle V^1_0\partial_{X}\langle \nabla\rangle^3m^{\frac{1}{4}}Mp^{-\frac{1}{2}} \partial_Z{\de}_{\neq},\langle \nabla\rangle^3\ma\frac{\partial_t p}{p}\partial_Zp^{-\frac{1}{2}}D_{\neq}\right\rangle_{L^2} \\
				&\quad\quad + \left\langle \langle \nabla\rangle^3\ma p^{-\frac{1}{2}}\partial_Z{\de}_{\neq}, V^1_0\partial_{X}\langle \nabla\rangle^3m^{\frac{1}{4}}M\frac{\partial_t p}{p}\partial_Zp^{-\frac{1}{2}} D_{\neq}\right\rangle_{L^2} dt.
			\end{align*}
			Using integration by parts, the last term vanishes. Therefore,
			\begin{align*}
				I^{\nxn}_{31} &\lesssim \|\nabla V^1_{0}\|_{L^\infty H^4}\|m^{\frac{1}{4}} p^{-\frac{1}{2}}\partial_Z{\de}_{\neq}\|_{L^2H^3}\|\partial_Xp^{-\frac{1}{2}}\ma\partial_Zp^{-\frac{1}{2}}D_{\neq}\|_{L^2H^3} \\
				&\quad + \|\nabla V^1_{0}\|_{L^\infty H^4}\|\ma p^{-\frac{1}{2}}\partial_Z{\de}_{\neq}\|_{L^2H^3}\|m^{\frac{1}{4}}\partial_Zp^{-\frac{1}{2}}D_{\neq}\|_{L^2H^3} 
				\lesssim \nu^{-7/6}\varepsilon^3.
			\end{align*}
			For $I^{\nxn}_{32}$, by Proposition \ref{est:com-p}, we have
			\begin{align*}
				I^{\nxn}_{32}&= \int_{0}^{T}\left\langle \frac{\partial_t p}{p^{\frac{3}{2}}} \ma (\partial_ZV^1_0\partial_X {\de}_{\neq}),\ma\partial_Zp^{-\frac{1}{2}}D_{\neq}\right\rangle_{H^3} 
				+ \left\langle \frac{\partial_t p}{p^{\frac{3}{2}}}\ma\partial_Z{\de}_{\neq},\ma ([p^{-\frac{1}{2}},\partial_ZV^1_0]\partial_X D_{\neq})\right\rangle_{H^3} \\
				&\quad + \left\langle \frac{\partial_t p}{p^{\frac{3}{2}}}\ma\partial_Z{\de}_{\neq},\ma (\partial_ZV^1_0\partial_Xp^{-\frac{1}{2}} D_{\neq})\right\rangle_{H^3} dt \\
				&\lesssim \|\partial_ZV^1_0\|_{L^\infty H^4}\|m^{\frac{1}{4}}\partial_X{\de}_{\neq}\|_{L^2H^3}\|\partial_Xp^{-\frac{1}{2}}\ma\partial_Zp^{-\frac{1}{2}}D_{\neq}\|_{L^2H^3} \\
				&\quad + \|\partial_ZV^1_0\|_{L^\infty H^4}\|\partial_Xp^{-\frac{1}{2}}D_{\neq}\|_{L^2H^3}\|\partial_Xp^{-\frac{1}{2}}\ma\partial_Z{\de}_{\neq}\|_{L^2H^3} 
				\lesssim \nu^{-4/3}\varepsilon^3.
			\end{align*}
			The estimate of $I^{\nxn}_{33}$ follows the same pattern as $I^{\nxn}_{1}$ and $I^{\nxn}_{2}$; we omit the details and obtain
			\begin{align*}
				I^{\nxn}_{33} \lesssim \nu^{-4/3}\varepsilon^3.
			\end{align*}
			The term $I^{(3)}_4$ is estimated via an argument analogous to that used for $I^{(3)}_3$, the detailed derivations being omitted. Having obtained all necessary estimates for $I_{1}^{(3)}$, $I_{2}^{(3)}$, $I_{3}^{(3)}$, $I_{4}^{(3)}$, $LS^{(3)}$, and $LE^{(3)}$, we thus establish Lemma \ref{Lem-E3}.
		\end{proof}

		\subsection{$H^3$ estimates on $(p^{\frac{1}{2}}{\de},D)$ and $\nabla_{X,Z}(p^{\frac{1}{2}}{\de},D)$}
		\label{4.7}
		In this subsection, we improve the estimates \cref{bs-yn}-\cref{bs-zyn} for $j=0$. The estimates for $(p^{\frac{1}{2}}{\de},D)$ and $\partial_X(p^{\frac{1}{2}}{\de},D)$ are similar to those for $\partial_Z(p^{\frac{1}{2}}{\de},D)$. Therefore, we only focus on the energy
		\begin{align*}
			E^{\nyn}=\frac{1}{2}\|(\partial_Zp^{\frac{1}{2}}{\de},\partial_ZD)\|^2_{H^3}-\frac{1}{2} Re\left\langle \frac{\partial_t p}{p}\partial_Z{\de},\partial_ZD\right\rangle_{H^3}-c_1\bar{\nu}^{\frac{1}{3}} Re\langle \partial_Z{\de}_{\neq},\partial_ZD_{\neq}\rangle_{H^3}-c_1\bar{\nu}  Re\langle \partial_Z{\de}_0,\partial_ZD_0\rangle_{H^3}.
		\end{align*}
		\begin{Lem}\label{Lem-E4}
			Under the same assumptions as \cref{MT} and  Proposition \ref{prop-bs}, it holds that,
			\begin{align}
				\sup_{t\in[0,T]}E^{(4)}(t)&
				+\bar{\nu}\|\tilde{\nabla}\partial_ZD\|^2_{L^2H^3}+c_1\bar{\nu}^{\frac{1}{3}}\|\partial_Zp^{\frac{1}{2}}{\de}_{\neq}\|^2_{L^2H^3}+c_1\bar{\nu}\|\partial_Zp^{\frac{1}{2}}{\de}_0\|^2_{L^2H^3}
				\lesssim E^{(4)}(0)+\nu^{-3/2}\eps^3 .
			\end{align}
		\end{Lem}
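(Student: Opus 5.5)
We will derive the energy identity for $E^{(4)}$ by differentiating in time and using the system for $(\partial_Z p^{\frac12}R,\partial_Z D)$. This system follows from \cref{sys-ND} after multiplying the second equation by $p$, namely
\[
\partial_t(p^{\frac12}R)-\tfrac{\partial_tp}{2p}p^{\frac12}R+p\,(p^{-\frac12}D)=p^{\frac12}\nlt_R,\qquad \partial_tD-pR-2\tfrac{\partial_X}{p^{\frac12}}p^{-\frac12}(W^2-\partial_XR+\nu\partial_XD)+\bar\nu pD=\nlt_D,
\]
all at the level $\langle\nabla\rangle^3\partial_Z$. The skew coupling $\pm p R,\ pD$ cancels in $\tfrac12\frac{d}{dt}\|(\partial_Zp^{\frac12}R,\partial_ZD)\|_{H^3}^2$. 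The only dangerous linear term is $\tfrac12\langle\tfrac{\partial_tp}{p}\partial_Zp^{\frac12}R,\partial_Zp^{\frac12}R\rangle$, which has no sign and comes from the $\tfrac{\partial_tp}{2p}$ growth. The correction $-\tfrac12 Re\langle\tfrac{\partial_tp}{p}\partial_ZR,\partial_ZD\rangle$ is built to cancel it: its time derivative yields $-\tfrac12\langle\tfrac{\partial_tp}{p}\partial_ZR,p\,\partial_ZR\rangle$ plus lower-order terms involving $\partial_t(\partial_tp/p)$, $(\partial_tp/p)^2$ and $\bar\nu\,\partial_tp\,\partial_ZR\,\partial_ZD$. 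These are bounded by $|\partial_tp/p|\lesssim p^{-1/2}|k|$ and absorbed either by dissipation or by the $\tilde C$-weighted $M_1$ norms already controlled in \cref{bs-xn}.

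The two cross terms $-c_1\bar\nu^{\frac13}Re\langle\partial_ZR_{\neq},\partial_ZD_{\neq}\rangle$ and $-c_1\bar\nu Re\langle\partial_ZR_0,\partial_ZD_0\rangle$ are there to produce the dissipation of the density. Differentiating gives $c_1\bar\nu^{\frac13}\|\partial_Zp^{\frac12}R_{\neq}\|^2$ and $c_1\bar\nu\|\partial_Zp^{\frac12}R_0\|^2$, with error terms $c_1\bar\nu^{\frac13}\|\partial_ZD_{\neq}\|^2$. We absorb these errors into $\bar\nu\|\tilde\nabla\partial_ZD\|^2$ on frequencies $p\gtrsim\nu^{-2/3}$. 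On the complementary frequencies we use the $M_2$ enhanced-dissipation norm of \cref{bs-xn}. Coercivity of $E^{(4)}$ is immediate for $c_1$ small, since $|\partial_tp/p|\le 2|k|p^{-1/2}\le 2$ only gives a factor $\le1$ against $\|\partial_ZR\|\le\|\partial_Zp^{\frac12}R\|$. The $W^2$ coupling term is controlled by \cref{bs-w2} and the multiplier bound \cref{def-p}, which gives $\lesssim\varepsilon^2$.

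For the nonlinear terms we follow the structure of Lemma \ref{Lem-E3} and Lemma \ref{Lem-dz}. We split $\nlt_R,\nlt_D$ into transport by $V_0^1\partial_X$, transport by $(V^2_0,V^3_0)$, non-zero–non-zero interactions, and the pressure/viscous terms $F(R)$, $G(R)$. Transport terms are handled by integration by parts plus commutators (Lemma \ref{lemma:commutator}, Proposition \ref{est:com-p}), while the other pieces are handled by Hölder together with Proposition \ref{est:main}. The top-order term $\langle\langle\nabla\rangle^3 p^{\frac12}\partial_Z(V\cdot\tilde\nabla R),\langle\nabla\rangle^3\partial_Zp^{\frac12}R\rangle$ would lose a derivative. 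The plan is to pair it with the $D$-equation's $V\cdot\tilde\nabla D$ via the symmetric structure, so that only the commutator $[p^{\frac12},V]$ survives; that commutator is bounded by $p^{\frac12}$ of the coefficient.

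\textbf{Main obstacle.} The hardest step is the lift-up interaction $V^1_0\partial_X$ with $\|V^1_{0\neq}\|_{L^\infty H^5}\lesssim\nu^{-1}\varepsilon$. The commutator $[p^{\frac12},V^1_0]\partial_X\partial_ZR_{\neq}$ is bounded by $\|\nabla V^1_0\|_{H^4}\|\partial_X\partial_Z R_{\neq}\|$. This has to be estimated using the $\nu^{-1/2}$-level bound of \cref{bs-xyn}, giving
\[
\nu^{-1}\varepsilon\cdot\nu^{-1/2}\nu^{-1/6}\varepsilon\cdot\nu^{-1/2}\nu^{-1/6}\varepsilon .
\]
Relative to the squared bootstrap size $\nu^{-1}\varepsilon^2$, this is of order $\nu^{-1/2}\varepsilon^3$, i.e.\ $\nu^{-3/2}\varepsilon^3$ in the normalization of the statement. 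For $V^1_{00}$ the commutator with $\partial_X$ vanishes except through $\partial_YV^1_{00}$, which is bounded by $\nu^{-1/2}\varepsilon$. The remaining terms close with $\varepsilon=\varepsilon_0\nu^{3/2}$.
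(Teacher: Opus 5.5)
Your overall route is the paper's: the same functional $E^{(4)}$ (the $\frac{\partial_tp}{p}$ corrector plus the two $c_1$ cross terms), linear terms closed by the bootstrap, and nonlinear terms split into $V^1_0$-transport/commutator pieces and H\"older pieces. Your bound $\nu^{-1}\varepsilon\,(\nu^{-2/3}\varepsilon)^2$ for $[p^{1/2},V^1_0]\partial_X\partial_Z\de_{\neq}$ is exactly the paper's $I^{(4)}_{13}$.

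\textbf{The gap: a wrong $D$-equation.} Multiplying the second line of \cref{sys-ND} by $p^{1/2}$ does not give your equation. Since $p^{1/2}\partial_t(p^{-1/2}D)=\partial_tD-\frac{\partial_tp}{2p}D$, the two $\frac{\partial_tp}{2p}$ contributions add, and one gets $\partial_tD-\frac{\partial_tp}{p}D-p\de-2\frac{\partial_X}{p}(W^2-\partial_X\de+\nu\partial_XD)+\bar{\nu}pD=\nlt_D$. The term $-\frac{\partial_tp}{p}D$ is just the $-2\partial_X\tilde{\partial}_Yp^{-1}D$ part of $2\partial_XV^2$.

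Hence $\frac12\frac{d}{dt}\|\partial_ZD\|^2_{H^3}$ produces $\langle\frac{\partial_tp}{p}\partial_ZD,\partial_ZD\rangle_{H^3}$. The time derivative of your corrector, through $\partial_t\partial_Z\de=-\partial_ZD+\dots$, adds another $\frac12\langle\frac{\partial_tp}{p}\partial_ZD,\partial_ZD\rangle_{H^3}$. So the corrector does not remove all unsigned growth: it trades $\frac12\langle\frac{\partial_tp}{p}\partial_Zp^{1/2}\de,\partial_Zp^{1/2}\de\rangle$ for the net term $\frac32\langle\frac{\partial_tp}{p}\partial_ZD_{\neq},\partial_ZD_{\neq}\rangle_{H^3}$, which is the paper's $LS^{(4)}$.

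This term is not lower order. It cannot be absorbed by $\bar{\nu}\|\tilde{\nabla}\partial_ZD\|^2$ on frequencies $p\lesssim\nu^{-1}$. Nor can it be absorbed by the $M_1$ norm, since $-\partial_tM_1/M_1=\tilde{C}|k|(k^2+l^2)^{1/2}/p$ lacks the factor $|\eta-kt|$.

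The paper closes it as follows. By \cref{def-p}, $|\frac{\partial_tp}{p}|\le\frac{2|k|}{p^{1/2}}\le 2m^{1/2}$, so $LS^{(4)}\le 3\|m^{1/4}\partial_ZD_{\neq}\|^2_{L^2H^3}\lesssim\nu^{-1}\varepsilon^2$, using the $\nu^{1/2}\|\tilde{\nabla}m^{1/4}M\partial_Zp^{-1/2}D_{\neq}\|$ part of \cref{bs-xn}. This equals $C_0^{-2}(C_0\nu^{-1/2}\varepsilon)^2$, so it closes only because of the $C_0$-gap between \cref{bs-xn} and \cref{bs-zyn}. Your ingredients suffice, but you must identify this term and state that gap explicitly.

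\textbf{Smaller corrections.}
\begin{itemize}
\item \emph{Transport.} There is no cancellation between the transport terms of the $\de$- and $D$-components. Each is handled inside its own component via $\langle V\cdot\tilde{\nabla}f,f\rangle=-\frac12\langle(\tilde{\mathrm{div}}\,V)f,f\rangle$. The remainders are the $[p^{1/2},V]$, $[\langle\nabla\rangle^3,V]$ and $(\partial_ZV)\cdot\tilde{\nabla}$ pieces (the paper's $I^{(4)}_{11}$--$I^{(4)}_{15}$).
\item \emph{Where the derivative loss really is.} It is $\de\,\partial_Zp^{1/2}D$ inside $\partial_Zp^{1/2}(\de D)$, one derivative on $D$ above the energy. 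It is paid by the $L^2_t$ dissipation of $\tilde{\nabla}\partial_ZD$ (this is $I^{(4)}_{16}$).
\item \emph{Exponent bookkeeping.} $\nu^{-1}\varepsilon\,(\nu^{-2/3}\varepsilon)^2=\nu^{-4/3}\varepsilon\,(\nu^{-1/2}\varepsilon)^2$. This is neither $\nu^{-1/2}\varepsilon$ times the squared bootstrap size nor $\nu^{-3/2}\varepsilon^3$. It closes because $\nu^{-4/3}\varepsilon=\varepsilon_0\nu^{1/6}\ll1$, which is how the paper's own proof closes. The paper's linear terms are likewise only $\lesssim(\bar{\nu}^{1/3}+C_0^{-1})(C_0\nu^{-1/2}\varepsilon)^2$, so the lemma's right-hand side must be read relative to the bootstrap size.
\item \emph{Coercivity.} Use $|\partial_tp/p|\le1$ (from $2|k||\eta-kt|\le p$) together with $p\ge1$ on $k\neq0$. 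This bounds the corrector by $\frac14(\|\partial_Zp^{1/2}\de\|^2_{H^3}+\|\partial_ZD\|^2_{H^3})$. With your constant $2$ the cross term exactly matches the diagonal part and leaves no margin.
\end{itemize}
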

		\begin{proof}
			
			The system of $\partial_Z(p^{\frac{1}{2}}{\de},D)$ reads
			\begin{align*}
				\left\{\begin{aligned}
					&\partial_t (\partial_Zp^{\frac{1}{2}}{\de})-\frac{\partial_t p}{2p}(\partial_Zp^{\frac{1}{2}}{\de})+p^{\frac{1}{2}}(\partial_ZD)=\partial_Zp^{\frac{1}{2}}\nlt_{{\de}},\\[2mm]
					&\partial_t (\partial_ZD)-\frac{\partial_tp}{p}(\partial_ZD)-p^{\frac{1}{2}}(\partial_Zp^{\frac{1}{2}}{\de})-2\frac{\partial_{XZ}}{p}(W^2-\partial_X{\de}+\nu\partial_XD)+\bar{\nu} p(\partial_ZD)=\partial_Z\nlt_{D},
				\end{aligned}\right.
			\end{align*}
			where the nonlinear terms $\nlt_{\de},\nlt_D$ are given in \cref{t-ND}.
			Then an energy estimate gives
			\begin{align*}
				&E^{\nyn}(T)+\bar{\nu}\|\tilde{\nabla}\partial_ZD\|^2_{L^2H^3}+c_1\bar{\nu}^{\frac{1}{3}}\|\partial_Zp^{\frac{1}{2}}{\de}_{\neq}\|^2_{L^2H^3}+c_1\bar{\nu}\|\partial_Zp^{\frac{1}{2}}{\de}_0\|^2_{L^2H^3}\\
				=&E^{\nyn}(0)+LS^{\nyn}+LE^{\nyn}+I^{\nyn}_1+I^{\nyn}_2+I^{\nyn}_3+I^{\nyn}_4,
			\end{align*}
			where we denote
			\begin{align*}
				LS^{\nyn}=&\int_{0}^{T}\left\langle\frac{3}{2}\frac{\partial_t p}{p}\partial_Z D_{\neq},\partial_Z D_{\neq}\right\rangle_{H^3} dt,\\
				LE^{\nyn}=&2\int_{0}^{T}\left\langle  \frac{\partial_{XZ}}{p}(W^2_{\neq}-\partial_X{\de}+\nu\partial_XD)_{\neq}, \partial_ZD_{\neq}\right\rangle_{H^3}-\frac{1}{2} Re\left\langle \left(\frac{\partial_{tt}p}{p}-\frac{(\partial_tp)^2}{p^{2}}\right)\partial_Z{\de}_{\neq},\partial_ZD_{\neq}\right\rangle_{H^3}\\
				&- Re\frac{1}{2}\left\langle\frac{\partial_tp}{p} \partial_Z{\de}_{\neq},\frac{\partial_tp}{p}\partial_ZD_{\neq}+ 2 \frac{\partial_{XZ}}{p}(W^2_{\neq}-\partial_X{\de}+\nu\partial_XD)_{\neq}-\bar{\nu} p\partial_ZD_{\neq}\right\rangle_{H^3}\\
				&+c_1\bar{\nu}^{\frac{1}{3}}\| \partial_Z D_{\neq}\|^2_{H^3}-c_1\bar{\nu}^{\frac{1}{3}}\left\langle \partial_Z{\de}_{\neq},\frac{\partial_tp}{p}\partial_ZD_{\neq}+ 2 \frac{\partial_{XZ}}{p}(W^2_{\neq}-\partial_X{\de}+\nu\partial_XD)_{\neq}-\bar{\nu} p\partial_ZD_{\neq}\right\rangle_{H^3}\\
				&+c_1\bar{\nu}\|\partial_ZD_0\|^2_{H^3}+c_1\bar{\nu}\left\langle \partial_Z{\de}_0,\bar{\nu} p\partial_ZD_0\right\rangle_{H^3} dt,\\
				I^{\nyn}_1=&\int_{0}^{T}\left\langle \partial_Zp^{\frac{1}{2}}\nlt_{{\de}},\partial_Zp^{\frac{1}{2}}{\de}\right\rangle_{H^3} dt,\quad
				I^{\nyn}_2=\int_{0}^{T}\left\langle \partial_Z\nlt_{D},\partial_ZD\right\rangle_{H^3} dt,\\
				I^{\nyn}_3=&-\frac{1}{2}\int_{0}^{T}\left\langle\frac{\partial_tp}{p}\partial_Z {\de}_{\neq}, \partial_Z(\nlt_{D})_{\neq} \right\rangle_{H^3}+\left\langle\frac{\partial_tp}{p}  \partial_Z(\nlt_{{\de}})_{\neq},\partial_ZD_{\neq}\right\rangle_{H^3} dt,\\
				I^{\nyn}_4=&-c_1\bar{\nu}^{\frac{1}{3}}\int_{0}^{T}\left\langle \partial_Z {\de}_{\neq}, \partial_Z(\nlt_{D})_{\neq} \right\rangle_{H^3}+\left\langle \partial_Z(\nlt_{{\de}})_{\neq},\partial_ZD_{\neq}\right\rangle_{H^3} dt,\\
				I^{\nyn}_5=&-c_1\bar{\nu}\int_{0}^{T}\left\langle \partial_Z {\de}_0, \partial_Z(\nlt_{D})_0\right\rangle_{H^3}+\left\langle \partial_Z(\nlt_{{\de}})_0,\partial_ZD_0\right\rangle_{H^3} dt.
			\end{align*}
			with the nonlinear terms $\nlt_{\de},\nlt_D$ given in \cref{t-ND}.
			For $LS^{\nyn}$ and $LE^{\nyn}$, by bootstrap argument, we have
			\begin{align*}
				LS^{\nyn}\lesssim&\frac{3}{2}\|m^{\frac{1}{4}}\partial_ZD_{\neq}\|^2_{L^2H^3}\lesssim\frac{3}{2C_0^2}(C_0\nu^{-1/2}\varepsilon)^2,\\
				LE^{\nyn}\lesssim&\|(m^{\frac{1}{4}}W^2_{\neq},m^{\frac{1}{4}}\nabla_{X,Z}{\de}_{\neq})\|_{L^2H^3}\|\partial_ZD_{\neq}\|_{L^2H^3}+\nu\|\nabla_{X,Z}D_{\neq}\|^2_{L^2H^3}+\|(m^{\frac{1}{4}}W^2_{\neq},m^{\frac{1}{4}}\nabla_{X,Z}{\de}_{\neq})\|^2_{L^2H^3}\\
				&+\bar{\nu}\|\partial_{Z}p^{\frac{1}{2}}{\de}_{\neq}\|_{L^2H^3}\|p^{\frac{1}{2}}\partial_ZD_{\neq}\|_{L^2H^3}+c_1\bar{\nu}^{\frac{1}{3}}\|\partial_ZD_{\neq}\|^2_{L^2H^3}+c_1\bar{\nu}\|\partial_ZD_0\|^2_{L^2H^3}\\
				&+c_1\bar{\nu}^2\|\partial_Zp^{\frac{1}{2}}{\de}_0\|_{L^2H^3}\|p^{\frac{1}{2}}\partial_ZD_0\|_{L^2H^3}\\
				\lesssim&\nu^{-1/6}\varepsilon\nu^{-2/3}\varepsilon+\nu(\nu^{-2/3}\varepsilon)^2+\nu^{-1/3}\varepsilon^2+\bar{\nu}\nu^{-2/3}C_0\varepsilon\nu^{-1}C_0\varepsilon+c_1(\nu^{-1/2}\varepsilon)^2+c_1\varepsilon^2+\bar{\nu}^2(\nu^{-1}C_0\varepsilon)^2\\
				\lesssim&\left(\bar{\nu}^{\frac{1}{3}}+\frac{1}{C_0}\right)(C_0\nu^{-1/2}\varepsilon)^2.
			\end{align*}
			We divide $I^{\nyn}_1$ into 
			\begin{align*}
				I^{\nyn}_1=&\int_{0}^{T}\left\langle \partial_Zp^{\frac{1}{2}}(V\cdot \tilde{\nabla} {\de}+{\de}D),\partial_Zp^{\frac{1}{2}}{\de}\right\rangle_{H^3} dt\\
				=&\int_{0}^{T}\left\langle [p^{\frac{1}{2}},\partial_ZV]\cdot \tilde{\nabla} {\de},\partial_Zp^{\frac{1}{2}}{\de}\right\rangle_{H^3} dt+\int_{0}^{T}\left\langle \partial_ZV\cdot \tilde{\nabla} p^{\frac{1}{2}}{\de},\partial_Zp^{\frac{1}{2}}{\de}\right\rangle_{H^3} dt\\
				&+\int_{0}^{T}\left\langle \langle\nabla\rangle^3[p^{\frac{1}{2}},V]\cdot \tilde{\nabla} \partial_Z{\de},\langle\nabla\rangle^3\partial_Zp^{\frac{1}{2}}{\de}\right\rangle_{L^2} dt
				+\int_{0}^{T}\left\langle [\langle\nabla\rangle^3,V]\cdot \tilde{\nabla} \partial_Zp^{\frac{1}{2}}{\de},\langle\nabla\rangle^3\partial_Zp^{\frac{1}{2}}{\de}\right\rangle_{L^2} dt\\
				&+\int_{0}^{T}\left\langle D\langle\nabla\rangle^3 \partial_Zp^{\frac{1}{2}}{\de},\langle\nabla\rangle^3\partial_Zp^{\frac{1}{2}}{\de}\right\rangle_{L^2} dt+\int_{0}^{T}\left\langle \partial_Zp^{\frac{1}{2}}({\de}D),\partial_Zp^{\frac{1}{2}}{\de}\right\rangle dt=\sum_{i=1}^{6}I^{\nyn}_{1i}.
			\end{align*}
			For $I^{\nyn}_{11}$, using Proposition \ref{est:com-p}, we deduce that
			\begin{align*}
				I^{\nyn}_{11}=&\int_{0}^{T}\left\langle [p^{\frac{1}{2}},\partial_ZV^1_0] \partial_X{\de}_{\neq},\partial_Zp^{\frac{1}{2}}{\de}_{\neq}\right\rangle_{H^3} dt+\int_{0}^{T}\left\langle [p^{\frac{1}{2}},\partial_ZV^1_{\neq}]\partial_X{\de}_{\neq},\partial_Zp^{\frac{1}{2}}{\de}\right\rangle_{H^3} dt\\
				&+\int_{0}^{T}\left\langle [p^{\frac{1}{2}},\partial_ZV^2] \tilde{\partial}_Y{\de},\partial_Zp^{\frac{1}{2}}{\de}\right\rangle_{H^3} dt+\int_{0}^{T}\left\langle [p^{\frac{1}{2}},\partial_ZV^3]\partial_Z{\de},\partial_Zp^{\frac{1}{2}}{\de}\right\rangle_{H^3} dt\\
				\lesssim&\|\partial_Z\nabla V^1_0\|_{L^\infty H^3}\|\partial_X{\de}_{\neq}\|_{L^2H^3}\|\partial_Zp^{\frac{1}{2}}{\de}_{\neq}\|_{L^2H^3}+\|\partial_Zp^{\frac{1}{2}}V^1_{\neq}\|_{L^2H^3}\|\partial_X{\de}_{\neq}\|_{L^2H^3}\|\partial_Zp^{\frac{1}{2}}{\de}\|_{L^\infty H^3}\\
				&+\|\partial_Zp^{\frac{1}{2}}(V^2,V^3)\|_{L^2H^3}\|p^{\frac{1}{2}}{\de}\|_{L^2H^3}\|\partial_Zp^{\frac{1}{2}}{\de}\|_{L^\infty H^3}\\
				\lesssim&\nu^{-1}\varepsilon\nu^{-1/3}\varepsilon\nu^{-2/3}\varepsilon+\nu^{-1/2}\varepsilon\nu^{-1/3}\varepsilon\nu^{-1/2}\varepsilon+\nu^{-2/3}\varepsilon\nu^{-1}\varepsilon\nu^{-1/2}\varepsilon\lesssim\nu^{-7/6}\varepsilon(\nu^{-1/2}\varepsilon)^2.
			\end{align*}
			Turning to $I^{\nyn}_{12}$, the bootstrap assumption implies that
			\begin{align*}
				I^{\nyn}_{12}\lesssim&(\|\partial_Z V^1_0\|_{L^\infty H^3}+\|\partial_ZV^1_{\neq}\|_{L^\infty H^3})\|\nabla_{X,Z}p^{\frac{1}{2}}{\de}_{\neq}\|^2_{L^2H^3}\\
				&+\|\partial_Z(V^2,V^3)\|_{L^2H^3}\|p{\de}\|_{L^2H^3}\|\partial_Zp^{\frac{1}{2}}{\de}\|_{L^\infty H^3}\\
				\lesssim&\nu^{-1}\varepsilon(\nu^{-2/3}\varepsilon)^2+\nu^{-1/2}\varepsilon\nu^{-1/2}\nu^{-5/6}\varepsilon\nu^{-1/2}\varepsilon\lesssim\nu^{-4/3}\varepsilon(\nu^{-1/2}\varepsilon)^2.
			\end{align*}
			$I^{\nyn}_{13}$ and $I^{\nyn}_{14}$ are treated similarly to $I^{\nxn}_{11}$, $I^{\nxn}_{12}$, respectively. Then we have
			\begin{align*}
				I^{\nyn}_{13}
				\lesssim&\|\nabla V^1_0\|_{L^\infty H^3}\|\partial_X\partial_Z{\de}_{\neq}\|_{L^2H^3}\|\partial_Zp^{\frac{1}{2}}{\de}_{\neq}\|_{L^2H^3}+\|p^{\frac{1}{2}}V^1_{\neq}\|_{L^2H^3}\|\partial_X\partial_Z{\de}_{\neq}\|_{L^2H^3}\|\partial_Zp^{\frac{1}{2}}{\de}\|_{L^\infty H^3}\\
				&+\|p^{\frac{1}{2}}(V^2,V^3)\|_{L^2H^3}\|\partial_Zp^{\frac{1}{2}}{\de}\|_{L^2H^3}\|\partial_Zp^{\frac{1}{2}}{\de}\|_{L^\infty H^3}\\
				\lesssim&\nu^{-1}\varepsilon\nu^{-2/3}\varepsilon\nu^{-2/3}\varepsilon+\nu^{-1/2}\varepsilon\nu^{-2/3}\varepsilon\nu^{-1/2}\varepsilon+\nu^{-2/3}\varepsilon\nu^{-1}\varepsilon\nu^{-1/2}\varepsilon\lesssim\nu^{-4/3}\varepsilon(\nu^{-1/2}\varepsilon)^2,
			\end{align*}
			and 
			\begin{align*}
				I^{\nyn}_{14}\lesssim&(\|\nabla V^1_0\|_{L^\infty H^2}+\|V^1_{\neq}\|_{L^\infty H^3})\|\nabla_{X,Z}p^{\frac{1}{2}}{\de}_{\neq}\|^2_{L^2H^3}\\
				&+\|\nabla(V^2,V^3)\|_{L^2H^2}\|p{\de}\|_{L^2H^3}\|\partial_Zp^{\frac{1}{2}}{\de}\|_{L^\infty H^3}\\
				\lesssim&\nu^{-1}\varepsilon(\nu^{-2/3}\varepsilon)^2+\nu^{-1/2}\varepsilon\nu^{-1/2}\nu^{-5/6}\varepsilon\nu^{-1/2}\varepsilon\lesssim\nu^{-4/3}\varepsilon(\nu^{-1/2}\varepsilon)^2.
			\end{align*}
			For $I^{\nyn}_{15}$ and $I^{\nyn}_{16}$, it immediately follows that
			\begin{align*}
				I^{\nyn}_{15}+I^{\nyn}_{16}\lesssim&\|D\|_{L^2H^3}\|\partial_Zp^{\frac{1}{2}}{\de}\|_{L^2H^3}\|\partial_Zp^{\frac{1}{2}}{\de}\|_{L^\infty H^3}+\|(1,\partial_Z)p^{\frac{1}{2}}{\de}\|_{L^2H^3}\|(1,\partial_Z)D\|_{L^2H^3}\|\partial_Zp^{\frac{1}{2}}{\de}\|_{L^\infty H^3}\\
				&+\|(1,\partial_Z){\de}\|_{L^\infty H^3}\|(1,\partial_Z)p^{\frac{1}{2}}D\|_{L^2H^3}\|\partial_Zp^{\frac{1}{2}}{\de}\|_{L^2 H^3}\\
				\lesssim&\nu^{-2/3}\varepsilon\nu^{-1}\varepsilon\nu^{-1/2}\varepsilon+\nu^{-1}\varepsilon\nu^{-2/3}\varepsilon\nu^{-1/2}\varepsilon+\nu^{-1/6}\varepsilon\nu^{-1}\varepsilon\nu^{-1}\varepsilon\lesssim\nu^{-7/6}\varepsilon(\nu^{-1/2}\varepsilon)^2.
			\end{align*}
			Next, we decompose $I^{\nyn}_2$ into three parts:
			\begin{align*}
				I^{\nyn}_2=&\int_{0}^{T}\left\langle  \partial_Z(V\cdot\tilde{\nabla} D),\partial_ZD\right\rangle_{H^3} dt+\int_{0}^{T}\left\langle  \partial_Z(\tilde{\partial}_iV^j\tilde{\partial}_jV^i),\partial_ZD\right\rangle_{H^3} dt\\
				&+\int_{0}^{T}\left\langle  \partial_Z\dl(F({\de})\tilde{\nabla}{\de}+G({\de})(\nu\tilde{\Delta} V+(\nu+\nu')\tilde{\nabla}D)),\partial_ZD\right\rangle_{H^3} dt	=\sum_{i=1}^{3}I^{\nyn}_{2i}.
			\end{align*}
			For $I^{\nyn}_{21}$, the by now standard treatment implies that
			\begin{align*}
				I^{\nyn}_{21}=&\int_{0}^{T}\left\langle (\partial_ZV)\cdot\tilde{\nabla} D,\partial_ZD\right\rangle_{H^3} dt+\int_{0}^{T}\left\langle [\langle \nabla\rangle^3, V]\cdot\tilde{\nabla} \partial_ZD,\langle \nabla\rangle^3\partial_ZD\right\rangle_{L^2} dt\\
				&-\frac{1}{2}\int_{0}^{T}\left\langle D \langle \nabla\rangle^3\partial_ZD,\langle \nabla\rangle^3\partial_ZD\right\rangle_{L^2} dt\\
				\lesssim&\|\partial_ZV^1_0\|_{L^\infty H^3}\|\nabla_{X,Z}D_{\neq}\|^2_{L^2H^3}+\|\partial_ZV^1_{\neq}\|_{L^2H^3}\|\partial_XD_{\neq}\|_{L^2H^3}\|\partial_ZD\|_{L^\infty H^3}\\
				&+\|\partial_Z(V^2,V^3)\|_{L^\infty H^3}\|\tilde{\nabla} D\|_{L^2H^3}\|\partial_Z D\|_{L^2H^3}+\|\partial_ZV^1_0\|_{L^\infty H^2}\|\nabla_{X,Z}D_{\neq}\|^2_{L^2H^3}\\
				&+\|\partial_ZV^1_{\neq}\|_{L^2H^2}\|\partial_XD_{\neq}\|_{L^2H^3}\|\partial_ZD\|_{L^\infty H^3}+\|\partial_Z(V^2,V^3)\|_{L^\infty H^2}\|\tilde{\nabla} D\|_{L^2H^3}\|\partial_Z D\|_{L^2H^3}\\
				&+\|D\|_{L^\infty H^2}\|\partial_ZD\|^2_{L^2H^3}\\
				\lesssim&\nu^{-1}\varepsilon(\nu^{-2/3}\varepsilon)^2+\nu^{-1/6}\varepsilon\nu^{-2/3}\varepsilon\nu^{-1/2}\varepsilon+\nu^{-1/6}\varepsilon\nu^{-1}\varepsilon\nu^{-2/3}\varepsilon+\nu^{-1/2}\varepsilon(\nu^{-2/3}\varepsilon)^2\lesssim\nu^{-4/3}\varepsilon(\nu^{-1/2}\varepsilon)^2.
			\end{align*}
			Turning to $I^{\nyn}_{22}$, following the same manner as in $I^{(3)}_{22}$, we deduce that 
			\begin{align*}
				I^{\nyn}_{22}=	&\int_{0}^{T}\left\langle  \partial_Z(\tilde{\partial}_iV^j\tilde{\partial}_jV^i),\partial_ZD\right\rangle_{H^3} dt\\
				\lesssim&(\|\partial_XV^1_{\neq}\|^2_{L^\infty H^3}\|\partial_XV^1_{\neq}\|^2_{L^2 H^3}+\|\tilde{\partial}_YV^1\|_{L^\infty H^3}\|\partial_XV^2_{\neq}\|_{L^2H^3}+\|\partial_ZV^1\|_{L^\infty H^3}\|\partial_XV^3_{\neq}\|_{L^2H^3}\\
				&+\|\tilde{\partial}_YV^2\|_{L^\infty H^3}\|\tilde{\partial}_YV^2_{\neq}\|_{L^2H^3}+\|\tilde{\partial}_YV^3\|_{L^\infty H^3}\|\partial_ZV^2_{\neq}\|_{L^2H^3}+\|\partial_YV^3_{\neq}\|_{L^2 H^3}\|\partial_ZV^2_0\|_{L^\infty H^3}\\
				&+\|\partial_ZV^3\|_{L^\infty H^3}\|\partial_ZV^3_{\neq}\|_{L^2H^3})\|\partial_Z\partial_ZD\|_{L^2 H^3}\\
				&+(\|\partial_Y\partial_ZV^2_0\|_{L^2H^3}\|\partial_YV^2_0\|_{L^2H^3}+\|\partial_Y(1,\partial_Z)V^2_0\|_{L^2H^3}\|\partial_{ZZ}V^3_0\|_{L^2H^3}+\|\partial_{ZZ}V^3_0\|^2_{L^2H^3})\|\partial_ZD_0\|_{L^\infty H^3}\\
				\lesssim&(\varepsilon\nu^{-1/6}\varepsilon+\nu^{-1}\varepsilon\nu^{-1/3}\varepsilon+\nu^{-1}\varepsilon\nu^{-1/6}\varepsilon+\nu^{-1/2}\varepsilon\nu^{-2/3}\varepsilon+(\nu^{-1/3}\varepsilon)^2+\nu^{-1/2}\varepsilon^2+\varepsilon\nu^{-1/6}\varepsilon)\nu^{-1}\varepsilon+(\nu^{-1/2}\varepsilon)^3\\
				\lesssim&\nu^{-4/3}\varepsilon(\nu^{-1/2}\varepsilon)^2.
			\end{align*}
			For $I^{\nyn}_{23}$, it follows from \cref{ineq:div} that
			\begin{align*}
				I^{\nyn}_{23}\lesssim&(\|(1,\partial_Z){\de}\|_{L^\infty H^3}\|\nabla_{X,Z}\tilde{\nabla} {\de}_{\neq}\|_{L^2H^3}+\|\nabla_{X,Z}{\de}_{\neq}\|_{L^2 H^3}\|(1,\partial_Z)\nabla {\de}_0\|_{L^\infty H^3}\\
				&+\|\partial_X{\de}_{\neq}\|_{L^2 H^3}(\nu\|\tilde{\Delta} V^1\|_{L^\infty H^3},(\nu+\nu')\|\partial_XD_{\neq}\|_{L^\infty H^3})\\
				&+\|{\de}_0\|_{L^\infty H^3}(\nu\|\partial_X\tilde{\Delta} V^1_{\neq}\|_{L^2 H^3},(\nu+\nu')\|\partial_{XX}D_{\neq}\|_{L^2 H^3})\\
				&+\|\partial_Z{\de}_{\neq}\|_{L^2 H^3}(\nu\|\tilde{\Delta} V^2\|_{L^\infty H^3},(\nu+\nu')\|\tilde{\partial}_YD\|_{L^\infty H^3})\\
				&+\|{\de}_0\|_{L^\infty H^3}(\nu\|\partial_Z\tilde{\Delta} V^2_{\neq}\|_{L^2 H^3},(\nu+\nu')\|\tilde{\partial}_Y\partial_ZD_{\neq}\|_{L^2 H^3})\\
				&+\|\partial_Z{\de}_{\neq}\|_{L^2 H^3}(\nu\|\tilde{\Delta} V^3\|_{L^\infty H^3},(\nu+\nu')\|\partial_ZD\|_{L^\infty H^3})\\
				&+\|{\de}_0\|_{L^\infty H^3}(\nu\|\partial_Z\tilde{\Delta} V^3_{\neq}\|_{L^2 H^3},(\nu+\nu')\|\partial_{ZZ}D_{\neq}\|_{L^2 H^3}))\|\tilde{\nabla}\partial_ZD\|_{L^2 H^3}\\
				&+(\|(1,\partial_Z)p^{\frac{1}{2}}{\de}_0\|_{L^2H^3}+\nu\|(1,\partial_Z)\Delta(V^2_0,V^3_0)\|_{L^2H^3}+(\nu+\nu')\|(1,\partial_Z)\nabla D_0\|_{L^2H^3})\\
				&\times\|(1,\partial_Z){\de}_0\|_{L^\infty H^3}\|\nabla\partial_ZD_0\|_{L^2H^3}\\
				\lesssim&(\nu^{-1/6}\varepsilon\nu^{-2/3}\varepsilon+\nu^{-1/3}\varepsilon\nu^{-1/2}\varepsilon+\nu^{-1/3}\varepsilon(\nu\nu^{-1}\varepsilon+(\nu+\nu')\nu^{-1/2}\varepsilon)+\varepsilon\bar{\nu}\nu^{-1}\varepsilon+\nu^{-1/3}\varepsilon\bar{\nu}\nu^{-5/6}\varepsilon\\
				&+\varepsilon\bar{\nu}\nu^{-1}\varepsilon+\nu^{-1/3}\varepsilon(\nu\nu^{-2/3}\varepsilon+(\nu+\nu')\nu^{-1/2}\varepsilon)+\varepsilon\bar{\nu}\nu^{-1}\varepsilon)\nu^{-1}\varepsilon+(\nu^{-1}\varepsilon+\bar{\nu}\nu^{-1}\varepsilon)\nu^{-1}\varepsilon^2\\
				\lesssim&\nu^{-1}\varepsilon(\nu^{-1/2}\varepsilon)^2=\nu^{-3/2}\varepsilon^3.
			\end{align*}
			The treatment of $I^{\nyn}_3$, $I^{\nyn}_4$, and $I^{\nyn}_5$ is analogous to the previous cases and is therefore omitted. Combining all estimates for $I_i^{(4)}$ ($i=1,2,\ldots,5$), $LS^{(4)}$, and $LE^{(4)}$, we obtain Lemma \ref{Lem-E4}.
		\end{proof}
		
		\subsection{$H^3$ estimate on $(p{\de},p^{\frac{1}{2}}D)$}\label{sec:48}
		In this subsection, we improve the estimate \cref{bs-pn} for $j=0$. Recall the energy
		\begin{align*}
			E^{\npn}(t)=\frac{1}{2}\|(p{\de},p^{\frac{1}{2}}D)\|^2_{H^3}-c_1\bar{\nu}^{\frac{1}{3}} Re\langle p^{\frac{1}{2}}{\de}_{\neq},p^{\frac{1}{2}}D_{\neq}\rangle_{H^3}-c_1\bar{\nu}  Re\langle p^{\frac{1}{2}}{\de}_0,p^{\frac{1}{2}}D_0\rangle_{H^3}.
		\end{align*}
		Then we have the following estimate.
		\begin{Lem}\label{Lem-E5}
			Under the same assumptions as \cref{MT} and  Proposition \ref{prop-bs}, it holds that,
			\begin{align}
				\sup_{t\in[0,T]}E^{(5)}(t)&
				+c_1\bar{\nu}^{\frac{1}{3}}\|p{\de}_{\neq}\|^2_{L^2H^3}+c_1\bar{\nu}^{\frac{1}{3}}\|p{\de}_{\neq}\|^2_{L^2H^3}+c_1\bar{\nu}\|p{\de}_0\|^2_{L^2H^3}\lesssim E^{\npn}(0)+\nu^{-3/2}\eps^3 .
			\end{align}
		\end{Lem}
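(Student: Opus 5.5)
We follow the scheme of Lemma \ref{Lem-E4}, now one derivative higher. First we write the system for $(p{\de},p^{\frac12}D)$ by applying $p^{\frac12}$ to the $(p^{\frac12}{\de},D)$ system:
\begin{align*}
&\partial_t(p{\de})-\frac{\partial_tp}{p}(p{\de})+p^{\frac12}(p^{\frac12}D)=p\nlt_{\de},\\
&\partial_t(p^{\frac12}D)-\frac{\partial_tp}{2p}(p^{\frac12}D)-p^{\frac12}(p{\de})-2\frac{\partial_X}{p^{\frac12}}(W^2-\partial_X{\de}+\nu\partial_XD)+\bar\nu p(p^{\frac12}D)=p^{\frac12}\nlt_D .
\end{align*}
We then differentiate $E^{(5)}$ in time. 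The skew terms $\pm p^{\frac12}$ cancel in the $H^3$ inner product. The dissipation $\bar\nu\|\tilde\nabla p^{\frac12}D\|^2$ comes from the $D$-equation. The cross terms $-c_1\bar\nu^{1/3}\mathrm{Re}\la p^{\frac12}{\de}_{\neq},p^{\frac12}D_{\neq}\ra$ and $-c_1\bar\nu\mathrm{Re}\la p^{\frac12}{\de}_0,p^{\frac12}D_0\ra$ produce the damping $c_1\bar\nu^{1/3}\|p{\de}_{\neq}\|^2_{L^2H^3}$ and $c_1\bar\nu\|p{\de}_0\|^2_{L^2H^3}$. As in Lemma \ref{Lem-E4}, we obtain
\begin{align*}
E^{(5)}(T)+\bar\nu\|\tilde\nabla p^{\frac12}D\|^2_{L^2H^3}+c_1\bar\nu^{\frac13}\|p{\de}_{\neq}\|^2_{L^2H^3}+c_1\bar\nu\|p{\de}_0\|^2_{L^2H^3}\le E^{(5)}(0)+LS^{(5)}+LE^{(5)}+\sum_i I_i^{(5)}.
\end{align*}

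The linear terms are handled first. The growth term $\frac{\partial_tp}{p}\|p{\de}_{\neq}\|^2$ is not sign-definite. We treat it as in the $j$-hierarchy remark after Proposition \ref{prop-bs}: $|\partial_tp/p|\lesssim |k|p^{-1/2}$, so it is bounded by $\|p^{\frac12}\nabla_{X,Z}p^{\frac12}{\de}_{\neq}\|_{L^2H^3}\|p{\de}_{\neq}\|_{L^2H^3}$. Using the bootstrap bounds \cref{bs-xyn}--\cref{bs-zyn} at level $j=1$ (which is why all $j$ are estimated simultaneously) together with \cref{bs-pn}, this is of size $C_0\nu^{-1/2}\nu^{-1/3}\cdot C_0^2\nu^{-5/6}\nu^{-1/6}\eps^2$. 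This is small compared with $(C_0^2\nu^{-5/6}\eps)^2$ once $C_0$ is large. The coupling term $\partial_Xp^{-\frac12}(W^2-\partial_X{\de}+\nu\partial_XD)$ is bounded by the $L^2_t$ norms of $m^{\frac14}W^2_{\neq}$ and $\nabla_{X,Z}{\de}_{\neq}$ from \cref{bs-w2} and \cref{bs-xn}, times $\|p^{\frac12}D_{\neq}\|$. The remaining linear cross terms are absorbed by $\bar\nu^{1/3}$ or $1/C_0$ factors exactly as in $LE^{(4)}$.

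For the nonlinear terms we split $I_1^{(5)}=\int\la p\nlt_{\de},p{\de}\ra_{H^3}$ into transport and commutator pieces, using Proposition \ref{est:com-p} for $[p,V]$ and $[p^{\frac12},V]$ and Lemma \ref{lemma:commutator} for $[\la\nabla\ra^3,V]$. The top-order transport $\la V\cdot\tilde\nabla\la\nabla\ra^3p{\de},\la\nabla\ra^3p{\de}\ra$ becomes $\frac12\la D,|\la\nabla\ra^3p{\de}|^2\ra$ after integration by parts, which is bounded by $\|D\|_{L^2L^\infty}\|p{\de}\|_{L^\infty H^3}\|p{\de}\|_{L^2H^3}$. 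Similarly, $I_2^{(5)}$ is split into $V\cdot\tilde\nabla D$, $\tilde\partial_iV^j\tilde\partial_jV^i$ and the $F,G$ terms, using the divergence structure \cref{ineq:div} to move one $p^{\frac12}$ onto $\tilde\nabla p^{\frac12}D$, which is controlled by dissipation. Each term should come out as $\lesssim\nu^{-3/2}\eps\cdot(\nu^{-5/6}\eps)^2$ or better, that is, $\lesssim\nu^{-3/2}\eps^3$ after normalization by the bootstrap size.

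\textbf{Main obstacle.} The hardest step is the lift-up interaction $V^1_0\partial_X$ in $p\nlt_{\de}$. The commutator $[p,V^1_0]\partial_X{\de}_{\neq}\sim\nabla V^1_0\, p^{\frac12}\partial_X{\de}_{\neq}$ involves $\|\nabla V^1_0\|_{L^\infty H^4}\lesssim\nu^{-1}\eps$ (Proposition \ref{est:main}). It has no dissipation on $p{\de}$ beyond $\bar\nu^{1/3}$, and the derivative count is borderline at $H^3$. We would pair it as $\|\nabla V^1_0\|_{L^\infty}\|\partial_Xp^{\frac12}{\de}_{\neq}\|_{L^2H^3}\|p{\de}_{\neq}\|_{L^2H^3}$, using the $\nu^{1/6}$-weighted $L^2$ bound in \cref{bs-xyn}. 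If the resulting power of $\nu$ is insufficient, we would fall back on the $(j=1)$-level norms with the extra $\nu^{-1/3}$ from $B_j$, or integrate by parts in time through the ${\de}$-equation as done for the lift-up term in the introduction.
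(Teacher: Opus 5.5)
Your overall scheme is the paper's: the same energy identity for $E^{(5)}$, bootstrap control of $LS^{(5)}$ and $LE^{(5)}$, and a transport/commutator splitting of $I^{(5)}_1$ and $I^{(5)}_2$. However, the step that controls the non-sign-definite growth term fails as written.

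\textbf{The growth term.} From $|\partial_tp|\le 2|k|p^{\frac12}$ one gets $|\la \frac{\partial_tp}{p}p\de_{\neq},p\de_{\neq}\ra_{H^3}|\lesssim\|\partial_Xp^{\frac12}\de_{\neq}\|_{H^3}\|p\de_{\neq}\|_{H^3}$. Your bound by $\|p^{\frac12}\nabla_{X,Z}p^{\frac12}\de_{\neq}\|_{H^3}$ carries a spurious extra $p^{\frac12}$, and this pushes you to the $j=1$ level. That level only controls $H^{2}$, not $H^3$, and it costs $\nu^{-1/3}$. Even with your own arithmetic, $C_0^3\nu^{-11/6}\eps^2$ exceeds the bootstrap size $(C_0^2\nu^{-5/6}\eps)^2=C_0^4\nu^{-5/3}\eps^2$ by the factor $\nu^{-1/6}/C_0$. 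No $\nu$-independent choice of $C_0$ makes this small. If the $\nu^{1/6}$ weight in \cref{bs-xyn} is counted correctly, the loss is in fact of order $B_1\nu^{-1/3}/C_0$.

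The correct route, which is the paper's, stays at $j=0$. By \cref{bs-xyn}, $\|\partial_Xp^{\frac12}\de_{\neq}\|_{L^2H^3}\le 10C_0\nu^{-2/3}\eps$, and by \cref{bs-pn}, $\|p\de_{\neq}\|_{L^2H^3}\le10C_0^2\nu^{-1}\eps$. Together these give $\lesssim C_0^{-1}(C_0^2\nu^{-5/6}\eps)^2$.

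\textbf{The missing $D$ growth term.} Your $D$-equation has the wrong coefficient. Applying $p^{\frac12}$ to the $D$-equation of the $(p^{\frac12}\de,D)$ system, whose linear term is $-\frac{\partial_tp}{p}D$, produces $-\frac32\frac{\partial_tp}{p}(p^{\frac12}D)$, not $-\frac{\partial_tp}{2p}(p^{\frac12}D)$. So there is a second growth term $\frac32\la\partial_tp\,D_{\neq},D_{\neq}\ra_{H^3}$ that you never estimate. The paper bounds it by $\|\partial_XD_{\neq}\|_{L^2H^3}\|p^{\frac12}D_{\neq}\|_{L^2H^3}$.

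\textbf{Where the $j$-hierarchy is actually needed.} It is needed in the nonlinear part, which your sketch passes over. In $[\la\nabla\ra^3,V]\cdot\tilde\nabla(p\de)$, Lemma \ref{lemma:commutator} produces $\|\la\nabla\ra^2\tilde\partial_Y(p\de)\|_{L^2}$. This is not bounded by $\|p\de\|_{H^3}$, because $\tilde\partial_Y=\partial_Y-t\partial_X$. The paper controls it through $\|p^{\frac32}\de\|_{L^2H^2}$, which is the $j=1$ level of \cref{bs-pn}.

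\textbf{The lift-up commutator is not the obstacle.} It closes directly with your own pairing: $\nu^{-1}\eps\cdot C_0\nu^{-2/3}\eps\cdot C_0^2\nu^{-1}\eps=C_0^3\nu^{-8/3}\eps^3$. This is $\nu^{1/2}/C_0$ times the admissible size $\nu^{-3/2}\eps\,(C_0^2\nu^{-5/6}\eps)^2$. No integration by parts in time is needed here; that device is only required in the $W^2$ and $W^1$ estimates.

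\textbf{Undifferentiated $V^1_{00}$ in $I^{(5)}_2$.} If you move $p^{\frac12}$ onto $pD$ in $I^{(5)}_2$, the transport piece $V^1_{00}\partial_XD_{\neq}$ appears with $V^1_{00}$ undifferentiated. Since $V^1_{00}$ has no uniform $L^2$ bound, this requires the $L^\infty$ bound of Theorem \ref{2026-3-21-2}, which is exactly how the paper treats $I^{(5)}_{21}$. Alternatively, you can keep the transport/commutator structure there, as you do for $I^{(5)}_1$.
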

		\begin{proof}

			The system of $(p{\de},p^{\frac{1}{2}}D)$ reads
			\begin{align*}
				\left\{
				\begin{aligned}
					&\partial_t (p{\de})-\frac{\partial_t p}{p}(p{\de})+pD=p\nlt_{{\de}},\\
					&\partial_t (p^{\frac{1}{2}}D)-\frac{3}{2}\frac{\partial_tp}{p}(p^{\frac{1}{2}}D)-p^{\frac{1}{2}}(p{\de})-2\partial_{X}p^{-\frac{1}{2}}(W^2-\partial_X{\de}+\nu\partial_XD)+\bar{\nu} p(p^{\frac{1}{2}}D)=p^{\frac{1}{2}}\nlt_{D},
				\end{aligned} \right.
			\end{align*}
			where the nonlinear terms $\nlt_{\de},\nlt_D$ are given in \cref{t-ND}.
			Then the energy estimate gives
			\begin{align*}
				&E^{\npn}(T)+\bar{\nu}\|pD\|^2_{L^2H^3}+c_1\bar{\nu}^{\frac{1}{3}}\|p{\de}_{\neq}\|^2_{L^2H^3}+c_1\bar{\nu}\|p{\de}_0\|^2_{L^2H^3}\\
				=&E^{\npn}(0)+LS^{\npn}+LE^{\npn}+I^{\npn}_1+I^{\npn}_2+I^{\npn}_3+I^{\npn}_4,
			\end{align*}
			where we denote
			\begin{align*}
				LS^{\npn}=&\int_{0}^{T}\left\langle\frac{3}{2}\partial_t p D_{\neq}, D_{\neq}\right\rangle_{H^3}+\left\langle\partial_t p {\de}_{\neq}, p{\de}_{\neq}\right\rangle_{H^3} dt,\\
				LE^{\npn}=&2\int_{0}^{T}\left\langle  \partial_Xp^{-\frac{1}{2}}(W^2_{\neq}-\partial_X{\de}+\nu\partial_XD)_{\neq}, p^{\frac{1}{2}}D_{\neq}\right\rangle_{H^3}\\
				&+c_1\bar{\nu}^{\frac{1}{3}}\| p^{\frac{1}{2}} D_{\neq}\|^2_{H^3}-c_1\bar{\nu}^{\frac{1}{3}}\left\langle p{\de}_{\neq},2\frac{\partial_tp}{p}D_{\neq}+ 2 \partial_Xp^{-1}(W^2_{\neq}-\partial_X{\de}+\nu\partial_XD)_{\neq}-\bar{\nu} pD_{\neq}\right\rangle_{H^3}\\
				&+c_1\bar{\nu}\|p^{\frac{1}{2}}D_0\|^2_{H^3}+c_1\bar{\nu}\left\langle p{\de}_0,\bar{\nu} pD_0\right\rangle_{H^3} dt,\\
				I^{\npn}_1=&\int_{0}^{T}\left\langle p\nlt_{{\de}},p{\de}\right\rangle_{H^3} dt,\quad
				I^{\npn}_2=\int_{0}^{T}\left\langle p^{\frac{1}{2}}\nlt_{D},p^{\frac{1}{2}}D\right\rangle_{H^3} dt,\\
				I^{\npn}_3=&-c_1\bar{\nu}^{\frac{1}{3}}\int_{0}^{T}\left\langle p^{\frac{1}{2}} {\de}_{\neq}, p^{\frac{1}{2}}(\nlt_{D})_{\neq} \right\rangle_{H^3}+\left\langle p^{\frac{1}{2}}(\nlt_{{\de}})_{\neq},p^{\frac{1}{2}}D_{\neq}\right\rangle_{H^3} dt,\\
				I^{\npn}_4=&-c_1\bar{\nu}\int_{0}^{T}\left\langle p^{\frac{1}{2}} {\de}_0, p^{\frac{1}{2}}(\nlt_{D})_0\right\rangle_{H^3}+\left\langle p^{\frac{1}{2}}(\nlt_{{\de}})_0,p^{\frac{1}{2}}D_0\right\rangle_{H^3} dt,
			\end{align*}
			with $\nlt_{\de},\nlt_D$ given in \cref{t-ND}.
			For $LS^{\npn}$ and $LE^{\npn}$, the bootstrap argument yields
			\begin{align*}
				LS^{\npn}\leq&\frac{3}{2}\|\partial_XD_{\neq}\|_{L^2H^3}\|p^{\frac{1}{2}}D_{\neq}\|_{L^2H^3}+\|\partial_Xp^{\frac{1}{2}}{\de}_{\neq}\|_{L^2H^3}\|p{\de}_{\neq}\|_{L^2H^3}\lesssim\frac{1}{C_0}(C_0^2\nu^{-5/6}\varepsilon)^2,\\
				LE^{\npn}\lesssim&\|m^{\frac{1}{4}}(W^2_{\neq},\nabla_{X,Z}{\de}_{\neq},\partial_Xp^{-\frac{1}{2}}D_{\neq})\|_{L^2H^3}\|(p{\de}_{\neq},p^{\frac{1}{2}}D_{\neq})\|_{L^2H^3}+\nu\|\partial_XD_{\neq}\|_{L^2H^3}\|p^{\frac{1}{2}}D_{\neq}\|_{L^2H^3}\\
				&+c_1\bar{\nu}^{\frac{4}{3}}\|p{\de}_{\neq}\|_{L^2H^3}\|pD_{\neq}\|_{L^2H^3}+c_1\bar{\nu}^{\frac{1}{3}}\|p^{\frac{1}{2}}D_{\neq}\|^2_{L^2H^3}+c_1\bar{\nu}\|p^{\frac{1}{2}}D_0\|^2_{L^2H^3}\\
				&+c_1\bar{\nu}^2\|p{\de}_0\|_{L^2H^3}\|pD_0\|_{L^2H^3}\\
				\lesssim&\left(\bar{\nu}^{\frac{2}{3}}+\frac{1}{C_0}\right)(C_0^2\nu^{-5/6}\varepsilon)^2.
			\end{align*}
			We divide $I^{\npn}_1$ into
			\begin{align*}
				I^{\npn}_1=&\int_{0}^{T}\left\langle p(V\cdot \tilde{\nabla} {\de}+{\de}D),p{\de}\right\rangle_{H^3} dt\\
				=&\int_{0}^{T}\left\langle [\langle\nabla\rangle^3,V]\cdot \tilde{\nabla} p{\de},\langle\nabla\rangle^3p{\de}\right\rangle_{L^2} dt-\frac{1}{2}\int_{0}^{T}\left\langle D\langle\nabla\rangle^3 p{\de},\langle\nabla\rangle^3p{\de}\right\rangle_{L^2} dt\\
				&+\int_{0}^{T}\left\langle (pV)\cdot \tilde{\nabla} {\de},p{\de}\right\rangle_{H^3} dt+\sum_{i=1}^{3}\int_{0}^{T}\left\langle \tilde{\partial}_iV\cdot \tilde{\nabla} \tilde{\partial}_i{\de},\partial_Zp^{\frac{1}{2}}{\de}\right\rangle_{H^3} dt+\int_{0}^{T}\left\langle p({\de}D),p{\de}\right\rangle_{H^3} dt	=\sum_{i=1}^{5}I^{\npn}_{1i}.
			\end{align*}
			For $I^{\npn}_{11}$, Lemma \ref{lemma:commutator} implies that
			\begin{align*}
				I^{\npn}_{11}=&\int_{0}^{T}\left\langle [\langle\nabla\rangle^3,V^i] \tilde{\partial}_i p{\de},\langle\nabla\rangle^3p{\de}\right\rangle_{L^2} dt\\
				\lesssim&\|\nabla V^1_0\|_{L^\infty H^2}\|\partial_Xp{\de}\|_{L^2H^2}\|p{\de}_{\neq}\|_{L^2H^3}+\|\nabla V^1_{\neq}\|_{L^2 H^2}\|\partial_Xp{\de}\|_{L^2H^2}\|p{\de}_{\neq}\|_{L^\infty H^3}\\
				&+(\|V^2\|_{L^\infty H^3}{\|p^{\frac{3}{2}}{\de}\|_{L^2H^2}}+\|V^3\|_{L^\infty H^3}\|\partial_Zp{\de}\|_{L^2H^2})\|p{\de}\|_{L^2H^3}\\
				\lesssim&(\nu^{-1}\varepsilon)^3+\nu^{-1/6}\varepsilon\nu^{-1}\varepsilon\nu^{-5/6}\varepsilon+\nu^{-1/6}\varepsilon\nu^{-5/3}\varepsilon\nu^{-4/3}\varepsilon+\varepsilon(\nu^{-4/3}\varepsilon)^2\lesssim\nu^{-3/2}\varepsilon(\nu^{-5/6}\varepsilon)^2.
			\end{align*}
			where we used the bootstrap assumption $\|p^{\frac{3}{2}}{\de}\|_{L^2H^2}\lesssim\nu^{-5/3}\eps.$ It also follows from the bootstrap assumption that
			\begin{align*}
				\sum_{i=2}^{5}I^{\npn}_{1i}\lesssim&\|D\|_{L^2H^3}\|p{\de}\|_{L^2H^3}\|p{\de}\|_{L^\infty H^3}\\
				&+\|pV^1_0\|_{L^\infty H^3}\|\partial_X{\de}_{\neq}\|_{L^2H^3}\|p{\de}_{\neq}\|_{L^2 H^3}+\|pV^1_{\neq}\|_{L^2H^3}\|\partial_X{\de}_{\neq}\|_{L^2H^3}\|p{\de}\|_{L^\infty H^3}\\
				&+(\|pV^2\|_{L^2H^3}\|p^{\frac{1}{2}}{\de}\|_{L^\infty H^3}+\|pV^3\|_{L^2H^3}\|\partial_Z{\de}\|_{L^\infty H^3})\|p{\de}\|_{L^2H^3}\\
				&+\|p^{\frac{1}{2}}V^1_0\|_{L^\infty H^3}\|\partial_Xp^{\frac{1}{2}}{\de}_{\neq}\|_{L^2H^3}\|p{\de}_{\neq}\|_{L^2 H^3}+\|p^{\frac{1}{2}}V^1_{\neq}\|_{L^2H^3}\|\partial_Xp^{\frac{1}{2}}{\de}_{\neq}\|_{L^2H^3}\|p{\de}\|_{L^\infty H^3}\\
				&+(\|p^{\frac{1}{2}}V^2\|_{L^2H^3}\|p{\de}\|_{L^\infty H^3}+\|p^{\frac{1}{2}}V^3\|_{L^2H^3}\|\partial_Zp^{\frac{1}{2}}{\de}\|_{L^\infty H^3})\|p{\de}\|_{L^2H^3}\\
				&+(\|D\|_{L^2H^3}\|p{\de}\|_{L^\infty H^3}+\|pD\|_{L^2H^3}\|{\de}\|_{L^\infty H^3})\|p{\de}\|_{L^2 H^3}\\
				\lesssim&\nu^{-2/3}\varepsilon\nu^{-1/2}(\nu^{-5/6}\varepsilon)^2+\nu^{-1}\nu^{-1/3}\varepsilon\nu^{-1}\varepsilon+\nu^{-5/6}\varepsilon\nu^{-1/3}\varepsilon\nu^{-5/6}\varepsilon\\
				&+(\nu^{-1}\varepsilon\nu^{-1/2}\varepsilon+\nu^{-5/6}\varepsilon\nu^{-1/6}\varepsilon)\nu^{-4/3}\varepsilon+\nu^{-1}\nu^{-2/3}\varepsilon\nu^{-1}\varepsilon+\nu^{-1/2}\varepsilon\nu^{-2/3}\varepsilon\nu^{-5/6}\varepsilon\\
				&+(\nu^{-2/3}\varepsilon\nu^{-5/6}\varepsilon+\nu^{-1/2}\varepsilon\nu^{-1/2}\varepsilon)\nu^{-4/3}\varepsilon+(\nu^{-2/3}\varepsilon\nu^{-5/6}\varepsilon+\nu^{-1/2}\nu^{-5/6}\varepsilon\nu^{-1/6}\varepsilon)\nu^{-4/3}\varepsilon\\
				\lesssim&\nu^{-7/6}\varepsilon(\nu^{-5/6}\varepsilon)^2.
			\end{align*}
			The estimates of $I^{\npn}_2$ are similar to the above. In fact, we divide $I^{\npn}_2$ into
			\begin{align*}
				I^{\npn}_2=\int_{0}^{T}\left \langle V\cdot\tilde{\nabla} D+\tilde{\partial}_iV^j\tilde{\partial}_jV^i+\dl(F({\de})\tilde{\nabla}{\de}+G({\de})(\nu\tilde{\Delta} 	V+(\nu+\nu')\tilde{\nabla}D)),pD\right\rangle_{H^3} dt=\sum_{i=1}^{3}I^{\npn}_{2i},
			\end{align*}
			\cref{2026-3-21-2} and  Proposition \ref{est:main} imply 
			\begin{align*}
				I^{\npn}_{21}=&\int_{0}^{T}\left\langle  V\cdot\tilde{\nabla} D,pD\right\rangle_{H^3} dt\\
				\lesssim&{(\|V^1\|_{L^\infty L^\infty}+\|\tilde{\nabla}V^1\|_{L^\infty H^2})}\|\partial_XD_{\neq}\|_{L^2H^3}\|pD\|_{L^2H^3}
				+\|(V^2,V^3)\|_{L^\infty H^3}\|\tilde{\nabla} D\|_{L^2H^3}\|p D\|_{L^2H^3}\\
				\lesssim&\nu^{-1}\varepsilon\nu^{-2/3}\varepsilon\nu^{-4/3}\varepsilon+\nu^{-1/6}\varepsilon\nu^{-1}\varepsilon\nu^{-4/3}\varepsilon\lesssim\nu^{-4/3}\varepsilon(\nu^{-5/6}\varepsilon)^2,
			\end{align*}
			and 
			\begin{align*}
				I^{\npn}_{22}=	&\int_{0}^{T}\left\langle  \tilde{\partial}_iV^j\tilde{\partial}_jV^i,pD\right\rangle_{H^3} dt\\
				\lesssim&(\|\partial_XV^1_{\neq}\|^2_{L^\infty H^3}\|\partial_XV^1_{\neq}\|^2_{L^2 H^3}+\|\tilde{\partial}_YV^1\|_{L^\infty H^3}\|\partial_XV^2_{\neq}\|_{L^2H^3}+\|\partial_ZV^1\|_{L^\infty H^3}\|\partial_XV^3_{\neq}\|_{L^2H^3}\\
				&+\|\tilde{\partial}_YV^2\|_{L^\infty H^3}\|\tilde{\partial}_YV^2_{\neq}\|_{L^2H^3}+\|\tilde{\partial}_YV^3\|_{L^\infty H^3}\|\partial_ZV^2_{\neq}\|_{L^2H^3}+\|\tilde{\partial}_YV^3_{\neq}\|_{L^2 H^3}\|\partial_ZV^2_0\|_{L^\infty H^3}\\
				&+\|\partial_ZV^3\|_{L^\infty H^3}\|\partial_ZV^3_{\neq}\|_{L^2H^3})\|pD\|_{L^2 H^3}\\
				&+(\|\partial_YV^2_0\|_{L^2H^3}\|\partial_YV^2_0\|_{L^\infty H^3}+\|\partial_YV^2_0\|_{L^2H^3}\|\partial_{Z}V^3_0\|_{L^\infty H^3}+\|\partial_{Z}V^3_0\|_{L^2H^3}\|\partial_{Z}V^3_0\|_{L^\infty H^3})\|pD_0\|_{L^2 H^3}\\
				\lesssim&(\varepsilon\nu^{-1/6}\varepsilon+\nu^{-1}\varepsilon\nu^{-1/3}\varepsilon+\nu^{-1}\varepsilon\nu^{-1/6}\varepsilon+\nu^{-1/2}\varepsilon\nu^{-2/3}\varepsilon+(\nu^{-1/3}\varepsilon)^2+\nu^{-1/2}\varepsilon^2+\varepsilon\nu^{-1/6}\varepsilon+(\nu^{-1/2}\varepsilon)^2)\nu^{-4/3}\varepsilon\\
				\lesssim&\nu^{-2/3}\varepsilon(\nu^{-1/2}\varepsilon)^2.
			\end{align*}
			For $I^{\npn}_{23}$, using \cref{ineq:div}, we have
			\begin{align*}
				I^{\npn}_{23}\lesssim&(\|p^{\frac{1}{2}}{\de}\|_{L^\infty H^3}\|p^{\frac{1}{2}} {\de}\|_{L^2H^3}+\|{\de}\|_{L^\infty H^3}\|p {\de}\|_{L^2 H^3}+\|{\de}\|_{L^\infty H^3}\bar{\nu}\|pD\|_{L^2H^3}\\
				&+\|\partial_X{\de}_{\neq}\|_{L^2 H^3}(\nu\|\tilde{\Delta} V^1\|_{L^\infty H^3}+(\nu+\nu')\|\partial_XD_{\neq}\|_{L^\infty H^3})\\
				&+\|\partial_Y{\de}\|_{L^2 H^3}(\nu\|\tilde{\Delta} V^2\|_{L^\infty H^3}+(\nu+\nu')\|\tilde{\partial}_YD\|_{L^\infty H^3})\\
				&+\|\partial_Z{\de}\|_{L^2 H^3}(\nu\|\tilde{\Delta} V^3\|_{L^\infty H^3}+(\nu+\nu')\|\partial_ZD\|_{L^\infty H^3}))\|pD\|_{L^2H^3}\\
				\lesssim&(\nu^{-1/2}\varepsilon\nu^{-1}\varepsilon+\nu^{-1/6}\varepsilon\nu^{-4/3}\varepsilon+\nu^{-1/6}\varepsilon\bar{\nu}\nu^{-4/3}\varepsilon+\nu^{-1/3}\varepsilon(\nu\nu^{-1}\varepsilon+(\nu+\nu')\nu^{-1/2}\varepsilon)\\
				&+\nu^{-1/2}\varepsilon\bar{\nu}\nu^{-1}\varepsilon+\nu^{-1/2}\varepsilon\bar{\nu}\nu^{-2/3}\varepsilon)\nu^{-4/3}\varepsilon\\
				\lesssim&\nu^{-7/6}\varepsilon(\nu^{-5/6}\varepsilon)^2.
			\end{align*}
			The treatment of $I^{\npn}_3$ and $I^{\npn}_4$ is analogous to the above argument and is therefore omitted.
		\end{proof}
		
		\section{Energy estimates for the incompressible part at low regularity}
		In this section, we prove the incompressible part of  Proposition \ref{prop-bs} for $j=0$.
		\subsection{$H^3$ estimate on  $W^2$}
		In this subsection, we improve \cref{bs-w2} for $j=0$ by providing estimates for $\norm{W^2_{\neq}}_{H^3}$ and $\norm{W^2_{0\neq}}_{H^3}$ separately.
		\subsubsection{$H^3$ estimate on  $W^2_{\neq}$}\label{sec:511}
		\begin{Lem}\label{Lem-E6}
			Under the same assumptions as \cref{MT} and  Proposition \ref{prop-bs}, it holds that
			\begin{align}
				\sup_{0\leq t\leq T}&\|\ma W^2_{\neq}(t)\|^2_{H^3}+\nu\|\tilde{\nabla}\ma W^2_{\neq}\|^2_{L^2H^3}\nonumber\\
				&+\left\|\sqrt{-\frac{\partial_t M}{M}}\ma W^2_{\neq}\right\|^2_{L^2H^3}+\frac{1}{4}\left\|\sqrt{-\frac{\partial_t m}{m}}\ma W^2_{\neq}\right\|^2_{L^2H^3}\lesssim\|W^2_{\neq}(0)\|^2_{H^3}+\nu^{-\frac{3}{2}}\varepsilon^3
			\end{align}
		\end{Lem}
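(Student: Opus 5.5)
The plan is to run the standard multiplier energy estimate on \cref{equ-W2}, restricted to nonzero modes. We apply $\ma\langle\nabla\rangle^3$ to $W^2_{\neq}$ and take the $L^2$ inner product with $\ma W^2_{\neq}$. Differentiating the weights produces the good terms $\|\sqrt{-\partial_tM/M}\,\ma W^2_{\neq}\|^2$ and $\frac14\|\sqrt{-\partial_t m/m}\,\ma W^2_{\neq}\|^2$, since $m$ and $M$ are nonincreasing for $k\neq0$. The dissipation $\nu p$ gives $\nu\|\tilde\nabla\ma W^2_{\neq}\|^2_{H^3}$. The energy identity is therefore
\[
\tfrac12\|\ma W^2_{\neq}(T)\|_{H^3}^2+\nu\|\tilde\nabla \ma W^2_{\neq}\|_{L^2H^3}^2+(\text{multiplier terms})=\tfrac12\|\ma W^2_{\neq}(0)\|^2_{H^3}-\int_0^T\langle \ma\lt_{W^2},\ma W^2_{\neq}\rangle_{H^3}dt+\int_0^T\langle \ma(\nlt_{W^2})_{\neq},\ma W^2_{\neq}\rangle_{H^3}dt.
\]

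\textbf{Linear terms.} By \cref{l-O3}, every term in $\lt_{W^2}$ carries a factor $\nu$. The term $2\nu\partial_X^2p^{-1}W^2$ is bounded by $\nu\|\ma W^2_{\neq}\|^2$, since $k^2/p\lesssim1$, and is absorbed into dissipation; if needed we use that $M_2$ yields $\nu^{1/3}\|\ma W^2_{\neq}\|^2\lesssim \|\sqrt{-\partial_tM/M}\cdots\|^2+\nu\|\tilde\nabla\cdots\|^2$. The coupling terms $\nu\partial_X^2p^{-1}\partial_X R$, $\nu\frac{\partial_tp}{p}\partial_XD$ and $\nu(\nu+\nu')p\partial_XD$ are controlled by Cauchy–Schwarz against the compressible bootstrap bounds \cref{bs-xn,bs-xyn}. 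For example,
\[
\nu^2\|p^{1/2}\partial_X D_{\neq}\|_{L^2H^3}\|p^{1/2}W^2_{\neq}\|_{L^2H^3}\lesssim \nu^2\cdot\nu^{-1}\nu^{-1/2}\varepsilon\cdot\nu^{-1/2}\varepsilon,
\]
which is $O(\varepsilon^2)$ times small constants and hence acceptable.

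\textbf{Nonlinear terms.} We split $\nlt_{W^2}$ from \cref{t-W2} into transport, zero-mode-with-nonzero, nonzero-with-nonzero, and viscous/pressure pieces.

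For the transport piece $V\cdot\tilde\nabla W^2$, we integrate by parts and use the commutators of Proposition \ref{est:com} for $[m^{1/4},V^1_0]$ and $[M,V^1_0]$, together with Lemma \ref{lemma:commutator}. The dangerous part is $V^1_0\partial_X W^2_{\neq}$. The commutator $|m^{1/4}(\eta)-m^{1/4}(\eta')||k|$ from \cref{est:m4} absorbs the $\partial_X$, so the term is bounded by $\|\nabla V^1_0\|_{L^\infty H^4}\|\ma W^2_{\neq}\|_{L^2H^3}^2$. By Proposition \ref{est:main} this gives $\nu^{-1}\varepsilon\cdot\nu^{-1/3}\varepsilon^2$, which is acceptable once compared with $\nu^{-3/2}\varepsilon^3$. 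The terms with $V^2_0, V^3_0$ and $V_{\neq}$ follow the same pattern as $I^{(3)}_{1}$.

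The quadratic terms such as $\tilde\partial_Y V^i\partial_i D$, $Q^i\partial_i V^2$, $\partial_i V^2\tilde\partial_Y^2V^i$ and $\tilde\partial_Y(\partial_iV^j\partial_jV^i)$ are estimated as in $I^{(3)}_{22}$, by H\"older's inequality with one factor in $L^\infty H^3$ and one in $L^2H^3$. The bound uses $\|\ma W^2_{\neq}\|_{L^2H^3}\lesssim\nu^{-1/6}\varepsilon$ and the pairings listed in Proposition \ref{est:main}. The pressure/viscous pieces $-\partial_X V\cdot\tilde\nabla R-\partial_X(RD)+\nu(\cdots)$ are handled like $I^{(3)}_{23}+I^{(3)}_{24}$, using \cref{ineq:div}.

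\textbf{Main obstacle.} The hardest terms are those where the lift-up quantity $\partial_Y V^1_{0\neq}$ (of size $\nu^{-1}\varepsilon$) multiplies a $\tilde\partial_Y$ of a nonzero mode, e.g. $\tilde\partial_YV^1_{0\neq}\partial_X D_{\neq}$ and $Q^1_{0}\partial_XV^2_{\neq}$. Near the critical time, $\tilde\partial_Y$ is not compensated by decay. Here I would exploit the weight $m^{1/4}$: by \cref{def-p}, $\frac{k^2+l^2}{p}\le m$. The plan is to place $\sqrt{-\partial_t m/m}$ on both sides, which is comparable to $|k|/\sqrt p$ in the window $\eta/k<t<\eta/k+1000\nu^{-1/3}$, and to use the $L^2_t$ bound $\|\ma\partial_Xp^{-1/2}W^2_{\neq}\|_{L^2H^3}\lesssim\varepsilon$ from \cref{bs-w2}. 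This converts one $\tilde\partial_Y$ into a gain, giving $\nu^{-1}\varepsilon\cdot\nu^{-1/3}\varepsilon\cdot\nu^{-1/6}\varepsilon\lesssim\nu^{-3/2}\varepsilon^3$. If this balance fails, the fallback is to integrate by parts in time using the $R$ equation in \cref{sys-ND}, as described for $I$ in the introduction. The remainder then involves $V^1_{00}$, which is controlled via \cref{2026-3-21-2}.

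Collecting all terms yields the right side $\|W^2_{\neq}(0)\|^2_{H^3}+\nu^{-3/2}\varepsilon^3$.
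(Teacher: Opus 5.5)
There is a genuine gap at exactly the term you single out, $\langle \ma(\partial_YV^1_0\,\partial_XD_{\neq}),\ma W^2_{\neq}\rangle_{H^3}$ (the paper's $I^{(6)}_{21}$). Your primary mechanism does not apply to it, for three reasons.
\begin{itemize}
\item There is no $\tilde\partial_Y$ on the nonzero factor to convert.
\item $\partial_t m$ is supported only in a window of length $O(\nu^{-1/3})$ after the critical time; for $\eta/k>0$ it vanishes identically for $t<\eta/k$.
\item Even inside that window, the two $\sqrt{-\partial_t m/m}$ weights only produce the factor $-\partial_tm/m=2|k||\eta-kt|/p\le 1$. Writing $\partial_XD_{\neq}=p^{1/2}\,\partial_Xp^{-1/2}D_{\neq}$, however, costs a full, unbounded $p^{1/2}$.
\end{itemize}
The only $L^2_t$ control of the acoustic quantity $\partial_XD_{\neq}$ comes from dissipation, $\|m^{1/4}\partial_XD_{\neq}\|_{L^2H^3}\lesssim\nu^{-1/2}\varepsilon$ by \cref{bs-xn}. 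So H\"older gives $\nu^{-1}\varepsilon\cdot\nu^{-1/2}\varepsilon\cdot\nu^{-1/6}\varepsilon=\nu^{-5/3}\varepsilon^3=\varepsilon_0\nu^{-1/6}\varepsilon^2$. Moving $\partial_Xp^{-1/2}$ onto $W^2_{\neq}$ instead costs $\|\tilde\nabla D_{\neq}\|_{L^2H^3}\lesssim\nu^{-1}\varepsilon$ and gives $\nu^{-2}\varepsilon^3$. Neither closes at $\varepsilon=\varepsilon_0\nu^{3/2}$.

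The product $\nu^{-1}\cdot\nu^{-1/3}\cdot\nu^{-1/6}$ you quote is actually the direct H\"older bound for $Q^1_0\partial_XV^2_{\neq}$, using $\|\partial_XV^2_{\neq}\|_{L^2H^3}\lesssim\nu^{-1/3}\varepsilon$; that term is harmless. The terms where a $\tilde\partial_Y$ really lands on a nonzero mode, such as $\partial_ZV^1_0\,\partial_{XZ}\tilde\partial_Yp^{-1}D_{\neq}$, are handled by moving $\partial_Xp^{-1/2}$ onto $W^2$ and using $\|\ma\partial_Xp^{-1/2}W^2_{\neq}\|_{L^2H^3}\lesssim\varepsilon$. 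That bound comes from $M_1$, not from $\partial_tm$.

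Your fallback is the right idea and is the heart of the paper's proof (\cref{est:I21}); it must be the main argument, not a contingency. Substitute $\partial_XD_{\neq}=-\partial_t\partial_X\de_{\neq}-\partial_X(V\cdot\tilde\nabla\de+\de D)_{\neq}$ and integrate by parts in time. This exploits that $\partial_X\de_{\neq}$ is much smaller than $\partial_XD_{\neq}$: $\|m^{1/4}\partial_X\de_{\neq}\|_{L^\infty H^3}\lesssim\varepsilon$ and $\|\ma\partial_X\de_{\neq}\|_{L^2H^3}\lesssim\nu^{-1/6}\varepsilon$. You then have to control:
\begin{itemize}
\item the boundary term, of size $\nu^{-1}\varepsilon^3$;
\item the $\partial_t\partial_YV^1_0$ term, rewritten through its own equation;
\item the $\partial_t(\ma)$ term;
\item the nonlinear remainder of the $\de$-equation;
\item the $\partial_tW^2_{\neq}$ term, rewritten via \cref{equ-W2}.
\end{itemize}
In the last term, $V^1_0\partial_XW^2_{\neq}$ is paired with $\partial_YV^1_0\,\partial_X\de_{\neq}$ rather than with $W^2_{\neq}$. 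No antisymmetry is available, so you need $\|V^1_0\|_{L^\infty L^\infty}\lesssim\nu^{-1}\varepsilon$, i.e. the uniform bound on $V^1_{00}$ from \cref{2026-3-21-2}; this is the real reason that bound enters. In the same term, $\partial_YV^1_0\,\partial_XD_{\neq}$ reappears, but now quadratic in $\partial_YV^1_0$ and of size $\nu^{-8/3}\varepsilon^4$, which is acceptable.

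A minor point in the linear terms: $\|p^{1/2}\partial_XD_{\neq}\|_{L^2H^3}\lesssim\nu^{-1}\varepsilon$ by \cref{bs-xyn}, not $\nu^{-3/2}\varepsilon$. Your number yields $O(\varepsilon^2)$ with no small factor, which would not close the bootstrap. The correct bound gives $\nu^{1/2}\varepsilon^2$.
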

		\begin{proof}
			
			The energy estimate gives
			\begin{align*}
				&\|\ma W^2_{\neq}(T)\|^2_{H^3}+\nu\|\tilde{\nabla}\ma W^2_{\neq}\|^2_{L^2H^3}+\left\|\sqrt{-\frac{\partial_t M}{M}}\ma W^2_{\neq}\right\|^2_{L^2H^3}+\frac{1}{4}\left\|\sqrt{-\frac{\partial_t m}{m}}\ma W^2_{\neq}\right\|^2_{L^2H^3}\\
				=&\|W^2_{\neq}(0)\|^2_{H^3}+LE^{\nwii}+\sum_{i=1}^{6}I^{\nwii}_i,
			\end{align*}
			where the lower-order and error terms are given by
			\begin{align*}
				LE^{\nwii}=&-\int_{0}^{T}\left\langle \ma\left(2\nu\partial_X^2p^{-1}(W^2-\partial_X{\de}+\nu\partial_XD)+\nu\frac{\partial_tp}{p}\partial_XD-\nu(\nu+\nu')p\partial_XD\right)_{\neq},\ma W^2_{\neq}\right\rangle_{H^3} dt,\\
				I_1^{\nwii}=&-\int_{0}^{T}\left\langle \ma(V\cdot\tilde{\nabla}W^2)_{\neq},\ma W^2_{\neq}\right\rangle_{H^3} dt,\\
				I_2^{\nwii}=&\int_{0}^{T}\left\langle \ma\left(\sum_{i=1,3}(\tilde{\partial}_YV^i\partial_iD-Q^i\partial_iV^2)-\Omega^2\tilde{\partial}_YV^2\right)_{\neq},\ma W^2_{\neq}\right\rangle_{H^3} dt,\\
				I_3^{\nwii}=&\int_{0}^{T}\left\langle \ma\left(2\sum_{i=1,3}(\partial_iV^2\tilde{\p}_{Y}^{2} V^i-\tilde{\partial}_iV^j\tilde{\partial}_{ij}V^2)\right)_{\neq},\ma W^2_{\neq}\right\rangle_{H^3} dt,\\
				I_4^{\nwii}=&\int_{0}^{T}\left\langle \ma\left(\sum_{i,j=1,3}\tilde{\partial}_Y(\tilde{\partial}_iV^j\tilde{\partial}_jV^i)\right)_{\neq},\ma W^2_{\neq}\right\rangle_{H^3} dt,\\
				I_5^{\nwii}=&\int_{0}^{T}\left\langle \ma\left(-\partial_XV\cdot\tilde{\nabla}{\de}-\partial_X({\de}D)+\nu\partial_XV\cdot\tilde{\nabla}D+\nu\partial_X(\tilde{\partial}_iV^j\tilde{\partial}_jV^i)\right)_{\neq},\ma W^2_{\neq}\right\rangle_{H^3} dt,\\
				I_6^{\nwii}=&\int_{0}^{T}\big\langle \ma\big((\tilde{\partial}_Y\tilde{\textrm{div}})\left(G({\de})(\nu\tilde{\Delta} V+(\nu+\nu')\tilde{\nabla} D)\right)-\tilde{\Delta}\left(G({\de})(\nu\tilde{\Delta} V^2+(\nu+\nu')\tilde{\partial}_Y D)\right)\\
				&+\nu\partial_X\tilde{\textrm{div}} \left(F({\de})\tilde{\nabla} {\de}+G({\de})(\nu\tilde{\Delta} V+(\nu+\nu')\tilde{\nabla} D)\right)\big),\ma W^2_{\neq}\big\rangle_{H^3} dt.
			\end{align*}
			For $LE^{\nwii}$, the bootstrap argument implies that
			\begin{align*}
				LE\lesssim& \nu\|\partial_Xp^{-\frac{1}{2}}m^{\frac{1}{4}}(W^2_{\neq},\partial_X{\de}_{\neq})\|^2_{L^2H^3}+\nu\|\partial_Xp^{-\frac{1}{2}}m^{\frac{1}{4}}W^2_{\neq}\|_{L^2H^3}\|m^{\frac{1}{4}}\partial_XD_{\neq}\|_{L^2H^3}\\
				&+\nu(\nu+\nu')\|p^{\frac{1}{2}}m^{\frac{1}{4}}W^2_{\neq}\|_{L^2H^3}\|p^{\frac{1}{2}}\partial_XD_{\neq}\|_{L^2H^3}\\
				\lesssim&\nu\varepsilon^2+\nu\varepsilon\nu^{-1/2}\varepsilon+\nu^2\nu^{-1/2}\varepsilon\nu^{-1}C_0\varepsilon\lesssim\nu^{1/2}C_0\varepsilon^2.
			\end{align*}
			Using a commutator argument similar to ${\de}_{\neq}$,	we divide $I^{\nwii}_1$ into 
			\begin{align*}
				I^{\nwii}_1=&\int_{0}^{T}\left\langle \ma (V\cdot \tilde{\nabla} W^2)_{\neq},\ma W^2_{\neq}\right\rangle_{H^3} dt,\\
				=&\int_{0}^{T}\left\langle \ma (V^1_0\partial_X W^2_{\neq}),\ma W^2_{\neq}\right\rangle_{H^3} dt+\sum_{i=2,3}\int_{0}^{T}\left\langle \ma (V^i_0\tilde{\partial}_i W^2_{\neq}),\ma W^2_{\neq}\right\rangle_{H^3} dt\\
				&+\int_{0}^{T}\left\langle \ma (V_{\neq}\cdot\tilde{\nabla} W^2),\ma W^2_{\neq}\right\rangle_{H^3} dt=\sum_{i=1}^{3}I^{\nwii}_{1i}.
			\end{align*}
			For $I^{\nwii}_{11}$, applying the commutator estimates \cref{est:m}, $\cref{est:m4}$, and integration by parts, we obtain
			\begin{align*}
				I^{\nwii}_{11}=&\int_{0}^{T}\left\langle [\langle\nabla\rangle^3\ma, V^1_0]\partial_X W^2_{\neq},\langle\nabla\rangle^3\ma W^2_{\neq}\right\rangle_{L^2} dt\\
				&+\int_{0}^{T}\left\langle  V^1_0\partial_X\langle\nabla\rangle^3 \ma W^2_{\neq},\langle\nabla\rangle^3\ma W^2_{\neq}\right\rangle_{L^2}dt\\
				\lesssim&(\|\partial_Z V^1_0\|_{L^\infty H^4}\|m^{\frac{1}{4}} W^2_{\neq}\|_{L^2H^3}+\|\nabla V^1_0\|_{L^\infty H^3}\|m^{\frac{1}{4}} W^2_{\neq}\|_{L^2H^3})\|\ma W^2_{\neq}\|_{L^2H^3}\\
				\lesssim&\nu^{-4/3}\varepsilon^3.
			\end{align*}
			For the other terms,  Proposition \ref{est:main} implies that
			\begin{align*}
				I^{\nwii}_{12}+I^{\nwii}_{13}\lesssim&\|(V^2_0,V^3_0)\|_{L^\infty H^3}\|\tilde{\nabla}W^2_{\neq}\|_{L^2H^3}\|\ma W^2_{\neq}\|_{L^2H^3}\\
				&+\|V_{\neq}\|_{L^\infty H^3}\|\tilde{\nabla} W^2\|_{L^2H^3}\|\ma W^2_{\neq}\|_{L^2H^3}\\
				\lesssim&\nu^{-1/6}\varepsilon\nu^{-2/3}\varepsilon\nu^{-1/6}\varepsilon\lesssim\nu^{-1}\varepsilon^3.		
			\end{align*}
			Next, we turn to the $I^{\nwii}_2$ term which includes one of the interactions between lift-up effect and compressibility. We first divide it into
			\begin{align*}
				I^{\nwii}_2=&\int_{0}^{T}\left\langle \ma(\tilde{\partial}_YV^1_0\partial_XD_{\neq}),\ma W^2_{\neq}\right\rangle_{H^3} dt\\
				&+\int_{0}^{T}\left\langle \ma\left(\tilde{\partial}_YV^1_{\neq}\partial_XD-Q^1\partial_XV^2\right)_{\neq},\ma W^2_{\neq}\right\rangle_{H^3} dt\\
				&+\int_{0}^{T}\left\langle \ma\left(\tilde{\partial}_YV^3\partial_ZD-Q^3\partial_ZV^2-\Omega^2\tilde{\partial}_YV^2\right)_{\neq},\ma W^2_{\neq}\right\rangle_{H^3} dt=I^{\nwii}_{21}+I^{\nwii}_{22}+I^{\nwii}_{23}.
			\end{align*}
			Recalling the equation $\partial_X{\de}_{\neq}$ :
			\begin{align*}
				&\partial_t \partial_X{\de}+\partial_XD=-\partial_X(V\cdot \tilde{\nabla} {\de})-\partial_X({\de}D).
			\end{align*}
			Invoking the above equation in $I^{\nwii}_{21}$ and using integration by parts in time, we arrive at the following
			\begin{equation}\label{est:I21}
				\begin{aligned}
					I^{\nwii}_{21}=&\int_{0}^{T}\langle \ma(\partial_yV^{1}_{0}\partial_{X}D_{\neq}), \ma W^2\rangle_{H^3}dt\\
					=&\int_{0}^{T}\langle\ma(\partial_yV^{1}_{0}(-\partial_t \partial_X{\de}-\partial_X(V\cdot \tilde{\nabla} {\de})-\partial_X({\de}D))_{\neq}), \ma W^2_{\neq}\rangle_{H^3}dt\\
					=&\left\langle \ma(\partial_yV^{1}_{0}(t)(- \partial_X{\de}(t))_{\neq}), \ma W^2_{\neq}(t)\right\rangle_{H^3}\Big|_{t=0}^{t=T}\\
					&+\int_{0}^{T}\langle \ma(\partial_t(\partial_yV^{1}_{0})\partial_X{\de}_{\neq}), \ma W^2_{\neq}\rangle_{H^3} dt+\int_{0}^{T}\langle \ma((\partial_yV^{1}_{0})\partial_X{\de})_{\neq}, \ma \partial_tW^2_{\neq}\rangle_{H^3} dt\\
					&+2\int_{0}^{T}\langle \partial_t(\ma)((\partial_yV^{1}_{0})\partial_X{\de})_{\neq}, \ma W^2_{\neq}\rangle_{H^3} dt\\
					&-\int_{0}^{T}\langle \ma(\partial_yV^{1}_{0}(\partial_X(V\cdot \tilde{\nabla} {\de})+\partial_X({\de}D))_{\neq}), \ma 	W^2_{\neq}\rangle_{H^3} dt	=\sum_{k=1}^{5}I_{21k}^{(6)}.
				\end{aligned}
			\end{equation}
			The bootstrap assumption implies that
			\begin{align*}
				I^{\nwii}_{211}=&\langle \ma(\partial_yV^{1}_{0}(t)(- \partial_X{\de}(t))_{\neq}), \ma W^2_{\neq}(t)\rangle_{H^3}\Big|_{t=0}^{t=T}\\
				\lesssim&\|\partial_yV^1_0\|_{L^\infty H^4}\|m^{\frac{1}{4}}\partial_X{\de}_{\neq}\|_{L^\infty H^3}\|\ma W^2_{\neq}\|_{L^\infty H^3}
				\lesssim\nu^{-1}\varepsilon^3\lesssim\nu^{1/2}\varepsilon^2.
			\end{align*}
			For $I^{\nwii}_{212}$, which is not the main problem, we give a sketch
			\begin{align*}
				I^{\nwii}_{212}=&\int_{0}^{T}\langle \ma(\partial_t(\partial_yV^{1}_{0})\partial_X{\de}_{\neq}), \ma W^2_{\neq}\rangle_{H^3}dt\\
				=&\int_{0}^{T}\langle \ma((-\partial_yV^2_0+\nu\Delta\partial_yV^1_0+(\nlt_{\partial_yV^1_0})_0)\partial_X{\de}_{\neq}), \ma W^2_{\neq}\rangle_{H^3} dt\\
				\lesssim&\nu^{-1/2}\varepsilon\nu^{-1/3}\varepsilon^2+\nu^{-2}\varepsilon^4\lesssim\varepsilon^2,
			\end{align*}
			where we used that
			\begin{align*}
				&\int_{0}^{T}\langle \ma((\nlt_{\partial_yV^1_0})_0\partial_X{\de}_{\neq}), \ma W^2_{\neq}\rangle_{H^3}dt\\
				\lesssim&\|\tilde{\partial}_Y(V\cdot \tilde{\nabla}V^1)_0+\tilde{\partial}_Y(F({\de})\partial_X{\de})_0\|_{L^\infty H^3}\|\partial_X{\de}_{\neq}\|_{L^2H^3}\|\ma W^2_{\neq}\|_{L^2H^3}\\
				&+\|\tilde{\partial}_Y(G({\de})(\nu\tilde{\Delta} V^1+(\nu+\nu')\partial_XD))_0\|_{L^2 H^3}\|\partial_X{\de}_{\neq}\|_{L^\infty H^3}\|\ma W^2_{\neq}\|_{L^2H^3}\\
				\lesssim&\nu^{-2}\varepsilon^4.
			\end{align*}
			Turning to $I^{\nwii}_{213}$, we decompose it into
			\begin{align*}
				I^{\nwii}_{213}=&\int_{0}^{T}\langle \ma((\partial_YV^{1}_{0})\partial_X{\de}_{\neq}), \ma\partial_t(W^2_{\neq})\rangle_{H^3}dt\\
				=&\int_{0}^{T}\langle \ma((\partial_YV^{1}_{0})\partial_X{\de}_{\neq}), (\ma(V^1_0\partial_XW^2_{\neq}))\rangle_{H^3}dt\\
				&+\int_{0}^{T}\langle \ma((\partial_YV^{1}_{0})\partial_X{\de}_{\neq}), (\ma(\partial_YV^1_0\partial_XD_{\neq}))\rangle_{H^3}dt\\
				&-\int_{0}^{T}\langle \ma((\partial_YV^{1}_{0})\partial_X{\de})_{\neq}, \ma( \mathcal{L}_{W^2}+\nu pW^2)\rangle_{H^3}dt\\
				&+\int_{0}^{T}\langle \ma((\partial_yV^{1}_{0})\partial_X{\de}_{\neq}), (\ma(\nlt_{W^2_{\neq}}-V^1_0\partial_XW^2_{\neq}-\partial_YV^1_0\partial_XD_{\neq}))\rangle_{H^3}dt=\sum_{j=1}^{4}I^{\nwii}_{213j},
			\end{align*}
			where the linear and nonlinear terms are given in \cref{l-W2} and \cref{t-W2}.
			It follows from \cref{2026-3-21-2} that
			\begin{align*}
				I^{\nwii}_{2131}\lesssim&\|\partial_YV^1_0\|_{L^\infty H^4}\|\ma\partial_X{\de}_{\neq}\|_{L^2H^3}(\|V^1_0\|_{L^\infty L^\infty}+\|\nabla V^1_0\|_{L^\infty H^2})\|\tilde{\nabla} \ma W^2_{\neq}\|_{L^2H^3}\\
				\lesssim&\nu^{-1}\varepsilon\nu^{-1/6}\varepsilon\nu^{-1}\varepsilon\nu^{-1/2}\varepsilon=(\nu^{-8/3}\varepsilon^2)\varepsilon^2.
			\end{align*}
			For $I^{\nwii}_{2132}$, direct H{\"o}lder's inequality and Sobolev embeddings give
			\begin{align*}
				I^{\nwii}_{2132}\lesssim&\|\partial_YV^1_0\|_{L^\infty H^4}\|\ma\partial_X{\de}_{\neq}\|_{L^2H^3}\|\partial_Y V^1_0\|_{L^\infty H^4}\|m^{\frac{1}{4}}\partial_X D_{\neq}\|_{L^2H^3}\\
				\lesssim&\nu^{-1}\varepsilon\nu^{-1/6}\varepsilon\nu^{-1}\varepsilon\nu^{-1/2}\varepsilon\lesssim\varepsilon^2.
			\end{align*}
			Noting that $I^{\nwii}_{2133}$ and $I^{\nwii}_{2134}$ are the same as the corresponding terms where $W^2_{\neq}$ is replaced with $(\partial_YV^1_0)\partial_X{\de}_{\neq}$, and both have the same bound. Therefore, we omit them and finish the estimate of $I^{\nwii}_{213}$.
			
			Recalling the definition of $m,M$, we obtain
			\begin{align*}
				I^{\nwii}_{214}\lesssim&\|\partial_YV^1_0\|_{L^\infty H^4}\|m^{\frac{1}{4}}\partial_X{\de}_{\neq}\|_{L^\infty H^3}\|\ma W^2_{\neq}\|_{L^2H^3}\\
				\lesssim&\nu^{-1}\varepsilon(\nu^{-1/6}\varepsilon)^2=\nu^{-4/3}\varepsilon^3.
			\end{align*}
			For $I^{\nwii}_{215}$, we also use \cref{2026-3-21-2} to get
			\begin{align*}
				I^{\nwii}_{215}=&\int_{0}^{T}\langle \ma\partial_yV^{1}_{0}(V^1_0\partial_{XX}{\de}_{\neq}), \ma W^2_{\neq}\rangle_{H^3}dt+...\\
				=&\int_{0}^{T}\langle \ma[p^{-1/2},\partial_y(V^1_0)^2]\partial_{X}p^{1/2}{\de}_{\neq},\ma \partial_XW^2_{\neq}\rangle_{H^3}dt+\\
				&+\int_{0}^{T}\langle m^{\frac{1}{4}}\partial_yV^{1}_{0}(V^1_0\partial_{X}p^{\frac{1}{2}}{\de}_{\neq}), m^{\frac{1}{4}}\partial_Xp^{-\frac{1}{2}}W^2_{\neq}\rangle_{H^3}dt+...\\
				\lesssim&\|\nabla\partial_y (V^1_0)^2\|_{L^\infty H^3}\|\partial_Xp^{\frac{1}{2}}{\de}_{\neq}\|_{L^2H^3}\|m^{\frac{1}{4}}\partial_Xp^{-\frac{1}{2}}W^2\|_{L^2H^3}\\
				\lesssim&(\nu^{-1}\varepsilon)^2\nu^{-2/3}\varepsilon^2=\nu^{-8/3}\varepsilon^4.
			\end{align*}
			Turning to $I^{\nwii}_{22}$ and $I^{\nwii}_{23}$,  Proposition \ref{est:main} implies that
			\begin{align*}
				I^{\nwii}_{22}+I^{\nwii}_{23}=&\int_{0}^{T}\left\langle \ma\left(\tilde{\partial}_YV^1_{\neq}\partial_XD-Q^1\partial_XV^2\right)_{\neq},\ma W^2_{\neq}\right\rangle_{H^3} dt\\
				&+\int_{0}^{T}\left\langle \ma\left(\tilde{\partial}_YV^3\partial_ZD-Q^3\partial_ZV^2-\Omega^2\tilde{\partial}_YV^2\right)_{\neq},\ma W^2_{\neq}\right\rangle_{H^3} dt\\
				\lesssim&\|\partial_YV^1_{\neq}\|_{L^2H^3}\|\partial_XD_{\neq}\|_{L^\infty H^3}\|\ma W^2_{\neq}\|_{L^2H^3}+{\|Q^1\|_{L^\infty H^3}\|\partial_XV^2_{\neq}\|_{L^2 H^3}\|\ma W^2_{\neq}\|_{L^2H^3}}\\ 
				&+\|\tilde{\partial}_Y V^3\|_{L^2H^3}\|\partial_ZD\|_{L^2H^3}\|\ma W^2_{\neq}\|_{L^\infty H^3}+\|Q^3_{\neq}\|_{L^2H^3}\|\partial_ZV^2\|_{L^\infty H^3}\|\ma W^2_{\neq}\|_{L^2H^3}\\
				&+\|Q^3_{0}\|_{L^2H^3}\|\partial_ZV^2_{\neq}\|_{L^2 H^3}\|\ma W^2_{\neq}\|_{L^\infty H^3}+\|\Omega^2\|_{L^\infty H^3}\|\partial_YV^2\|_{L^2H^3}\|\ma W^2_{\neq}\|_{L^2H^3}\\
				\lesssim&(\nu^{-1/2}\varepsilon)^2\nu^{-1/6}\varepsilon+\nu^{-1}\varepsilon\nu^{-1/3}\varepsilon\nu^{-1/6}\varepsilon+\nu^{-1/2}\varepsilon\nu^{-2/3}\varepsilon^2\\
				&+\nu^{-5/6}\varepsilon(\nu^{-1/6}\varepsilon)^2+\nu^{-1/2}\varepsilon\nu^{-1/3}\varepsilon^2+\nu^{-1/6}\varepsilon\nu^{-2/3}\varepsilon\nu^{-1/6}\varepsilon\lesssim\nu^{-3/2}\varepsilon^3.
			\end{align*}
			For $I^{\nwii}_3$, we treat $i=1,3$ respectively. It holds for $i=1$ that
			\begin{align*}
				&\int_{0}^{T}\langle\ma (\partial_XV^2_{\neq}\tilde{\p}_{Y}^{2} V^1-(\partial_XV)\cdot\tilde{\nabla}(\partial_XV^2))_{\neq},\ma W^2_{\neq}\rangle_{H^3} dt\\
				\lesssim&{\|\partial_XV^2_{\neq}\|_{L^2H^3}\|\partial^2_{Y}V^1\|_{L^\infty H^3}\|\ma W^2_{\neq}\|_{L^2H^3}}+\|\partial_XV_{\neq}\|_{L^2H^3}\|\partial_X\tilde{\nabla}V^2\|_{L^2H^3}\|\ma W^2_{\neq}\|_{L^\infty H^3}\\
				\lesssim&\nu^{-1/3}\varepsilon\nu^{-1}\varepsilon\nu^{-1/6}\varepsilon+\nu^{-1/3}\varepsilon\nu^{-2/3}\varepsilon^2\lesssim\nu^{-3/2}\varepsilon^3.
			\end{align*}
			When $i=3$, we divide it into
			\begin{align*}
				&\int_{0}^{T}\langle \ma(\partial_ZV\cdot\tilde{\nabla}\partial_{Z}V^2+\partial_ZV^2\tilde{\p}_{Y}^{2} V^3)_{\neq},\ma W^2_{\neq}\rangle_{H^3} dt\\
				=&\int_{0}^{T}\langle \ma(\partial_ZV^1_{0}\partial_{XZ}(-\tilde{\partial}_Yp^{-1}D)_{\neq}),\ma W^2_{\neq}\rangle_{H^3} dt+\int_{0}^{T}\langle \ma(\partial_ZV^1_{0}\partial_{XZ}\Omega^2_{\neq}),\ma W^2_{\neq}\rangle_{H^3} dt\\
				&+\int_{0}^{T}\langle \ma(\partial_ZV^1_{\neq}\partial_{XZ}V^2_{\neq}),\ma W^2_{\neq}\rangle_{H^3} dt+\int_{0}^{T}\langle \ma(\partial_ZV^2\tilde{\partial}_{YZ}V^2)_{\neq},\ma W^2_{\neq}\rangle_{H^3} dt\\
				&+\int_{0}^{T}\langle \ma(\partial_ZV^3\partial_{ZZ}V^2)_{\neq},\ma W^2_{\neq}\rangle_{H^3} dt+\int_{0}^{T}\langle \ma(\partial_ZV^2\tilde{\p}_{Y}^{2} V^3)_{\neq},\ma W^2_{\neq}\rangle_{H^3} dt
				=\sum_{k=1}^{6}I_{3k}.
			\end{align*}
			For  $I^{\nwii}_{31}$, we use Proposition \ref{est:com-p} and \cref{est:m} to obtain
			\begin{equation}\label{est:I31}
				\begin{aligned}
					I^{\nwii}_{31}=&\int_{0}^{T}\langle \ma(\partial_ZV^1_{0}\partial_{Z}(\tilde{\partial}_Yp^{-1}D)_{\neq}),\ma \partial_XW^2_{\neq}\rangle_{H^3} dt\\
					=&\int_{0}^{T}\langle \ma([p^{-\frac{1}{2}},\partial_ZV^1_{0}]\partial_{Z}(\tilde{\partial}_Yp^{-\frac{1}{2}}D)_{\neq}),\ma \partial_XW^2_{\neq}\rangle_{H^3} dt\\
					&+\int_{0}^{T}\langle \ma(\partial_ZV^1_{0}\partial_{Z}(\tilde{\partial}_Yp^{-\frac{1}{2}}D)_{\neq}),\ma \partial_Xp^{-\frac{1}{2}}W^2_{\neq}\rangle_{H^3} dt\\
					\lesssim&\|\partial_ZV^1_0\|_{L^\infty H^4}(\|\partial_Zp^{-\frac{1}{2}}D_{\neq}\|_{L^2H^3}+{\|m^{\frac{1}{4}}\partial_ZD_{\neq}\|_{L^2H^3}})\|\partial_Xp^{-\frac{1}{2}}\ma W^2_{\neq}\|_{L^2H^3}\\
					\lesssim&\nu^{-1}\varepsilon\nu^{-1/2}\varepsilon^2=\nu^{-3/2}\varepsilon^3.
				\end{aligned}
			\end{equation}
			Using  Proposition \ref{est:main}, we conclude that
			\begin{align*}
				\sum_{k=2}^{6}I^{\nwii}_{3k}\lesssim&\|\partial_ZV^1_0\|_{L^\infty H^3}\|\partial_{XZ}p^{-1}(W^2_{\neq}-\partial_X{\de}_{\neq}+\nu\partial_XD)\|_{L^2H^3}\|\ma W^2_{\neq}\|_{L^2H^3}\\
				&+\|\partial_ZV^1_{\neq}\|_{L^2H^3}\|\partial_{XZ}V^2_{\neq}\|_{L^2H^3}\|\ma W^2_{\neq}\|_{L^\infty H^3}\\
				&+\|\partial_Z(V^2,V^3)\|_{L^\infty H^3}\|\partial_{Z}p^{\frac{1}{2}}V^2\|_{L^2H^3}\|_{L^2H^3}\|\ma W^2_{\neq}\|_{L^2 H^3}\\
				&+\|\partial_ZV^2\|_{L^\infty H^3}\|\tilde{\p}_{Y}^{2} V^3\|_{L^2H^3}\|\ma W^2_{\neq}\|_{L^2 H^3}\\
				\lesssim&\nu^{-1}\varepsilon^2\nu^{-1/6}\varepsilon+\nu^{-1/6}\varepsilon\nu^{-1/2}\varepsilon^2+\nu^{-1/6}\varepsilon\nu^{-2/3}\varepsilon\nu^{-1/6}\varepsilon+\nu^{-1/6}\varepsilon\nu^{-5/6}\varepsilon\nu^{-1/6}\varepsilon \lesssim\nu^{-7/6}\varepsilon^3.
			\end{align*}
			For the component of $I^{\nwii}_4$ within $\partial_ZV^1_0$, a combination of \cref{est:m} and integration by parts yields
			\begin{align*}
				&\int_{0}^{T}\langle \ma\tilde{\partial}_Y(\partial_XV^3_{\neq}\partial_ZV^1_{0}),\ma W^2_{\neq}\rangle_{H^3}dt\\
				=&\int_{0}^{T}-\langle \ma(\partial_{XZ}p^{-1}D_{\neq}\partial_ZV^1_0),\ma\tilde{\partial}_YW^2_{\neq}\rangle_{H^3} +
				\langle \ma(\tilde{\partial}_{XY}p^{-1}\Omega^3_{\neq}\partial_ZV^1_0),\ma W^2_{\neq}\rangle_{H^3} dt\\
				&+\int_{0}^{T}
				\langle \ma(\partial_{X}p^{-1}\Omega^3_{\neq}\tilde{\partial}_{YZ}V^1_0),\ma W^2_{\neq}\rangle_{H^3} dt\\
				\lesssim&{\|\partial_Xp^{-\frac{1}{2}}m^{\frac{1}{4}}\partial_Zp^{-\frac{1}{2}}D_{\neq}\|_{L^2H^3}\|\partial_ZV^1_0\|_{L^\infty H^4}\|\tilde{\nabla} \ma W^2_{\neq}\|_{L^2H^3}+\|m^{\frac{3}{4}}\Omega^3_{\neq}\|_{L^2H^3}\|\partial_ZV^1_{0}\|_{L^\infty H^4}\|\ma W^2_{\neq}\|_{L^2H^3}}\\
				&+\|m\Omega^3_{\neq}\|_{L^2H^3}\|\partial_ZV^1_0\|_{L^\infty H^4}\|\ma W^2_{\neq}\|_{L^2H^3}\\
				\lesssim&\varepsilon\nu^{-1}\varepsilon\nu^{-1/2}\varepsilon+\nu^{-1/3}\varepsilon\nu^{-1}\varepsilon\nu^{-1/6}\varepsilon+\nu^{-1/6}\varepsilon\nu^{-1}\varepsilon\nu^{-1/6}\varepsilon\lesssim\nu^{-3/2}\varepsilon^3.
			\end{align*}
			Considering the other components of $I^{\nwii}_4$, we have
			\begin{align*}
				&\int_{0}^{T}\langle \ma((\partial_XV^1_{\neq})^2+\partial_XV^3_{\neq}\partial_ZV^1_{\neq}+(\partial_ZV^3)^2_{\neq}),\ma \tilde{\partial}_YW^2_{\neq}\rangle_{H^3} dt\\
				\lesssim&\|\nabla_{X,Z}(V^1_{\neq}, V^3)\|_{L^\infty H^3}\|\nabla_{X,Z}(V^1_{\neq},V^3_{\neq})\|_{L^2 H^3}\|\tilde{\nabla}\ma W^2_{\neq}\|_{L^2H^3}\\
				\lesssim&\varepsilon\nu^{-1/6}\varepsilon\nu^{-1/2}\varepsilon\lesssim\nu^{-2/3}\varepsilon^3.
			\end{align*}
			For $I^{\nwii}_5$ and $I^{\nwii}_6$, applying the bootstrap assumption, we conclude that
			\begin{align*}
				I^{\nwii}_5\lesssim&\|\partial_XV_{\neq}\|_{L^2H^3}\|\tilde{\nabla} {\de}\|_{L^\infty H^3}\|\ma W^2_{\neq}\|_{L^2H^3}+\|\partial_X{\de}_{\neq}\|_{L^2H^3}\|D\|_{L^\infty H^3}\|\ma W^2_{\neq}\|_{L^2H^3}\\
				&+\|{\de}\|_{L^\infty H^3}\|\partial_X D_{\neq}\|_{L^2H^3}\|\ma W^2_{\neq}\|_{L^2H^3}+\nu\|\partial_XV_{\neq}\|_{L^2H^3}\|\tilde{\nabla} D\|_{L^2 H^3}\|\ma W^2_{\neq}\|_{L^\infty H^3}\\
				&+\nu\|\partial_iV^j\partial_jV^i\|_{L^2H^3}\|\partial_X\ma W^2_{\neq}\|_{L^2H^3}\\
				\lesssim&\nu^{-1/3}\varepsilon\nu^{-1/2}\varepsilon\nu^{-1/6}\varepsilon+\nu^{-1/6}\varepsilon\nu^{-2/3}\varepsilon\nu^{-1/6}\varepsilon+\nu\nu^{-1/3}\varepsilon\nu^{-1}\varepsilon^2+\nu\nu^{-4/3}\varepsilon^2\nu^{-1/2}\varepsilon\lesssim\nu^{-1}\varepsilon^3,
			\end{align*}
			and
			\begin{align*}
				I^{\nwii}_6\lesssim&(\|p^{\frac{1}{2}}{\de}\|_{L^\infty H^3}(\nu\|\tilde{\Delta} V^2\|_{L^2H^3}+(\nu+\nu')\|\tilde{\partial}_YD\|_{L^2H^3})\\
				&+\|{\de}\|_{L^\infty H^3}(\nu\|p^{\frac{1}{2}}\tilde{\Delta} V^2\|_{L^2H^3}+(\nu+\nu')\|p^{\frac{1}{2}}\tilde{\partial}_YD\|_{L^2H^3})\\
				&+\|\partial_X{\de}\|_{L^2H^3}(\nu\|\tilde{\Delta}V^1\|_{L^\infty H^3}+(\nu+\nu')\|\partial_XD\|_{L^\infty H^3})\\
				&+\|\tilde{\partial}_Y{\de}\|_{L^\infty H^3}(\nu\|\tilde{\Delta}V^2\|_{L^2 H^3}+(\nu+\nu')\|\tilde{\partial}_YD\|_{L^2 H^3})\\
				&+\|\partial_Z{\de}\|_{L^\infty H^3}(\nu\|\tilde{\Delta}V^3\|_{L^2 H^3}+(\nu+\nu')\|\partial_ZD\|_{L^2 H^3})+\|{\de}\|_{L^\infty H^3}\bar{\nu}\|pD\|_{L^2H^3}\\
				&+\nu\|\partial_X{\de}\|_{L^2H^3}\|\tilde{\nabla}{\de}\|_{L^\infty H^3}+\nu\|{\de}\|_{L^\infty H^3}\|\partial_X\tilde{\nabla}{\de}\|_{L^2 H^3})\|\ma p^{\frac{1}{2}}W^2_{\neq}\|_{L^2H^3}\\
				\lesssim&\nu^{-1/2}\varepsilon\bar{\nu}\nu^{-1}\varepsilon\nu^{-1/2}\varepsilon+\nu^{-1/6}\varepsilon\bar{\nu}\nu^{-4/3}\varepsilon\nu^{-1/2}\varepsilon+\nu^{-1/3}\varepsilon\nu^{-1}\varepsilon\nu^{-1/2}\varepsilon+\nu^{-1/2}\varepsilon\bar{\nu}\nu^{-1}\varepsilon\nu^{-1/2}\varepsilon\\
				&+\nu^{-1/6}\varepsilon\bar{\nu}\nu^{-5/6}\varepsilon\nu^{-1/2}\varepsilon+\nu^{-1/6}\varepsilon\bar{\nu}\nu^{-4/3}\varepsilon\nu^{-1/2}\varepsilon+\nu\nu^{-1/6}\varepsilon(\nu^{-1/2}\varepsilon)^2+\nu\nu^{-1/6}\varepsilon\nu^{-2/3}\varepsilon\nu^{-1/2}\varepsilon\\
				\lesssim&\nu^{-1}\varepsilon^3.
			\end{align*} 
			Collecting all the estimates for $I_i^{(6)}$ ($i = 1, \ldots, 6$), and $LS^{(6)}$, $LE^{(6)}$ we thus obtain Lemma \ref{Lem-E6}.
		\end{proof}
		
		\subsubsection{$H^3$ estimate on  $W^2_{0\neq}$}
		\begin{Lem}\label{Lem-E61}
			Under the same assumptions as \cref{MT} and  Proposition \ref{prop-bs}, it holds that
			\begin{align}
				\sup_{0\leq t\leq T}\|W^2_0(t)\|^2_{H^3}+\nu\|\nabla W^2_0\|^2_{L^2H^3}\lesssim\|W^2_0(0)\|^2_{H^3}+\nu^{-\frac{3}{2}}\varepsilon^3.
			\end{align}
		\end{Lem}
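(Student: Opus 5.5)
We will run a plain $H^3$ energy estimate on the $x$-zero mode of \cref{equ-W2}, i.e.\ on $W^2_0$ (both $W^2_{00}$ and $W^2_{0\neq}$). For $k=0$ the multipliers satisfy $m=M=1$, $p=\eta^2+l^2$ and $\partial_t p=0$. Moreover every term of $\lt_{W^2}$ in \cref{l-O3} carries a factor $\partial_X$ and hence vanishes on the zero mode. The equation therefore reduces to
\[
\partial_t W^2_0-\nu\Delta W^2_0=(\nlt_{W^2})_0 .
\]
Pairing with $W^2_0$ in $H^3$ gives
\[
\|W^2_0(T)\|_{H^3}^2+2\nu\|\nabla W^2_0\|_{L^2H^3}^2=\|W^2_0(0)\|_{H^3}^2+2\int_0^T\langle(\nlt_{W^2})_0,W^2_0\rangle_{H^3}\,dt .
\]
The whole task is then to bound the nonlinear integral by $\nu^{-3/2}\eps^3$, using Proposition \ref{est:main} and the uniform bound of \cref{2026-3-21-2}.

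We split $(\nlt_{W^2})_0$ into zero–zero interactions and $\neq\times\neq$ interactions projected onto $k=0$, following the list of terms in \cref{t-W2}. For the transport term $(V\cdot\tilde\nabla W^2)_0$ we use the commutator Lemma \ref{lemma:commutator} and integrate by parts.
\begin{itemize}
\item In the zero-mode part $V_0\cdot\nabla W^2_0$ only $V^2_0$ and $V^3_0$ appear, since the $V^1_0\partial_X$ term vanishes. This yields $\|(V^2_0,V^3_0)\|_{L^\infty H^3}\|\nabla W^2_0\|_{L^2H^3}\|W^2_0\|_{L^2H^3}$. Here $\|W^2_0\|_{L^2H^3}$ is not directly available, so we move the factor $\nabla$ by Poincaré in $z$ for the $0\neq$ part. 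For the double zero mode we use that $W^2_{00}=\Omega^2_{00}$ is controlled through $\partial_Y V^2_{00}$-type quantities in Proposition \ref{est:main}.
\item The $\neq\times\neq$ part is bounded by $\|V_{\neq}\|_{L^2H^3}\|\tilde\nabla W^2_{\neq}\|_{L^2H^3}\|W^2_0\|_{L^\infty H^3}\lesssim \nu^{-1/3}\eps\cdot\nu^{-2/3}\eps\cdot\eps$.
\end{itemize}

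The terms containing $\partial_iV^2\,\tilde\partial_Y^2V^i$, $Q^i\partial_iV^2$ and $\tilde\partial_Y(\partial_iV^j\partial_jV^i)$ restricted to $i,j\in\{1,3\}$ are the dangerous ones, because they can involve $V^1_0$. On the $x$-zero mode, however, $\partial_XV^1_0=0$. Hence the only surviving zero–zero products involve $\partial_Z V^1_{0\neq}$ paired with $V^3_0$ or $V^2_0$; the contraction $\partial_ZV^3\partial_ZV^3$ does not involve $V^1$ at all. For these we put $V^1_{0\neq}$ in $L^\infty H^4$ at size $\nu^{-1}\eps$. The partner factor must then be $\partial_Z V^3_0$ or $\partial_Z V^2_0$ in $L^2H^3$, which costs $\nu^{-1/2}\eps$. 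Combined with $\|\nabla W^2_0\|_{L^2H^3}\lesssim\nu^{-1/2}\eps$ after integrating the outer $\tilde\partial_Y$ by parts, this gives $\nu^{-2}\eps^3$. That is too large, so we must exploit structure.
\begin{itemize}
\item First, we use $\partial_iV^i$-type cancellations (the divergence of the zero mode is $D_0$, which is small).
\item Alternatively, we rewrite these products via $V^3_{0\neq}=-\partial_Zp^{-1}D_{0\neq}-p^{-1}\Omega^3_{0\neq}$, so that the partner factor gains an $L^2_t$ bound of size $\eps$ rather than $\nu^{-1/2}\eps$, improving the bound to $\nu^{-3/2}\eps^3$.
\end{itemize}

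The compressible nonlinearities $-\partial_X(\cdots)$ and $\nu\partial_X(\cdots)$ in \cref{t-W2} vanish on $k=0$ except through $\neq\times\neq$ interactions. For these we use $\|\partial_X V_{\neq}\|_{L^2H^3}\lesssim\nu^{-1/3}\eps$ together with $\|\tilde\nabla\de\|_{L^2H^3}$. The viscous terms built from $G(\de)$ are handled as in the estimate of $I^{(6)}_6$ in Lemma \ref{Lem-E6}: we transfer one derivative onto $W^2_0$ and use $\nu\|\nabla W^2_0\|_{L^2H^3}^2$ together with the bounds $\nu\|\tilde\Delta V\|$.

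I expect the main obstacle to be the lift-up-type products in which $\partial_ZV^1_{0\neq}$ (of size $\nu^{-1}\eps$) multiplies zero modes of $V^2$ or $V^3$. This is where the $\nu^{-3/2}$ budget is tight, and I would first try the $\Omega^3$/$D$ reformulation described above to recover the needed factor.
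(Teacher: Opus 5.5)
\textbf{Verdict: genuine gap.} Your overall scheme matches the paper's: the $H^3$ energy identity for the $x$-zero mode, the vanishing of $\lt_{W^2}$ at $k=0$, and the split of $(\nlt_{W^2})_0$ into zero--zero and $(\neq\times\neq)_0$ interactions. The gap is in the step you single out as the main obstacle, which rests on a misreading of \cref{t-W2}.

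In every term of $\nlt_{W^2}$ that contains $V^1$, a $\partial_X$ falls either on $V^1$ or on its partner. The occurrences are:
\begin{itemize}
\item the transport and stretching terms $V^1\partial_XW^2$, $\tilde\partial_YV^1\partial_XD$, $Q^1\partial_XV^2$ and $\partial_XV^2\tilde\partial_Y^2V^1$;
\item $\tilde\partial_iV^1\,\partial_X\tilde\partial_iV^2$, which is the $j=1$ part of $\tilde\partial_iV^j\tilde\partial_{ij}V^2$;
\item $\partial_XV^1\partial_XV^1$ and $\partial_XV^3\partial_ZV^1$ inside $\tilde\partial_Y(\partial_iV^j\partial_jV^i)$;
\item the compressible and viscous terms, which carry an outer $\partial_X$, including $\partial_X(G(\de)\nu\tilde\Delta V^1)$ inside $\tilde\partial_Y\tilde{\mathrm{div}}$.
\end{itemize}
Since $(fg)_0=f_0g_0+(f_{\neq}g_{\neq})_0$, the field $V^1_0$ could only enter through zero--zero products, and there the $\partial_X$ kills it. So the products ``$\partial_ZV^1_{0\neq}$ paired with $V^3_0$ or $V^2_0$'' that you estimate at $\nu^{-2}\eps^3$ are identically zero. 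This is exactly the paper's remark that $W^2_0$ ``does not have interaction within $V^1_0$''. After that, the zero--zero part is bounded crudely by $\nu^{-1}\eps^3$, and the $(\neq\times\neq)_0$ part by $\nu^{-7/6}\eps^3$, as for $W^2_{\neq}$.

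This matters because the remedies you propose would fail if the term were real. No zero-mode quantity has an $L^2_t$ bound of size $\eps$. For $k=0$ the only time integrability comes from viscosity, for example
\[
\|D_0\|_{L^2H^3},\quad \|\nabla\Omega^3_0\|_{L^2H^3},\quad \|V^3_{0\neq}\|_{L^2H^5}\lesssim\nu^{-1/2}\eps .
\]
Hence rewriting $V^3_{0\neq}=-\partial_Zp^{-1}D_{0\neq}-p^{-1}\Omega^3_{0\neq}$ gains nothing. Likewise $D_0$ has no better $L^2_t$ bound than the velocity gradients, so divergence cancellations do not help either. As written, your argument leaves its key term unresolved; the fix is to notice that the term is absent.

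A smaller point: you need not treat $W^2_{00}$ separately. Since $D_{00}=\partial_YV^2_{00}$, one has $W^2_{00}=\Omega^2_{00}=\partial_Y^2V^2_{00}-\partial_YD_{00}=0$. Thus $W^2_0$ has only $l\neq0$ modes, and Poincar\'e in $Z$ gives $\|W^2_0\|_{L^2H^3}\le\|\nabla W^2_0\|_{L^2H^3}$ for the whole zero mode. This is what closes the transport term and the $W^2_0\partial_YV^2_0$ term.
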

		\begin{proof}
			
			The energy estimate gives
			\begin{align*}
				\|W^2_0(T)\|^2_{H^3}+\nu\|\nabla W^2_0\|^2_{L^2H^3}=\|W^2_0(0)\|^2_{H^3}+K^{(1)}_1+K^{(1)}_2,
			\end{align*}
			where we denote
			\begin{align*}
				K^{(1)}_1=&\int_{0}^{T}\langle (\nlt_{W^2_0})_{1}, W^2_0\rangle_{H^3} dt,\\
				K^{(1)}_2=&\int_{0}^{T}\langle (\nlt_{W^2_0})_{2}, W^2_0\rangle_{H^3} dt,
			\end{align*}
			with $\nlt_{W^2_0}$ given in \cref{bs-w2}, and
			\begin{align*}
				(\nlt_{W^2_0})_{1}=&-V_0\cdot\nabla W^2_0+\partial_YV^3_0\partial_ZD_0-Q^3_0\partial_ZV^2_0-W^2_0\partial_YV^2_0\\
				&+2\partial_ZV^2_0\partial^2_{Y}V^3_0-\partial_ZV^2_0\partial_{ZY}V^2_0-\partial_ZV^3_0\partial_{ZZ}V^2_0+\partial_Y(\partial_ZV^3_0)^2\\
				&-\Delta(G({\de})_0(\nu\Delta V^2_0+(\nu+\nu')\partial_YD_0))\\
				&+\partial_Y(\partial_iG({\de})_0(\nu\Delta V^i_0+(\nu+\nu')\partial_i D_0))+\partial_Y(G({\de})_0\bar{\nu}\Delta D_0),\\
				(\nlt_{W^2_0})_2=&\nlt_{W^2_0}-(\nlt_{W^2_0})_1.
			\end{align*}	
			For $K^{(1)}_1$, noting that incompressible part $W^2_0=\Omega^2_0$ has the nonzero $l$-mode and does not have interaction within $V^1_0$, we deduce that
			\begin{align*}
				K^{(1)}_1\lesssim&\|(V^2_0,V^3_0)\|_{L^\infty H^3}\|\nabla W^2_0\|_{L^2H^3}\|W^2_0\|_{L^2H^3}+\|\partial_YV^3_0\|_{L^2H^3}\|\partial_ZD_0\|_{L^2H^3}\|W^2_0\|_{L^\infty H^3}\\
				&+\|W^2_0\|_{L^2H^3}\|\partial_YV^2_0\|_{L^2H^3}\|W^2_0\|_{L^\infty H^3}+\|\partial_Z(V^2_0,V^3_0)\|_{L^2H^3}\|(\partial_Z\nabla V^2_0,\Delta V^3_0)\|_{L^2H^3}\|W^2_0\|_{L^\infty H^3}\\
				&+(\|p^{\frac{1}{2}}{\de}_0\|_{L^\infty H^3}\|(\nu\Delta V^2_0+(\nu+\nu')\partial_YD_0)\|_{L^2H^3}+\|{\de}_0\|_{L^\infty H^3}\|p^{\frac{1}{2}}(\nu\Delta V^2_0+(\nu+\nu')\partial_YD_0)\|_{L^2H^3}\\
				&+\|\nabla {\de}\|_{L^\infty H^3}\|(\nu\partial_Y\Delta V^2_0+\nu\partial_Z\Delta V^3_0+(\nu+\nu')p^{\frac{1}{2}} D_0)\|_{L^2H^3}+\|{\de}_0\|_{L^\infty H^3}\bar{\nu}\|pD\|_{L^2H^3})\|\nabla W^2_0\|_{L^2H^3}\\
				\lesssim&\nu^{-1}\varepsilon^3.
			\end{align*}
			Through a process similar to $W^2_{\neq}$, we conclude that
			\begin{align*}
				K^{(1)}_2\lesssim\nu^{-7/6}\varepsilon^3.
			\end{align*}
			Then Lemma \ref{Lem-E61} follows.
		\end{proof}
		\subsection{$H^3$ estimate on $\Omega^3$}
		In this subsection, we improve \cref{bs-o3} for $j=0$ by providing estimates for $\norm{\Omega^3_{\neq}}_{H^3}$ and $\norm{\Omega^3_{0}}_{H^3}$ separately.
		\subsubsection{$H^3$ estimate on  $\Omega^3_{\neq}$}
		\begin{Lem}\label{Lem-E7}
			Under the same assumptions as \cref{MT} and  Proposition \ref{prop-bs}, it holds that
			\begin{align}
				\sup_{0\leq t\leq T}&\|mM \Omega^3_{\neq}(T)\|^2_{H^3}+\nu\|\tilde{\nabla}mM \Omega^3_{\neq}\|^2_{L^2H^3}\nonumber\\&+\left\|\sqrt{-\frac{\partial_t M}{M}}mM \Omega^3_{\neq}\right\|^2_{L^2H^3}+\left\|\sqrt{-\frac{\partial_t m}{m}}mM \Omega^3_{\neq}\right\|^2_{L^2H^3}
				\lesssim\|\Omega^3_{\neq}(0)\|_{H^3}+\nu^{-\frac{3}{2}}\varepsilon^3.
			\end{align}
		\end{Lem}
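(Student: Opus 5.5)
We run the standard multiplier energy estimate for \cref{equ-O3} restricted to non-zero modes. We pair $mM\Omega^3_{\neq}$ with itself in $H^3$ and use $\partial_t(mM)=(\frac{\partial_t m}{m}+\frac{\partial_t M}{M})mM$, both factors being non-positive. This produces the good terms $\|\sqrt{-\partial_tM/M}\,mM\Omega^3_{\neq}\|^2_{L^2H^3}$, $\|\sqrt{-\partial_tm/m}\,mM\Omega^3_{\neq}\|^2_{L^2H^3}$ and $\nu\|\tilde\nabla mM\Omega^3_{\neq}\|^2_{L^2H^3}$. What remains is the initial data plus a linear part $LE^{(7)}$ from $\mathcal{L}_{\Omega^3}$ and the nonlinear contributions $\langle mM(\nlt_{\Omega^3})_{\neq},mM\Omega^3_{\neq}\rangle_{H^3}$.

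For the linear part, the term $-\frac{\partial_t p}{p}\Omega^3$ is the stretching term. The full power of $m$ is designed to absorb it: on $t<\eta/k+1000\nu^{-1/3}$ we have $-\frac{\partial_t m}{m}=-\frac{\partial_t p}{p}$ whenever $\partial_t p<0$. In the late-time regime $|\partial_tp/p|\lesssim |k|/p^{1/2}\cdot\nu^{1/3}$, which is controlled by the enhanced dissipation from $M_2$ together with $\nu p$. The coupling $-2\frac{\partial_{XZ}}{p}(W^2-2\partial_X R+\nu\partial_X D)$ is bounded by
$\|\sqrt{-\partial_tM_1/M_1}\,(\ldots)\|$ products, using $|kl|/p\lesssim \sqrt{(k^4+k^2l^2)}/p$. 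This gives $\tilde C^{-1}$ times the bootstrap quantities for $W^2$ and $\nabla_{X,Z}R$. The mismatch $m$ versus $m^{1/4}$ is harmless because $m\le m^{1/4}$. In total this yields a contribution $\lesssim \tilde C^{-1}B_0^2\varepsilon^2+\nu^{1/3}\varepsilon^2$.

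For the nonlinear terms, we split as in Lemma \ref{Lem-E6}. The transport term $V\cdot\tilde\nabla\Omega^3$ is handled by commutators, using \cref{est:m}, \cref{est-m2} and Lemma \ref{lemma:commutator}. The dangerous piece $V^1_0\partial_X\Omega^3_{\neq}$ reduces after integration by parts to commutator terms bounded by $\|\nabla V^1_0\|_{L^\infty H^4}\|m\Omega^3_{\neq}\|^2_{L^2H^3}\lesssim\nu^{-1}\varepsilon\cdot(\nu^{-1/6}\varepsilon)^2$. The lift-up interactions $\partial_ZV^1_0\,\partial_X D$ and $\partial_ZV^2\tilde\partial_Y D$, together with the $Q^i\tilde\partial_iV^3$ terms, are treated as in \cref{est:I31}. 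We move one $p^{-1/2}$ onto $\Omega^3$ via Proposition \ref{est:com-p} and use $\|mM\partial_X p^{-1/2}\Omega^3_{\neq}\|_{L^2H^3}\lesssim\varepsilon$ against $\|\partial_Z V^1_0\|_{L^\infty H^4}\nu^{-1/2}\varepsilon$. The quadratic terms in $\tilde\partial_iV^j$ and the $G(R)$ viscous terms are bounded directly by Proposition \ref{est:main} and Theorem \ref{2026-3-21-2}, each at most $\nu^{-3/2}\varepsilon^3$.

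The main obstacle is the term $\partial_ZV^1_0\partial_XD_{\neq}$ (and its analogue $\partial_ZV^2\tilde\partial_YD$). Here the $\nu^{-1/2}$ growth of $D$ meets the $\nu^{-1}$ lift-up of $V^1_{0\neq}$, which naively gives $\nu^{-3/2}\cdot\nu^{-1/6}\varepsilon^3$ and is too lossy. Following \cref{est:I21}, we would first substitute $\partial_XD=-\partial_t\partial_XR+\ldots$ and integrate by parts in time. This replaces $D$ by $R$ (which has no $\nu^{-1/2}$ loss) at the cost of $\partial_t\Omega^3$ and $\partial_t\partial_ZV^1_0$. We then reinsert their equations: the worst resulting term, involving $V^1_{00}\partial_X\Omega^3$, is closed using the uniform bound on $V^1_{00}$ from Theorem \ref{2026-3-21-2}. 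The remaining ones are $\lesssim\nu^{-8/3}\varepsilon^4$, which is acceptable since $\varepsilon=\varepsilon_0\nu^{3/2}$. Summing everything gives the claimed bound.
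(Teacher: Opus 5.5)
Your overall route is the paper's. It consists of the $mM$-weighted $H^3$ energy estimate for \cref{equ-O3}, $m$ cancelling $\frac{\partial_tp}{p}$, $M_1$ absorbing the $\frac{\partial_{XZ}}{p}$ coupling, commutators for $V^1_0\partial_X\Omega^3_{\neq}$, and the time integration by parts of \cref{est:I21}, closed with Theorem \ref{2026-3-21-2}, for $\partial_ZV^1_0\,\partial_XD_{\neq}$. The gap is in which of the two lift-up devices you assign to which term, and as written one step fails.

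The redistribution of \cref{est:I31} moves $\partial_Xp^{-1/2}$ onto $\Omega^3$ via Proposition \ref{est:com-p}, uses $\|\partial_Xp^{-1/2}mM\Omega^3_{\neq}\|_{L^2H^3}\lesssim\varepsilon$, and pays $\|m^{1/4}\partial_ZD_{\neq}\|_{L^2H^3}\lesssim\nu^{-1/2}\varepsilon$. It only works when $D$ carries a factor $p^{-1}$, that is, when $D$ enters through $V^2_{\neq}=-\tilde{\partial}_Yp^{-1}D_{\neq}+\dots$ or $V^3_{\neq}=-\partial_Zp^{-1}D_{\neq}+\dots$.
\begin{itemize}
\item It cannot handle $\partial_ZV^1_0\,\partial_XD_{\neq}$. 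After moving $\partial_Xp^{-1/2}$ you would need $\|p^{1/2}D_{\neq}\|_{L^2H^3}$, which is only $O(\nu^{-1}\varepsilon)$, giving $\nu^{-2}\varepsilon^3$. That is exactly why the time integration by parts is needed, as your last paragraph says. Drop the sentence assigning this term to \cref{est:I31}; the $Q^i\tilde{\partial}_iV^3$ terms do not need that device either.
\item Conversely, the terms that do need \cref{est:I31} are inside your ``quadratic terms bounded directly''. These are $\tilde{\partial}_YV^1_0\,\tilde{\partial}_{XY}V^3_{\neq}$ (from $\tilde{\partial}_iV^j\tilde{\partial}_{ij}V^3$) and $\partial_Z(\partial_XV^2_{\neq}\,\partial_YV^1_0)$ (from $\partial_Z(\tilde{\partial}_iV^j\tilde{\partial}_jV^i)$), with the compressible parts of $V^3_{\neq}$ and $V^2_{\neq}$ inserted.
\item In these terms the $D$-factor has symbol $\frac{k(\eta-kt)l}{p}$. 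Plain H\"older gives $\|\partial_YV^1_0\|_{L^\infty H^4}\|m^{1/4}\partial_ZD_{\neq}\|_{L^2H^3}\|mM\Omega^3_{\neq}\|_{L^2H^3}\lesssim\nu^{-1}\varepsilon\cdot\nu^{-1/2}\varepsilon\cdot\nu^{-1/6}\varepsilon=\nu^{-5/3}\varepsilon^3=\varepsilon_0\nu^{-1/6}\varepsilon^2$, which does not close.
\item The fix is to use $\frac{|k(\eta-kt)l|}{p}\le\frac{|k|}{p^{1/2}}|l|$ and place $\partial_Xp^{-1/2}$ on $\Omega^3$, which brings these terms to $\nu^{-3/2}\varepsilon^3$. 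This is the paper's $I^{(7)}_{31}$ and the first part of $I^{(7)}_4$.
\end{itemize}

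Two smaller points. First, $\partial_ZV^2\,\tilde{\partial}_YD$ is not an analogue of the lift-up obstacle: no $V^1_0$ appears, $\|\partial_ZV^2\|_{L^\infty H^3}\lesssim\nu^{-1/6}\varepsilon$, and the bootstrap bounds give it directly at $\nu^{-4/3}\varepsilon^3$.

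Second, the sign in your treatment of $\frac{\partial_tp}{p}$ is reversed. $m$ acts on the window $\eta/k<t<\eta/k+1000\nu^{-1/3}$, after the critical time, where $\partial_tp>0$ and $-\frac{\partial_tm}{m}=\frac{\partial_tp}{p}$. Before the critical time, $\partial_tp<0$ and $m$ is constant. Past the window, $\frac{|\partial_tp|}{p}\le\frac{2|k|}{|\eta-kt|}\le\frac{2}{1000}\nu^{1/3}$. Absorbing this into $\nu p-\frac{\partial_tM_2}{M_2}\gtrsim\nu^{1/3}$, as you propose, is tidier than the paper, which leaves a $\nu^{1/3}\|mM\Omega^3_{\neq}\|^2_{L^2H^3}$ term on the right-hand side.
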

		\begin{proof}
			
			The energy estimate gives
			\begin{align*}
				&\|mM \Omega^3_{\neq}(T)\|^2_{H^3}+\nu\|\tilde{\nabla}mM \Omega^3_{\neq}\|^2_{L^2H^3}+\left\|\sqrt{-\frac{\partial_t M}{M}}mM \Omega^3_{\neq}\right\|^2_{L^2H^3}+\left\|\sqrt{-\frac{\partial_t m}{m}}mM \Omega^3_{\neq}\right\|^2_{L^2H^3}\\
				=&\|\Omega^3_{\neq}(0)\|^2_{H^3}+LS^{\noiii}+LE^{\noiii}+\sum_{i=1}^{5}I^{\noiii}_i,
			\end{align*}
			where linear and nonlinear terms are defined by
			\begin{align*}
				LS^{\noiii}=&\int_{0}^{T}\left\langle \frac{\partial_t p}{p}mM\Omega^3_{\neq},mM\Omega^3_{\neq}\right\rangle_{H^3} dt,\\
				LE^{\noiii}=&-\int_{0}^{T}\left\langle mM\left(2\frac{\partial_{XZ}}{p}(W^2-2\partial_X{\de}+\nu\partial_XD)\right)_{\neq},mM \Omega^3_{\neq}\right\rangle_{H^3} dt,\\
				I_1^{\noiii}=&-\int_{0}^{T}\left\langle mM(V\cdot\tilde{\nabla}\Omega^3)_{\neq},mM \Omega^3_{\neq}\right\rangle_{H^3} dt,\\
				I_2^{\noiii}=&\int_{0}^{T}\left\langle mM\left(-Q^1\partial_XV^3-Q^2\tilde{\partial}_YV^3-\Omega^3\partial_ZV^3+\partial_ZV^1\partial_XD+\partial_ZV^2\tilde{\partial}_YD\right)_{\neq},mM \Omega^3_{\neq}\right\rangle_{H^3} dt,\\
				I_3^{\noiii}=&\int_{0}^{T}\left\langle mM\left(2\sum_{i=1,2}(\tilde{\partial}_iV^3\partial_{ZZ}V^i-\tilde{\partial}_iV^j\tilde{\partial}_{ij}V^3)\right)_{\neq},mM \Omega^3_{\neq}\right\rangle_{H^3} dt,\\
				I_4^{\noiii}=&\int_{0}^{T}\left\langle mM\left(\sum_{i,j=1,2}\partial_Z(\tilde{\partial}_iV^j\tilde{\partial}_jV^i)\right)_{\neq},mM \Omega^3_{\neq}\right\rangle_{H^3} dt,\\
				I_5^{\noiii}=&\int_{0}^{T}\big\langle mM\left(\partial_Z\tilde{\textrm{div}}\left(G({\de})(\nu\tilde{\Delta} V+(\nu+\nu')\tilde{\nabla} D)\right)-\tilde{\Delta}\left(G({\de})(\nu\tilde{\Delta} V^3+(\nu+\nu')\partial_ZD)\right)\right)_{\neq},mM \Omega^3_{\neq}\rangle_{H^3} dt.
			\end{align*}
			For $LS^{\noiii}$ and $LE^{\noiii}$, by the definition and monotonicity properties of the multipliers $m$ and $M$, we have
			\begin{align*}
				LS^{\noiii}\leq& \left\|\sqrt{-\frac{\partial_t m}{m}}mM \Omega^3_{\neq}\right\|^2_{L^2H^3}+\frac{c_1}{4}\nu^{1/3}\|mM\Omega^3_{\neq}\|^2_{L^2H^3},\\
				LE^{\noiii}\lesssim&\frac{1}{\tilde{C}}\left\|\sqrt{-\frac{\partial_t M_1}{M_1}}mM \Omega^3_{\neq}\right\|_{L^2H^3}\left(\left\|\sqrt{-\frac{\partial_t M_1}{M_1}}(\ma W^2_{\neq},\ma \partial_X{\de}_{\neq})\right\|_{L^2H^3}+\nu\|\tilde{\nabla} \nabla_{X,Z}p^{-\frac{1}{2}}D_{\neq}\|_{L^2H^3}\right).
			\end{align*}
			Turning to $I^{\noiii}_1$, we divide it into
			\begin{equation}\label{est:I1}
				\begin{aligned}
					I_1^{\noiii}=&\int_{0}^{T}\left\langle mM(V^1_0\partial_X\Omega^3_{\neq}),mM \Omega^3_{\neq}\right\rangle_{H^3} dt+\sum_{i=2,3}\int_{0}^{T}\left\langle mM(V^i_0\tilde{\partial}_i\Omega^3_{\neq}),mM \Omega^3_{\neq}\right\rangle_{H^3} dt\\
					&+\int_{0}^{T}\left\langle mM(V_{\neq}\cdot\tilde{\nabla}\Omega^3),mM \Omega^3_{\neq}\right\rangle_{H^3} dt=\sum_{i=1}^{3}I^{\noiii}_{1i}.
				\end{aligned}
			\end{equation}
			It follows from Lemma \ref{lemma:commutator} and Proposition \ref{est:com} that
			\begin{align*}
				I^{\noiii}_{11}=&\int_{0}^{T}\left\langle ([mM\langle\nabla\rangle^3,V^1_0]\partial_X\Omega^3_{\neq}),mM\langle\nabla\rangle^3\Omega^3_{\neq}\right\rangle_{L^2} dt\\
				=&\int_{0}^{T}\left\langle M\langle\nabla\rangle^3([m,V^1_0]\partial_X\Omega^3_{\neq}),mM\langle\nabla\rangle^3\Omega^3_{\neq}\right\rangle_{L^2} dt+\int_{0}^{T}\left\langle ([M\langle\nabla\rangle^3,V^1_0]m\partial_X\Omega^3_{\neq}),mM\langle\nabla\rangle^3\Omega^3_{\neq}\right\rangle_{L^2} dt\\
				\lesssim&\|\nabla V^1_{0}\|_{L^\infty H^4}\|mM\Omega^3_{\neq}\|^2_{L^2H^3}\lesssim\nu^{-4/3}\varepsilon^3.
			\end{align*}
			For $I^{\noiii}_{12}$ and $I^{\noiii}_{13}$,  Proposition \ref{est:main} implies that
			\begin{align*}
				I^{\noiii}_{12}+I^{\noiii}_{13}\lesssim&\|(V^2_0,V^3_0)\|_{L^\infty H^3}\|\tilde{\nabla} \Omega^3_{\neq}\|_{L^2H^3}\|mM\Omega^3_{\neq}\|_{L^2H^3}+{\|V_{\neq}\|_{L^2 H^3}\|\tilde{\nabla} \Omega^3\|_{L^2H^3}\|mM\Omega^3_{\neq}\|_{L^\infty H^3}}\\
				\lesssim&\varepsilon\nu^{-7/6}\varepsilon\nu^{-1/6}\varepsilon+\nu^{-1/3}\varepsilon\nu^{-7/6}\varepsilon^2\lesssim\nu^{-3/2}\varepsilon^3.
			\end{align*}
			Turning to $I^{\noiii}_2$, we divide it into
			\begin{align*}
				I^{\noiii}_2=&\int_{0}^{T}\left\langle mM(\partial_ZV^1_0\partial_XD_{\neq}),mM\Omega^3_{\neq}\right\rangle_{H^3} dt\\
				&+\int_{0}^{T}\left\langle mM\left(\partial_ZV^1_{\neq}\partial_XD-Q^1\partial_XV^3\right)_{\neq},mM \Omega^3_{\neq}\right\rangle_{H^3} dt\\
				&+\int_{0}^{T}\left\langle mM\left(\partial_ZV^2\tilde{\partial}_YD-Q^2\tilde{\partial}_YV^3-\Omega^3\partial_ZV^3\right)_{\neq},mM \Omega^3_{\neq}\right\rangle_{H^3} dt=I^{\noiii}_{21}+I^{\noiii}_{22}+I^{\noiii}_{23}.
			\end{align*}
			The estimates of $I^{\noiii}_{21}$ are similar to those of \cref{est:I21} line by line, {we omit it here.}

			For $I^{\noiii}_3$, we treat $i=1,2$ respectively. It holds for $i=1$ that
			\begin{align*}
				&\int_{0}^{T}\langle mM (\partial_XV^3_{\neq}\partial_{ZZ}V^1-(\partial_XV)\cdot\tilde{\nabla}(\partial_XV^3))_{\neq},mM \Omega^3_{\neq}\rangle_{H^3} dt\\
				\lesssim&\|\partial_XV^3_{\neq}\|_{L^2H^3}\|\partial_{ZZ}V^1\|_{L^\infty H^3}\|mM \Omega^3_{\neq}\|_{L^2H^3}+\|\partial_XV_{\neq}\|_{L^2H^3}\|\partial_X\tilde{\nabla}V^3\|_{L^2}\|mM\Omega^3_{\neq}\|_{L^\infty H^3}\\
				\lesssim&\nu^{-1/6}\varepsilon\nu^{-1}\varepsilon\nu^{-1/6}\varepsilon+\nu^{-1/3}\varepsilon\nu^{-1/2}\varepsilon^2\lesssim\nu^{-4/3}\varepsilon^3.
			\end{align*}
			When $i=2$, we divide it into
			\begin{align*}
				&\int_{0}^{T}\langle mM(\tilde{\partial}_{Y}V\cdot\tilde{\nabla}\tilde{\partial}_{Y}V^3+\tilde{\partial}_YV^3\partial_{ZZ}V^2)_{\neq},mM \Omega^3_{\neq}\rangle_{H^3} dt\\
				=&\int_{0}^{T}\langle mM(\partial_YV^1_{0}\tilde{\partial}_{XY}(-\partial_Zp^{-1}D)_{\neq}),mM \Omega^3_{\neq}\rangle_{H^3} dt+\int_{0}^{T}\langle mM(\partial_YV^1_{0}\tilde{\partial}_{XY}p^{-1}\Omega^3_{\neq}),mM \Omega^3_{\neq}\rangle_{H^3} dt\\
				&+\int_{0}^{T}\langle mM(\tilde{\partial}_YV^1_{\neq}\tilde{\partial}_{XY}V^3_{\neq}),mM \Omega^3_{\neq}\rangle_{H^3} dt+\int_{0}^{T}\langle mM(\tilde{\partial}_YV^2\tilde{\p}_{Y}^{2} V^3)_{\neq},mM \Omega^3_{\neq}\rangle_{H^3} dt\\
				&+\int_{0}^{T}\langle mM(\tilde{\partial}_YV^3\tilde{\partial}_{YZ}V^3)_{\neq},mM \Omega^3_{\neq}\rangle_{H^3} dt+\int_{0}^{T}\langle mM(\tilde{\partial}_YV^3\partial_{ZZ}V^2)_{\neq},mM \Omega^3_{\neq}\rangle_{H^3} dt		=\sum_{k=1}^{6}I^{\noiii}_{3k}.
			\end{align*}
			By the same procedure as in \cref{est:I31}, we obtain
			\begin{align*}
				I^{\noiii}_{31}=&\int_{0}^{T}\langle mM([p^{-\frac{1}{2}},\partial_YV^1_{0}]\partial_{Z}(\tilde{\partial}_Yp^{-\frac{1}{2}}D)_{\neq}),mM \partial_X\Omega^3_{\neq}\rangle_{H^3} dt\\
				&+\int_{0}^{T}\langle mM(\partial_YV^1_{0}\partial_{Z}(\tilde{\partial}_Yp^{-\frac{1}{2}}D)_{\neq}),mM \partial_Xp^{-\frac{1}{2}}\Omega^3_{\neq}\rangle_{H^3} dt\\
				\lesssim&\|\partial_YV^1_0\|_{L^\infty H^4}(\|\partial_Zp^{-\frac{1}{2}}D_{\neq}\|_{L^2H^3}+{\|m^{\frac{1}{4}}\partial_ZD_{\neq}\|_{L^2H^3}})\|\partial_Xp^{-\frac{1}{2}}mM \Omega^3_{\neq}\|_{L^2H^3}\\
				\lesssim&\nu^{-1}\varepsilon\nu^{-1/2}\varepsilon^2=\nu^{-3/2}\varepsilon^3.
			\end{align*}
			The remaining terms can be estimated by  Proposition \ref{est:main}.
			\begin{align*}
				\sum_{k=2}^{6}I^{\noiii}_{3k}\lesssim&\|\partial_YV^1_0\|_{L^\infty H^4}\|m^{\frac{1}{2}}\tilde{\partial}_{XY}p^{-1}\Omega^3_{\neq}\|_{L^2H^3}\|mM \Omega^3_{\neq}\|_{L^2H^3}\\
				&+\|\tilde{\partial}_YV^1_{\neq}\|_{L^2H^3}\|\tilde{\partial}_{XY}V^3_{\neq}\|_{L^2H^3}\|mM \Omega^3_{\neq}\|_{L^\infty H^3}\\
				&+{\|\tilde{\partial}_Y(V^2,V^3)\|_{L^2 H^3}\|pV^3\|_{L^2H^3}\|_{L^2H^3}\|mM\Omega^3_{\neq}\|_{L^\infty H^3}}\\
				&+\|\tilde{\partial}_YV^3\|_{L^2 H^3}\|\partial_{ZZ}V^2\|_{L^2H^3}\|mM \Omega^3_{\neq}\|_{L^\infty H^3}\\
				\lesssim&\nu^{-1}\varepsilon(\nu^{-1/6}\varepsilon)^2+\nu^{-1/2}\varepsilon\nu^{-1/2}\varepsilon^2+\nu^{-2/3}\varepsilon\nu^{-5/6}\varepsilon^2+\nu^{-1/2}\varepsilon\nu^{-1/2}\varepsilon^2 \lesssim\nu^{-3/2}\varepsilon^3.
			\end{align*}
			The component of $I^{\noiii}_4$ within $\partial_YV^1_0$ is similar to that of \cref{est:I31}, we use commutator estimate to get
			\begin{align*}
				&\int_{0}^{T}\langle mM\partial_Z(\partial_XV^2_{\neq}\partial_YV^1_{0}),mM \Omega^3_{\neq}\rangle_{H^3} dt\\
				=&-\int_{0}^{T}\langle mM(\tilde{\partial}_{XYZ}p^{-1}D_{\neq}\partial_YV^1_0), mM \Omega^3_{\neq}\rangle_{H^3} +
				\langle mM(\partial_{XZ}p^{-1}(W^2-\partial_X{\de}+\nu\partial_XD)_{\neq}\partial_YV^1_0),mM \Omega^3_{\neq}\rangle_{H^3} dt\\
				&+\int_{0}^{T}
				\langle mM(\partial_XV^2_{\neq}\partial_{YZ}V^1_0),mM \Omega^3_{\neq}\rangle_{H^3} dt\\
				\lesssim&\nu^{-3/2}\varepsilon^3.
			\end{align*}
			For the other component of $I^{\noiii}_4$, it holds that
			\begin{align*}
				&\int_{0}^{T}\langle mM\partial_Z((\partial_XV^1_{\neq})^2+\partial_XV^2_{\neq}\partial_YV^1_{\neq}+(\partial_YV^2)^2_{\neq}),mM \Omega^3_{\neq}\rangle_{H^3} dt\\
				\lesssim&(\|\partial_{XZ}V^1_{\neq}\|_{L^2H^3}\|\partial_XV^1_{\neq}\|_{L^2H^3}+\|\tilde{\partial}_{YZ}V^1_{\neq}\|_{L^2H^3}\|\partial_XV^2_{\neq}\|_{L^2H^3}+\|\tilde{\partial}_{Y}V^1_{\neq}\|_{L^2H^3}\|\partial_{XZ}V^2_{\neq}\|_{L^2H^3}\\
				&+\|\tilde{\partial}_YV^2\|_{L^2H^3}\|\tilde{\partial}_{YZ}V^2\|_{L^2H^3})\|mM \Omega^3_{\neq}\|_{L^\infty H^3}
				\lesssim\nu^{-4/3}\varepsilon^3.
			\end{align*}
			For $I^{\noiii}_5$, by bootstrap argument, we have
			\begin{align*}
				I^{\noiii}_5\lesssim&(\|p^{\frac{1}{2}}{\de}\|_{L^\infty H^3}(\nu\|\tilde{\Delta} V^3\|_{L^2H^3}+(\nu+\nu')\|\partial_ZD\|_{L^2H^3})\\
				&+\|{\de}\|_{L^\infty H^3}(\nu\|p^{\frac{1}{2}}\tilde{\Delta} V^3\|_{L^2H^3}+(\nu+\nu')\|p^{\frac{1}{2}}\partial_ZD\|_{L^2H^3})\\
				&+\|\partial_X{\de}\|_{L^2H^3}(\nu\|\tilde{\Delta}V^1\|_{L^\infty H^3}+(\nu+\nu')\|\partial_XD\|_{L^\infty H^3})\\
				&+\|\tilde{\partial}_Y{\de}\|_{L^\infty H^3}(\nu\|\tilde{\Delta}V^2\|_{L^2 H^3}+(\nu+\nu')\|\tilde{\partial}_YD\|_{L^2 H^3})\\
				&+\|\partial_Z{\de}\|_{L^\infty H^3}(\nu\|\tilde{\Delta}V^3\|_{L^2 H^3}+(\nu+\nu')\|\partial_ZD\|_{L^2 H^3})+\|{\de}\|_{L^\infty H^3}\bar{\nu}\|pD\|_{L^2H^3})\|mM p^{\frac{1}{2}}\Omega^3_{\neq}\|_{L^2H^3}\\
				\lesssim&\nu^{-1/2}\varepsilon\bar{\nu}\nu^{-1}\varepsilon\nu^{-1/2}\varepsilon+\nu^{-1/6}\varepsilon\bar{\nu}\nu^{-7/6}\varepsilon\nu^{-1/2}\varepsilon+\nu^{-1/3}\varepsilon\nu^{-1}\varepsilon\nu^{-1/2}\varepsilon+\nu^{-1/2}\varepsilon\bar{\nu}\nu^{-1}\varepsilon\nu^{-1/2}\varepsilon\\
				&+\nu^{-1/6}\varepsilon\bar{\nu}\nu^{-5/6}\varepsilon\nu^{-1/2}\varepsilon+\nu^{-1/6}\varepsilon\bar{\nu}\nu^{-4/3}\varepsilon\nu^{-1/2}\varepsilon
				\lesssim\nu^{-1}\varepsilon^3.
			\end{align*} 
			Putting together the estimates for $LS^{(7)}$, $LE^{(7)}$, and $I_{i}^{(7)}$ ($i=1,\dots,5$) yields Lemma \ref{Lem-E7}.
		\end{proof}
		
		\subsubsection{$H^3$ estimate on  $\Omega^3_0$}
		\begin{Lem}\label{Lem-E71}
			Under the same assumptions as \cref{MT} and  Proposition \ref{prop-bs}, it holds that
			\begin{align}
				\sup_{0\leq t\leq T}\|\Omega^3_0(t)\|^2_{H^3}+\nu\|\nabla \Omega^3_0\|^2_{L^2H^3}\lesssim\|\Omega^3_0(0)\|^2_{H^3}+\nu^{-\frac{3}{2}}\varepsilon^3.
			\end{align}
		\end{Lem}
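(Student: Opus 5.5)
We will proceed exactly as for $W^2_0$ in Lemma \ref{Lem-E61}. Take the simple zero mode of \cref{equ-O3}. On $k=0$ the linear operator $\lt_{\Omega^3}$ vanishes, because $\partial_t p=0$ and the term with $\partial_{XZ}$ is zero. So $\Omega^3_0$ solves a pure heat equation,
\begin{align*}
\partial_t\Omega^3_0-\nu\Delta\Omega^3_0=(\nlt_{\Omega^3})_0 .
\end{align*}
Pairing with $\Omega^3_0$ in $H^3$ then gives
\begin{align*}
\|\Omega^3_0(T)\|^2_{H^3}+\nu\|\nabla\Omega^3_0\|^2_{L^2H^3}=\|\Omega^3_0(0)\|^2_{H^3}+K^{(2)}_1+K^{(2)}_2 .
\end{align*}
Here $K^{(2)}_1$ collects the zero–zero interactions $(\nlt_{\Omega^3_0})_1$ and $K^{(2)}_2$ collects the contributions $(V_{\neq}\cdots V_{\neq})_0$ coming from nonzero–nonzero interactions.

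For $K^{(2)}_1$ we first handle the transport term. The piece $V^1_0\partial_X\Omega^3_0$ vanishes on zero modes, so only $V^2_0\partial_Y+V^3_0\partial_Z$ survives. We treat it with the commutator estimate of Lemma \ref{lemma:commutator} and integrate by parts, which produces $\|\nabla(V^2_0,V^3_0)\|_{L^\infty H^3}\|\Omega^3_0\|_{L^2H^3}^2$. The remaining zero-mode terms are
\begin{align*}
Q^2_0\partial_YV^3_0,\qquad \Omega^3_0\partial_ZV^3_0,\qquad \partial_ZV^2_0\partial_YD_0,\qquad \partial_Z(\partial_YV^2_0)^2,\qquad \partial_jV^3_0\partial_{ZZ}V^j_0 .
\end{align*}
We bound these by Hölder and Sobolev in the form $L^\infty H^3\times L^2H^3\times L^2H^3$, using Proposition \ref{est:main}. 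That proposition gives $V^2_0,V^3_0,D_0,R_0$ of size $\eps$ in $L^\infty$ and $\nu^{-1/2}\eps$ in $L^2$ for one extra derivative, so each such term is $\lesssim\nu^{-1}\eps^3$.

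The first real danger is lift-up: terms containing $V^1_0$, namely $\partial_ZV^1_0\partial_XD$ and $\partial_ZV^1\partial_{ZZ}$-type terms. Their $k=0$ parts with $\partial_X$ vanish. The surviving $V^1_0$ pieces involve $\partial_Z V^1_{0\neq}$ multiplied by $k=0$ factors of $V^2_0,V^3_0$ (e.g.\ $\partial_j V^3_0\partial_{ZZ}V^1_0$ appears only with $j=1$, so it vanishes). We will check that every remaining term with $V^1_0$ carries an $X$-derivative on $V^1_0$ or is paired with $V^3$. The viscous terms $(\tilde\Delta-\tilde\nabla\tilde{\rm div})(G(R)(\nu\tilde\Delta V+\cdots))^3$ restricted to zero modes involve only $V^3_0,V^2_0,D_0$. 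We put one derivative on $\Omega^3_0$ and use $\nu\|\nabla\Omega^3_0\|_{L^2H^3}$, which gives $\nu^{-1}\eps^3$.

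The main obstacle is $K^{(2)}_2$, the zero-mode projection of nonzero interactions. The worst pieces are
\begin{align*}
(V_{\neq}\cdot\tilde\nabla\Omega^3_{\neq})_0,\qquad (\partial_ZV^1_{\neq}\partial_XD_{\neq})_0,\qquad (Q^1_{\neq}\partial_XV^3_{\neq})_0 .
\end{align*}
For the first, we integrate by parts in the $Y,Z$ directions, since $\tilde\partial_Y$ acts as $\partial_Y$ on the product zero mode. This moves a derivative onto $\Omega^3_0$, and we pay $\nu^{-1/2}$ through $\nu^{1/2}\|\nabla\Omega^3_0\|_{L^2H^3}$. The nonzero factors are taken in $L^2_t$ with the enhanced-dissipation bounds of Proposition \ref{est:main}, namely $\nu^{1/6}\|\nabla_{X,Z}V_{\neq}\|_{L^2H^3}\lesssim\eps$. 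For the terms involving $D_{\neq}$ we use $\nu^{1/6}\|m^{1/4}\partial_X R_{\neq}\|_{L^2}$ and the $\nu^{-2/3}$ bound on $\|\nabla_{X,Z}D_{\neq}\|_{L^2H^3}$. These should give $\nu^{-1/3}\eps\cdot\nu^{-2/3}\eps\cdot\eps+\nu^{-1/2}\cdot\nu^{-1/3}\eps\cdot\nu^{-1/6}\eps\cdot\eps\lesssim\nu^{-1}\eps^3$. The hardest product is $Q^1_{\neq}\partial_XV^3_{\neq}$, since $Q^1_{\neq}$ has no $X$-derivative. We bound it by $\|m^{1/2}pV^1_{\neq}\|_{L^2H^3}\lesssim\nu^{-1/2}\eps$ against $\|\partial_X V^3_{\neq}\|_{L^2H^3}\lesssim\nu^{-1/6}\eps$. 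The missing $m^{1/2}$ is compensated because $\partial_XV^3_{\neq}$ absorbs $m^{-1/2}\lesssim p/k^2$ via \eqref{def-p}. The cost is $\nu^{-3/2}\eps^3$ at worst, which is acceptable.

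Summing $K^{(2)}_1+K^{(2)}_2\lesssim\nu^{-3/2}\eps^3$ yields Lemma \ref{Lem-E71}.
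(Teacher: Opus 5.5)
Your architecture is the paper's: on $k=0$ the operator $\lt_{\Omega^3}$ vanishes, the $H^3$ energy identity splits into $K^{(2)}_1$ (zero--zero) and $K^{(2)}_2$ (nonzero--nonzero), and everything is closed by Hölder against Proposition \ref{est:main}. The paper itself only asserts that $K^{(2)}_2$ is handled like $\Omega^3_{\neq}$.

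Your lift-up paragraph can be dropped. No $V^1_0$ term survives at all: zero--zero products containing $V^1_0$ all carry a $\partial_X$, and zero--nonzero products have vanishing $x$-average.

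\textbf{The step that fails as written} is $(Q^1_{\neq}\partial_XV^3_{\neq})_0$. The weight $m^{-1/2}$ lives at the frequency $(k,\eta,l)$ of $V^1_{\neq}$, whereas $V^3_{\neq}$ sits at $(-k,\eta',l')$ and $\Omega^3_0$ at $(0,\eta+\eta',l+l')$. Moving the weight therefore needs a shift estimate such as $p^{1/2}(k,\eta,l)\le p^{1/2}(-k,\eta',l')+|\eta+\eta',l+l'|$. This spends a derivative on $\Omega^3_0$, or two with your cruder $m^{-1/2}\lesssim p/k^2$. Afterwards the $V^3$ factor is $p^{1/2}V^3_{\neq}$ (resp.\ $pV^3_{\neq}/k$), not $\partial_XV^3_{\neq}$. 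So the bound $\|\partial_XV^3_{\neq}\|_{L^2H^3}\lesssim\nu^{-1/6}\eps$ you invoke is not the relevant one, and your claimed $\nu^{-3/2}\eps^3$ does not follow from the argument given.

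The simplest repair avoids the weight altogether:
\begin{itemize}
\item put $\Omega^3_0$ in $L^\infty H^3$, where it has size $\eps$ because $m=M=1$ at $k=0$;
\item bound $Q^1_{\neq}=pV^1_{\neq}$ in $L^2H^3$ without $m$, using the reconstruction formula for $V^1_{\neq}$;
\item its worst piece $\partial_Z\partial_X^{-1}\Omega^3_{\neq}$ loses $m^{-1}\lesssim\nu^{-2/3}$ and is $\lesssim\nu^{-7/6}\eps$, while the other pieces are $\lesssim\nu^{-2/3}\eps$.
\end{itemize}
The term is then $\lesssim\nu^{-7/6}\eps\cdot\nu^{-1/6}\eps\cdot\eps=\nu^{-4/3}\eps^3$.

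Two other counts are too optimistic, though neither endangers the lemma.

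First, the transport term $(V_{\neq}\cdot\tilde\nabla\Omega^3_{\neq})_0$ is not $\nu^{-1}\eps^3$. $\Omega^3_{\neq}$ is controlled only through $mM\Omega^3_{\neq}$ with $m\gtrsim\nu^{2/3}$, so $\|\tilde\nabla\Omega^3_{\neq}\|_{L^2H^3}\lesssim\nu^{-7/6}\eps$. Hence $\|V_{\neq}\|_{L^2H^3}\|\tilde\nabla\Omega^3_{\neq}\|_{L^2H^3}\|\Omega^3_0\|_{L^\infty H^3}\lesssim\nu^{-3/2}\eps^3$. This matches the paper's count for $I^{(7)}_{13}$ and the $\nu^{-3/2}$ in the statement, and it needs no integration by parts.

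Second, in $K^{(2)}_1$, $D_0$ is not of size $\eps$ in $L^\infty H^3$; only $p^{-1/2}D_0$ is. By \eqref{bs-yn}, $\|D_0\|_{L^\infty H^3}\lesssim\nu^{-1/2}\eps$ and $\|\nabla D_0\|_{L^2H^3}\lesssim\nu^{-1}\eps$. Since $Q^2_0=W^2_0+\partial_YD_0$, your direct Hölder bound on $Q^2_0\partial_YV^3_0$ and $\partial_ZV^2_0\partial_YD_0$ gives $\nu^{-3/2}\eps^3$. The paper instead regroups these as $-W^2_0\partial_YV^3_0+\partial_YD_0(\partial_ZV^2_0-\partial_YV^3_0)$ and integrates $\partial_Y$ off $D_0$, reaching $\nu^{-1}\eps^3$. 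Either suffices, since $\eps=\eps_0\nu^{3/2}$.
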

		\begin{proof}
			
			The energy estimate gives
			\begin{align*}
				\|\Omega^3_0(T)\|^2_{H^3}+\nu\|\nabla \Omega^3_0\|^2_{L^2H^3}=\|\Omega^3_0(0)\|^2_{H^3}+K^{(2)}_1+K^{(2)}_2,
			\end{align*}
			where we denote
			\begin{align*}
				K^{(2)}_1=&\int_{0}^{T}\langle (\nlt_{\Omega^3_0})_{1}, \Omega^3_0\rangle_{H^3} dt,\\
				K^{(2)}_2=&\int_{0}^{T}\langle (\nlt_{\Omega^3_0})_{2}, \Omega^3_0\rangle_{H^3} dt,
			\end{align*}
			with $\nlt_{\Omega^3_0}$  given in \cref{t-O3}, and 
			\begin{align*}
				(\nlt_{\Omega^3_0})_{1}=&-V_0\cdot\nabla \Omega^3_0+\partial_ZV^2_0\partial_YD_0-Q^2_0\partial_YV^3_0-\Omega^3_0\partial_ZV^3_0\\
				&+2\partial_YV^3_0\partial_{ZZ}V^3_0-2\partial_YV^2_0\partial^2_{Y}V^3_0-2\partial_YV^3_0\partial_{ZY}V^2_0+\partial_Z(\partial_YV^2_0)^2\\
				&-\Delta(G({\de})_0(\nu\Delta V^3_0+(\nu+\nu')\partial_ZD_0))\\
				&+\partial_Z(\partial_iG({\de})_0(\nu\Delta V^i_0+(\nu+\nu')\partial_i D_0))+\partial_Z(G({\de})_0\bar{\nu}\Delta D_0),\\
				(\nlt_{\Omega^3_0})_{2}=&\nlt_{\Omega^3_0}-(\nlt_{\Omega^3_0})_{1}.
			\end{align*}	
			For the component of $K^{(2)}_1$ within $\partial_YD_0$, integration by parts implies that
			\begin{align*}
				\int_{0}^{T}\langle (\partial_ZV^2_0\partial_YD_0-Q^2_0\partial_YV^3_0),\Omega^3_{0}\rangle_{H^3} dt
				=&\int_{0}^{T}-\langle W^2_0\partial_YV^3_0,\Omega^3_{0}\rangle_{H^3}+\langle D_0(\partial^2_{Y}V^3_0+\partial_{YZ}V^2_0),\Omega^3_{0}\rangle_{H^3}\\
				&+\langle D_0(\partial_{Y}V^3_0+\partial_{Z}V^2_0),\partial_Y\Omega^3_{0}\rangle_{H^3} dt.
			\end{align*}
			Therefore, by  Proposition \ref{est:main}, we have
			\begin{align*}
				K^{(2)}_1\lesssim&\|(V^2_0,V^3_0)\|_{L^\infty H^3}\|\nabla \Omega^3_0\|_{L^2H^3}\|\Omega^3_0\|_{L^2H^3}+\|W^2_0\|_{L^2H^3}\|\partial_YV^3_0\|_{L^2H^3}\|\Omega^3_0\|_{L^\infty H^3}\\
				&+\|D_0\|_{L^2H^3}\|(\partial_Y V^3_0,\partial_ZV^2_0)\|_{L^\infty H^3}\|\partial_Y\Omega^3_0\|_{L^2 H^3}+\|D_0\|_{L^2H^3}\|\partial_Y(\partial_Y V^3_0,\partial_ZV^2_0)\|_{L^2 H^3}\|\Omega^3_0\|_{L^\infty H^3}\\
				&+\|\partial_ZV^2_0\|_{L^\infty H^3}\|\Omega^3_0\|^2_{L^2H^3}+
				\|\partial_Y(V^2_0,V^3_0)\|_{L^2H^3}\|(\partial_Z\nabla V^2_0,\Delta V^3_0)\|_{L^2H^3}\|\Omega^3_0\|_{L^\infty H^3}\\
				&+(\|p^{\frac{1}{2}}{\de}_0\|_{L^\infty H^3}\|(\nu\Delta V^3_0+(\nu+\nu')\partial_ZD_0)\|_{L^2H^3}+\|{\de}_0\|_{L^\infty H^3}\|p^{\frac{1}{2}}(\nu\Delta V^3_0+(\nu+\nu')\partial_ZD_0)\|_{L^2H^3}\\
				&+\|\nabla {\de}\|_{L^\infty H^3}\|(\nu\partial_Y\Delta V^2_0+\nu\partial_Z\Delta V^3_0+(\nu+\nu')p^{\frac{1}{2}} D_0)\|_{L^2H^3}+\|{\de}_0\|_{L^\infty H^3}\bar{\nu}\|pD\|_{L^2H^3})\|\nabla \Omega^3_0\|_{L^2H^3}\\
				\lesssim&\nu^{-1}\varepsilon^3.
			\end{align*}
			Through a process similar to $\Omega^3_{\neq}$, we obtain
			\begin{align*}
				K^{(2)}_2\lesssim\nu^{-3/2}\varepsilon^3.
			\end{align*}
			Combining estimates on $K^{(2)}_1$ and $K^{(2)}_2$, Lemma \ref{Lem-E71} follows.
		\end{proof}
		
		\subsection{$H^3$ estimate on  $W^1_{\neq}$}
		In this subsection, we improve \cref{bs-w1} for $j=0$. The energy estimates on $W^1_{\neq}$ are generally much simpler than those on $\Omega^3_{\neq}$ as the additional growth of $\nu^{-1/3}$ arising by lift-up effect.
		\begin{Lem}\label{Lem-E8}
			Under the same assumptions as Theorem \ref{MT} and  Proposition \ref{prop-bs}, it holds that
			\begin{align}
				\sup_{0\leq t\leq T}&\|mM W^1_{\neq}(t)\|^2_{H^3}+\nu\|\tilde{\nabla}mM W^1_{\neq}\|^2_{L^2H^3}\nonumber\\
				&+\left\|\sqrt{-\frac{\partial_t M}{M}}mM W^1_{\neq}\right\|^2_{L^2H^3}+\left\|\sqrt{-\frac{\partial_t m}{m}}mM W^1_{\neq}\right\|^2_{L^2H^3}\lesssim\|mM W^1_{\neq}(0)\|^2_{H^3}+\nu^{-\frac{3}{2}}\varepsilon(\nu^{-1/3}\eps)^2.
			\end{align}
		\end{Lem}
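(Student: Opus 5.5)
We run the same weighted energy argument as for $\Omega^3_{\neq}$ in Lemma \ref{Lem-E7}, now applied to \cref{equ-W1}. We take the $H^3$ inner product of $mM$ applied to \cref{equ-W1} with $mMW^1_{\neq}$. The time derivatives of the multipliers produce the two damping terms $\|\sqrt{-\partial_tM/M}\,mMW^1_{\neq}\|^2$ and $\|\sqrt{-\partial_tm/m}\,mMW^1_{\neq}\|^2$. The term $\nu p$ gives the dissipation $\nu\|\tilde\nabla mMW^1_{\neq}\|^2_{L^2H^3}$. The remaining terms are collected as $LS+LE+\sum_i I_i$, following the pattern already used for $\Omega^3$.

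\textbf{Linear terms.} These are the terms from $\lt_{W^1}$ in \cref{l-W2}.
\begin{itemize}
\item The stretching term $-\frac{\partial_tp}{p}W^1$ is absorbed by $\sqrt{-\partial_tm/m}$ (and by $c_1\nu^{1/3}$ in the non-resonant region), exactly as in $LS^{(7)}$.
\item The lift-up source $W^2-2\partial_X\de+2(\partial_{XX}+\tilde\partial_Y^2)\tilde\Delta^{-1}\partial_X\de+\dots$ is not small. It is the reason for the extra $\nu^{-1/3}$ in \cref{bs-w1}. We bound it by
\[
\|mM(W^2,\nabla_{X,Z}\de)\|_{L^2H^3}\,\|mMW^1_{\neq}\|_{L^2H^3}.
\]
Using $m\lesssim m^{1/4}$ and the $\nu^{1/6}$-weighted $L^2_t$ bounds from \cref{bs-w2} and \cref{bs-xn}, this gives
\[
\nu^{-1/6}B\varepsilon\cdot\nu^{-1/6}\cdot 10C_0\nu^{-1/3}B\varepsilon
\]
times a constant independent of $C_0$. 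Since $C_0$ is large, this closes the bound $(C_0\nu^{-1/3}\varepsilon)^2$ with room to spare.
\item The viscous $D$ terms, including $\nu(\nu+\nu')\tilde\Delta\tilde\partial_YD$, are handled with \cref{bs-yn}–\cref{bs-pn} and integration by parts. For example, $\nu^2\|p^{1/2}\tilde\nabla D\|\,\|\tilde\nabla mMW^1\|$ is controlled by $\nu^{-1/6}\varepsilon\cdot\nu^{-1/2}\nu^{-1/3}\varepsilon$ up to powers of $\nu$, which is acceptable.
\end{itemize}

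\textbf{Nonlinear terms $\nlt_{W^1}$ from \cref{t-W1}.} We split these into transport, lift-up interaction and compressible pieces.
\begin{itemize}
\item For the transport term $V\cdot\tilde\nabla W^1$, we first isolate $V^1_0\partial_XW^1_{\neq}$. We treat it with the commutators of Lemma \ref{lemma:commutator} and Proposition \ref{est:com}, together with integration by parts, as in $I^{(7)}_{11}$. This yields
\[
\|\nabla V^1_0\|_{L^\infty H^4}\|mMW^1_{\neq}\|_{L^2H^3}^2\lesssim \nu^{-1}\varepsilon(\nu^{-1/2}\varepsilon)^2 .
\]
The remaining transport pieces are bounded by Proposition \ref{est:main}.
\item The dangerous terms are the lift-up interactions with compressibility: $\partial_XV^2\tilde\partial_YD$, the piece $\tilde\partial_Y(\de D)$, and above all the term from $\tilde\partial_YV\cdot\tilde\nabla\de$ containing $\partial_YV^1_{0\neq}\tilde\partial_Y\de_{\neq}$. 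For this term we copy the time integration by parts of \cref{est:I21}. We substitute the equation for $\tilde\partial_Y\de_{\neq}$ from \cref{sys-ND}. The boundary and $\partial_t(\partial_YV^1_0)$ terms are harmless. The term where $\partial_t$ falls on $W^1_{\neq}$ produces, through \cref{equ-W1}, the products $V^1_{00}\partial_XW^1_{\neq}$ and $\partial_YV^1_{0\neq}\tilde\partial_YD_{\neq}$, as anticipated in the introduction. Here we use the uniform bound $\|V^1_{00}\|_{L^\infty}\lesssim\nu^{-1/2}\varepsilon$ of Theorem \ref{2026-3-21-2}. This produces factors $\nu^{-a}\varepsilon^2$ with $a\le 3$, which close precisely because $\varepsilon=\varepsilon_0\nu^{3/2}$.
\item Terms such as $Q^2\tilde\partial_YV^1$ and $\tilde\partial_iV^1\partial_X^2V^i$ involve $\partial_YV^1_0$ multiplied by non-zero modes. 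For these we use the commutator $[p^{-1/2},\cdot]$ of Proposition \ref{est:com-p}, as in \cref{est:I31}, to move a factor $\partial_Xp^{-1/2}$ onto $W^1$.
\item The viscous $G(\de)$ and $F(\de)$ terms are estimated as in $I^{(7)}_5$.
\end{itemize}

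\textbf{Conclusion and main obstacle.} Each nonlinear contribution is bounded by $\nu^{-3/2}\varepsilon(\nu^{-1/3}\varepsilon)^2$, and this gives the lemma. The main obstacle is the interaction term $\partial_YV^1_{0\neq}\tilde\partial_Y\de_{\neq}$. A direct Hölder bound loses $\nu^{-1}\cdot\nu^{-1/2}$ beyond what is available. Closing it requires both the time integration by parts and the $L^\infty$ control of $V^1_{00}$. I would check this piece first, since the $\nu$-bookkeeping is tight there.
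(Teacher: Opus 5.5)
There is a genuine gap: the time integration by parts is attached to the wrong term, and the term that actually needs it is assigned to an argument that does not close. Your energy framework, and your handling of $LS$, the lift-up source, the viscous linear terms and the transport piece $V^1_0\partial_XW^1_{\neq}$, agree with the paper.

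\textbf{The term you integrate by parts is not the dangerous one.} The product $(\tilde{\partial}_YV)\cdot\tilde{\nabla}R$ contains no $\partial_YV^1_0\,\tilde{\partial}_YR_{\neq}$. Its $V^1_0$-part is $\partial_YV^1_0\,\partial_XR_{\neq}$, because the factor $\tilde{\partial}_YR$ multiplies $\tilde{\partial}_YV^2$. That part is bounded directly by $\nu^{-1}\varepsilon\cdot\nu^{-1/3}\varepsilon\cdot\nu^{-1/2}\varepsilon$. Even a term $\partial_YV^1_0\tilde{\partial}_YR_{\neq}$ would close directly, since $\|p^{1/2}R_{\neq}\|_{L^2H^3}\lesssim\nu^{-2/3}\varepsilon$. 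Also, ``substituting the equation for $\tilde{\partial}_YR$'' only creates a time derivative to integrate by parts if you start from $\tilde{\partial}_YD$.

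\textbf{The dangerous term is handled by a step that fails.} It sits in $-Q^2\tilde{\partial}_YV^1$, which you propose to treat with the $[p^{-1/2},\cdot]$ commutator of \cref{est:I31}. Since $Q^2_{\neq}=\tilde{\partial}_YD_{\neq}+(W^2-\partial_XR+\nu\partial_XD)_{\neq}$, it contains $\partial_YV^1_0\,\tilde{\partial}_YD_{\neq}$. The trick in \cref{est:I31} works only because $V^2_{\neq}\ni-\tilde{\partial}_Yp^{-1}D$ carries a $p^{-1}$. That frees a $\partial_X$, which is moved onto the test function as $\partial_Xp^{-1/2}$. The factor $\tilde{\partial}_YD_{\neq}$ has neither a $p^{-1}$ nor a spare $\partial_X$. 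Estimating directly,
\[
\|\partial_YV^1_0\|_{L^\infty H^4}\|\tilde{\nabla}D_{\neq}\|_{L^2H^3}\|mMW^1_{\neq}\|_{L^2H^3}\lesssim\nu^{-1}\varepsilon\cdot\nu^{-1}\varepsilon\cdot\nu^{-1/2}\varepsilon=\nu^{-5/2}\varepsilon^3,
\]
while the lemma needs $\nu^{-3/2}\varepsilon(\nu^{-1/3}\varepsilon)^2=\nu^{-13/6}\varepsilon^3$. With $\varepsilon=\varepsilon_0\nu^{3/2}$ this misses by a factor $\nu^{-1/3}$. Moving $\tilde{\partial}_Y$ onto $W^1_{\neq}$ to use the dissipation still leaves $\nu^{-7/3}\varepsilon^3$. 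So your plan, as written, does not control this term.

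\textbf{The fix.} Apply the time integration by parts to exactly $\langle mM(\partial_YV^1_0\tilde{\partial}_YD_{\neq}),mMW^1_{\neq}\rangle_{H^3}$, as the paper does for $I^{(8)}_{21}$. Substitute $\tilde{\partial}_YD_{\neq}=-\partial_t\tilde{\partial}_YR-\partial_XR-\tilde{\partial}_Y(V\cdot\tilde{\nabla}R)-\tilde{\partial}_Y(RD)$ and integrate by parts in $t$. This gives:
\begin{itemize}
\item a boundary term;
\item terms with $\partial_t(\partial_YV^1_0)$ and with $\partial_t(mM)$;
\item a directly bounded term $\partial_YV^1_0\partial_XR_{\neq}$;
\item cubic terms;
\item the term with $\partial_tW^1_{\neq}$, which through \cref{equ-W1} produces $V^1_0\partial_XW^1_{\neq}$ and $\partial_YV^1_0\tilde{\partial}_YD_{\neq}$ tested against $\partial_YV^1_0\tilde{\partial}_YR_{\neq}$.
\end{itemize}
The last item is where $\|V^1_{00}\|_{L^\infty}\lesssim\nu^{-1/2}\varepsilon$ and $\varepsilon\sim\nu^{3/2}$ enter, exactly as you anticipated. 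The remaining piece $(W^2-\partial_XR+\nu\partial_XD)_{\neq}\partial_YV^1_0$ of $Q^2\tilde{\partial}_YV^1$ is then bounded directly by $\nu^{-1/6}\varepsilon\cdot\nu^{-1}\varepsilon\cdot\nu^{-1/2}\varepsilon$. The term $\partial_XV^2\tilde{\partial}_YD$, which you also flag, carries no lift-up factor and is harmless.
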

		\begin{proof}
			
			The energy estimate gives
			\begin{align*}
				&\|mM W^1_{\neq}(T)\|^2_{H^3}+\nu\|\tilde{\nabla}mM W^1_{\neq}\|^2_{L^2H^3}+\left\|\sqrt{-\frac{\partial_t M}{M}}mM W^1_{\neq}\right\|^2_{L^2H^3}+\left\|\sqrt{-\frac{\partial_t m}{m}}mM W^1_{\neq}\right\|^2_{L^2H^3}\\
				=&\|W^1_{\neq}(0)\|^2_{H^3}+LU^{\nwi}+LS^{\nwi}+LE^{\nwi}+\sum_{i=1}^{6}I^{\nwi}_i,
			\end{align*}
			where the linear and nonlinear terms are given by
			\begin{align*}
				LU^{\nwi}=&\int_{0}^{T}\big\langle mM(-W^2+2\partial_X{\de}-2\nu\partial_XD-2(\partial_{XX}+\tilde{\p}_{Y}^{2} )p^{-1}\partial_X{\de}\\
				&+2\partial_{XX}p^{-1}W^2+2\nu\partial_{XX}p^{-1}\partial_XD)_{\neq},mMW^1_{\neq}\big\rangle_{H^3} dt,\\
				LS^{\nwi}=&\int_{0}^{T}\left\langle \frac{\partial_tp}{p}mMW^1_{\neq},mMW^1_{\neq}\right\rangle_{H^3} dt,\\
				LE^{\nwi}=&-\int_{0}^{T}\left\langle mM\left(2\nu\tilde{\partial}_{XY}\tilde{\Delta}^{-1}W^2-2\nu\tilde{\partial}_{XY}\tilde{\Delta}^{-1}\partial_X{\de}+2\nu^2\tilde{\partial}_{XY}\tilde{\Delta}^{-1}\partial_XD-\nu(\nu+\nu')\tilde{\Delta}\tilde{\partial}_YD\right)_{\neq},mM W^1_{\neq}\right\rangle_{H^3} dt,\\
				I^{\nwi}_1=&-\int_{0}^{T}\left\langle mM(V\cdot\tilde{\nabla}W^1)_{\neq},mM W^1_{\neq}\right\rangle_{H^3} dt,\\
				I^{\nwi}_2=&\int_{0}^{T}\left\langle mM\left(\partial_XV^2\tilde{\partial}_YD+\partial_XV^3\partial_ZD-Q^2\tilde{\partial}_YV^1-Q^3\partial_ZV^1-\Omega^1\partial_XV^1\right)_{\neq},mM W^1_{\neq}\right\rangle_{H^3} dt,\\
				I^{\nwi}_3=&\int_{0}^{T}\left\langle mM\left(2\sum_{i=2,3}(\partial_iV^1\partial_{XX}V^i-\tilde{\partial}_iV^j\tilde{\partial}_{ij}V^1)\right)_{\neq},mM W^1_{\neq}\right\rangle_{H^3} dt,\\
				I^{\nwi}_4=&\int_{0}^{T}\left\langle mM\left(\sum_{i,j=2,3}\partial_X(\tilde{\partial}_iV^j\tilde{\partial}_jV^i)\right)_{\neq},mM W^1_{\neq}\right\rangle_{H^3} dt,\\
				I^{\nwi}_5=&\int_{0}^{T}\left\langle -mM\left((\tilde{\partial}_YV)\cdot\tilde{\nabla}{\de}-\tilde{\partial}_Y({\de}D)+\nu(\tilde{\partial}_YV)\cdot\tilde{\nabla}D+\nu\tilde{\partial}_Y(\tilde{\partial}_iV^j\tilde{\partial}_jV^i)\right)_{\neq},mM W^1_{\neq}\right\rangle_{H^3} dt,\\
				I^{\nwi}_6=&\int_{0}^{T}\big\langle mM\big(\partial_X\tilde{\textrm{div}}\left(G({\de})(\nu\tilde{\Delta} V+(\nu+\nu')\tilde{\nabla} D)\right)-\tilde{\Delta}\left(G({\de})(\nu\tilde{\Delta} V^1+(\nu+\nu')\partial_X D)\right)\\
				&-\nu\tilde{\partial}_Y\tilde{\textrm{div}} \left(F({\de})\tilde{\nabla} {\de}+G({\de})(\nu\tilde{\Delta} V+(\nu+\nu')\tilde{\nabla} D)\right)\big),mM W^1_{\neq}\big\rangle_{H^3} dt.
			\end{align*}
			For $LU^{\nwi}$, $LS^{\nwi}$, and $LE^{\nwi}$, a combination of properties of the multipliers $m,M$ and bootstrap assumptions implies that
			\begin{align*}
				LU^{\nwi}\lesssim&\|m^{\frac{1}{4}}(W^2_{\neq},\partial_X{\de}_{\neq})\|_{L^2H^3}\|mMW^1_{\neq}\|_{L^2H^3}+\nu\|m^{\frac{1}{4}}\partial_Xp^{-\frac{1}{2}}D_{\neq}\|_{L^2H^3}\|p^{\frac{1}{2}}mMW^1_{\neq}\|_{L^2H^3}\\
				\lesssim&\nu^{-1/6}\varepsilon\nu^{-1/2}C_0\varepsilon+\nu\nu^{-1/6}\varepsilon\nu^{-5/6}C_0\varepsilon\lesssim\frac{1}{C_0}(\nu^{-1/3}C_0\varepsilon)^2,\\
				LS^{\nwi}\leq&\left\|\sqrt{-\frac{\partial_t m}{m}}mM W^1_{\neq}\right\|^2_{L^2H^3}+c_1\nu^{1/3}\|mMW^1_{\neq}\|^2_{L^2H^3},
			\end{align*}
			and 
			\begin{align*}
				LE^{\nwi}\lesssim&\nu\|m^{\frac{1}{4}}\partial_Xp^{-\frac{1}{2}}(W^2,\partial_X{\de})\|_{L^2H^3}\|mMW^1_{\neq}\|_{L^2H^3}+\nu^2\|m^{\frac{1}{4}}\partial_Xp^{-\frac{1}{2}}D\|_{L^2H^3}\|\partial_XmMW^1_{\neq}\|_{L^2H^3}\\
				&+\nu(\nu+\nu')\|pD_{\neq}\|_{L^2H^3}\|\tilde{\partial}_YmMW^1_{\neq}\|_{L^2H^3}\\
				\lesssim&(\nu^{\frac{1}{2}}C_0+\frac{1}{C_0})(\nu^{-1/3}C_0\varepsilon)^2.
			\end{align*}
			For $I^{\nwi}_1$, arguing as in the estimate of \cref{est:I1}, we have
			\begin{align*}
				I^{\nwi}_1\lesssim&\|\nabla V^1_0\|_{L^\infty H^4}\|mMW^1_{\neq}\|_{L^2H^3}+\|(V^2_0,V^3_0)\|_{L^\infty H^3}\|\tilde{\nabla} W^1_{\neq}\|_{L^2H^3}\|W^1_{\neq}\|_{L^2H^3}\\
				&+{(\|V_{\neq}\|_{L^2H^3}\|\tilde{\nabla} W^1_{\neq}\|_{L^2H^3}+\|V_{\neq}\|_{L^2H^3}\|\nabla(\Delta V^1_0-\partial_Y{\de}_0+\nu\partial_YD_0)\|_{L^2H^3})\|mMW^1_{\neq}\|_{L^\infty H^3}}\\
				\lesssim&\nu^{-1}\varepsilon(\nu^{-1/2}\varepsilon)^2+\nu^{-1/3}\varepsilon\nu^{-3/2}\varepsilon\nu^{-1/3}\varepsilon\lesssim\nu^{-3/2}\varepsilon(\nu^{-1/3}\varepsilon)^2.
			\end{align*}
			Turning to $I^{\nwi}_2$, we divide it into
			\begin{align*}
				I^{\nwi}_2=&-\int_{0}^{T}\left\langle mM(\tilde{\partial}_YD_{\neq}\partial_YV^1_0),mMW^1_{\neq}\right\rangle_{H^3} dt\\
				&-\int_{0}^{T}\left\langle mM((W^2-\partial_X{\de}+\nu\partial_XD)_{\neq}\partial_YV^1_0),mMW^1_{\neq}\right\rangle _{H^3}dt\\
				&-\int_{0}^{T}\left\langle mM\left(Q^2\tilde{\partial}_YV^1_{\neq}+Q^3\partial_ZV^1\right)_{\neq},mM W^1_{\neq}\right\rangle_{H^3} dt\\
				&+\int_{0}^{T}\left\langle mM\left(\partial_XV^2\tilde{\partial}_YD+\partial_XV^3\partial_ZD-\Omega^1\partial_XV^1\right)_{\neq},mM W^1_{\neq}\right\rangle_{H^3} dt=I^{\nwi}_{21}+I^{\nwi}_{22}+I^{\nwi}_{23}+I^{\nwi}_{24}.
			\end{align*}
			It is worth noting that although $\tilde{\partial}_Y D_{\neq}$ has an additional $\nu^{-1/3}$ growth relative to ${\partial}_X D_{\neq}$, $mW^1_{\neq}$ also has an additional $\nu^{-1/3}$ growth relative to $m^{\frac{1}{4}}W^2_{\neq}$. Therefore, the estimate of $I^{\nwi}_{21}$ is similar to that of \cref{est:I21}. In fact, invoking the equation
			\begin{align*}
				&\partial_t (\tilde{\partial}_Y{\de})+\partial_X{\de}+\tilde{\partial}_YD=-\tilde{\partial}_Y(V\cdot \tilde{\nabla} {\de})-\tilde{\partial}_Y({\de}D)
			\end{align*}
			in $I^{\nwi}_{21}$ and using integration by parts in time, we deduce
			\begin{align*}
				-I^{\nwi}_{21}=&\int_{0}^{T}\langle mM(\partial_YV^{1}_{0}\tilde{\partial}_{Y}D_{\neq}), mM W^1_{\neq}\rangle_{H^3} dt\\
				=&\int_{0}^{T}\langle mM(\partial_YV^{1}_{0}(-\partial_t (\tilde{\partial}_Y{\de})-\partial_X{\de}-\tilde{\partial}_Y(V\cdot \tilde{\nabla} {\de})-\tilde{\partial}_Y({\de}D))_{\neq}), mM W^1_{\neq}\rangle_{H^3}dt\\
				=&\left\langle mM(\partial_YV^{1}_{0}(t)(- \tilde{\partial}_Y{\de}(t))_{\neq}), mM W^1_{\neq}(t)\right\rangle_{H^3}\Big|_{t=0}^{t=T}\\
				&+\int_{0}^{T}\langle mM(\partial_t(\partial_YV^{1}_{0})\tilde{\partial}_Y{\de}_{\neq}), mM W^1_{\neq}\rangle_{H^3} dt+\int_{0}^{T}\langle mM((\partial_YV^{1}_{0})\tilde{\partial}_Y{\de}_{\neq}), mM \partial_t W^1_{\neq}\rangle_{H^3} dt\\
				&+2\int_{0}^{T}\langle \partial_t(mM)((\partial_YV^{1}_{0})\tilde{\partial}_Y{\de})_{\neq}, mM W^1_{\neq}\rangle_{H^3} dt-\int_{0}^{T}\langle mM((\partial_YV^{1}_{0})\partial_X{\de}_{\neq}), mM W^1_{\neq}\rangle_{H^3} dt\\
				&-\int_{0}^{T}\langle mM(\partial_YV^{1}_{0}(\tilde{\partial}_Y(V\cdot \tilde{\nabla} {\de})+\tilde{\partial}_Y({\de}D))_{\neq}), mM 	W^1_{\neq}\rangle_{H^3} dt	=\sum_{k=1}^{6}I^{\nwi}_{21k}.
			\end{align*}
			The bootstrap assumption yields that
			\begin{align*}
				I^{\nwi}_{211}=&\langle mM(\partial_YV^{1}_{0}(t)(- \tilde{\partial}_Y{\de}(t))_{\neq}), mMW^1_{\neq}(t)\rangle_{H^3}\Big|_{t=0}^{t=T}\\
				\lesssim&\|\partial_YV^1_0\|_{L^\infty H^3}\|\tilde{\partial}_Y{\de}_{\neq}\|_{L^\infty H^3}\|mM W^1_{\neq}\|_{L^\infty H^3}
				\lesssim\nu^{-1}\varepsilon\nu^{-1/2}\varepsilon\nu^{-1/3}\varepsilon\lesssim\nu^{-7/6}\varepsilon(\nu^{-1/3}\varepsilon)^2.
			\end{align*}
			For $I^{\nwi}_{212}$, which is not the main problem, we give a sketch
			\begin{align*}
				I^{\nwi}_{212}=&\int_{0}^{T}\langle mM(\partial_t(\partial_YV^{1}_{0})\tilde{\partial}_Y{\de}_{\neq}), mM W^1_{\neq}\rangle_{H^3} dt\\
				=&\int_{0}^{T}\langle mM((-\partial_YV^2_0+\nu\Delta\partial_YV^1_0+(\nlt_{\partial_YV^1_0})_0)\tilde{\partial}_Y{\de}_{\neq}), mMW^1_{\neq}\rangle_{H^3} dt\\
				\lesssim&\nu^{-1/2}\varepsilon\nu^{-2/3}\varepsilon\nu^{-1/3}\varepsilon+\nu^{-2}\varepsilon^2(\nu^{-1/3}\varepsilon)^2\lesssim(\nu^{-1/3}\varepsilon)^2,
			\end{align*}
			where we used that
			\begin{align*}
				&\int_{0}^{T}\langle mM((\nlt_{\partial_yV^1_0})_0\tilde{\partial}_Y{\de}_{\neq}), mM W^1_{\neq}\rangle_{H^3}dt\\
				\lesssim&\|\partial_Y(V\cdot \tilde{\nabla}V^1)_0+\partial_Y(F({\de})\partial_X{\de})_0\|_{L^\infty H^3}\|\tilde{\partial}_Y{\de}_{\neq}\|_{L^2H^3}\|mM W^1_{\neq}\|_{L^2H^3}\\
				&+\|\partial_Y(G({\de})(\nu\tilde{\Delta} V^1+(\nu+\nu')\partial_XD))_0\|_{L^2 H^3}\|\tilde{\partial}_Y{\de}_{\neq}\|_{L^\infty H^3}\|mM W^1_{\neq}\|_{L^2H^3}\\
				\lesssim&\nu^{-2}\varepsilon^2(\nu^{-1/3}\varepsilon)^2.
			\end{align*}
			For $I^{\nwi}_{213}$, we just address newly emerging terms 
			\begin{align*}
				I^{\nwi}_{213}=-&\int_{0}^{T}\langle mM((\partial_YV^{1}_{0})\tilde{\partial}_Y{\de}_{\neq}), mM\partial_t(W^1_{\neq})\rangle_{H^3} dt\\
				=&\int_{0}^{T}\langle mM((\partial_YV^{1}_{0})\tilde{\partial}_Y{\de})_{\neq}, mM(\mathcal{L}_{W^1}+\nu pW^1)\rangle_{H^3}dt\\
				&+\underbrace{\int_{0}^{T}\langle mM((\partial_YV^{1}_{0})\tilde{\partial}_Y{\de}_{\neq}), (mM(V^1_0\partial_XW^1_{\neq}))\rangle_{H^3}dt}_{=:I_{2131}}\\
				&+\underbrace{\int_{0}^{T}\langle mM((\partial_YV^{1}_{0})\tilde{\partial}_Y{\de}_{\neq}), (mM(\partial_YV^1_0\tilde{\partial}_YD_{\neq}))\rangle_{H^3}dt}_{=:I_{2132}}\\
				&+\int_{0}^{T}\langle mM((\partial_YV^{1}_{0})\tilde{\partial}_Y{\de}_{\neq}), (mM((\nlt_{W^1})_{\neq}-V^1_0\partial_XW^1_{\neq}-\partial_YV^1_0\tilde{\partial}_YD_{\neq}))\rangle_{H^3}dt,
			\end{align*}
			where $\lt_{W^1},\nlt_{W^1}$ are given in \cref{bs-w1}.
			For $I^{\nwi}_{2131}$, it follows from Theorem \ref{2026-3-21-2} that
			\begin{align*}
				I^{\nwi}_{2131}\lesssim&\|\partial_YV^1_0\|_{L^\infty H^3}\|\tilde{\partial}_Y{\de}_{\neq}\|_{L^2H^3}(\|V^1_0\|_{L^\infty L^\infty}+\|\nabla V^1_0\|_{L^\infty H^3})\|\tilde{\nabla} m W^1_{\neq}\|_{L^2H^3}\\
				\lesssim&\nu^{-1}\varepsilon\nu^{-2/3}\varepsilon\nu^{-1}\varepsilon\nu^{-5/6}\varepsilon=(\nu^{-17/6}\varepsilon^2)(\nu^{-1/3}\varepsilon)^2
			\end{align*}
			For $I^{\nwi}_{2132}$, by Proposition \ref{est:main}, we have
			\begin{align*}
				I^{\nwi}_{2132}\lesssim&{\|\partial_YV^1_0\|_{L^\infty H^3}\|\tilde{\partial}_Y{\de}_{\neq}\|_{L^2H^3}\|\partial_Y V^1_0\|_{L^\infty H^3}\|\tilde{\partial}_Y D_{\neq}\|_{L^2H^3}}\\
				\lesssim&\nu^{-1}\varepsilon\nu^{-2/3}\varepsilon\nu^{-1}\varepsilon\nu^{-1}\varepsilon\lesssim\nu^{-3}\varepsilon^2(\nu^{-1/3}\varepsilon)^2.
			\end{align*}
		For the same reason of \cref{est:I21}, we omit the estimates of remaining terms. Recalling the definition of $m,M$, we have
		\begin{align*}
			I^{\nwi}_{214}+I^{\nwi}_{215}\lesssim&\|\partial_YV^1_0\|_{L^\infty H^3}\|p^{\frac{1}{2}}{\de}_{\neq}\|_{L^2 H^3}\|mM W^1_{\neq}\|_{L^2H^3}\\
			\lesssim&\nu^{-1}\nu^{-2/3}\varepsilon\nu^{-1/3}\varepsilon=\nu^{-4/3}\varepsilon(\nu^{-1/3}\varepsilon)^2.
		\end{align*}
		For $I^{\nwi}_{216}$, we also use Theorem \ref{2026-3-21-2} to get
		\begin{align*}
			I^{\nwi}_{216}=&\int_{0}^{T}\langle mM(\partial_YV^{1}_{0}\tilde{\partial}_Y(V^1_0\partial_{X}{\de}_{\neq})), mM W^1_{\neq}\rangle_{H^3}dt+...\\
			=&\int_{0}^{T}\langle mM((\partial^2_{Y}V^1_0)(V^1_0\partial_{X}{\de}_{\neq})),mM W^1_{\neq}\rangle_{H^3}dt+\int_{0}^{T}\langle mM(\partial_YV^{1}_{0}(V^1_0\partial_{X}{\de}_{\neq})), \tilde{\partial}_YmMW^1_{\neq}\rangle_{H^3}dt+...\\
			\lesssim&(\nu^{-1}\varepsilon)^2(\nu^{-1/3}\varepsilon\nu^{-5/6}\varepsilon)\lesssim\nu^{-5/2}\varepsilon^2(\nu^{-1/3}\varepsilon)^2.
		\end{align*}
		Turning to $I^{\nwi}_{22}$, $I^{\nwi}_{23}$, and $I^{\nwi}_{24}$, it follows from Proposition \ref{est:main} that
		\begin{align*}
			\sum_{k=2}^{4}I^{\nwi}_{2k}\lesssim&\|m^{\frac{1}{4}}(W^2_{\neq},\partial_X{\de}_{\neq},\nu\partial_XD_{\neq})\|_{L^2H^3}\|\partial_YV^1_0\|_{L^\infty H^4}\|mMW^1_{\neq}\|_{L^2H^3}\\
			&+\|Q^2\|_{L^2H^3}\|\partial_YV^1_{\neq}\|_{L^2H^3}\|mMW^1_{\neq}\|_{L^\infty H^3}+\|\tilde{\Delta}V^3\|_{L^2H^3}\|\partial_ZV^1_{\neq}\|_{L^2H^3}\|mMW^1_{\neq}\|_{L^\infty H^3}\\
			&+\|m^{\frac{1}{2}}\Omega^3_{\neq}\|_{L^2H^3}\|\partial_ZV^1_0\|_{L^\infty H^4}\|mMW^1_{\neq}||_{L^2H^3}+\|m^{\frac{1}{4}}\partial_ZD_{\neq}\|_{L^2H^3}\|\partial_ZV^1_0\|_{L^\infty H^4}\|mMW^1_{\neq}||_{L^2H^3}\\
			&+\|\partial_X(V^2,V^3)\|_{L^2H^3}\|\tilde{\nabla} D\|_{L^2H^3}\|mMW^1_{\neq}\|_{L^\infty H^3}+\|\Omega^1\|_{L^\infty H^3}\|\partial_XV^1_{\neq}\|_{L^2H^3}\|mMW^1_{\neq}\|_{L^2H^3}\\
			\lesssim&\nu^{-1/6}\varepsilon\nu^{-1}\varepsilon\nu^{-1/2}\varepsilon+\nu^{-1}\varepsilon\nu^{-1/2}\varepsilon\nu^{-1/3}\varepsilon+\nu^{-5/6}\varepsilon\nu^{-1/6}\varepsilon\nu^{-1/3}\varepsilon+\nu^{-1/2}\varepsilon\nu^{-1}\varepsilon\nu^{-1/2}\varepsilon\\
			&+\nu^{-1/3}\varepsilon\nu^{-1}\varepsilon\nu^{-1/3}\varepsilon+\nu^{-1}\varepsilon\nu^{-1/6}\varepsilon\nu^{-1/2}\varepsilon\lesssim\nu^{-4/3}\varepsilon(\nu^{-1/3}\varepsilon)^2.
		\end{align*}
		For $I^{\nwi}_3$, we have
		\begin{align*}
			I^{\nwi}_3\lesssim&\|\nabla V^1\|_{L^\infty H^3}\|\partial_{XX}(V^2,V^3)\|_{L^2H^3}\|mMW^1_{\neq}\|_{L^2H^3}\\
			&+\|\tilde{\nabla} V^1\|_{L^\infty H^3}\|\partial_X\tilde{\nabla} V^1_{\neq}\|_{L^2H^3}\|mMW^1_{\neq}\|_{L^2H^3}\\
			&+\|\tilde{\nabla} V^2\|_{L^\infty H^3}\|\tilde{\partial}_Y\tilde{\nabla} V^1_{\neq}\|_{L^2H^3}\|mMW^1_{\neq}\|_{L^2H^3}\\
			&+{\|\tilde{\nabla} V^2_{\neq}\|_{L^2 H^3}\|\partial_Y\nabla V^1_{0}\|_{L^\infty H^3}\|mMW^1_{\neq}\|_{L^2H^3}}\\
			&+\|\tilde{\nabla} V^3\|_{L^\infty H^3}\|\partial_Z\tilde{\nabla} V^1_{\neq}\|_{L^2H^3}\|mMW^1_{\neq}\|_{L^2H^3}\\
			&+\|\tilde{\nabla} V^3_{\neq}\|_{L^2 H^3}\|\partial_Z\nabla V^1_{0}\|_{L^\infty H^3}\|mMW^1_{\neq}\|_{L^2H^3}\\
			\lesssim&\nu^{-1}\varepsilon(\nu^{-1/2}\varepsilon)^2+\nu^{-1/2}\varepsilon\nu^{-5/6}\varepsilon\nu^{-1/2}\varepsilon+\nu^{-2/3}\varepsilon\nu^{-1}\varepsilon\nu^{-1/2}\varepsilon+\nu^{-1/3}\varepsilon(\nu^{-1/2}\varepsilon)^2\\
			&+\nu^{-1/2}\varepsilon\nu^{-1}\varepsilon\nu^{-1/2}\varepsilon\lesssim\nu^{-3/2}\varepsilon(\nu^{-1/3}\varepsilon)^2.
		\end{align*}
		The $I^{\nwi}_4$, $I^{\nwi}_5$, and $I^{\nwi}_6$ terms can be treated analogously to that of $I^{\nwi}_3$. By bootstrap assumption, we get
		\begin{align*}
			I^{\nwi}_4\lesssim&\|\tilde{\partial}_{XY}V^2_{\neq}\|_{L^2H^3}\|\tilde{\partial}_YV^2\|_{L^2H^3}\|mMW^1_{\neq}\|_{L^\infty H^3}+\|\partial_{XZ}V^2_{\neq}\|_{L^2H^3}\|\tilde{\partial}_YV^3\|_{L^2H^3}\|mMW^1_{\neq}\|_{L^\infty H^3}\\
			&+\|\partial_{Z}V^2\|_{L^\infty H^3}\|\tilde{\partial}_{XY}V^3_{\neq}\|_{L^2H^3}\|mMW^1_{\neq}\|_{L^2 H^3}+\|\partial_{XZ}V^3_{\neq}\|_{L^2H^3}\|\partial_ZV^3\|_{L^\infty H^3}\|mMW^1_{\neq}\|_{L^2 H^3}\\
			\lesssim&(\nu^{-2/3}\varepsilon)^2\nu^{-1/3}\varepsilon+(\nu^{-1/2}\varepsilon)^2\nu^{-1/3}\varepsilon+\nu^{-1/6}\varepsilon(\nu^{-1/2}\varepsilon)^2+\nu^{-1/2}\varepsilon^2\nu^{-1/2}\varepsilon\lesssim\nu^{-2/3}\varepsilon(\nu^{-1/3}\varepsilon)^2,
		\end{align*}
		and
		\begin{align*}
			I^{\nwi}_5\lesssim&\|\tilde{\partial}_YV_{\neq}\|_{L^2H^3}\|\tilde{\nabla} {\de}\|_{L^\infty H^3}\|mM W^1_{\neq}\|_{L^2H^3}+\|\partial_YV^i_{0}\|_{L^\infty H^3}\|\tilde{\partial}_i {\de}_{\neq}\|_{L^2 H^3}\|mM W^1_{\neq}\|_{L^2H^3}\\
			&+\|{\de}\|_{L^\infty H^3}\|D\|_{L^2H^3}\|\tilde{\partial}_YmM W^1_{\neq}\|_{L^2H^3}+\nu\|\tilde{\partial}_YV\|_{L^\infty H^3}\|\tilde{\nabla} D\|_{L^2 H^3}\|mM W^1_{\neq}\|_{L^2 H^3}\\
			&+\nu\|\partial_iV^j\partial_jV^i\|_{L^2H^3}\|\tilde{\partial}_Y\ma W^2_{\neq}\|_{L^2H^3}\\
			\lesssim&\nu^{-2/3}\varepsilon\nu^{-1/2}\varepsilon\nu^{-1/2}\varepsilon+\nu^{-1}\varepsilon\nu^{-1/3}\varepsilon\nu^{-1/2}\varepsilon+\nu^{-1/6}\varepsilon\nu^{-2/3}\varepsilon\nu^{-2/3}\varepsilon+\nu\nu^{-1}\varepsilon\nu^{-1}\varepsilon\nu^{-1/2}\varepsilon\\
			&+\nu\nu^{-4/3}\varepsilon^2\nu^{-5/6}\varepsilon\lesssim\nu^{-7/6}\varepsilon(\nu^{-1/3}\varepsilon)^2,
		\end{align*}
		and
		\begin{align*}
			I^{\nwi}_6\lesssim&(\|p^{\frac{1}{2}}{\de}\|_{L^\infty H^3}(\nu\|\tilde{\Delta} V^1\|_{L^2H^3}+(\nu+\nu')\|\partial_XD\|_{L^2H^3})\\
			&+\|{\de}\|_{L^\infty H^3}(\nu\|p^{\frac{1}{2}}\tilde{\Delta} V^1\|_{L^2H^3}+(\nu+\nu')\|p^{\frac{1}{2}}\partial_XD\|_{L^2H^3})\\
			&+\|\partial_X{\de}\|_{L^2H^3}(\nu\|\tilde{\Delta}V^1\|_{L^\infty H^3}+(\nu+\nu')\|\partial_XD\|_{L^\infty H^3})\\
			&+\|\tilde{\partial}_Y{\de}\|_{L^\infty H^3}(\nu\|\tilde{\Delta}V^2\|_{L^2 H^3}+(\nu+\nu')\|\tilde{\partial}_YD\|_{L^2 H^3})\\
			&+\|\partial_Z{\de}\|_{L^\infty H^3}(\nu\|\tilde{\Delta}V^3\|_{L^2 H^3}+(\nu+\nu')\|\partial_ZD\|_{L^2 H^3})+\|{\de}\|_{L^\infty H^3}\bar{\nu}\|pD\|_{L^2H^3}\\
			&+\nu\|\tilde{\partial}_Y{\de}\|_{L^2H^3}\|\tilde{\nabla}{\de}\|_{L^\infty H^3}+\nu\|{\de}\|_{L^\infty H^3}\|p{\de}\|_{L^2 H^3})\|\ma p^{\frac{1}{2}}W^1_{\neq}\|_{L^2H^3}\\
			\lesssim&(\nu^{-1/2}\varepsilon\bar{\nu}\nu^{-1}\varepsilon+\nu^{-1/6}\varepsilon\bar{\nu}\nu^{-4/3}\varepsilon+\nu^{-1/3}\varepsilon\nu^{-1}\varepsilon+\nu^{-1/2}\varepsilon\bar{\nu}\nu^{-1}\varepsilon+\nu^{-1/6}\varepsilon\bar{\nu}\nu^{-5/6}\varepsilon+\nu^{-1/6}\varepsilon\bar{\nu}\nu^{-4/3}\varepsilon\\
			&+\nu\nu^{-1}\varepsilon\nu^{-1/2}\varepsilon+\nu\nu^{-1/6}\varepsilon\nu^{-4/3}\varepsilon)\nu^{-5/6}\varepsilon
			\lesssim\nu^{-1}\varepsilon(\nu^{-1/3}\varepsilon)^2\leq \nu^{-\frac{3}{2}}\varepsilon^3.
		\end{align*}
		
		From the estimates for $LU^{(8)}$, $LS^{(8)}$, $LE^{(8)}$, and $I_{i}^{(8)}$ ($i=1,\ldots,6$), we obtain Lemma \ref{Lem-E8}.
	\end{proof}
	
	\section{Energy estimates on zero mode}
	\subsection{$H^4$ estimate on  $U^1_{00}$}
	In this subsection, we improve \cref{bs-u100}. Due to the absence of $L^2_t$-estimates for $V^2_{00},U^1_{00}$ in low order, we need to treat $\|U^1_{00}\|_{L^2}$ and $\|\nabla^sU^1_{00}\|_{L^2}$, respectively.
	\subsubsection{$L^2$ estimate on $U^1_{00}$}
	\begin{Lem}\label{Lem-E9}
		Under the same assumptions as Theorem \ref{MT} and  Proposition \ref{prop-bs}, it holds that
		\begin{align}
			\sup_{0\leq t\leq T}\frac{1}{2}\|U^1_{00}(t)\|^2_{L^2}+\nu\|\partial_YU^1_{00}\|^2_{L^2L^2}\lesssim\frac{1}{2}\|U^1_{00}(0)\|^2_{L^2}+\nu^{-\frac{3}{2}}\varepsilon(\nu^{-1/2}\eps)^2.
		\end{align}
	\end{Lem}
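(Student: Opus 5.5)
The plan is a direct $L^2$ energy estimate for the equation of $U^1_{00}$ stated in Section 3:
\[
\partial_t U^1_{00}-\nu\partial_Y^2U^1_{00}=-\partial_Y(V^2_{00}U^1_{00})+\partial_Y\mathcal{R},
\]
where $\mathcal{R}$ collects the remaining terms (all in divergence form). These are the $0\neq$ interactions $V^2_{0\neq}\partial_YV^1_{0\neq}+V^3_{0\neq}\partial_ZV^1_{0\neq}-R_{0\neq}V^2_{0\neq}+\nu V_{0\neq}\cdot\nabla V^2_{0\neq}$, the $\neq$ interactions $(V_{\neq}\cdot\tilde\nabla V^1_{\neq}-V^2_{\neq}R_{\neq}+\nu V_{\neq}\cdot\tilde\nabla V^2_{\neq})_{00}$, and the pressure/viscous terms with $F(R),G(R)$. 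Multiplying by $U^1_{00}$ and integrating over $\mathbb{R}\times[0,T]$, every right-hand term becomes $-\int_0^T\langle \mathcal{R},\partial_YU^1_{00}\rangle dt$ after one integration by parts. This is the reason to work with the first-order quantity $U^1_{00}$, since the time growth of $V^1_{00}$ itself is never used.

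\textbf{Transport term.} Integration by parts gives
\[
\int V^2_{00}U^1_{00}\partial_YU^1_{00}\,dY=-\tfrac12\int \partial_YV^2_{00}(U^1_{00})^2\,dY,
\]
which is bounded by $\|\partial_YV^2_{00}\|_{L^2L^\infty}\|U^1_{00}\|_{L^\infty L^2}\|U^1_{00}\|_{L^2L^2}$. This is the delicate point, because there is no $L^2_t$ control of $U^1_{00}$ itself. I would instead use Cauchy--Schwarz with the dissipation,
\[
\|V^2_{00}\|_{L^\infty L^\infty}\|U^1_{00}\|_{L^2L^2}\|\partial_YU^1_{00}\|_{L^2L^2},
\]
which again needs $\|U^1_{00}\|_{L^2L^2}$. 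I therefore expect this term to be the main obstacle. The first fallback is Gagliardo--Nirenberg in one dimension: $\|U^1_{00}\|_{L^\infty}\le\|U^1_{00}\|_{L^2}^{1/2}\|\partial_YU^1_{00}\|_{L^2}^{1/2}$. Then
\[
\int_0^T\!\!\int |\partial_YV^2_{00}|(U^1_{00})^2\lesssim \|U^1_{00}\|_{L^\infty L^2}^{3/2}\,\|\partial_YV^2_{00}\|_{L^{4/3}_tL^2}^{\vphantom{1}}\|\partial_YU^1_{00}\|_{L^2L^2}^{1/2}.
\]
The $L^{4/3}_t$ norm is not directly available. So the preferred route is to reuse the diffusion-wave structure of Section 5: $V^2_{00}=O(1)(\tilde W_{iy}+\theta_j+\Xi_j)$. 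The $\tilde W_{iy}$ part is handled with $\int_0^T\|\partial_y\tilde W_i\|_{L^2}^2\lesssim\varepsilon^2\nu^{-1}$ (Lemma \ref{1or}) together with $\|U^1_{00}\|_{L^\infty}\lesssim\|U^1_{00}\|^{1/2}_{L^2}\|\partial_YU^1_{00}\|^{1/2}_{L^2}$. The $\theta_j$ part produces the Gaussian-weighted integral $\varepsilon^2\nu^{-1}\int\!\!\int[\nu(1+t)]^{-1}e^{-(y\pm(1+t))^2/(\nu(1+t))}(U^1_{00})^2$. Since $U^1_{00}$ has no propagation speed, this is absorbed exactly as for $\mathcal{A}$ in Lemma \ref{Aor}, via the weight $\eta_1$ of \cref{q}, giving $\lesssim\nu^{3/2}\|\partial_YU^1_{00}\|_{L^2L^2}^2$, which is absorbable.

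\textbf{Source terms.} For each piece of $\mathcal{R}$ I would bound $\|\mathcal{R}\|_{L^2L^2}\|\partial_YU^1_{00}\|_{L^2L^2}$ using Proposition \ref{est:main}, with $\|\partial_YU^1_{00}\|_{L^2L^2}\lesssim\nu^{-1}\varepsilon$ from the bootstrap \cref{bs-u100}:
\begin{itemize}
\item The worst $0\neq$ term is $V^2_{0\neq}\partial_YV^1_{0\neq}$. With $\|V^2_{0\neq}\|_{L^2L^\infty}\lesssim\nu^{-1/2}\varepsilon$ (it is controlled by $\partial_Z$ bounds) and $\|\nabla V^1_{0\neq}\|_{L^\infty H^3}\lesssim\nu^{-1}\varepsilon$, this gives $\nu^{-3/2}\varepsilon^2\cdot\nu^{-1}\varepsilon$, which matches the target $\nu^{-3/2}\varepsilon(\nu^{-1/2}\varepsilon)^2$.
\item The $\neq$ terms use the $\nu^{-1/6}$ enhanced-dissipation $L^2_t$ bounds: $\|V_{\neq}\|_{L^2H^3}\lesssim\nu^{-1/3}\varepsilon$ and $\|\tilde\nabla V^1_{\neq}\|_{L^\infty H^2}\lesssim\nu^{-2/3}\varepsilon$.
\item The pressure and viscous terms use $\|R\|_{L^\infty H^3}\lesssim\varepsilon$ and the bootstrap bounds with a factor $\nu$ in front.
\end{itemize}
Collecting these and absorbing the $\delta\nu\|\partial_YU^1_{00}\|^2_{L^2L^2}$ contributions yields the claimed inequality.
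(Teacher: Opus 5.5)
Your treatment of the divergence-form sources is essentially the paper's. For the critical transport term, however, you take a genuinely different route.

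\textbf{The paper's route.} The paper never decomposes $V^2_{00}$. It writes the term as $-\tfrac12\int_0^T\!\!\int\partial_YV^2_{00}\,|U^1_{00}|^2$ and uses the continuity equation for $R_{00}$ to replace $\partial_YV^2_{00}$ by $-\partial_tR_{00}$ plus quadratic terms. It then integrates by parts in time:
\[
-\tfrac12\int_0^T\!\!\int\partial_YV^2_{00}|U^1_{00}|^2=\tfrac12\int R_{00}|U^1_{00}|^2\,dY\Big|_{0}^{T}-\int_0^T\!\!\int R_{00}\,U^1_{00}\,\partial_tU^1_{00}+\text{(quartic)} .
\]
The boundary term is $\lesssim\varepsilon(\nu^{-1/2}\varepsilon)^2$. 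After inserting the $U^1_{00}$ equation for $\partial_tU^1_{00}$, everything left is at least quartic. The extra factor $R_{00}$ (or $V^2_{00}$), interpolated by one-dimensional Gagliardo--Nirenberg against $\|\partial_Y(R_{00},V^2_{00})\|_{L^2L^2}\lesssim\nu^{-1/2}\varepsilon$, supplies exactly the time integrability the cubic term lacks. That argument uses only bootstrap norms.

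\textbf{Your route.} You instead expose the mechanism: the part of $V^2_{00}$ with no $L^2_t$ control is the acoustic wave $\theta_j$, which moves at speed $\pm1$ away from the non-propagating $U^1_{00}$. The other pieces are harmless:
\begin{itemize}
\item the $\partial_y\tilde W_i$ part is $L^2_t$ by Lemma~\ref{1or};
\item the $\Xi_j$ part decays faster;
\item the nonzero-mode remainder $v^2_{00}-\varphi^2_{00}/\rho_{00}$, which you should list explicitly, has $L^2_tL^\infty$ bounds from enhanced dissipation.
\end{itemize}
The cost is importing all of Section~5 plus a moving-weight estimate. That weighted estimate is indispensable: the $\theta_j$ piece is exactly critical, and the natural unweighted H\"older/Gagliardo--Nirenberg splittings all lose a factor $\log(1+T)$.

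\textbf{The weighted step needs correcting.} You borrow from the sketch of Lemma~\ref{Aor} a bound of $\varepsilon^2\nu^{-1}\int\!\!\int[\nu(1+t)]^{-1}e^{-(y\pm(1+t))^2/(\nu(1+t))}(U^1_{00})^2$ by $\nu^{3/2}\|\partial_YU^1_{00}\|^2_{L^2L^2}$. That is not what the $\eta_1$ argument produces, and it is false in general: a space--time integral against a moving Gaussian is not controlled by dissipation alone.

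What the weight actually gives is a term on the \emph{left}. Since $\partial_t\eta_1=\eta_1(\tfrac{\bar\nu}{2}\partial_yh-h)$ and $U^1_{00}$ is not transported, testing the $U^1_{00}$ equation with $\eta_1U^1_{00}$ (where $1\le\eta_1\le e$) yields $\tfrac12\int_0^T\!\!\int\eta_1h\,(U^1_{00})^2$ on the left-hand side.

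So you should run the whole $L^2$ estimate with multiplier $\eta_1U^1_{00}$. Run it again with $\tilde\eta=\exp(-\int_{-\infty}^y\tilde h)$ for the left-moving Gaussian $\tilde h$, and add the two. Then:
\begin{itemize}
\item The new term $\int\!\!\int\eta_1hV^2_{00}(U^1_{00})^2$ is absorbed using $\|V^2_{00}\|_{L^\infty}\lesssim\varepsilon$.
\item The new term $\bar\nu\int\!\!\int\eta_1\partial_yh\,(U^1_{00})^2$ is absorbed after integrating by parts in $y$ and using the dissipation.
\item Since $\theta_j^2\lesssim\varepsilon^2\nu^{-1/2}(h+\tilde h)$, the $\theta_j$ contribution is at most $C\varepsilon^2\nu^{-3/2}\int_0^T\!\!\int(h+\tilde h)(U^1_{00})^2$. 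This is absorbed because $\varepsilon^2\nu^{-3/2}=\varepsilon_0^2\nu^{3/2}\ll1$.
\end{itemize}
With that change your argument closes with the stated right-hand side.
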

	\begin{proof}

		An energy estimate gives
		\begin{align*}
			\frac{1}{2}\|U^1_{00}(T)\|^2_{L^2}+\nu\|\partial_YU^1_{00}\|^2_{L^2L^2}=\frac{1}{2}\|U^1_{00}(0)\|^2_{L^2}+K^{(3)}_1+K^{(3)}_2+K^{(3)}_3,
		\end{align*}
		where the nonlinear terms are given by
		\begin{align*}
			K^{(3)}_1=&-\int_{0}^{T}\left\langle \partial_Y(V^2_{00}U^1_{00}),U^1_{00}\right\rangle_{L^2} dt,\\
			K^{(3)}_2=&-\int_{0}^{T}\left\langle \partial_Y(V^2_{0\neq}\partial_YV^1_{0\neq}+V^3_{0\neq}\partial_ZV^1_{0\neq}-{\de}_{0\neq}V^2_{0\neq}+\nu V_{0\neq}\cdot\nabla V^2_{0\neq}),U^1_{00}\right\rangle_{L^2} dt,\\
			K^{(3)}_3=&\int_{0}^{T}\left\langle \nlt_{U^1_{00}},U^1_{00}\right\rangle_{L^2}dt,
		\end{align*}
		with
		\begin{align*}
			\nlt_{U^1_{00}}=&-\tilde{\partial}_Y(V_{\neq}\cdot\tilde{\nabla}V^1_{\neq})+\tilde{\partial}_Y(V^2_{\neq}{\de}_{\neq})-\nu\tilde{\partial}_Y(V_{\neq}\cdot\tilde{\nabla}V^2_{\neq})\\
			&-\tilde{\partial}_Y(F({\de})\partial_X{\de}-G({\de})(\nu\tilde{\Delta}V^1+(\nu+\nu')\partial_XD))_{00}\\
			&-\nu\tilde{\partial}_Y(F({\de})\tilde{\partial}_Y{\de}-G({\de})(\nu\tilde{\Delta}V^2+(\nu+\nu')\tilde{\partial}_YD))_{00}.
		\end{align*}
		Recalling the equation of ${\de}_{00}$,
		\begin{equation*}
			\partial_t {\de}_{00}+\partial_YV^2_{00}=-(V^2_{00}\partial_Y{\de}_{00})-({\de}_{00}D_{00})-(V_{0\neq}\cdot\nabla {\de}_{0\neq})-({\de}_{0\neq}D_{0\neq})-(V_{\neq}\cdot\tilde{\nabla}{\de}_{\neq})-({\de}_{\neq}D_{\neq}),
		\end{equation*}
		a direct calculation for $I_1$ finds that
		\begin{align*}
			2K^{(3)}_{1}=&-\int_{0}^{T}\langle\partial_YV^2_{00},|U^1_{00}|^2\rangle_{L^2} dt\\
			=&\int_{0}^{T}\langle \partial_t {\de}_{00},|U^1_{00}|^2\rangle_{L^2} dt+\int_{0}^{T}\langle (V^2_{00}\partial_Y{\de}_{00})+({\de}_{00}D_{00}),|U^1_{00}|^2\rangle_{L^2} dt\\
			&+\int_{0}^{T}\langle (V_{0\neq}\cdot\nabla {\de}_{0\neq})+({\de}_{0\neq}D_{0\neq}),|U^1_{00}|^2\rangle dt+\int_{0}^{T}\langle (V_{\neq}\cdot\tilde{\nabla}{\de}_{\neq})+({\de}_{\neq}D_{\neq}),|U^1_{00}|^2\rangle_{L^2} dt\\
			=&\sum_{k=1}^{4}K^{(3)}_{1k}.
		\end{align*}
		For $K^{(3)}_{11}$, integration by parts in time implies that
		\begin{align*}
			K^{(3)}_{11}=&-\|{\de}_{00}\|_{L^\infty L^2}\|U^1_{00}\|^2_{L^\infty L^2}-2\int_{0}^{T}\langle  {\de}_{00}U^1_{00},\partial_tU^1_{00}\rangle_{L^2} dt\\
			\lesssim&\varepsilon(\nu^{-1/2}\varepsilon)^2+\nu^{-1/2}\varepsilon(\nu^{-1/2}\varepsilon)^2,
		\end{align*}
		where we used that
		\begin{equation*}
			\int_{0}^{T}\langle  {\de}_{00}U^1_{00},\partial_tU^1_{00}\rangle_{L^2} dt\lesssim\nu^{-3/2}\varepsilon^3+\nu^{-5/2}\varepsilon^4.
		\end{equation*}
		In fact, due to the lift up effect of $V^1_{0\neq}$, it holds that
		\begin{align*}
			\sum_{i=2}^{3}\int_{0}^{T}\langle  {\de}_{00}U^1_{00},\partial_Y(V^i_{0\neq}\partial_iV^1_{0\neq})\rangle_{L^2} dt\lesssim&\|{\de}_{00}\|_{L^\infty H^2}\|U^1_{00}\|_{L^\infty H^2}\|(V^2_{0\neq},V^3_{0\neq})\|_{L^2L^2}\|\nabla V^1_{0\neq}\|_{L^2L^2}\\
			\lesssim&\varepsilon(\nu^{-1/2}\varepsilon)^2(\nu^{-3/2}\varepsilon)\lesssim\nu^{-5/2}\varepsilon^4.
		\end{align*}
		For $K^{(3)}_{12}$, $K^{(3)}_{13}$, and $K^{(3)}_{14}$, by bootstrap assumption, we have
		\begin{align*}
			\sum_{k=2}^{4}K^{(3)}_{1k}\lesssim&(\|(V^2_{00},{\de}_{00})\|_{L^\infty L^2}\|(\partial_Y{\de}_{00},D_{00})\|_{L^2L^2}+\|(V^2,V^3,{\de})_{0\neq}\|_{L^\infty L^2}\|\nabla {\de}_{0\neq},D_{0\neq}\|_{L^2H^3}\\
			&+\|(V_{\neq},{\de}_{\neq})\|_{L^\infty L^2}\|\tilde{\nabla}{\de}_{\neq},D_{\neq}\|_{L^2L^2})\|U^1_{00}\|_{L^\infty L^2}\|\partial_YU^1_{00}\|_{L^2L^2}\\
			\lesssim& (\varepsilon\nu^{-1}\varepsilon+\varepsilon\nu^{-1}\varepsilon+\nu^{-1/6}\varepsilon\nu^{-2/3}\varepsilon)\nu^{-1/2}\varepsilon\nu^{-1}\varepsilon\lesssim\nu^{-3/2}\varepsilon^2(\nu^{-1/2}\varepsilon)^2.
		\end{align*}
		It follows from Proposition \ref{est:main} that
		\begin{align*}
			K^{(3)}_2\lesssim(&{\|(V^2_0,V^3_0)\|_{L^\infty L^2}\|\nabla V^1_{0\neq}\|_{L^2 L^2}}+\|(V^2_0,V^3_0,{\de}_{0})\|_{L^\infty H^3}\|\nabla V^2_{0\neq}\|_{L^2L^2})\|\partial_YU^1_{00}\|_{L^2L^2}\\
			\lesssim&\varepsilon\nu^{-3/2}\varepsilon\nu^{-1}\varepsilon=\nu^{-3/2}\varepsilon(\nu^{-1/2}\varepsilon)^2.
		\end{align*}
		The estimate of $K^{(3)}_3$ is similar to below. Therefore, we omit it here.
	\end{proof}
	\subsubsection{$H^3$ estimate on  $\partial_YU^1_{00}$}
	An energy estimate gives
	\begin{align*}
		\frac{1}{2}\|\partial_YU^1_{00}(T)\|^2_{H^3}+\nu\|\partial^2_{Y}U^1_{00}\|^2_{L^2H^3}=\frac{1}{2}\|\partial_YU^1_{00}(0)\|^2_{H^3}+K^{(4)}_1+K^{(4)}_2+K^{(4)}_3,
	\end{align*}
	where the nonlinear terms are given by
	\begin{align*}
		K^{(4)}_1=&-\int_{0}^{T}\left\langle \partial^2_{Y}(V^2_{00}U^1_{00}),\partial_YU^1_{00}\right\rangle_{H^3} dt,\\
		K^{(4)}_2=&-\int_{0}^{T}\left\langle \partial^2_{Y}(V^2_{0\neq}\partial_YV^1_{0\neq}+V^3_{0\neq}\partial_ZV^1_{0\neq}-{\de}_{0\neq}V^2_{0\neq}+\nu V_{0\neq}\cdot\nabla V^2_{0\neq}),\partial_YU^1_{00}\right\rangle_{H^3} dt,\\
		K^{(4)}_3=&-\int_{0}^{T}\left\langle \tilde{\p}_{Y}^{2} (V_{\neq}\cdot\tilde{\nabla}V^1_{\neq}-V^2_{\neq}{\de}_{\neq}+\nu V_{\neq}\cdot\tilde{\nabla}V^2_{\neq}),\partial_YU^1_{00}\right\rangle_{H^3}dt,\\
		K^{(4)}_4=&-\int_{0}^{T}\left\langle \tilde{\p}_{Y}^{2} (F({\de})\partial_X{\de}-G({\de})(\nu\tilde{\Delta}V^1+(\nu+\nu')\partial_XD))_{00},\partial_YU^1_{00}\right\rangle_{H^3}dt\\
		&-\int_{0}^{T}\left\langle \nu\tilde{\p}_{Y}^{2} (F({\de})\tilde{\partial}_Y{\de}-G({\de})(\nu\tilde{\Delta}V^2+(\nu+\nu')\tilde{\partial}_YD))_{00},\partial_YU^1_{00}\right\rangle_{H^3}dt.
	\end{align*}
	It follows from Proposition \ref{est:main} that
	\begin{align*}
		K^{(4)}_1\lesssim&\|V^2_{00}\|_{L^\infty H^3}\|\partial_YU^1_{00}\|_{L^2H^3}\|\partial^2_{Y}U^1_{00}\|_{L^2H^3}+\|\partial_YV^2_{00}\|_{L^2 H^3}\|U^1_{00}\|_{L^\infty H^3}\|\partial^2_{Y}U^1_{00}\|_{L^2H^3}\\
		\lesssim&\varepsilon(\nu^{-1}\varepsilon)^2\lesssim\nu^{-1}\varepsilon(\nu^{-1/2}\varepsilon)^2,
	\end{align*}
	and
	\begin{align*}
		K^{(4)}_2\lesssim&({\|\partial_Y(V^2_{0\neq},V^3_{0\neq})\|_{L^2H^3}\|\nabla V^1_{0\neq}\|_{L^\infty H^3}+\|(V^2_{0\neq},V^3_{0\neq})\|_{L^2H^3}\|\partial_Y\nabla V^1_{0\neq}\|_{L^\infty H^3}}\\
		&+\|\partial_Y{\de}_{0\neq}\|_{L^\infty H^3}\|V^2_{0\neq}\|_{L^2H^3}+\nu\|\partial_Y(V^2_{0\neq},V^3_{0\neq})\|_{L^\infty H^3}\|\nabla V^2_{0\neq}\|_{L^2H^3}\\
		&+\|{\de}_{0\neq}\|_{L^\infty H^3}\|\partial_YV^2_{0\neq}\|_{L^2H^3}+\nu\|(V^2_{0\neq},V^3_{0\neq})\|_{L^\infty H^3}\|\partial_Y\nabla V^2_{0\neq}\|_{L^2H^3})\|\partial^2_{Y}U^1_{00}\|_{L^2H^3}\\
		\lesssim&\nu^{-1/2}\varepsilon\nu^{-1}\varepsilon\nu^{-1}\varepsilon\lesssim\nu^{-3/2}\varepsilon(\nu^{-1/2}\varepsilon)^2,
	\end{align*}
	and 
	\begin{align*}
		K^{(4)}_3\lesssim&(\|\tilde{\partial}_Y V^i_{\neq}\|_{L^\infty H^3}\|\tilde{\partial}_iV^1_{\neq}\|_{L^2H^3}+\| V_{\neq}\|_{L^\infty H^3}\|\tilde{\partial}_Y\tilde{\nabla}V^1_{\neq}\|_{L^2H^3}\\
		&+\|\tilde{\partial}_Y{\de}_{\neq}\|_{L^\infty H^3}\|V^2_{\neq}\|_{L^2H^3}+\|{\de}_{\neq}\|_{L^\infty H^3}\|\tilde{\partial}_YV^2_{\neq}\|_{L^2H^3}\\
		&+\nu\|\tilde{\partial}_Y V^i_{\neq}\|_{L^\infty H^3}\|\tilde{\partial}_iV^2_{\neq}\|_{L^2H^3}+\nu\| V_{\neq}\|_{L^\infty H^3}\|\tilde{\partial}_Y\tilde{\nabla}V^2_{\neq}\|_{L^2H^3})\|\partial^2_{Y}U^1_{00}\|_{L^2H^3}\\
		\lesssim&(\nu^{-1/2}\varepsilon\nu^{-1/2}\varepsilon+\nu^{-1/6}\varepsilon\nu^{-5/6}\varepsilon)\nu^{-1}\varepsilon\lesssim\nu^{-1}\varepsilon(\nu^{-1/2}\varepsilon)^2.
	\end{align*}
	By bootstrap assumption, we conclude that
	\begin{align*}
		K^{(4)}_4\lesssim&(\|\tilde{\partial}_Y{\de}_{\neq}\|_{L^2H^3}\|\partial_X{\de}_{\neq}\|_{L^\infty}+\|{\de}_{\neq}\|_{L^2H^3}\|\tilde{\partial}_{XY}{\de} _{\neq}\|_{L^\infty}\\
		&+\|\partial_Y{\de}_0\|_{L^2H^3}\nu\|\Delta V^1_0\|_{L^\infty H^3}+\|{\de}_0\|_{L^\infty H^3}\nu\|\partial_Y\Delta V^1_0\|_{L^2 H^3}\\
		&+\|\tilde{\partial}_Y{\de}_{\neq}\|_{L^2H^3}(\nu\|(W^1+\tilde{\partial}_Y{\de}-\nu\tilde{\partial}_YD)_{\neq}\|_{L^\infty H^3}+\bar{\nu}\|\partial_XD_{\neq}\|_{L^\infty H^3})\\
		&+\|{\de}_{\neq}\|_{L^\infty H^3}({\nu\|\tilde{\partial}_Y(W^1+\tilde{\partial}_Y{\de}-\nu\tilde{\partial}_YD)_{\neq}\|_{L^2 H^3}}+\bar{\nu}\|\tilde{\partial}_{XY}D_{\neq}\|_{L^2 H^3})\\
		&+\nu(\|\tilde{\partial}_Y{\de}_{0}\|_{L^2H^3}\|\tilde{\partial}_Y{\de}_{0}\|_{L^\infty H^3}+\|{\de}_{0}\|_{L^2H^3}\|\tilde{\p}_{Y}^{2} {\de} _{0}\|_{L^\infty H^3}\\
		&+\|\tilde{\partial}_Y{\de}_{\neq}\|_{L^2H^3}\|\tilde{\partial}_Y{\de}_{\neq}\|_{L^\infty H^3}+\|{\de}_{\neq}\|_{L^2H^3}\|\tilde{\p}_{Y}^{2} {\de} _{\neq}\|_{L^\infty H^3}\\
		&+\|\partial_Y{\de}_0\|_{L^2H^3}\nu\|\Delta V^2_0\|_{L^\infty H^3}+\|{\de}_0\|_{L^\infty H^3}\nu\|\partial_Y\Delta V^2_0\|_{L^2 H^3}\\
		&+\|\partial_Y{\de}_0\|_{L^2H^3}(\nu+\nu')\|\tilde{\partial}_Y D_0\|_{L^\infty H^3}+\|{\de}_0\|_{L^\infty H^3}(\nu+\nu')\|\partial^2_{Y}D_0\|_{L^2 H^3}\\
		&+\|\tilde{\partial}_Y{\de}_{\neq}\|_{L^2H^3}(\nu\|(W^2-\partial_X{\de}+\nu\partial_XD)_{\neq}\|_{L^\infty H^3}+\bar{\nu}\|\tilde{\partial}_YD_{\neq}\|_{L^\infty H^3})\\
		&+\|{\de}_{\neq}\|_{L^\infty H^3}(\nu\|\tilde{\partial}_Y(W^2-\partial_X{\de}+\nu\partial_XD)_{\neq}\|_{L^2 H^3}+\bar{\nu}\|\tilde{\p}_{Y}^{2} D_{\neq}\|_{L^2 H^3})))\|\partial^2_{Y}U^1_{00}\|_{L^2H^3}\\
		\lesssim&\nu^{-1}\varepsilon(\nu^{-1/2}\varepsilon)^2.
	\end{align*}
	This finishes the improvement of \cref{bs-u100}.     
	
	\subsection{$H^3$ estimate on  $\Delta V^1_{0\neq}$}
	In this subsection, we improve \cref{bs-pv10}. An energy estimate gives
	\begin{align*}
		\frac{1}{2}\|\Delta V^1_{0\neq}(T)\|^2_{H^3}+\nu\|\nabla \Delta V^1_{0\neq}\|^2_{L^2H^3}=\frac{1}{2}\|\Delta V^1_{0\neq}(0)\|^2_{H^3}+LU+K^{(5)}_1+K^{(5)}_2,
	\end{align*}
	where
	\begin{align*}
		LU=&\int_{0}^{T}\langle \nabla V^2_{0\neq},\nabla \Delta V^1_{0\neq}\rangle_{H^3} dt,\\
		K^{(5)}_1=&\int_{0}^{T}\langle \nabla (V^2_0\partial_YV^1_0+V^3_0\partial_ZV^1_0),\nabla \Delta V^1_{0\neq}\rangle_{H^3} dt,\\
		K^{(5)}_2=&\int_{0}^{T}\langle \tilde{\nabla} (V_{\neq}\cdot \tilde{\nabla} V^1_{\neq}+F({\de})\partial_X{\de}-G({\de})(\nu\tilde{\Delta}V^1+(\nu+\nu')\partial_XD))_{0\neq},\nabla \Delta V^1_{0\neq}\rangle_{H^3} dt.
	\end{align*}
	 Proposition \ref{est:main} implies that
	\begin{align*}
		LU\lesssim\|\nabla V^2_{0\neq}\|_{L^2H^3}\|\nabla \Delta V^1_{0\neq}\|_{L^2H^3}\lesssim\frac{1}{C_0}(\nu^{-1}C_0\varepsilon)^2, 
	\end{align*}
	and
	\begin{align*}
		K^{(5)}_1\lesssim&(\|\nabla(V^2_0,V^3_0)\|_{L^2H^3}\|\nabla V^1_0\|_{L^\infty H^3}+\|(V^2_0,V^3_0)\|_{L^\infty H^3}\|p V^1_0\|_{L^2 H^3})\|\nabla \Delta V^1_{0\neq}\|_{L^2H^3}\\
		\lesssim&\nu^{-1}\varepsilon(\nu^{-1}\varepsilon)^2.
	\end{align*}
	The estimate of $K^{(5)}_2$ is similar to above. We omit it and conclude that
	\begin{align*}
		K^{(5)}_2\lesssim\nu^{-1/2}\varepsilon(\nu^{-1}\varepsilon)^2.
	\end{align*}
	This completes the improvement of \cref{bs-pv10}.    
	\subsection{$H^3$ estimate on  $V^3_{00}$}
	Noting the improvement of $\|V^3_{00}\|_{L^2}$ already obtained in \cref{sec411}. We only show the treatment of $\|\partial_YV^3_{00}\|_{H^2}$.
	The energy estimate give
	\begin{align*}
		\frac{1}{2}\|\partial_YV^3_{00}(T)\|^2_{H^2}+\nu\|\partial^2_{Y}V^3_{00}\|_{L^2H^2}=\frac{1}{2}\|\partial_YV^3_{00}(0)\|^2_{H^2}+K^{(6)}_1+K^{(6)}_2,
	\end{align*}
	where
	\begin{align*}
		K^{(6)}_1=&\int_{0}^{T}\langle (V^2_0\partial_YV^3_0+V^3_0\partial_ZV^3_0)_{00},\partial^2_{Y}V^3_{00}\rangle_{H^2} dt,\\
		K^{(6)}_2=&\int_{0}^{T}\langle (V_{\neq}\cdot \tilde{\nabla} V^3_{\neq})_{00}+(F({\de})\partial_Z{\de}-G({\de})(\nu\tilde{\Delta}V^3+(\nu+\nu')\partial_ZD))_{00},\partial^2_{Y}V^3_{00}\rangle_{H^2} dt.
	\end{align*}
	It follows from bootstrap argument that
	\begin{align*}
		K^{(6)}_1\lesssim\|(V^2_0,V^3_0)\|_{L^\infty H^2}\|\nabla V^3_0\|_{L^2H^2}\|\partial^2_{Y}V^3_{00}\|_{L^2H^2}\lesssim\nu^{-1}\varepsilon^3,
	\end{align*}
	and 
	\begin{align*}
		K^{(6)}_2\lesssim&\|V_{\neq}\|_{L^2H^3}\|\tilde{\nabla} V^3_{\neq}\|_{L^2H^3}\|V^3_{00}\|_{L^\infty H^4}+\|{\de}_{0\neq}\|_{L^\infty H^2}\|\partial_Z{\de}_{0\neq}\|_{L^2H^2}\|\partial^2_{Y}V^3_{00}\|_{L^2H^2}\\
		&+\|{\de}_{\neq}\|_{L^2 H^2}\|\partial_Z{\de}_{\neq}\|_{L^2H^3}\|V^3_{00}\|_{L^\infty H^4}\\
		&+\|{\de}_{0}\|_{L^\infty H^2}(\nu\|\Delta V^3_{0}\|_{L^2H^2}+(\nu+\nu')\|\partial_ZD_0\|_{L^2H^2})\|\partial^2_{Y}V^3_{00}\|_{L^2H^2}\\
		&+\|{\de}_{\neq}\|_{L^2 H^3}(\nu\|\tilde{\Delta} V^3_{\neq}\|_{L^2H^3}+(\nu+\nu')\|\partial_ZD_{\neq}\|_{L^2H^3})\|V^3_{00}\|_{L^\infty H^4}\\
		\lesssim&\nu^{-1}\varepsilon^3.
	\end{align*}
	This completes all of zero mode estimates.
	\section{High regularity energy estimates}
	In this section, we prove   Proposition \ref{prop-bs} for $j=1,2,3$.
	\subsection{Estimates on compressible part}
	\subsubsection{$H^{3-j}$ estimates on $\nabla_{X,Z}(p^{\frac{j}{2}}R_{\neq},p^{\frac{j-1}{2}}D_{\neq})$ }
	In this subsection, we improve the energy estimates \cref{bs-xn} for $j=1,2,3$.
	We only consider $\|m^{\frac{1}{4}}Mp^{\frac{1}{2}}\partial_X({\de},p^{-\frac{1}{2}}D)_{\neq}\|_{H^{2}}$. The treatment of the other quantities is similar. 
	The energy functional is given by
	\begin{align*}
		E^{\nxnh}=\frac{1}{2}\|\ma\partial_X(p^{\frac{1}{2}}{\de},D)_{\neq}\|^2_{H^2}-c_1\bar{\nu}^{\frac{1}{3}} Re\langle \ma \partial_X{\de}_{\neq},\ma \partial_XD_{\neq}\rangle_{H^2}.
	\end{align*}
	The energy estimate gives
	\begin{align*}
		&E^{\nxnh}(T)+\bar{\nu}\|\tilde{\nabla}\ma \partial_XD_{\neq}\|^2_{L^2H^2}+\frac{1}{4}\left\|\sqrt{-\frac{\partial_t m}{m}}m^{\frac{1}{4}}M\partial_X(p^{\frac{1}{2}}{\de},D)_{\neq}\right\|^2_{L^2H^2}+\left\|\sqrt{-\frac{\partial_t M}{M}}m^{\frac{1}{4}}M\partial_X(p^{\frac{1}{2}}{\de},D)_{\neq}\right\|^2_{L^2H^2}\\
		&+c_1\bar{\nu}^{\frac{1}{3}}\|\ma p^{\frac{1}{2}} \partial_X{\de}_{\neq}\|^2_{L^2H^2}\\
		=&E^{\nxnh}(0)+LS^{\nxnh}+LE^{\nxnh}+I^{\nxnh}_1+I^{\nxnh}_2+I^{\nxnh}_3,
	\end{align*}
	where 
	\begin{align*}
		LS^{\nxnh}=&\int_{0}^{T}\left\langle\frac{1}{2}\partial_t p \ma \partial_X{\de}_{\neq}, \ma \partial_X{\de}_{\neq}\right\rangle_{H^2} +\left\langle\frac{\partial_t p}{p} \ma \partial_XD_{\neq},\ma \partial_XD_{\neq}\right\rangle_{H^2} dt\\
		LE^{\nxnh}=&2\int_{0}^{T}\left\langle \ma \frac{ \partial_{XX}}{p}(W^2_{\neq}-\partial_X{\de}+\nu\partial_XD)_{\neq}, \ma\partial_X D_{\neq}\right\rangle_{H^2}+c_1\bar{\nu}^{\frac{1}{3}}\| \ma \partial_XD_{\neq}\|^2_{H^2}\\
		&-c_1\bar{\nu}^{\frac{1}{3}}\left\langle \ma \partial_X{\de}_{\neq},\ma \partial_X\left(\frac{\partial_tp}{p}D+ 2 \frac{\partial_{X}}{p}(W^2-\partial_X{\de}+\nu\partial_XD)-\bar{\nu} pD\right)_{\neq}\right\rangle_{H^2} dt\\
		I^{\nxnh}_1=&\int_{0}^{T}\left\langle \ma p^{\frac{1}{2}}\partial_X(\nlt_{{\de}})_{\neq},\ma p^{\frac{1}{2}}\partial_X{\de}_{\neq}\right\rangle_{H^2} dt,\quad
		I^{\nxnh}_2=\int_{0}^{T}\left\langle \ma\partial_X(\nlt_{D})_{\neq},\ma\partial_X D_{\neq}\right\rangle_{H^2} dt,\\
		I^{\nxnh}_3=&-c_1\bar{\nu}^{\frac{1}{3}}\int_{0}^{T}\left\langle \ma\partial_X {\de}_{\neq}, \ma\partial_X(\nlt_{D})_{\neq} \right\rangle_{H^2}+\left\langle \ma\partial_X(\nlt_{{\de}})_{\neq},\ma \partial_XD_{\neq}\right\rangle_{H^2} dt,
	\end{align*}
	with $\nlt_{\de},\nlt_D$ given in \cref{t-ND}.
	For $LS$ and $LE$, by bootstrap argument, we have
	\begin{align*}
		LS^{\nxnh}\lesssim&\|m^{\frac{1}{4}}\partial_Xp^{-\frac{1}{2}}\partial_XD_{\neq}\|_{L^2H^2}\|m^{\frac{1}{4}}\partial_XD_{\neq}\|_{L^2H^2}+\frac{1}{2}\|m^{\frac{1}{4}}\partial_{XX}{\de}_{\neq}\|_{L^2H^2}\|m^{\frac{1}{4}}p^{\frac{1}{2}}\partial_X{\de}_{\neq}\|_{L^2H^2}\lesssim\frac{1}{C_1}(B_1\nu^{-1/3}\varepsilon)^2,\\
		LE^{\nxnh}\lesssim&\|\ma\partial_X(W^2_{\neq},\partial_X{\de}_{\neq})\|_{L^2H^2}\|\ma \partial_XD_{\neq}\|_{L^2H^2}+\nu\|\ma \partial_XD_{\neq}\|^2_{L^2H^2}+\|\ma\partial_X(W^2_{\neq},\partial_X{\de}_{\neq})\|^2_{L^2H^2}\\
		&+\bar{\nu}^{\frac{4}{3}}\|\ma p^{\frac{1}{2}}\partial_X{\de}_{\neq}\|_{L^2H^2}\|p^{\frac{1}{2}}\partial_XD_{\neq}\|_{L^2H^2}+c_1\bar{\nu}^{\frac{1}{3}}\|\ma \partial_XD_{\neq}\|^2_{L^2H^2}\\
		\lesssim&\nu^{-1/6}\varepsilon B_1\nu^{-1/2}\varepsilon+\nu(B_1\nu^{-1/2}\varepsilon)^2+\nu^{-1/3}\varepsilon^2+\bar{\nu}^{\frac{4}{3}}\nu^{-1/3}B_1\varepsilon\nu^{-2/3}B_1\varepsilon+c_1(B_1\nu^{-1/3}\varepsilon)^2\\
		\lesssim&\left(\bar{\nu}^{\frac{2}{3}}+\frac{1}{C_1}\right)(B_1\nu^{-1/3}\varepsilon)^2.
	\end{align*}
	The estimates of $I^{\nxnh}_1$, $I^{\nxnh}_2$, and $I^{\nxnh}_3$ are exact the same to that in \cref{4.4}. Therefore, we omit them and arrive at
	\begin{align*}
		I^{\nxnh}_1+I^{\nxnh}_2+I^{\nxnh}_3\lesssim\nu^{-3/2}\varepsilon(\nu^{-1/3}\varepsilon)^2.
	\end{align*}
	\subsubsection{$H^{3-j}$ estimates on $(p^{\frac{j+1}{2}}R,p^\frac{j}{2}D)$, $\nabla_{X,Z}(p^{\frac{j+1}{2}}R,p^\frac{j}{2}D)$, and $(p^{\frac{j+2}{2}}R,p^{\frac{j+1}{2}}D)$}
	In this subsection, we improve the energy estimates \cref{bs-yn}-\cref{bs-pn} for $j=1,2,3$.
	We only consider $\|p^\frac{3}{2}(p{\de},p^{\frac{1}{2}}D)\|_{L^2}$. The treatment of the other quantities is similar. 
	The energy functional is given by
	\begin{align*}
		E^{\pnh}=\frac{1}{2}\|(p^{\frac{5}{2}}{\de},p^2D)\|^2_{L^2}-c_1\bar{\nu}^{\frac{1}{3}} Re\langle p^{2}{\de}_{\neq},p^{2}D_{\neq}\rangle_{L^2}.
	\end{align*}
	The energy estimate read
	\begin{align*}
		&E^{\pnh}(T)+\bar{\nu}\|p^{\frac{5}{2}}D\|^2_{L^2L^2}+c_1\bar{\nu}^{\frac{1}{3}}\|p^{\frac{5}{2}}{\de}_{\neq}\|^2_{L^2L^2}
		=E^{\pnh}(0)+LS^{\pnh}+LE^{\pnh}+I^{\pnh}_1+I^{\pnh}_2+I^{\pnh}_3+I^{\pnh}_4,
	\end{align*}
	where 
	\begin{align*}
		LS^{\pnh}=&\int_{0}^{T}3\left\langle(\partial_t p) pD_{\neq}, p^2D_{\neq}\right\rangle_{L^2}+\frac{5}{2}\left\langle(\partial_t p)p^{\frac{3}{2}} {\de}_{\neq}, p^{\frac{5}{2}}{\de}_{\neq}\right\rangle_{L^2} dt\\
		LE^{\pnh}=&2\int_{0}^{T}\left\langle  \partial_Xp(W^2-\partial_X{\de}+\nu\partial_XD)_{\neq}, p^{2}D_{\neq}\right\rangle_{L^2}\\
		&+c_1\bar{\nu}^{\frac{1}{3}}\| p^{2} D_{\neq}\|^2_{L^2}-c_1\bar{\nu}^{\frac{1}{3}}\left\langle p^{\frac{5}{2}}{\de}_{\neq},5(\partial_tp)p^{\frac{1}{2}}D_{\neq}+ 2 \partial_Xp^{\frac{1}{2}}(W^2-\partial_X{\de}+\nu\partial_XD)_{\neq}-\bar{\nu} p^{\frac{5}{2}}D_{\neq}\right\rangle_{L^2} dt\\
		I^{\pnh}_1=&\int_{0}^{T}\left\langle p^{\frac{5}{2}}\nlt_{{\de}},p^{\frac{5}{2}}{\de}\right\rangle_{L^2} dt,\quad
		I^{\pnh}_2=\int_{0}^{T}\left\langle p^{2}\nlt_{D},p^{2}D\right\rangle_{L^2} dt,\\
		I^{\pnh}_3=&-c_1\bar{\nu}^{\frac{1}{3}}\int_{0}^{T}\left\langle p^{2} {\de}_{\neq}, p^{2}(\nlt_{D})_{\neq} \right\rangle+\left\langle p^{2}(\nlt_{{\de}})_{\neq},p^{2}D_{\neq}\right\rangle_{L^2} dt.
	\end{align*}
	For $LS$ and $LE$, by bootstrap argument, we have
	\begin{align*}
		LS^{\pnh}\leq&2\|\partial_Xp^{\frac{3}{2}}D_{\neq}\|_{L^2L^2}\|p^{2}D_{\neq}\|_{L^2L^2}+\frac{5}{2}\|\partial_Xp^{2}{\de}_{\neq}\|_{L^2L^2}\|p^{\frac{5}{2}}{\de}_{\neq}\|_{L^2L^2}\lesssim\frac{1}{C_0}(C_0^2B_3\nu^{-11/6}\varepsilon)^2,\\
		LE^{\pnh}\lesssim&\|m^{\frac{1}{4}}p^{\frac{3}{2}}(W^2_{\neq},{\de}_{\neq},\partial_Xp^{-\frac{1}{2}}D_{\neq})\|_{L^2H^1}\|(p^{\frac{5}{2}}{\de}_{\neq},p^{2}D_{\neq})\|_{L^2L^2}+\nu\|\partial_XpD_{\neq}\|_{L^2H^1}\|p^2D_{\neq}\|_{L^2L^2}\\
		&+c_1\bar{\nu}^{\frac{4}{3}}\|p^{\frac{5}{2}}{\de}_{\neq}\|_{L^2L^2}\|p^{\frac{5}{2}}D_{\neq}\|_{L^2L^2}+c_1\bar{\nu}^{\frac{1}{3}}\|p^{2}D_{\neq}\|^2_{L^2L^2}\\
		\lesssim&\left(\bar{\nu}^{\frac{2}{3}}+\frac{1}{C_0}\right)(C_0^2B_3\nu^{-11/6}\varepsilon)^2.
	\end{align*}
	For transport term of $p^{\frac{5}{2}}{\de}$, we use integration by parts to obtain
	\begin{align*}
		&\int_{0}^{T}\left\langle p^{\frac{5}{2}}(V\cdot\tilde{\nabla} {\de}),p^{\frac{5}{2}}{\de}\right\rangle_{L^2} dt\\=&\int_{0}^{T}\left\langle [p^{\frac{5}{2}},V]\cdot\tilde{\nabla} {\de},p^{\frac{5}{2}}{\de}\right\rangle_{L^2} dt+\int_{0}^{T}\left\langle V\cdot\tilde{\nabla} p^{\frac{5}{2}}{\de},p^{\frac{5}{2}}{\de}\right\rangle_{L^2} dt\\
		\lesssim&\|p^{\frac{5}{2}} V\|_{L^2L^2}\|p^{\frac{1}{2}}{\de}\|_{L^\infty H^2}\|p^{\frac{5}{2}}{\de}\|_{L^2L^2}+\|p^{\frac{1}{2}}V^1_0\|_{L^\infty H^2}\|p^{\frac{5}{2}}{\de}_{\neq}\|^2_{L^2L^2}\\
		&+\|p^{\frac{1}{2}}(V^2_0,V^3_0)\|_{L^\infty H^2}\|p^{\frac{5}{2}}{\de}\|^2_{L^2L^2}+\|p^{\frac{1}{2}}V_{\neq}\|_{L^2H^2}\|p^{\frac{5}{2}}{\de}\|_{L^2L^2}\|p^{\frac{5}{2}}{\de}\|_{L^\infty L^2}\\
		&+\|D\|_{L^2H^2}\|p^{\frac{5}{2}}{\de}\|_{L^2L^2}\|p^{\frac{5}{2}}{\de}\|_{L^\infty L^2}\\
		\lesssim&\nu^{-2}\varepsilon\nu^{-1/2}\varepsilon\nu^{-7/3}\varepsilon+\nu^{-1}\varepsilon(\nu^{-2}\varepsilon)^2+\varepsilon(\nu^{-7/3}\varepsilon)^2+\nu^{-2/3}\varepsilon\nu^{-7/3}\varepsilon\nu^{-11/6}\varepsilon\\
		\lesssim&\nu^{-4/3}\varepsilon(\nu^{-11/6}\varepsilon)^2.
	\end{align*}
	The estimates of the other term of $I^{\pnh}_1$, $I^{\pnh}_2$, and $I^{\pnh}_3$ are exact the same to that in Section \ref{sec:48}. Therefore, we omit them and arrive at
	\begin{align*}
		I^{\pnh}_1+I^{\pnh}_2+I^{\pnh}_3\lesssim\nu^{-3/2}\varepsilon(\nu^{-11/6}\varepsilon)^2.
	\end{align*}
	\subsection{Estimates on incompressible part}
	In this subsection, we improve the energy estimates \eqref{bs-w1}-\eqref{bs-o3} for $j=1,2,3$. We only consider $\|p^{\frac{1}{2}}W^2_{\neq}\|_{H^{2}}$. The treatment of the other quantities is similar. 
	The energy estimate gives
	\begin{align*}
		&\|\ma p^{\frac{1}{2}}W^2_{\neq}(T)\|^2_{H^2}+\nu\|\tilde{\nabla}\ma p^{\frac{1}{2}} W^2_{\neq}\|^2_{L^2H^2}+\left\|\sqrt{-\frac{\partial_t M}{M}}\ma p^{\frac{1}{2}} W^2_{\neq}\right\|^2_{L^2H^2}\\
		&+\frac{1}{4}\left\|\sqrt{-\frac{\partial_t m}{m}}\ma  p^{\frac{1}{2}}W^2_{\neq}\right\|^2_{L^2H^2}
		=\|p^{\frac{1}{2}}W^2_{\neq}(0)\|^2_{H^3}+LS^{\nwh}+LE^{\nwh}+\sum_{i=1}^{6}I^{\nwh}_i,
	\end{align*}
	where
	\begin{align*}
		LS^{\nwh}=&\int_{0}^{T}\left\langle \frac{1}{2}\frac{\partial_t p}{p^{\frac{1}{2}}}\ma W^2_{\neq},\ma p^{\frac{1}{2}}W^2_{\neq}\right\rangle_{H^2}dt,\\
		LE^{\nwh}=&-\int_{0}^{T}\left\langle \ma p^{\frac{1}{2}}\left(2\nu\partial_X^2p^{-1}(W^2-\partial_X{\de}+\nu\partial_XD)+\nu\frac{\partial_tp}{p}\partial_XD-\nu(\nu+\nu')p\partial_XD\right)_{\neq},\ma p^{\frac{1}{2}} W^2_{\neq}\right\rangle_{H^2} dt,\\
		I^{\nwh}_1=&-\int_{0}^{T}\left\langle \ma p^{\frac{1}{2}}(V\cdot\tilde{\nabla}W^2)_{\neq},\ma p^{\frac{1}{2}} W^2_{\neq}\right\rangle_{H^2} dt,\\
		I^{\nwh}_2=&\int_{0}^{T}\left\langle \ma p^{\frac{1}{2}}\left(\sum_{i=1,3}(\tilde{\partial}_YV^i\partial_iD-Q^i\partial_iV^2)-\Omega^2\tilde{\partial}_YV^2\right)_{\neq},\ma p^{\frac{1}{2}}W^2_{\neq}\right\rangle_{H^2} dt,\\
		I^{\nwh}_3=&\int_{0}^{T}\left\langle \ma p^{\frac{1}{2}}\left(2\sum_{i=1,3}(\partial_iV^2\tilde{\p}_{Y}^{2} V^i-\tilde{\partial}_iV^j\tilde{\partial}_{ij}V^2)\right)_{\neq},\ma p^{\frac{1}{2}}W^2_{\neq}\right\rangle_{H^2} dt,\\
		I^{\nwh}_4=&\int_{0}^{T}\left\langle \ma p^{\frac{1}{2}}\left(\sum_{i,j=1,3}\tilde{\partial}_Y(\partial_iV^j\partial_jV^i)\right)_{\neq},\ma p^{\frac{1}{2}}W^2_{\neq}\right\rangle_{H^2} dt,\\
		I^{\nwh}_5=&\int_{0}^{T}\left\langle \ma p^{\frac{1}{2}}\left(-\partial_XV\cdot\tilde{\nabla}{\de}-\partial_X({\de}D)+\nu\partial_XV\cdot\tilde{\nabla}D+\nu\partial_X(\tilde{\partial}_iV^j\tilde{\partial}_jV^i)\right)_{\neq},\ma p^{\frac{1}{2}} W^2_{\neq}\right\rangle_{H^2} dt,\\
		I^{\nwh}_6=&\int_{0}^{T}\big\langle \ma p^{\frac{1}{2}}\big(-(\tilde{\Delta}-\tilde{\nabla}\tilde{\textrm{div}})\left(G({\de})(\nu\tilde{\Delta} V+(\nu+\nu')\tilde{\nabla} D)\right)^2\\
		&+\nu\partial_X\tilde{\textrm{div}} \left(F({\de})\tilde{\nabla} {\de}+G({\de})(\nu\tilde{\Delta} V+(\nu+\nu')\tilde{\nabla} D)\right)\big),\ma p^{\frac{1}{2}}W^2_{\neq}\big\rangle_{H^2} dt.
	\end{align*}
	For $LS^{\nwh}$ and $LE^{\nwh}$, the bootstrap argument implies that
	\begin{align*}
		LS^{\nwh}\lesssim& \|\ma W^2_{\neq}\|_{L^2H^3}\|\ma p^{\frac{1}{2}} W^2_{\neq}\|_{L^2H^2}
		\lesssim \nu^{-1/6}\varepsilon\nu^{-1/2}B_1\varepsilon\lesssim\frac{1}{C_1}(B_1\nu^{-1/3}\varepsilon)^2.
	\end{align*}
	and 
	\begin{align*}
		LE^{\nwh}\lesssim& \nu\|\partial_Xp^{-\frac{1}{2}}m^{\frac{1}{4}}p^{\frac{1}{2}}(W^2_{\neq},\partial_X{\de}_{\neq})\|^2_{L^2H^2}+\nu\|\partial_Xp^{-\frac{1}{2}}m^{\frac{1}{4}}p^{\frac{1}{2}}W^2_{\neq}\|_{L^2H^2}\|m^{\frac{1}{4}}\partial_Xp^{\frac{1}{2}}D_{\neq}\|_{L^2H^2}\\
		&+\nu(\nu+\nu')\|pm^{\frac{1}{4}}W^2_{\neq}\|_{L^2H^2}\|p\partial_XD_{\neq}\|_{L^2H^2}\\
		\lesssim&\nu(B_1\nu^{-1/3}\varepsilon)^2+\nu B_1\nu^{-1/3}\varepsilon B_1\nu^{-5/6}\varepsilon+\nu^2B_1\nu^{-5/6}\varepsilon C_0 B_1\nu^{-4/3}\varepsilon\lesssim\nu^{1/2}C_0(B_1\nu^{-1/3}\varepsilon)^2.
	\end{align*}
	The estimates of $I^{\nwh}_1-I^{\nwh}_6$ are exact the same to that in Section \ref{sec:511}. Therefore, we omit them and arrive at
	\begin{align*}
		\sum_{k=1}^{6}I^{\nwh}_k\lesssim\nu^{-3/2}\varepsilon(\nu^{-1/3}\varepsilon)^2.
	\end{align*} 
	This completes the improvement of \eqref{bs-w1}-\eqref{bs-o3}.

	\medskip
	\noindent {\bf Acknowledgment:}\, FW gratefully  acknowledges support from the National Natural Science Foundation of China (No. 12471223, 12101396, and 12331008).
	
	\medskip
	\noindent{\bf Data availability:} The manuscript contains no associated data.

	\medskip
	\noindent{\bf Conflict of Interest:} The authors declare that they have no conflict of interest.

\end{document}